\documentclass [reqno] %{conm-p-l}
{amsart}

% Update the information and uncomment if AMS is not the copyright holder.
\copyrightinfo{2010}{American Mathematical Society}

\usepackage{amsfonts}
\usepackage{amssymb}

\usepackage{color}
\usepackage{hyperref}

\setcounter{figure}{0}

\newtheorem{theorem}{Theorem}[section]
\newtheorem{lemma}[theorem]{Lemma}
\newtheorem{corollary}[theorem]{Corollary}

\theoremstyle{definition}

\theoremstyle{remark}
\newtheorem{remark}[theorem]{Remark}

\numberwithin{equation}{section}

%       Math definitions

%\newcommand{\cC}{\mathcal{C}}
%\newcommand{\cD}{\mathcal{D}}
%\newcommand{\iD}{\mathit{\varDelta}}
%\newcommand{\cH}{\mathcal{H}}
%\newcommand{\Id}{I}
%\newcommand{\cN}{\mathcal{N}}
%\newcommand{\iO}{\mathit{\Omega}}
%\newcommand{\R}{\mathbb{R}}
%\newcommand{\cS}{\mathcal{S}}
%\newcommand{\cX}{\mathcal{X}}
%\newcommand{\Z}{\mathbb{Z}}

\begin{document}

\title[An Open Mapping Theorem for the Navier-Stokes Equations]
      {An Open Mapping Theorem for \\ the Navier-Stokes Equations\footnote{This is a preprint version of the paper published in Advances and Applications in Fluid Mechanics, 21:2 (2018), 127-246}}

%    Information for first author
\author[A. Shlapunov]{Alexander Shlapunov}

%    Address of record for the research reported here
\address{Siberian Federal University,
         Institute of Mathematics and Computer Science,
         pr. Svobodnyi 79,
         660041 Krasnoyarsk,
         Russia}

\email{ashlapunov@sfu-kras.ru}

%    \thanks will become a 1st page footnote.
%    \thanks{}

%    Information for second author
\author[N. Tarkhanov]{Nikolai Tarkhanov}

%    Address of record for the research reported here
\address{Institute of Mathematics,
         University of Potsdam,
         Karl-Liebknecht-Str. 24/25,
         14476 Potsdam OT Golm,
         Germany}

\email{tarkhanov@math.uni-potsdam.de}

%    \thanks will become a 1st page footnote.
%    \thanks{}

\date{April 30, 2016}

%\dedicatory{\it This paper is dedicated to ?.}

%    General info
\subjclass [2010] {Primary 76D05; Secondary 76N10, 35Q30}

\keywords{Navier-Stokes equations,
          weighted H\"older spaces,
          integral representation method}

\begin{abstract}
We consider the Navier-Stokes equations in the layer ${\mathbb R}^n \times [0,T]$ over $\mathbb{R}^n$
with finite $T > 0$.
Using the standard fundamental solutions of the Laplace operator and the heat operator, we reduce
the Navier-Stokes equations to a nonlinear Fredholm equation of the form $(I+K) u = f$, where $K$
is a compact continuous operator in anisotropic normed H\"older spaces weighted at the point at
infinity with respect to the space variables.
Actually, the weight function is included to provide a finite energy estimate for solutions to the
Navier-Stokes equations for all $t \in [0,T]$.
On using the particular properties of the de Rham complex we conclude that the Fr\'echet derivative
$(I+K)'$ is continuously invertible at each point of the Banach space under consideration and the
map $I+K$ is open and injective in the space.
In this way the Navier-Stokes equations prove to induce an open one-to-one mapping in the scale of
H\"older spaces.
\end{abstract}

\maketitle

\tableofcontents

\section*{Introduction}
\label{s.0}

The problem of describing the dynamics of incompressible viscous fluid is of great importance in
applications.
%In 2006 the Clay Mathematics Institute announced it as the sixth prize millennium problem,
%   see \cite{Feff00}.
The dynamics is described by the Navier-Stokes equations and the problem consists in finding a
classical solution to the equations.
By a classical solution we mean here a solution of a class which is well motivated by applications
and for which a uniqueness theorem is available.
Essential contributions have been published in the research articles
   \cite{Lera34a,Lera34b},
   \cite{Kolm42},
   \cite{Hopf51},
   \cite{LadySolo60},
   \cite{Tao15}
as well as surveys and books
   \cite{Lady70,Lady03}),
   \cite{Lion61,Lion69},
   \cite{Tema79},
   \cite{FursVish80},
etc.

In physics by the Navier-Stokes equations is meant the impulse equation for the flow.
In the computational fluid dynamics the impulse equation is enlarged by the continuity and energy
equations.

The impulse equation of dynamics of (compressible) viscous fluid was formulated in differential form
independently by Claude Navier (1827) and George Stokes (1845).
This is
\begin{equation}
\label{eq.impulse}
   \rho (\partial_{t} v + (v \cdot \nabla) v)
 = \mu \varDelta v + (\lambda + \mu)\, \nabla\, \mathrm{div}\, v - \nabla p + f,
\end{equation}
where
   $v : \mathcal{X} \times (0,T) \to \mathbb{R}^3$ and
   $p : \mathcal{X} \times (0,T) \to \mathbb{R}$
are the search-for velocity vector field and pressure of a particle in the flow, respectively, and
   $\mathcal{X}$ is a domain in the Euclidean space $\mathbb{R}^3$,
   $(0,T)$ is an interval of the time axis.
Furthermore,
   the number $\rho$ stands for the mass density,
   $\lambda$ and $\mu$ are the first Lam\'{e} constant and the dynamical viscosity of the fluid under
   consideration, respectively,
   $\varDelta  =  \partial^2_{x^1 x^1} + \partial^2_{x^2 x^2} + \partial^2_{x^3 x^3}$ is the Laplace operator in
   $\mathbb{R}^3$,
   $\nabla$ and $\mathrm{div}$ are the gradient operator and the divergence operator in $\mathbb{R}^3$,
   respectively,
and
   $f$ is the density vector of outer forces, such as gravitation and so on,
see formulas (15.5) and (15.6) in \cite[\S~15]{LaLi}, \cite{Tema79} and elsewhere.

Usually the impulse equation is supplemented by the continuity equation
$$
   \partial_t \rho + \mbox{div} (\rho v) = g,
$$
see \cite[\S~15]{LaLi}.

In order to specify a particular solution of equations (\ref{eq.impulse}), one usually considers the first
mixed problem in the cylinder $\mathcal{X} \times (0,T)$ by posing the initial conditions on the lower basis
of the cylinder and a Dirichlet condition on the lateral surface.
To wit,
\begin{equation}
\label{eq.mbvp}
\begin{array}{rclcl}
   v (x,0)
 & =
 & v_0 (x),
 & \mbox{for}
 & x \in \mathcal{X},
\\
   v (x,t)
 & =
 & v_l (x,t),
 & \mbox{for}
 & (x,t) \in \partial \mathcal{X} \times (0,T).
\end{array}
\end{equation}
It is worth pointing out that the pressure $p$ is determined solely from the impulse equation up to an
additive constant.
To fix this constant it suffices to put a moment condition on $p$.

If the density $\rho$ does not change along the trajectories of particles, the flow is said to be
incompressible.
It is the assumption that is most often used in applications.
For incompressible fluid the continuity equation takes the especially simple form $\mathrm{div}\, v = 0$
in $\mathcal{X} \times (0,T)$, i.e., the vector field $v$ should be divergence free (solenoidal).
In many practical problems the flow is not only incompressible but it has even a constant density.
In this case one can just set $\rho = 1$ in (\ref{eq.impulse}) which reduces the impulse equation to
\begin{equation}
\label{eq.NS.cyl}
\begin{array}{rcl}
   \partial_{t} v + (v \cdot \nabla) v - \mu \varDelta v + \nabla p
 & = &
   f,
\\
   \mathrm{div}\, v
 & = &
  0
\end{array}
\end{equation}
in $\mathcal{X} \times (0,T)$.
In this way we obtain what is referred to as but the Navier-Stokes equations.

After J. Leray \cite{Lera34a,Lera34b}, a great attention was paid to weak solutions to (\ref{eq.NS.cyl})
with boundary conditions (\ref{eq.mbvp}).
Hopf \cite{Hopf51} proved that equations (\ref{eq.NS.cyl}) under homogeneous data (\ref{eq.mbvp}) have a
weak solution satisfying reasonable estimates.
However, in this full generality no uniqueness theorem for a weak solution has been known.
On the other hand, under stronger conditions on the solution it is unique, see \cite{Lady70,Lady03}, who
proved the existence of a smooth solution for the two-dimensional version of the problem.

Traditionally two main directions have been formed in the study of the Navier-Stokes equations.
The first direction is concerned with improvement of the regularity of the weak solution of Hopf \cite{Hopf51}
using a priori estimates or more refined methods.
The second one is based on the fixed point theorems like those by Banach or Schauder or mapping degree theory
which allow one to attack the nonlinear problem directly.

On these ways one usually looks first for a suitable uniqueness class of solutions to the problem.
When working in Lebesgue and Sobolev spaces of positive or negative smoothness, one is aimed at obtaining
solvability theorems of certain linearised versions of the problem in the spaces of generalised functions
and establishing a priori estimates for weak solutions (see for instance the monographs
   \cite{Lady70},
   \cite{Tema79}
and the references given there).
However, this does not lead to any breakthrough in the original nonlinear problem, as elementary examples
like $y = \exp x$ show.
Furthermore, on passing to the nonlinear problem in spaces of distributions one encounters the additional
problem on multiplication of distributions.
Hence, there is strong feeling that weak solutions do not completely fit to handle the nonlinearity of the
Navier-Stokes equations.

Actually this observation has motivated clearly the investigation of the linear and quasilinear systems
of parabolic equations in (possibly, weighted) H\"older spaces over cylindrical domains, see for instance
    \cite{LadSoUr67},
    \cite{Sol64},
    \cite{Sol65},
    \cite{Bel79},
    \cite{BiSol93},
    \cite{Sol06},
etc.
Although the use of H\"{o}lder spaces guarantees an uniqueness theorem for the Navier-Stokes equations, one
is still not able to derive an existence theorem in this function scale.
What is usually lacking is the compactness or smallness of the nonlinear term with respect to the parabolic
linear part of the equations.
The weighted H\"{o}lder spaces we introduce in the present article serve mainly to get rid of this drawback.

We now specify the contribution of our paper to a huge amount of works on the Navier-Stokes equations.
In the sequel we consider an initial problem for the Navier-Stokes equations in the case of non-compressible
fluid corresponding to $\rho = 1$.
Namely,
\begin{equation}
\label{eq.NS}
\begin{array}{rclcl}
   \partial_t u -  \mu \varDelta u + (u \cdot \nabla) u + \nabla p
 & =
 & f,
 & \mbox{if}
 & (x,t) \in \mathcal{X} \times (0,T),
\\
   \mbox{div}\, u
 & =
 & 0,
 & \mbox{if}
 & (x,t) \in \mathcal{X} \times (0,T),
\\[.2cm]
   u (x,0)
 & =
 & u_0 (x),
 & \mbox{if}
 & x \in \mathcal{X},
\end{array}
\end{equation}
where
   $\mathcal{X} = \mathbb{R}^n$ with $n \geq 2$,
   $T > 0$ is finite,
and
   $\mu > 0$ is a viscosity constant.
As usual, the boundary conditions in this case are replaced by proper asymptotic behaviour of the
solution at the point of infinity %(cf. for instance \cite{Feff00}
for the space of smooth functions).

We develop an operator theoretic approach to the Navier-Stokes equations.
The focus is on elaborating a scale of weighted H\"{o}lder spaces which provides the openness of the
map induced by the Navier-Stokes equations, and the compactness of the nonlinear term.

Basically an open mapping theorem just amounts to an existence theorem for all data which are close to
any element of the range of the operator.
To the best of our knowledge there have been known no such results for finite energy solutions in the
scale of H\"{o}lder spaces on a cylinder with unbounded basis.
Theorems 10 and 11 of \cite[Ch.~4, \S~4]{Lady70} contain similar results for weak solutions in spaces of the Lebesgue type 
(see also \cite{Gal13} for global stability results in the spaces of infinitely differentiable functions).

On the other hand, when combined with the invertibility of the linear part in the Navier-Stokes equations,
the compactness of the nonlinear term enables one to invoke the mapping degree theory to get an existence
theorem, if there is any.
This is precisely on what the most of current investigations of the Navier-Stokes equations are focused.

Let us dwell on the content of the paper and the choice of function spaces in detail.
It should be first noted that since the gradient, rotation and divergence operators are of steady use in
the models of hydrodinamics we use the language of exterior differential forms and the de Rham complex
to treat the Navier-Stokes equations, see Section \ref{s.NS.deRham}.
This allows one to immediately specify the Navier-Stokes equations within the framework of global analysis
on smooth compact manifolds with boundary.
However, we have not been able to uniquely identify the nature of nonlinearity on forms of degree greater
than one.

It is well known that the operator $\varDelta \left( \partial_t - \mu \varDelta \right)$ admits a matrix
factorisation through the Stokes operator (the principal linear part of the Navier-Stokes equations), see
\cite[\S~15]{LaLi} or Section \ref{s.NS.deRham} below.
Hence, on aiming at investigation of the Navier-Stokes equations on functions vanishing at the infinitely
distant point, one should begin with the study of invertibility of the Laplace and heat operators in the
spaces in question.
The point at infinity in $\mathbb{R}^n$ is naturally thought of as a conical point of the one-point
compactificaion of $\mathbb{R}^n$.
The analysis close to this point is traditionally based on the use of weighted spaces, see
   \cite{Kond66}, \cite{HiMaw96} for parabolic problems,
   \cite{AmMeNe14} for Stokes-type equations or
   \cite{McOw79} for elliptic problems in weighted Sobolev spaces.
Still the Laplace operator acts properly in weighted Sobolev spaces of square integrable functions on
$\mathbb{R}^n$ merely for $n \geq 5$.
Apart from difficulties with multiplication and possible lack of uniqueness this is an evidence for us
to choose the scale of weighted H\"{o}lder spaces over $\mathbb{R}^n$ instead of Sobolev spaces, see
   Section \ref{s.HoelderSpaces}.

We first introduce weighted H\"{o}lder spaces over $\mathbb{R}^n$ as in elliptic theory and then
anisotropic H\"{o}lder spaces over the layer $\mathbb{R}^n \times [0,T]$ of finite width $T > 0$.
The explicit construction of these Banach spaces provides appropriate embedding theorems including those
on compact embedding, see Section \ref{s.HoelderSpaces}.
Taking the results by
   \cite{McOw79},
   \cite{HiMaw98} and
   \cite{Be11}
as a starting point, we investigate the Laplace operator and the heat operator in anisotropic weighted
H\"older spaces with weight functions prescribing a proper behaviour of its elements at the point at infinity,
see
   Sections \ref{s.deRham.HS} and \ref{s.thoitwHs}.
As usual, a set of prohibited weights appears, for which the action of the Laplace operator fails to be
Fredholm.
Note that the range of weight exponents, for which the Laplace operator is continuously invertible, is rather
narrow.

As but one useful tool we derive theorems on the invertibility of the differential of the de Rham complex
in both elliptic and parabolic scales of weighted \textcolor{red}{H\"older} spaces, see Section \ref{s.deRham.HS}.
They actually constitute certain versions of the classical Hodge theory on the one-point compactification
of $\mathbb{R}^n$, cf. also the particular decompositions of
   \cite[Ch.~1, \S~2]{Lady70},  \cite[Ch.~1, \S~1.4]{Tema79} and 
	{\cite[\S 1.8]{BerMaj02}
related to the rotation operator.

The actions of the Laplace and heat operators in the weighted H\"{o}lder spaces under study prove to be not
fully coherent.
To wit, in order to achieve the invertibility of the Laplace operator for lower dimensions $2 \leq n \leq 4$
or higher smoothness, we deal with parabolic H\"{o}lder spaces, where the dilation principle is partially
neglected with regard to the weight.
As a result we lose some regularity and weight in the Cauchy problem for the heat equation in the scale of
parabolic H\"{o}lder spaces under consideration.
More precisely, the loss of regularity occurs with respect to the smoothness of solution in the standard
H\"{o}lder spaces over  a cylinder domain with bounded base.

Using a familiar trick excluding the pressure, and the standard fundamental solutions of the Laplace and heat
operators, we study linearisations of the Navier-Stokes equations which can be reduced to Fredholm equations
of the form $(I + K') g = g_0$, where $K'$ is a compact pseudodifferential mapping of anisotropic H\"{o}lder
spaces.
As a consequence we get an existence theorem for linearisations of the Navier-Stokes equations, see
   Section \ref{s.NS.lin}.
Note that there is a loss of regularity in the existence theorem for the linearised Navier-Stokes equations
comparing with that for the reduced equations.

Further development allows one to reduce the Navier-Stokes equations to an operator equation for the Fredholm
type operator $I+K$ with a nonlinear compact continuous mapping $K$ in anisotropic normed H\"ol\-der spaces
weighted at the point at infinity with respect to the space variables.
Actually, the weight function is chosen in such a way that the finite energy estimate be fulfilled for
solutions of the Navier-Stokes equations for all $t \in [0,T]$.
Next, using the properties of the de Rham complex we conclude that the Fr\'echet derivative $(I+K)'$ of the
map is continuously invertible at every point of the Banach space under the consideration and the map $I+K$
is open and injective in the space.
This implies readily that the reduced Navier-Stokes equations induce an open mapping on the scale of H\"older
spaces.
Again, a loss of regularity occurs in the open mapping theorem for the Navier-Stokes equations comparing with
that for the reduced equations.
However, the soft formulation of the open mapping theorem is strengthened to a rigorous result is the Navier-Stokes
equations are given a domain being a metric space, see Section \ref{s.NS.OpenMap}.

\part{Preliminaries}
\label{p.preliminaries}

\section{The Navier-Stokes equations and the de Rham complex}
\label{s.NS.deRham}

Let
   $\mathbb{Z}_{\geq 0}$ be the set of all natural numbers including zero,
and let $\mathbb{R}^n$ be the Euclidean space of dimension $n \geq 2$ with coordinates $x=(x^1, \ldots, x^n)$.
For a domain $\mathcal{X}$ in $\mathbb{R}^n$, we often consider the open cylinder
   $\mathcal{C}_T (\mathcal{X}) := \mathcal{X} \times (0,T)$
over $\mathcal{X}$ in the space $\mathbb{R}^{n+1}$ of variables $(x,t)$.
If $\mathcal{X} = \mathbb{R}^n$, we write
   $\mathcal{C}_T = \mathbb{R}^n \times (0,T)$
for short and we sometimes refer to $\mathcal{C}_T$ as a layer over $\mathbb{R}^n$.

We are going to rewrite the nonlinear Navier-Stokes equations (\ref{eq.NS}) in a more convenient form (the so-called Lamb form).
To this end, denote by $\varLambda^q$ the bundle of exterior forms of degree $0 \leq q \leq n$ over
$\mathbb{R}^n$.
Given a domain $\mathcal{X}$ in $\mathbb{R}^n$, we write $\varOmega^q (\mathcal{X})$ for the space of
all differential forms of degree $q$ with $C^\infty$ coefficients on $\mathcal{X}$.
These space constitute the so-called de Rham complex $\varOmega^\cdot (\mathcal{X})$ on $\mathcal{X}$
whose differential is given by the exterior derivative $d$.
To display $d$ acting on $q\,$-forms one uses the designation
   $du := d^q u$
for $u \in \varOmega^q (\mathcal{X})$ (see for instance \cite{Tark95a}).
For the space of differential forms of degree $q$ with coefficients of a class $\mathcal{F}$ in $\mathcal{X}$
we have to use more cumbersome designation $\mathcal{F} (\mathcal{X},\varLambda^q)$.

By $\mathcal{F} (\mathcal{X} \times (0,T),\varLambda^q)$ is meant the space of all differential forms of
degree $q$ in $x$ whose coefficients are functions of a class $\mathcal{F} (\mathcal{X} \times (0,T))$.
They have the form
$$
  u (x,t) = \sum_{1 \leq i_1 < \ldots < i_q \leq n} u_I (x,t) dx^I
$$
where
   the sum is over all increasing multi-indices $I = (i_1, \ldots, i_q)$ of the numbers $1, \ldots, n$,
   $dx^I$ stands for the exterior product of the differentials  $dx^{i_1},  \ldots, dx^{i_q}$ after each other,
and
   $u_I (x,t)$ are functions of class $\mathcal{F} (\mathcal{X} \times (0,T))$.
The parameter $t$ is included only in the coefficients.

Consider the de Rham complex $\varOmega^\cdot (\mathbb{R}^{n+1}_{t \geq 0})$ on the closed half-space
$\mathbb{R}^{n+1}_{t \geq 0}$, i.e.,
\begin{equation*}
   0
 \rightarrow
   \varOmega^0 (\mathbb{R}^{n+1}_{t \geq 0})
 \stackrel{d}{\rightarrow}
   \varOmega^1 (\mathbb{R}^{n+1}_{t \geq 0})
 \stackrel{d}{\rightarrow}
   \ldots
 \stackrel{d}{\rightarrow}
   \varOmega^n (\mathbb{R}^{n+1}_{t \geq 0})
 \rightarrow
   0,
\end{equation*}
where
   $t$ is thought of as a parameter and the coefficients of differential forms are smooth both in $x$ and $t$.
We denote by $d^\ast$ the formal adjoint operator for $d$, more precisely,
   $d^\ast g = (d^{q-1})^\ast g$
for $g \in \varOmega^{q} (\mathbb{R}^{n+1}_{t \geq 0})$.
The Laplacian of $\varOmega^\cdot (\mathbb{R}^{n+1}_{t \geq 0})$ evaluated on $q\,$-forms reduces to
\begin{equation}
\label{eq.deRham}
   \varDelta^q
 := d^\ast d + d d^\ast
  = - E_{k_q}\, \varDelta,
\end{equation}
where
   $k_q = \left( {n \atop k} \right)$,
   $E_{k_q}$ is the unit $(k_q \times k_q)\,$-matrix
and
   $\varDelta$ the Laplace operator applied componentwise in the space variable $x$.
Equality (\ref{eq.deRham}) means that
$$
  (\varDelta^q u) (x,t) = - \sum_{1 \leq i_1 < \ldots < i_q \leq n} (\varDelta u_I) (x,t) dx^I.
$$
For the most important case $n=3$, the differential forms of degree $0$ and $3$ can be identified with 
 functions and the differential forms of degree $1$ and $2$ can be identified with vector-valued functions 
 having three components.
 Then one readily gets
$$
\begin{array}{cccccc}
   d^0 = \nabla, 
 & d^0{}^* = - \mathrm{div}, 
 & d^1 = \mathrm{rot}, 
 & d^1{}^* = \mathrm{rot}, 
 & d_2 = \mathrm{div}.
\end{array}
$$

Write $H_{\mu} = \partial_t - \mu \varDelta$ for the heat operator in $\mathbb{R}^{n+1}_{t \geq 0}$ with
a constant $\mu > 0$.
When extended componentwise to differential forms of degree $q$ it can be written in the form
$$
   H_\mu^q
 = E_{k_q}\, \partial_t + \mu \varDelta^q.
$$
By abuse of notation we will omit the index $q$ and write it simply $H_\mu$ if it causes no confusion.

Since scalar partial differential operators with constant coefficients commute, we obviously deduce
that
\begin{equation}
\label{eq.commute}
\begin{array}{rcl}
   d H_\mu
 & =
 & H_\mu d,
\\
   d^\ast H_\mu
 & =
 & H_\mu d^\ast,
\end{array}
\end{equation}
etc.

If we identify a function $u (x,t) = (u_1 (x,t), \ldots, u_n (x,t))$ with values in $\mathbb{R}^n$ with the
differential form
$$
   u (x,t)= \sum_{i=1}^n u_i (x,t) dx^i,
$$
then using the de Rham complex above we introduce the block matrix of partial differential operators
\begin{equation*}
   \Big( \begin{array}{cc}
           H_\mu
         & d
\\
           d^\ast
         & 0
         \end{array}
   \Big) :\
   \begin{array}{c}
   \varOmega^1 (\mathbb{R}^{n+1}_{t \geq 0})
\\
   \oplus
\\
   \varOmega^0 (\mathbb{R}^{n+1}_{t \geq 0})
   \end{array}
 \to
   \begin{array}{c}
   \varOmega^1 (\mathbb{R}^{n+1}_{t \geq 0})
\\
   \oplus
\\
   \varOmega^0 (\mathbb{R}^{n+1}_{t \geq 0})
   \end{array}
\end{equation*}
for the linear part of the Navier-Stokes operator of (\ref{eq.NS.cyl}).
The following factorisation was implicitly used in \cite[\S~15]{LaLi}.

\begin{lemma}
\label{l.factorization}
We have
\begin{eqnarray*}
\Big( \begin{array}{cc}
        H_\mu^1
      & d^0
\\
        d^0{}^\ast
      & 0
      \end{array}
\Big)
\Big( \begin{array}{cc}
        d^1{}^\ast d^1
      & H_\mu^1 d^0
\\
        d^0{}^\ast H_\mu^1
      & - (H_\mu^0)^2
      \end{array}
\Big)
 & = &
\Big( \begin{array}{cc}
        \varDelta^1 H_\mu^1
      & 0
\\
        0
      & \varDelta^0 H_\mu^0
      \end {array}
\Big),
\\
\Big( \begin{array}{cc}
        d^1{}^\ast d^1
      & H_\mu^1 d^0
\\
        d^0{}^\ast H_\mu^1
      & - (H_\mu^0)^2
      \end {array}
\Big)
\Big( \begin{array}{cc}
        H_\mu^1
      & d^0
\\
        d^0{}^\ast
      & 0
\end{array}
\Big)
 & = &
\Big( \begin{array}{cc}
        \varDelta^1 H_\mu^1
      & 0
\\
        0
      & \varDelta^0 H_\mu^0
      \end{array}
\Big).
\end{eqnarray*}
\end{lemma}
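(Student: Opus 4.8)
The plan is to prove both identities by a straightforward block-by-block multiplication, invoking only three structural facts assembled above: the definition (\ref{eq.deRham}) of the de Rham Laplacian, namely $\varDelta^q = d^\ast d + d d^\ast$, which on $0\,$-forms reduces to $\varDelta^0 = d^0{}^\ast d^0$ because there is no operator $d^{-1}$; the commutation relations (\ref{eq.commute}) together with the trivial remark that any two scalar constant-coefficient differential operators commute; and the cochain identity $d^1 d^0 = 0$ of the de Rham complex along with its formal adjoint $d^0{}^\ast d^1{}^\ast = 0$.

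Denote by $A$ and $B$ the left and right factor in the first identity. The $(1,1)$ entry of $AB$ equals $H_\mu^1 d^1{}^\ast d^1 + d^0 d^0{}^\ast H_\mu^1$, and since $H_\mu^1$ commutes with $d^0 d^0{}^\ast$ this is $(d^1{}^\ast d^1 + d^0 d^0{}^\ast) H_\mu^1 = \varDelta^1 H_\mu^1$. The $(1,2)$ entry is $(H_\mu^1)^2 d^0 - d^0 (H_\mu^0)^2$, which vanishes because $d^0 H_\mu^0 = H_\mu^1 d^0$ forces $d^0 (H_\mu^0)^2 = (H_\mu^1)^2 d^0$. The $(2,1)$ entry is $d^0{}^\ast d^1{}^\ast d^1 = 0$ by the adjoint cochain identity, and the $(2,2)$ entry is $d^0{}^\ast H_\mu^1 d^0 = H_\mu^0 d^0{}^\ast d^0 = H_\mu^0 \varDelta^0 = \varDelta^0 H_\mu^0$, using $d^0{}^\ast H_\mu^1 = H_\mu^0 d^0{}^\ast$. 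Hence $AB$ is the asserted diagonal matrix.

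The identity $B A = \mathrm{diag}\, (\varDelta^1 H_\mu^1,\, \varDelta^0 H_\mu^0)$ is verified in exactly the same manner. Its $(1,1)$ entry, after moving $H_\mu^1$ through $d^1{}^\ast d^1$, is $(d^1{}^\ast d^1 + d^0 d^0{}^\ast) H_\mu^1 = \varDelta^1 H_\mu^1$; its $(1,2)$ entry is $d^1{}^\ast d^1 d^0 = 0$ by $d^1 d^0 = 0$; its $(2,1)$ entry is $d^0{}^\ast (H_\mu^1)^2 - (H_\mu^0)^2 d^0{}^\ast = 0$ again by the adjoint commutation relation $d^0{}^\ast H_\mu^1 = H_\mu^0 d^0{}^\ast$; and its $(2,2)$ entry is once more $d^0{}^\ast H_\mu^1 d^0 = \varDelta^0 H_\mu^0$.

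There is no genuine difficulty in this lemma; the only point demanding attention is the bookkeeping of form degrees, so that the correct instance of (\ref{eq.commute}) and of $d^1 d^0 = 0$ (and of its adjoint) is invoked at each step, and so that on $0\,$-forms the de Rham Laplacian is read off as $d^0{}^\ast d^0$, without any spurious $d^{-1}$ contribution.
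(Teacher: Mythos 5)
Your proof is correct and follows essentially the same route as the paper: a direct block-by-block multiplication using the commutation relations (\ref{eq.commute}), the cochain identities $d^1 d^0 = 0$ and $d^0{}^\ast d^1{}^\ast = 0$, and the identification $\varDelta^q = d^\ast d + d d^\ast$ from (\ref{eq.deRham}). All eight entries are computed as in the paper's own argument.
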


\begin{proof}
On using (\ref{eq.deRham}), (\ref{eq.commute}) and the rules of multiplication of block-matrices we see
that
$$
\begin{array}{rclcrcl}
   H_\mu d^1{}^\ast d^1 + d^0 d^0{}^\ast H_\mu
 & =
 & \varDelta^1 H_\mu^1,
 &
 & H_\mu H_\mu d^0 - d^0 (H_\mu)^2
 & =
 & 0,
\\
   d^0{}^\ast d^1{}^\ast d^1 + 0\, d^0{}^\ast H_\mu
 & =
 & 0,
 &
 & d^0{}^\ast H_\mu d^0 - 0\, (H_\mu)^2
 & =
 & \varDelta^0 H_\mu^0,
\end{array}
$$
which proves the first formula.
On the other hand,
$$
\begin{array}{rclcrcl}
   d^1{}^\ast d^1 H_\mu + H_\mu d^0 d^0{}^\ast
 & =
 & \varDelta^1 H_\mu^1,
 &
 & d^1{}^\ast d^1 d^0 + H_\mu d^0\, 0
 & =
 & 0,
\\
   d^0{}^\ast H_\mu H_\mu - (H_\mu)^2 d^0{}^\ast
 & =
 & 0,
 &
 & d^0{}^\ast H_\mu d^0 - (H_\mu)^2\, 0
 & =
 & \varDelta^0 H_\mu^0.
\end{array}
$$
This proves the second formula.
\end{proof}

The lemma shows that the investigation of the Navier-Stokes equations is closely related to the
behaviour of the Laplace and the heat operators on functions over $\mathbb{R}^{n}$ and
$\mathbb{R}^{n+1}_{t \geq 0}$, respectively.
For weighted Sobolev spaces this behaviour has been studied in the papers
   \cite{McOw79},
   \cite{HiMaw98},
see also \cite{Be11}, \cite{Mar02}.

Our next objective is to rewrite the nonlinear part of the Navier-Stokes equations in terms of the
de Rham complex.
Given a smooth vector field $v$ on $\mathbb{R}^n$, the derivative of $v$ in the direction of $v$ is
called the substantial derivative of $v$ and denoted by $\mathbf{D}^1 v := (v \cdot \nabla) v$.
To obtain a useful description we make use of the so-called Hodge star operator on $\mathbb{R}^n$
$$
   \ast : \varOmega^q (\mathbb{R}^n) \to \varOmega^{n-q} (\mathbb{R}^n)
$$
defined by linearity from
$
   dx^I \wedge (\ast dx^I) = dx
$
for all multi-indices $I = (i_1, \ldots, i_q)$ with $1 \leq i_1 < \ldots < i_q \leq n$.
As is known,
\begin{equation}
\label{eq.Hodge}
\begin{array}{rcl}
   \ast (\ast dx^I)
 & =
 & (-1)^{(n-q) q} dx^I,
\\
   u \wedge \ast v
 & =
 & \displaystyle
   \Big( \sum_{1 \leq i_1 < \ldots < i_q \leq n} u_I v_I \Big) dx
\end{array}
\end{equation}
and so $\ast (u \wedge \ast u) = |u|^2$ holds for all differential forms on $\mathbb{R}^n$ with
real-valued coefficients.

The following lemma is well known, see for instance \cite[\S~15]{LaLi}.

\begin{lemma}
\label{l.D.deRham}
For any smooth differential forms $u$ and $v$ of degree one in $\mathbb{R}^n$ it follows that
\begin{equation}
\label{eq.D.deRham}
   (v \cdot \nabla) u + (u \cdot \nabla) v
 = d^0 \ast (\ast v \wedge u) +  \ast ((\ast d^1 u) \wedge v) +  \ast ((\ast d^1 v) \wedge u).
\end{equation}
\end{lemma}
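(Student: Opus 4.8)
The plan is to verify identity (\ref{eq.D.deRham}) directly, coefficient by coefficient, in the standard coordinates of $\mathbb{R}^n$. Write $u = \sum_{k=1}^n u_k\, dx^k$ and $v = \sum_{k=1}^n v_k\, dx^k$ with smooth coefficients. On the left-hand side the very definition of the substantial derivative gives that the coefficient of $dx^l$ in $(v \cdot \nabla) u + (u \cdot \nabla) v$ equals
$$
   \sum_{j=1}^n \big( v_j\, \partial_j u_l + u_j\, \partial_j v_l \big),
$$
so the whole task reduces to showing that the three terms on the right-hand side produce precisely this expression for every $l = 1, \ldots, n$.

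First I would handle the gradient term $d^0 \ast (\ast v \wedge u)$. Since $u$ and $v$ are forms of degree one, formula (\ref{eq.Hodge}) (equivalently, polarising the identity $\ast (\ast u \wedge u) = |u|^2$, the bilinear form $(u,v) \mapsto \ast (\ast v \wedge u)$ being symmetric) yields $\ast (\ast v \wedge u) = \sum_{k=1}^n u_k v_k$, the pointwise scalar product of the coefficient vectors. Applying $d^0 = \nabla$, the $dx^l$-coefficient of $d^0 \ast (\ast v \wedge u)$ becomes $\sum_j \big( v_j\, \partial_l u_j + u_j\, \partial_l v_j \big)$.

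Next I would deal with the two ``rotation'' terms. Here I would first record $d^1 u = \sum_k du_k \wedge dx^k = \sum_{j < k} (\partial_j u_k - \partial_k u_j)\, dx^j \wedge dx^k$, put $\omega_{jk} := \partial_j u_k - \partial_k u_j$ and extend it antisymmetrically in $j, k$. Then, applying $\ast$, wedging with $v$ and applying $\ast$ once more, I expect to find that $\ast ((\ast d^1 u) \wedge v)$ is the one-form whose $dx^l$-coefficient equals $\sum_j \omega_{jl}\, v_j = \sum_j (\partial_j u_l - \partial_l u_j)\, v_j$. The quickest way to confirm this is to test the claimed formula on the monomials $dx^j \wedge dx^k$ and use bilinearity, or to quote the standard contraction identity relating $\ast ((\ast \alpha) \wedge \beta)$ to the interior multiplication of $\alpha$ by the vector dual to $\beta$; the delicate point here, and indeed the main obstacle in the whole argument, is the bookkeeping of the signs produced by reordering factors in the exterior product and by the rule $\ast \ast = (-1)^{(n-q) q}$ on $q$-forms. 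The third term $\ast ((\ast d^1 v) \wedge u)$ is obtained from the second by interchanging $u$ and $v$, so its $dx^l$-coefficient is $\sum_j (\partial_j v_l - \partial_l v_j)\, u_j$.

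Finally, adding the three contributions
$$
   \sum_j \big( v_j \partial_l u_j + u_j \partial_l v_j \big)
 + \sum_j (\partial_j u_l - \partial_l u_j) v_j
 + \sum_j (\partial_j v_l - \partial_l v_j) u_j,
$$
every term carrying a derivative $\partial_l$ cancels in pairs, and what remains is exactly $\sum_j \big( v_j\, \partial_j u_l + u_j\, \partial_j v_l \big)$, i.e. the $dx^l$-coefficient of the left-hand side. As $l$ was arbitrary, (\ref{eq.D.deRham}) follows. As a guide and a consistency check one may keep in mind that for $n = 3$ the identity is nothing but the classical vector relation $\nabla (v \cdot u) = (v \cdot \nabla) u + (u \cdot \nabla) v + v \times \mathrm{rot}\, u + u \times \mathrm{rot}\, v$; all the signs produced in the general case must reproduce it.
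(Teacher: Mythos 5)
Your proposal is correct and follows essentially the same route as the paper: a direct coordinate computation showing that the $dx^l$-coefficient of each of the three terms on the right-hand side sums to $\sum_j (v_j \partial_j u_l + u_j \partial_j v_l)$, with all $\partial_l$-terms cancelling in pairs. The one step you defer — the sign bookkeeping in $\ast((\ast d^1 u)\wedge v)$ — is precisely the computation the paper carries out explicitly via $\ast du = \sum_{i<j}(\partial_i u_j - \partial_j u_i)(-1)^{i+j-1}dx[i,j]$, and the formula you assert for its $dx^l$-coefficient, $\sum_j(\partial_j u_l - \partial_l u_j)v_j$, agrees with the paper's result.
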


\begin{proof}
Indeed,
\begin{equation}
\label{eq.D.deRham.1}
   d \ast (\ast v \wedge u)
 = \sum_{j=1}^n
   \sum_{i=1}^n \left( v_i \partial_j u_i + u_i \partial_j v_i \right)
   dx^j
\end{equation}
and
\begin{eqnarray*}
   \ast du
 & = &
   \sum_{i < j} \left( \partial_i u_j - \partial_j u_i \right) \ast (dx^i \wedge dx^j)
\\
 & = &
   \sum_{i < j} \left( \partial_i u_j - \partial_j u_i \right) (-1)^{i+j-1} dx [i,j],
\end{eqnarray*}
where $dx [i,j]$ is the exterior product of the differentials $dx^1, \ldots, dx^n$ after each other
among which $dx^i$ and $dx^j$ are omitted.
Then
\begin{eqnarray*}
   (\ast du) \wedge v
 \! & \! = \! & \!
   \sum_{k=1}^n
   \Big( \sum_{i < j} (\partial_i u_j - \partial_j u_i) v_k  (-1)^{i+j-1} dx [i,j] \wedge dx^k \Big)
\\
 \! & \! = \! & \!
   \sum_{i < j} (\partial_i u_j - \partial_j u_i) v_j  (-1)^{n+i-1} dx [i]
 + \sum_{i < j} (\partial_i u_j - \partial_j u_i) v_i  (-1)^{n+j} dx [j],
\end{eqnarray*}
and so using (\ref{eq.Hodge}) yields
\begin{eqnarray*}
   \ast ((\ast du) \wedge v)
 & = &
   \sum_{i < j} (\partial_i u_j - \partial_j u_i) v_i dx^j
 - \sum_{i < j} (\partial_i u_j - \partial_j u_i) v_j dx^i
\\
 & = &
   \sum_{i < j} (\partial_i u_j - \partial_j u_i) v_i dx^j
 - \sum_{i > j} (\partial_j u_i - \partial_i u_j) v_i dx^j
\\
 & = &
   \sum_{j=1}^n
   \Big( \sum_{i \neq j} v_i \partial_i u_j - \sum_{i \neq j} v_i \partial_j u_i \Big)
   dx^j.
\end{eqnarray*}
The same reasoning shows that
$$
   \ast ((\ast dv) \wedge u)
 = \sum_{j=1}^n
   \Big( \sum_{i \neq j} u_i \partial_i v_j - \sum_{i \neq j} u_i \partial_j v_i \Big)
   dx^j.
$$
Since, by (\ref{eq.D.deRham.1}),
$$
   d \ast (\ast v \wedge u)
 = \sum_{j=1}^n
   \Big( v_j \partial_j u_j + u_j \partial_j v_j
       + \sum_{i \neq j} \left( v_i \partial_j u_i + u_i \partial_j v_i \right)
   \Big)
   dx^j,
$$
on gathering the above equalities we arrive at (\ref{eq.D.deRham}), as desired.
\end{proof}

In particular, on choosing $u = v$ we get
$
   \mathbf{D}^1 u = d^0 \left( \ast (\ast u \wedge u)/2 \right) + \ast \left( (\ast d^1 u) \wedge u \right)
$
for all one-forms $u$.

\section{The H\"older spaces weighted at the point at infinity}
\label{s.HoelderSpaces}

Suppose that $\mathcal{X}$ is a (possibly, unbounded) domain in $\mathbb{R}^n$ with smooth boundary, where
$n \geq 1$.

For $s = 0, 1, \ldots$, denote by $C^{s,0} (\overline{\mathcal{X}})$ the space of all $s$ times continuously
differentiable functions on $\overline{\mathcal{X}}$ with finite norm
$$
   \| u \|_{C^{s,0} (\overline{\mathcal{X}})}
 = \sum_{|\alpha| \leq s}
   \sup_{x \in \overline{\mathcal{X}}}
   |\partial^\alpha u (x)|.
$$
Given any $0 < \lambda \leq 1$, we set
$$
   \langle u \rangle_{\lambda,\overline{\mathcal{X}}}
 = \sup_{x,y \in \overline{\mathcal{X}} \atop x \neq y}
   \frac{|u (x) - u (y)|}{|x-y|^\lambda}
$$
and write $C^{0,\lambda} (\overline{\mathcal{X}})$ for the space of all continuous functions on the closure
of $\mathcal{X}$ with finite norm
$$
   \| u \|_{C^{0,\lambda} (\overline{\mathcal{X}})}
 = \| u \|_{C^{0,0} (\overline{\mathcal{X}})} + \langle u \rangle_{\lambda,\overline{\mathcal{X}}},
$$
the so-called H\"{o}lder space.
More generally, for $s = 0, 1, \ldots$, let $C^{s,\lambda} (\overline{\mathcal{X}})$ stand for the space of
all $s$ times continuously differentiable functions on $\overline{\mathcal{X}}$ with finite norm
$$
   \| u \|_{C^{s,\lambda} (\overline{\mathcal{X}})}
 = \| u \|_{C^{s,0} (\overline{\mathcal{X}})}
 + \sum_{|\alpha| \leq s} \langle \partial^{\alpha} u \rangle_{\lambda, \overline{\mathcal{X}}}.
$$
The normed spaces $C^{s,\lambda} (\overline{\mathcal{X}})$ with two indices $s \in \mathbb{Z}_{\geq 0}$ and
$\lambda \in [0,1]$ are known to be Banach spaces.

We are next going to control the growth of functions on $\mathcal{X}$ at the point at infinity.
Set
$$
\begin{array}{rcl}
   w (x)
 & =
 & \sqrt{1 + |x|^2},
\\
   w (x,y)
 & =
 & \max \{ w (x), w (y) \} \sim \sqrt{1 + |x|^2 + |y|^2}
\end{array}
$$
for $x, y \in \mathbb{R}^n$.
Let $\delta \in \mathbb{R}$.
(Note that $\delta$ is tacitly assumed to be nonnegative.)
Denote by $C^{s,0,\delta} (\overline{\mathcal{X}})$ the space of all $s$ times continuously differentiable
functions on $\overline{\mathcal{X}}$ with finite norm
$$
   \| u \|_{C^{s,0,\delta} (\overline{\mathcal{X}})}
 = \sum_{|\alpha| \leq s}
   \sup_{x \in \overline{\mathcal{X}}}
   (w (x))^{\delta+|\alpha|}
   |\partial^\alpha u (x)|.
$$
For $0 < \lambda \leq 1$, we introduce
$$
   \langle u \rangle_{\lambda,\delta, \overline{\mathcal{X}}}
 = \sup_{{{x,y \in \overline{\mathcal{X}} \atop x \neq y} \atop |x-y| \leq |x|/2}}
   (w (x,y))^{\delta+\lambda} \frac{|u (x) - u (y)|}{|x-y|^\lambda}.
$$
If $\mathcal{X}$ does not contain the origin, we define $C^{0,\lambda,\delta} (\overline{\mathcal{X}})$ to
consist of all continuous functions on $\overline{\mathcal{X}}$, such that
$$
\begin{array}{rcl}
   \| u \|_{C^{0,\lambda,\delta} (\overline{\mathcal{X}})}
 & =
 & \| u \|_{C^{0,0,\delta} (\overline{\mathcal{X}})} + \langle u \rangle_{\lambda,\delta,\overline{\mathcal{X}}}
\\
 & <
 & \infty.
\end{array}
$$
For those domains $\mathcal{X}$ which contain the origin we have also to control the H\"{o}lder property close
to $0$.
Hence, we let
   $C^{0,\lambda,\delta} (\overline{\mathcal{X}})$
be the space of all continuous functions on $\overline{\mathcal{X}}$ with finite norm
$$
   \| u \|_{C^{0,\lambda,\delta} (\overline{\mathcal{X}})}
 = \| u \|_{C^{0,\lambda} (\overline{U})}
 + \| u \|_{C^{0,0,\delta} (\overline{\mathcal{X}})} + \langle u \rangle_{\lambda,\delta,\overline{\mathcal{X}}},
$$
where $U$ is a small neighbourhood of the origin in $\mathcal{X}$.
Finally, for $s \in \mathbb{Z}_{\geq 0}$, we introduce $C^{s,\lambda,\delta} (\overline{\mathcal{X}})$ to be
the space of all $s$ times continuously differentiable functions on $\overline{\mathcal{X}}$  with finite norm
$$
   \| u \|_{C^{s,\lambda,\delta} (\overline{\mathcal{X}})}
 = \sum_{|\alpha| \leq s}
   \| \partial^\alpha u \|_{C^{0,\lambda,\delta+|\alpha|} (\overline{\mathcal{X}})}.
$$

The normed spaces $C^{s,\lambda,\delta} (\overline{\mathcal{X}})$ constitute a scale of Banach spaces
parametrised by
   $s \in \mathbb{Z}_{\geq 0}$,
   $\lambda \in [0,1]$ and
   $\delta \in \mathbb{R}$.
We will mostly consider the case $\mathcal{X} = \mathbb{R}^n$ and $U$ being the unit open ball
   $B_1 = B (0,1)$
in $\mathbb{R}^n$.
We will write simply $C^{s,\lambda,\delta}$ for the corresponding space when no confusion can arise.
The properties of the scale are well known.
% see Section \ref{s.HoelderSpaces.App} of the Appendix \ref{s.appendix} for additional %details.
Actually this construction is natural if we think of $\mathbb{R}^n$ as a manifold with a singular point at
the point of infinity.
The first summand corresponds to a coordinate chart in the nonsingular part of the manifold while the last
two summands of the norm correspond to a coordinate chart of the point at infinity.
We need not glue together these summands into one norm by means of partition of unity on $\mathbb{R}^n$,
for there are global coordinates in all of $\mathbb{R}^{n}$ and the operators under consideration possess
the transmission property.

\begin{lemma}
\label{l.emb.loc}
The space $C^{s,\lambda,\delta} (\mathbb{R}^{n})$ is embedded continuously into the Fr\'{e}chet space
   $C^{s,\lambda}_{\mathrm{loc}} (\mathbb{R}^{n})$.
\end{lemma}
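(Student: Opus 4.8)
The plan is to unwind the definition of the weighted norm $\|\cdot\|_{C^{s,\lambda,\delta}(\mathbb{R}^n)}$ and compare it, on an arbitrary compact subset $E\subset\mathbb{R}^n$, with the unweighted local seminorms defining $C^{s,\lambda}_{\mathrm{loc}}(\mathbb{R}^n)$. Recall that the Fr\'echet topology on $C^{s,\lambda}_{\mathrm{loc}}(\mathbb{R}^n)$ is generated by the countable family of seminorms $u\mapsto \|u\|_{C^{s,\lambda}(E_j)}$, where $(E_j)$ is an exhaustion of $\mathbb{R}^n$ by compact sets, say $E_j=\overline{B_j}$; so it suffices to bound $\|u\|_{C^{s,\lambda}(\overline{B_R})}$ by a constant (depending on $R$, $s$, $\lambda$, $\delta$, $n$) times $\|u\|_{C^{s,\lambda,\delta}(\mathbb{R}^n)}$ for each fixed $R>0$.

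First I would handle the $C^{s,0}$ part. For $|\alpha|\le s$ and $x\in\overline{B_R}$ one has $1\le w(x)\le\sqrt{1+R^2}$, hence
\[
   |\partial^\alpha u(x)|
 \le (w(x))^{-(\delta+|\alpha|)}\, (w(x))^{\delta+|\alpha|}|\partial^\alpha u(x)|
 \le c_{R,\delta,s}\, \|u\|_{C^{s,0,\delta}(\mathbb{R}^n)},
\]
where $c_{R,\delta,s}=\max\{1,(\sqrt{1+R^2})^{|\delta|+s}\}$ absorbs both signs of $\delta+|\alpha|$; summing over $|\alpha|\le s$ gives $\|u\|_{C^{s,0}(\overline{B_R})}\le C_{R}\,\|u\|_{C^{s,0,\delta}(\mathbb{R}^n)}$. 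Next come the H\"older seminorms $\langle\partial^\alpha u\rangle_{\lambda,\overline{B_R}}$. Here the subtlety is that the weighted seminorm $\langle\cdot\rangle_{\lambda,\delta,\mathbb{R}^n}$ only controls difference quotients over the restricted range $|x-y|\le |x|/2$, whereas the local seminorm takes the supremum over all $x\ne y$ in $\overline{B_R}$. To bridge this gap I would invoke the hypothesis that the norm of $C^{s,\lambda,\delta}(\mathbb{R}^n)$ (for $\mathcal{X}=\mathbb{R}^n$, which contains the origin) includes the summand $\|\partial^\alpha u\|_{C^{0,\lambda}(\overline{B_1})}$ over the fixed neighbourhood $U=B_1$ of the origin; together with a standard chaining argument this upgrades control of small-increment quotients to control of all quotients on $\overline{B_R}$. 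Concretely, for $x,y\in\overline{B_R}$ with $x\ne y$: if $|x-y|\le 1$ pick a point $z$ on the segment $[x,y]$ (or a short polygonal path staying in $\overline{B_{R+1}}$) and split into at most finitely many sub-increments each of length $\le\min\{|x|/2,1\}$ or lying in $B_1$, applying either the weighted seminorm (with $w\ge 1$, so the weight only helps) or the $\overline{B_1}$ seminorm on each piece; if $|x-y|>1$ then $|u(x)-u(y)|/|x-y|^\lambda\le |u(x)-u(y)|\le 2\|u\|_{C^{0,0}(\overline{B_R})}$, already bounded by the first step. Summing the finitely many contributions yields $\langle\partial^\alpha u\rangle_{\lambda,\overline{B_R}}\le C_R'(\langle\partial^\alpha u\rangle_{\lambda,\delta,\mathbb{R}^n}+\|\partial^\alpha u\|_{C^{0,\lambda}(\overline{B_1})}+\|u\|_{C^{0,0,\delta}(\mathbb{R}^n)})$.

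The main obstacle, and the only place that requires genuine care rather than bookkeeping, is exactly this passage from the restricted difference-quotient condition $|x-y|\le|x|/2$ to an unrestricted one on a compact set: one must verify that a uniformly bounded number of chaining steps suffices on $\overline{B_R}$, with the number of steps depending only on $R$ (not on $x,y$), and that the intermediate points can be chosen so that each sub-increment either meets the smallness condition or is absorbed by the $\overline{B_1}$-seminorm. Near the origin the constraint $|x-y|\le|x|/2$ degenerates (it forces $|x-y|\to 0$ as $x\to 0$), which is precisely why the definition of $C^{s,\lambda,\delta}(\mathbb{R}^n)$ built in the extra $C^{0,\lambda}(\overline{B_1})$ term; invoking it removes the difficulty. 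Once these estimates are assembled, adding the $C^{s,0}$ and H\"older contributions gives $\|u\|_{C^{s,\lambda}(\overline{B_R})}\le C_R\,\|u\|_{C^{s,\lambda,\delta}(\mathbb{R}^n)}$ for every $R$, which is exactly the continuity of the inclusion $C^{s,\lambda,\delta}(\mathbb{R}^n)\hookrightarrow C^{s,\lambda}_{\mathrm{loc}}(\mathbb{R}^n)$, completing the proof.
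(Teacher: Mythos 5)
Your proposal is correct, but the way you bridge the gap between the restricted H\"older seminorm $\langle\cdot\rangle_{\lambda,\delta,\mathbb{R}^n}$ (taken only over pairs with $|x-y|\leq|x|/2$) and the full local seminorm differs from the paper's. You use a polygonal chaining argument: each sub-increment either satisfies the smallness condition or lies in $\overline{B_1}$, where the built-in summand $\|u\|_{C^{0,\lambda}(\overline{B_1})}$ of the weighted norm applies. This works, provided (as you yourself flag) the number of steps is bounded in terms of $R$ alone and the smallness condition is checked for the left endpoint of each piece rather than for $x$; since the intermediate points lie on the segment $[x,y]$, each piece contributes at most a constant times $|x-y|^{\lambda}$, so the uniformly bounded step count is all that is needed. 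The paper avoids chaining entirely: it covers a compact set by $\overline{B_1}$ together with finitely many balls $B(x_0,r)$ with $|x_0|>1$ and $0<r<1$; on such a ball $|x|\geq 1-r$, so in the complementary case $|x-y|>|x|/2$ one automatically has $|x-y|>(1-r)/2$, and the H\"older quotient is dominated directly by $2^{\lambda+1}(1-r)^{-\lambda}\sup_x (w(x))^{\delta}|u(x)|$ with no intermediate points at all. Your route is self-contained for each pair $(x,y)$ at the cost of bookkeeping the chain; the paper's is shorter but delegates the patching of seminorms over the finite cover to the Heine-Borel theorem, which tacitly rests on the same dichotomy (nearby pairs share a ball of the cover, distant pairs are handled by the sup norm). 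The treatment of the $C^{s,0}$ part is the same in both arguments.
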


\begin{proof}
By definition, the space $C^{s,\lambda,\delta} (\mathbb{R}^{n})$ is  embedded continuously into
$C^{s,0}_{\mathrm{loc}} (\mathbb{R}^{n})$ for all $\lambda \in [0,1]$.
In addition, it is  embedded continuously into $C^{s,\lambda} (\overline{B_1})$.
Take $x_0 \in \mathbb{R}^n$ away from the closure of $B_1$ and choose any ball $B (x_0,r)$ around
$x_0$ with radius $0 < r < 1$.
Since $|x| \geq |x_0| - r \geq 1 - r$ for all $x$ in the closure of $B (x_0,r)$, it follows that
\begin{eqnarray*}
   \langle u \rangle_{\lambda,\overline{B (x_0,r)}}
 & \leq &
   \sup_{x,y \in \overline{B (x_0,r)} \atop |x-y| \leq |x|/2}
   \frac{|u (x) - u (y)|}{|x-y|^\lambda}
 + \sup_{{x,y \in \overline{B (x_0,r)} \atop x \neq y} \atop |x-y| > |x|/2}
   \frac{|u (x) - u (y)|}{|x-y|^\lambda}
\\
 & \leq &
   \langle u \rangle_{\lambda,\delta,\overline{B (x_0,r)}}
 + \frac{2^{\lambda + 1}}{(1-r)^\lambda}
   \sup_{x \in \overline{B (x_0,r)}}
   (w (x))^\delta |u (x)|
\\
 & \leq &
   \Big( 1 + \frac{2^{\lambda + 1}}{(1-r)^\lambda} \Big)\,
   \| u \|_{C^{0,\lambda,\delta} (\mathbb{R}^n)}.
\end{eqnarray*}
Therefore, $C^{0,\lambda,\delta} (\mathbb{R}^n)$ is embedded continuously into
   $C^{0,\lambda} (\overline{B (x_0,r)})$
for all $x_0 \in \mathbb{R}^n$ with $|x_0| > 1$ and for any $r \in (0,1)$.
By the Heine-Borel theorem, the space $C^{0,\lambda,\delta} (\mathbb{R}^n)$ is embedded continuously into
   $C^{0,\lambda} (\overline{\mathcal{X}})$
for any bounded domain $\mathcal{X}$ in $\mathbb{R}^n$, as desired.
\end{proof}

\begin{lemma}
\label{l.emb.L2}
If $\delta > n/2$, then there is a constant $c = c (\delta) > 0$ such that
$$
   \| u \|_{L^2 (\mathbb{R}^n)}
 \leq c (\delta)\, \| u \|_{C^{0,0,\delta} (\mathbb{R}^{n})}
$$
for all $u \in C^{0,0,\delta} (\mathbb{R}^{n})$.
\end{lemma}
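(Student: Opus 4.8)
The plan is to use the pointwise bound encoded in the weighted sup-norm directly. Since $s = 0$ and the only multi-index with $|\alpha| \le 0$ is $\alpha = 0$, the norm $\| u \|_{C^{0,0,\delta} (\mathbb{R}^n)}$ reduces to $\sup_{x \in \mathbb{R}^n} (w (x))^{\delta} |u (x)|$, whence
$$
   |u (x)| \leq \| u \|_{C^{0,0,\delta} (\mathbb{R}^n)}\, (w (x))^{-\delta}
$$
for all $x \in \mathbb{R}^n$. Squaring and integrating over $\mathbb{R}^n$ gives
$$
   \| u \|_{L^2 (\mathbb{R}^n)}^2 \leq \| u \|_{C^{0,0,\delta} (\mathbb{R}^n)}^2 \int_{\mathbb{R}^n} (1 + |x|^2)^{-\delta}\, dx,
$$
so it remains only to check that the integral on the right is finite precisely when $\delta > n/2$.

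For the convergence of this integral I would pass to polar coordinates, writing $\int_{\mathbb{R}^n} (1 + |x|^2)^{-\delta}\, dx = \sigma_{n-1} \int_0^\infty (1 + r^2)^{-\delta} r^{n-1}\, dr$, where $\sigma_{n-1}$ stands for the area of the unit sphere in $\mathbb{R}^n$. Near $r = 0$ the integrand is bounded, while as $r \to \infty$ it is comparable to $r^{n-1-2\delta}$, which is integrable at infinity exactly when $n - 1 - 2\delta < -1$, i.e. when $2\delta > n$. Hence under the hypothesis $\delta > n/2$ the integral converges, and one may simply take
$$
   c (\delta) = \Big( \int_{\mathbb{R}^n} (1 + |x|^2)^{-\delta}\, dx \Big)^{1/2},
$$
a quantity depending only on $\delta$ (and the ambient dimension $n$).

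There is essentially no obstacle here: the claim is a routine consequence of the definition of the weighted norm combined with the elementary computation of when a radial power is integrable over $\mathbb{R}^n$. The only point meriting a word of care is that the $L^2$ norm of $u$ is \emph{a priori} well defined; but this is immediate, since $u$ is continuous and the pointwise bound above shows $|u|^2$ is dominated by an integrable function, so that $u \in L^2 (\mathbb{R}^n)$ automatically.
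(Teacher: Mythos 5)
Your proof is correct and is essentially the argument the paper uses: the paper defers to the proof of Lemma \ref{l.emb.L2t}, which consists of exactly this pointwise bound $|u(x)| \leq \| u \|_{C^{0,0,\delta}} (1+|x|^2)^{-\delta/2}$ followed by the passage to spherical coordinates and the observation that $\int_0^\infty \sigma_n r^{n-1}(1+r^2)^{-\delta}\, dr$ converges for $\delta > n/2$.
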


\begin{proof}
The proof is similar to that of Lemma \ref{l.emb.L2t}.
\end{proof}

By the very definition of the spaces, any derivative $\partial_x^\alpha $ maps
   $C^{s,\lambda,\delta}$ continuously into
   $C^{s-|\alpha|,\lambda,\delta+|\alpha|}$
if
   $s \in \mathbb{Z}_{\geq 0}$,
   $\lambda \in [0,1]$ and
   $|\alpha| \leq s$.
The following embedding theorem is expectable.

\begin{theorem}
\label{t.emb.hoelder}
Suppose that
   $s, s' \in \mathbb{Z}_{\geq 0}$,
   $\delta, \delta' \in \mathbb{R}_{\geq 0}$ and
   $\lambda, \lambda' \in [0,1]$.
If
   $\delta \geq \delta'$ and
   $s + \lambda \geq s' + \lambda'$
then the space
   $C^{s,\lambda,\delta}$ is embedded continuously into the space
   $C^{s',\lambda',\delta'}$.
Moreover, the embedding is compact if
   $\delta > \delta'$ and
   $s + \lambda > s' + \lambda'$
is fulfilled.
\end{theorem}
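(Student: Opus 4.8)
The plan is to reduce both assertions to the classical H\"older embeddings on bounded domains, localised on a suitable family of balls so as to keep track of the weight at infinity.

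\emph{Continuous embedding.} First one may assume $s'\le s$; otherwise $s+\lambda\ge s'+\lambda'$ forces $s'=s+1$, $\lambda=1$, $\lambda'=0$, a degenerate case we discard. Using the fact recorded just before the theorem, that $\partial^\alpha$ maps $C^{s,\lambda,\delta}$ continuously into $C^{s-|\alpha|,\lambda,\delta+|\alpha|}$, together with the defining identity $\|u\|_{C^{s',\lambda',\delta'}}=\sum_{|\alpha|\le s'}\|\partial^\alpha u\|_{C^{0,\lambda',\delta'+|\alpha|}}$, it suffices to establish, for each multi-index with $|\alpha|\le s'$, the continuity of the inclusion $C^{s-|\alpha|,\lambda,\delta+|\alpha|}\hookrightarrow C^{0,\lambda',\delta'+|\alpha|}$. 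For each such $\alpha$ either $s-|\alpha|\ge 1$, or $s-|\alpha|=0$, in which case $|\alpha|=s=s'$ and hence $\lambda\ge\lambda'$. Thus it is enough to handle two model cases: (i) $C^{0,\lambda,\rho}\hookrightarrow C^{0,\lambda',\rho'}$ with $\lambda\ge\lambda'$, $\rho\ge\rho'$; and (ii) $C^{1,\lambda,\rho}\hookrightarrow C^{0,\lambda',\rho'}$ with $\rho\ge\rho'$ and $\lambda,\lambda'$ arbitrary. In both, the weighted sup-part of the target norm is immediate since $w\ge 1$ and $\rho'\le\rho$. For the weighted H\"older seminorm I would exploit that on the admissible region $|x-y|\le|x|/2$ one has $w(x)\sim w(y)\sim w(x,y)$, so a point pair effectively sees a single weight; then in case (i) interpolate $|u(x)-u(y)|/|x-y|^{\lambda'}$ between $|u(x)-u(y)|/|x-y|^{\lambda}$ and $|u(x)-u(y)|$ with exponents $\lambda'/\lambda$ and $1-\lambda'/\lambda$ (the case $\lambda=0$ being trivial), while in case (ii) estimate $|u(x)-u(y)|\le|x-y|\sup_{[x,y]}|\nabla u|$, the segment $[x,y]$ staying in $\{w\sim w(x)\}$, and absorb the factor $|x-y|^{1-\lambda'}$ into $w(x)^{1-\lambda'}$. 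A bookkeeping of the weight exponents leaves over a factor $w(x,y)^{\rho'-\rho}\le 1$, which closes the estimate. Near the origin (when $0\in\mathcal X$) the relevant summand is the ordinary norm on $\overline{B_1}$, and there the claim is the classical H\"older embedding on a bounded domain with smooth boundary.

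\emph{Compact embedding.} Assume now $\delta>\delta'$ and $s+\lambda>s'+\lambda'$, and let $(u_k)$ be bounded in $C^{s,\lambda,\delta}$. By Lemma \ref{l.emb.loc} it is bounded in $C^{s,\lambda}_{\mathrm{loc}}(\mathbb R^n)$; since $s+\lambda>s'+\lambda'$, the classical compact H\"older embedding on each ball $\overline{B_j}$ and a diagonal extraction produce a subsequence, still denoted $(u_k)$, converging in $C^{s',\lambda'}(\overline{B_j})$ for every $j$ to some $u$, which inherits the bound $\|u\|_{C^{s,\lambda,\delta}}\le\sup_k\|u_k\|_{C^{s,\lambda,\delta}}$ by lower semicontinuity. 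To conclude I would show $\|u_k-u\|_{C^{s',\lambda',\delta'}}\to 0$. Fix $\varepsilon>0$. For the part of the norm coming from points and point pairs with $|x|\ge R$ use the elementary gain $w(x)^{\delta'-\delta}\le(1+R^2)^{(\delta'-\delta)/2}\to 0$: by the same interpolation as above, each such term is bounded by $(1+R^2)^{(\delta'-\delta)/2}$ times the corresponding term of the uniformly bounded $C^{s,\lambda,\delta}$ norm of $u_k-u$, so a single large $R$ makes this $<\varepsilon$ uniformly in $k$. For the remaining part with $|x|\le R$, all weights $w(x,y)$ that occur are bounded by a constant $C_R$, whence this part is $\le C_R\,\|u_k-u\|_{C^{s',\lambda'}(\overline{B_{2R}})}\to 0$. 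Combining, $\limsup_k\|u_k-u\|_{C^{s',\lambda',\delta'}}\le\varepsilon$.

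\emph{Main obstacle.} The classical bounded-domain embeddings and the monotonicity of the weighted sup-norms are routine; the real work is the weighted H\"older seminorm, where the definition couples the weight $w(x,y)$ to the localisation $|x-y|\le|x|/2$. One must verify that this localisation is precisely what makes $w(x)$, $w(y)$, $w(x,y)$ comparable, so that the interpolation and mean-value estimates can be run with one weight, and — in the compactness argument — that splitting pairs into ``both far'' and ``both near'' is consistent (again because $|x-y|\le|x|/2$ forces $|y|$ comparable to $|x|$) and produces no cross terms. A secondary technicality, if one wishes to avoid rescaling, is uniformity in $x_0$ of the local embedding constants on the far balls $B(x_0,|x_0|/4)$; this follows by rescaling each such ball to the unit ball, the weight powers then reassembling into exactly the scale-invariant combinations built into the norms.
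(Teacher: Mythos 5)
Your proposal is correct, and the continuity half is essentially the paper's own argument: the paper first proves (its Lemma \ref{l.smooth.lipschitz.t}) that $C^{s,0,\delta}$ embeds into $C^{s-1,1,\delta}$ by the Lagrange mean value theorem together with the comparability $w(x)\sim w(y)\sim w(x_\vartheta)\sim w(x,y)$ on the region $|x-y|\le |x|/2$ (inequality (\ref{eq.half})), which is exactly your model case (ii); your model case (i) is handled in the paper by writing the $\lambda'$-quotient as the $\lambda$-quotient times $(w(x,y))^{\delta'-\delta}\left(|x-y|/w(x,y)\right)^{\lambda-\lambda'}$ rather than by interpolating against the sup norm, but the two computations are interchangeable. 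Where you genuinely diverge is the compactness half. The paper embeds $\hat{\mathbb{R}}^n$ into a compact cylinder via the map $\iota$ of (\ref{eq.M}), proves an Ascoli--Arzel\'a criterion (Lemma \ref{l.Arzela.t}) for the functions $w^{\delta'}\partial^\alpha u$ extended by zero to the point at infinity (continuity there is exactly where $\delta>\delta'$ enters), obtains precompactness in the $\lambda'=0$ space, and then upgrades to the $\lambda'$-seminorm by the interpolation inequality. You instead run a diagonal extraction over exhausting balls and estimate the tail $|x|\ge R$ by the uniform gain $(1+R^2)^{(\delta'-\delta)/2}$, obtaining convergence of all norm components at once; this is more elementary and avoids the compactification machinery, at the cost of having to check (as you do) that the constraint $|x-y|\le|x|/2$ keeps point pairs entirely in the near region or entirely in the far region. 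Both routes are sound. One remark on the case you ``discard'': when $s'=s+1$, $\lambda=1$, $\lambda'=0$ the hypotheses of the theorem are formally satisfied but the embedding is false (Lipschitz functions need not be $C^1$), so this is a defect of the statement rather than of your proof --- the paper's own argument silently excludes it as well, and everywhere in the paper only $0<\lambda<1$ is used.
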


\begin{proof}
It is similar to the proof of Theorem \ref{t.emb.hoelder.t}.
%   see also \S~\ref{s.HoelderSpaces.App}.
\end{proof}

Let us introduce  the anisotropic H\"older spaces (see
   \cite{LadSoUr67},
   \cite{Sol64},
   \cite{Sol06},
   \cite{Be11}
and elsewhere).

As usual, we first set
\begin{eqnarray*}
   \| v \|_{C^{0,0} [0,T]}
 & =
 & \sup_{t \in [0,T]} |v (t)|,
\\
   \langle v \rangle_{\lambda,[0,T]}
 & =
 & \sup_{t', t'' \in [0,T] \atop t' \neq t''} \frac{|v (t') - v (t'')|}{|t' - t''|^\lambda}
\end{eqnarray*}
and
$$
   \| v \|_{C^{0,\lambda} [0,T]} = \| v \|_{C^{0,0} [0,T]} + \langle v \rangle_{\lambda,[0,T]}
$$
for functions defined on $[0,T]$.
For $s \in \mathbb{Z}_{\geq 0}$ and $\lambda \in [0,1]$, the space $C^{s,\lambda} ([0,T])$ is the
usual H\"older space on the segment $[0,T]$ with norm
$$
   \| v \|_{C^{s,\lambda} [0,T]}
 = \sum_{j=0}^s \| (d/dt)^j v \|_{C^{0,\lambda} [0,T]}.
$$
As is well known, this is a scale of Banach spaces.

More generally, given a Banach space $\mathcal{B}$, we denote by $C^{s,0} ([0,T], \mathcal{B})$ the
Banach space of all mappings $v : [0,T] \to \mathcal{B}$ with finite norm
$$
   \| v \|_{C^{s,0} ([0,T], \mathcal{B})}
 = \sum_{j=0}^s \sup_{t \in [0,T]} \| (d/dt)^j v \|_{\mathcal B},
$$
where $s \in \mathbb{Z}_{\geq 0}$.
We also let
$$
   \langle v \rangle_{\lambda, [0,T], \mathcal{B}}
 = \sup_{t', t'' \in [0,T] \atop t' \neq t''}
   \frac{\|  v (t') - v (t'') \|_{\mathcal{B}}}{|t' - t''|^\lambda}
$$
and let $C^{s,\lambda} ([0,T], \mathcal{B})$ stand for the space of all functions
   $v \in C^{s,0} ([0,T], \mathcal{B})$
with finite norm
$$
   \| v \|_{C^{s,\lambda} ([0,T], \mathcal{B})}
 = \sum_{j=0}^s
   \Big( \sup_{t \in [0,T]} \| (d/dt)^j v \|_{\mathcal B}
       + \langle (d/dt)^j v \rangle_{\lambda, [0,T], \mathcal{B}} \Big).
$$

Weighted H\"older spaces which control the behaviour of functions with respect to the time variable $t$
have been well known, see for instance \cite{Sol06} and elsewhere.
We go to introduce anisotropic H\"older spaces which suit well to parabolic theory and are weighted at
$x = \infty$.

The H\"{o}lder spaces in question will be parametrised several parameters $s$, $\lambda$, $\delta$,
$\mathcal{X}$ and $T$.
By abuse of notation we introduce the special designation $\mathbf{s} (s,\lambda,\delta)$ for the quintuple
$$
   \mathbf{s} (s,\lambda,\delta) := \Big( 2s,\lambda,s,\frac{\lambda}{2},\delta \Big).
$$

Let
$
   C^{\mathbf{s} (0,0,\delta)} (\overline{\mathcal{C}_T (\mathcal{X})})
 = C^{0,0} ([0,T], C^{0,0,\delta} (\overline{\mathcal{X}}))
$
be the space of all continuous functions on $\overline{\mathcal{C}_T (\mathcal{X})}$ with finite norm
$$
   \| u \|_{C^{\mathbf{s} (0,0,\delta)} (\overline{\mathcal{C}_T (\mathcal{X})})}
 = \sup_{(x,t) \in \overline{\mathcal{C}_T (\mathcal{X})}} (w (x))^\delta |u (x,t)|,
$$
and, for $0 < \lambda \leq 1$,
$$
   C^{\mathbf{s} (0,\lambda,\delta)} (\overline{\mathcal{C}_T (\mathcal{X})})
 = C^{0,0} ([0,T], C^{0,\lambda,\delta} (\overline{\mathcal{X}}))\, \cap\,
   C^{0,\lambda/2} ([0,T], C^{0,0,\delta} (\overline{\mathcal{X}}))
$$
is the space of all continuous functions on $\overline{\mathcal{C}_T (\mathcal{X})}$ with finite norm
\begin{eqnarray*}
\lefteqn{
   \| u \|_{C^{\mathbf{s} (0,\lambda,\delta)} (\overline{\mathcal{C}_T (\mathcal{X})})}
}
\\
 & = &
   \sup_{t \in [0,T]} \| u (\cdot, t) \|_{C^{0,\lambda,\delta} (\overline{\mathcal X})}
 + \sup_{t', t'' \in [0,T] \atop t' \neq t''}
   \frac{\|u (\cdot,t') - u (\cdot,t'') \|_{C^{0,0,\delta} (\overline{\mathcal X})}}{|t'-t''|^{\lambda/2}}.
\end{eqnarray*}
Then
$$
   C^{\mathbf{s} (s,0,\delta)} (\overline{\mathcal{C}_T (\mathcal{X})})
 = \bigcap_{j=0}^s C^{j,0} ([0,T], C^{2 (s-j),0,\delta} (\overline{\mathcal X}))
$$
is the space of functions on the cylinder $\overline{\mathcal{C}_T (\mathcal{X})}$ with continuous derivatives
   $\partial^{\alpha}_x \partial^j_t u$,
for $|\alpha| + 2j \leq 2s$, and with finite norm
$$
   \| u \|_{C^{\mathbf{s} (s,0,\delta)} (\overline{\mathcal{C}_T (\mathcal{X})})}
 = \sum_{|\alpha| + 2j \leq 2s}
   \| \partial^{\alpha}_x \partial^j_t u
   \|_{C^{\mathbf{s} (0,0,\delta+|\alpha|)} (\overline{\mathcal{C}_T (\mathcal{X})})}.
$$
Similarly,
$$
   C^{\mathbf{s} (s,\lambda,\delta)} (\overline{\mathcal{C}_T (\mathcal{X})})
 = \bigcap_{j=0}^s
   \! \Big( \!
         C^{j,0} ([0,T], C^{2 (s\!-\!j),\lambda,\delta} (\overline{\mathcal X})) \cap
         C^{j,\lambda/2} ([0,T], C^{2 (s\!-\!j),0,\delta} (\overline{\mathcal X}))
   \! \Big) \!
$$
is the space functions on $\overline{\mathcal{C}_T (\mathcal{X})}$ with continuous partial derivatives
   $\partial^{\alpha}_x \partial^j_t u$, for $|\alpha| + 2j \leq 2s$,
and with finite norm
$$
   \| u \|_{C^{\mathbf{s} (s,\lambda,\delta)} (\overline{\mathcal{C}_T (\mathcal{X})})}
 = \sum_{|\alpha| + 2j \leq 2s}
   \| \partial^{\alpha}_x \partial^j_t u
   \|_{C^{\mathbf{s} (0,\lambda,\delta+|\alpha|)} (\overline{\mathcal{C}_T (\mathcal{X})})}.
$$

We also need a function space whose structure goes slightly beyond the scale of function spaces
$C^{\mathbf{s} (s,\lambda,\delta)} (\overline{\mathcal{C}_T (\mathcal{X})})$.
Namely, given any integral $k \geq 0$, we denote by
   $C^{k, \mathbf{s} (s,\lambda,\delta)} (\overline{\mathcal{C}_T (\mathcal{X})})$
the space of all continuous functions $u$ on $\overline{\mathcal{C}_T (\mathcal{X})}$ whose derivatives
   $\partial^\beta_x u$
belong to $C^{\mathbf{s} (s,\lambda,\delta+|\beta|)} (\overline{\mathcal{C}_T (\mathcal{X})})$ for all multi-indices
$\beta$ satisfying $|\beta| \leq k$, with finite norm
$$
   \| u \|_{C^{k, \mathbf{s} (s,\lambda,\delta)} (\overline{\mathcal{C}_T (\mathcal{X})})}
 = \sum_{|\beta| \leq k}
   \| \partial^{\beta}_x u \|_{C^{\mathbf{s} (s,\lambda,\delta+|\beta|)} (\overline{\mathcal{C}_T (\mathcal{X})})}.
$$
For $k = 0$, this space just amounts to $C^{\mathbf{s} (s,\lambda,\delta+|\beta|)} (\overline{\mathcal{C}_T (\mathcal{X})})$,
and so we omit the index $k=0$.

If $\mathcal{X} = \mathbb{R}^n$, we will simply write it $C^{k, \mathbf{s} (s,\lambda,\delta)}$ if it causes no
confusion.
The normed spaces $C^{k, \mathbf{s} (s,\lambda,\delta)}$ are obviously Banach spaces.
Let us briefly discuss their basic properties.

\begin{lemma}
\label{l.emb.loc.t}
The Banach space $C^{k, \mathbf{s} (s,\lambda,\delta)}$ is embedded continuously into the space
   $C^{k, \mathbf{s} (s,\lambda,0)}_{\mathrm{loc}} (\overline{\mathcal{C}_T})$.
\end{lemma}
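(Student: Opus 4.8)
The statement to prove is Lemma~\ref{l.emb.loc.t}: the Banach space $C^{k, \mathbf{s} (s,\lambda,\delta)}$ embeds continuously into $C^{k, \mathbf{s} (s,\lambda,0)}_{\mathrm{loc}} (\overline{\mathcal{C}_T})$. The plan is to reduce this to the already-established elliptic counterpart, Lemma~\ref{l.emb.loc}, by ``freezing'' the time variable and using the definition of the anisotropic norm as an intersection of $C^{j,\cdot}([0,T], \cdot)$-type norms. Since $\delta \geq 0$ is tacitly assumed, the only role of the weight is to control growth at $x = \infty$; over a bounded region in $x$ the weight $w(x) = \sqrt{1+|x|^2}$ is bounded above and below by positive constants, so the weighted norm dominates the unweighted local norm, and the content is entirely the statement that this domination is uniform on compact subsets of $\overline{\mathcal{C}_T}$.

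First I would unwind the definition: to bound $\| u \|_{C^{k, \mathbf{s}(s,\lambda,0)}(\overline{\mathcal{C}_T(\mathcal{K})})}$ for a compact $\mathcal{K} \subset \mathbb{R}^n$, it suffices, by the definitions of $C^{k,\mathbf{s}(s,\lambda,\delta)}$ and $C^{\mathbf{s}(s,\lambda,\delta)}$, to bound each summand $\| \partial^\beta_x \partial^\alpha_x \partial^j_t u \|_{C^{\mathbf{s}(0,\lambda,\delta')}(\overline{\mathcal{C}_T(\mathcal{K})})}$ with the weight exponent $\delta'$ replaced by $0$, in terms of the corresponding weighted quantity on all of $\mathbb{R}^n$. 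For the $C^{0,0}([0,T], C^{0,0,\delta'}(\overline{\mathcal{X}}))$-part this is immediate: $\sup_{(x,t)\in \overline{\mathcal{C}_T(\mathcal{K})}} |v(x,t)| \leq (\inf_{x \in \mathcal{K}} w(x))^{-\delta'} \sup_{(x,t)} (w(x))^{\delta'}|v(x,t)|$, and $\inf_{x\in\mathcal{K}} w(x) \geq 1 > 0$. For the $C^{0,\lambda/2}([0,T], C^{0,0,\delta'}(\overline{\mathcal{X}}))$-part (the time-H\"older seminorm) the same scalar factor works verbatim, since the weight does not involve $t$. For the spatial H\"older seminorm $\langle v(\cdot,t)\rangle_{\lambda,\overline{\mathcal{K}}}$ at each fixed $t$, I would invoke Lemma~\ref{l.emb.loc} applied to the function $x \mapsto v(x,t) \in C^{s',\lambda,\delta'}(\mathbb{R}^n)$ — more precisely the local embedding estimate it yields on any bounded domain — obtaining $\langle v(\cdot,t)\rangle_{\lambda,\overline{\mathcal{K}}} \leq C(\mathcal{K})\, \| v(\cdot,t) \|_{C^{0,\lambda,\delta'}(\mathbb{R}^n)}$ with $C(\mathcal{K})$ independent of $t$. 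Taking $\sup_{t\in[0,T]}$ and summing over the finitely many $\beta$, $\alpha$, $j$ with $|\beta|\leq k$ and $|\alpha|+2j\leq 2s$ then gives the desired continuous embedding with a constant depending only on $\mathcal{K}$, $k$, $s$, $\lambda$, $\delta$.

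The only mild subtlety — and the point I would be most careful about — is the uniformity in $t$ of the constant coming from Lemma~\ref{l.emb.loc}: its proof produces a constant of the shape $1 + 2^{\lambda+1}/(1-r)^\lambda$ times finitely many radii/coverings depending only on $\mathcal{K}$ (through the Heine--Borel covering argument), and in no way on the function; so freezing $t$ and applying it pointwise in $t$ is legitimate. A second, purely bookkeeping point is that $C^{k,\mathbf{s}(s,\lambda,0)}_{\mathrm{loc}}$ is a Fr\'echet space whose topology is generated by the seminorms $\|\cdot\|_{C^{k,\mathbf{s}(s,\lambda,0)}(\overline{\mathcal{C}_T(\mathcal{K})})}$ as $\mathcal{K}$ ranges over compact subsets of $\mathbb{R}^n$ (equivalently over an exhausting sequence of balls $B_m$), so establishing the estimate for each such $\mathcal{K}$ is exactly what continuity of the embedding means. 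I would close by remarking that this argument is the parabolic analogue of the proof of Lemma~\ref{l.emb.loc}, which is why one expects it to be routine.
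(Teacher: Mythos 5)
Your proof is correct and follows exactly the route the paper intends: the paper's own proof is a one-line reference to the elliptic case (Lemma~\ref{l.emb.loc}; the printed citation of Lemma~\ref{l.emb.loc.t} itself is evidently a typo), and you carry out precisely that reduction by freezing $t$, using $w\geq 1$ on the sup- and time-H\"older parts, and applying the elliptic embedding with its $t$-independent constant to the spatial H\"older seminorm. Nothing is missing.
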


\begin{proof}
It is similar to the proof of Lemma \ref{l.emb.loc.t}.
\end{proof}

We note that the function classes introduced above can be thought of as ``physically'' admissible solutions
to the Navier-Stokes equations (at least for proper numbers $\delta$).

\begin{lemma}
\label{l.emb.L2t}
If $\delta > n/2$ then there exists a constant $c > 0$ depending on $\delta$, such that
$$
   \| u (\cdot,t) \|_{L^2 (\mathbb{R}^n)}
 \leq  c\, \| u \|_{C^{\mathbf{s} (0,0,\delta)} (\overline{\mathcal{C}_T})}
$$
for all $t \in [0,T]$ and all $u \in C^{\mathbf{s} (0,0,\delta)}$.
\end{lemma}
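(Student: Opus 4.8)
The statement to prove is that for $\delta > n/2$ there is a constant $c = c(\delta) > 0$ with
$$
   \| u (\cdot,t) \|_{L^2 (\mathbb{R}^n)}
 \leq c\, \| u \|_{C^{\mathbf{s} (0,0,\delta)} (\overline{\mathcal{C}_T})}
$$
for all $t \in [0,T]$ and all $u \in C^{\mathbf{s} (0,0,\delta)}$. The idea is simply to bound the $L^2$ norm by the pointwise weighted sup-norm and to integrate the weight. By the very definition of the norm on $C^{\mathbf{s} (0,0,\delta)} (\overline{\mathcal{C}_T})$, for every fixed $t \in [0,T]$ and every $x \in \mathbb{R}^n$ one has
$$
   |u (x,t)| \leq (w (x))^{-\delta}\, \| u \|_{C^{\mathbf{s} (0,0,\delta)} (\overline{\mathcal{C}_T})},
$$
where $w (x) = \sqrt{1 + |x|^2}$. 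Squaring and integrating in $x$ over $\mathbb{R}^n$ gives
$$
   \| u (\cdot,t) \|_{L^2 (\mathbb{R}^n)}^2
 \leq \| u \|_{C^{\mathbf{s} (0,0,\delta)} (\overline{\mathcal{C}_T})}^2
      \int_{\mathbb{R}^n} (1 + |x|^2)^{-\delta}\, dx,
$$
so it remains only to check that the last integral is finite precisely when $\delta > n/2$.

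**Finiteness of the weight integral.** Passing to polar coordinates, $\int_{\mathbb{R}^n} (1+|x|^2)^{-\delta}\, dx = \omega_{n-1} \int_0^\infty (1+r^2)^{-\delta} r^{n-1}\, dr$, where $\omega_{n-1}$ is the area of the unit sphere in $\mathbb{R}^n$. Near $r = 0$ the integrand is bounded, and as $r \to \infty$ it behaves like $r^{n-1-2\delta}$, which is integrable at infinity exactly when $n - 1 - 2\delta < -1$, i.e. $\delta > n/2$. Hence the constant
$$
   c (\delta) := \Big( \omega_{n-1} \int_0^\infty \frac{r^{n-1}}{(1+r^2)^{\delta}}\, dr \Big)^{1/2}
$$
is finite, and taking square roots in the displayed inequality yields $\| u (\cdot,t) \|_{L^2 (\mathbb{R}^n)} \leq c (\delta)\, \| u \|_{C^{\mathbf{s} (0,0,\delta)} (\overline{\mathcal{C}_T})}$ uniformly in $t \in [0,T]$, as claimed.

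**On difficulty.** There is essentially no obstacle here: the only substantive point is the elementary computation that $(1+|x|^2)^{-\delta}$ is integrable over $\mathbb{R}^n$ iff $\delta > n/2$, which is standard. The analogous spatial statement, Lemma \ref{l.emb.L2}, is obtained by exactly the same argument with $u \in C^{0,0,\delta} (\mathbb{R}^n)$ in place of a one-parameter family, which is why the excerpt defers its proof to this one. The only mild care needed is to note that the estimate $|u(x,t)| \le (w(x))^{-\delta}\|u\|$ holds for \emph{every} $t$ simultaneously because the sup in the definition of $\| \cdot \|_{C^{\mathbf{s}(0,0,\delta)}(\overline{\mathcal{C}_T})}$ is taken over all $(x,t) \in \overline{\mathcal{C}_T}$; this makes the resulting $L^2$ bound uniform in $t \in [0,T]$ with the same constant $c(\delta)$.
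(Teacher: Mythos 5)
Your proof is correct and is essentially identical to the paper's: both bound $|u(x,t)|$ pointwise by $(w(x))^{-\delta}\|u\|_{C^{\mathbf{s}(0,0,\delta)}(\overline{\mathcal{C}_T})}$, pass to spherical coordinates, and observe that $\int_0^\infty \sigma_n r^{n-1}(1+r^2)^{-\delta}\,dr$ converges precisely because $\delta > n/2$. Your added remark that the bound is uniform in $t$ because the supremum in the norm runs over all of $\overline{\mathcal{C}_T}$ is a correct (if implicit in the paper) observation.
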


\begin{proof}
Indeed, on passing to the spherical coordinates we obtain
\begin{eqnarray*}
   \| u (\cdot,t) \|^2_{L^2 (\mathbb{R}^n)}
 & \leq &
   \| u \|^2_{C^{\mathbf{s} (0,0,\delta)} (\overline{\mathcal{C}_T})}
   \int_{\mathbb{R}^n} (1 + |x|^2)^{-\delta} dx
\\
 & = &
   \| u \|^2_{C^{\mathbf{s} (0,0,\delta)} (\overline{\mathcal{C}_T})}
   \int_{0}^{+\infty} \frac{\sigma_n \, r^{n-1}}{(1 + r^2)^{\delta}}\, dr,
\end{eqnarray*}
where $\sigma_n$ is the surface area of the unit sphere in $\mathbb{R}^n$.
Hence it follows that
$$
c^2
 = \int_0^{+\infty} \frac{\sigma_n\, r^{n-1}}{(1+r^2)^{\delta}}\, dr,
$$
for this integral converges if $\delta > n/2$.
\end{proof}

The following embedding theorem is rather expectable.

\begin{theorem}
\label{t.emb.hoelder.t}
Let
   $s, s' \in \mathbb{Z}_{\geq 0}$,
   $\delta, \delta' \in \mathbb{R}_{\geq 0}$,
   $\lambda, \lambda' \in [0,1]$
and $k$ a nonnegative integer.
If
   $s+\lambda \geq s'+\lambda'$ and
   $\delta \geq \delta'$,
then the space
   $C^{k, \mathbf{s} (s,\lambda,\delta)}$ is embedded continuously into
   $C^{k, \mathbf{s} (s',\lambda',\delta')}$.
The embedding is compact if $s + \lambda > s' + \lambda'$ and $\delta > \delta'$.
\end{theorem}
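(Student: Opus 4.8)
The plan is to reduce to the scalar case $k=0$ and then prove the continuity of the embedding and its compactness by two essentially independent arguments: continuity by applying the elliptic embedding of Theorem~\ref{t.emb.hoelder} on each time slice, combined with a bookkeeping of the time derivatives; compactness by an Arzel\`{a}--Ascoli argument on large balls in $x$, combined with a truncation of the weight at the point at infinity. The reduction to $k=0$ is immediate: the norm $\|u\|_{C^{k,\mathbf{s}(s',\lambda',\delta')}}$ is a finite sum of the $C^{\mathbf{s}(s',\lambda',\delta'+|\beta|)}$-norms of $\partial^\beta_x u$, $|\beta|\le k$, each of which is dominated by the $C^{\mathbf{s}(s,\lambda,\delta+|\beta|)}$-norm of $\partial^\beta_x u$ occurring in $\|u\|_{C^{k,\mathbf{s}(s,\lambda,\delta)}}$; in the compact case one diagonalises over the finitely many $\beta$ with $|\beta|\le k$ and uses that differentiation is closed. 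So assume $k=0$. Recall that the norm of $C^{\mathbf{s}(s',\lambda',\delta')}$ is the sum, over $|\alpha|+2j\le 2s'$, of the $C^{\mathbf{s}(0,\lambda',\delta'+|\alpha|)}$-norms of $\partial^\alpha_x\partial^j_t u$, the latter consisting of a part controlling $\sup_t\|\,\cdot\,(\cdot,t)\|_{C^{0,\lambda',\delta'+|\alpha|}(\overline{\mathcal X})}$ and a part controlling the $(\lambda'/2)$-H\"older modulus in $t$ of $\|\,\cdot\,(\cdot,t)\|_{C^{0,0,\delta'+|\alpha|}(\overline{\mathcal X})}$.

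For the continuity I would bound both pieces for every admissible $(\alpha,j)$. Since $|\alpha|+2j\le 2s'$ one has $j\le s'\le s$ in the nondegenerate range of exponents, so $\partial^j_t u(\cdot,t)\in C^{2(s-j),\lambda,\delta}(\overline{\mathcal X})$ uniformly in $t$; as $2(s-j)+\lambda\ge 2(s'-j)+\lambda'$ and $\delta\ge\delta'$, Theorem~\ref{t.emb.hoelder} places $\partial^j_t u(\cdot,t)$ in $C^{2(s'-j),\lambda',\delta'}(\overline{\mathcal X})$ with a bound uniform in $t$, whence $\partial^\alpha_x\partial^j_t u(\cdot,t)$ is controlled in $C^{0,\lambda',\delta'+|\alpha|}(\overline{\mathcal X})$ uniformly in $t$; this settles the first piece. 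For the temporal piece, if $\lambda'\le\lambda$ one uses directly that $\partial^j_t u$ is $(\lambda/2)$-H\"older in $t$ with values in $C^{2(s-j),0,\delta}(\overline{\mathcal X})$, differentiates in $x$ and lowers the weight from $\delta$ to $\delta'$; if $\lambda'>\lambda$ then necessarily $s'<s$, so $|\alpha|\le 2(s-j-1)$ and $\partial^{j+1}_t u$ is bounded with values in $C^{2(s-j-1),\lambda,\delta}(\overline{\mathcal X})$, hence $\partial^\alpha_x\partial^j_t u$ is at once bounded and Lipschitz in $t$ with values in $C^{0,0,\delta'+|\alpha|}(\overline{\mathcal X})$ and therefore $\theta$-H\"older in $t$ for every $\theta\in[0,1]$, in particular for $\theta=\lambda'/2$. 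The degenerate values $\lambda,\lambda'\in\{0,1\}$ need only the obvious modifications, the temporal H\"older seminorm being absent whenever the relevant exponent is $0$.

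For the compactness assume $s+\lambda>s'+\lambda'$ and $\delta>\delta'$ and let $(u_m)$ be bounded in $C^{\mathbf{s}(s,\lambda,\delta)}$. By Lemma~\ref{l.emb.loc.t} the restrictions of $u_m$ to $\overline{\mathcal{C}_T(B(0,R))}$ are bounded in the classical anisotropic H\"older space $C^{\mathbf{s}(s,\lambda,0)}(\overline{\mathcal{C}_T(B(0,R))})$ for every $R>0$; by the Arzel\`{a}--Ascoli theorem together with the standard interpolation inequality for H\"older norms (available since $s+\lambda>s'+\lambda'$ is strict), and a diagonal choice over $R=1,2,\dots$, we may pass to a subsequence $(u_{m_l})$ converging in $C^{\mathbf{s}(s',\lambda',0)}$ on every bounded cylinder. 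It remains to upgrade this to convergence in the weighted norm $C^{\mathbf{s}(s',\lambda',\delta')}$: one splits each weighted supremum and each weighted H\"older seminorm entering that norm into the part supported in $\{|x|\le R\}$ and the part meeting $\{|x|>R\}$. On the first region the factor $(w(x))^{\delta'+|\alpha|}$ is bounded by a constant depending only on $R$, so that part of $\|u_{m_l}-u_{m_{l'}}\|_{C^{\mathbf{s}(s',\lambda',\delta')}}$ is dominated by that constant times $\|u_{m_l}-u_{m_{l'}}\|_{C^{\mathbf{s}(s',\lambda',0)}(\overline{\mathcal{C}_T(B(0,2R))})}$, which tends to $0$. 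On the second region, using $w(x)\ge\sqrt{1+R^2}$ and $\delta'<\delta$ one bounds the weighted supremum of $\partial^\alpha_x\partial^j_t(u_{m_l}-u_{m_{l'}})$ over $\{|x|>R\}$ by $(1+R^2)^{(\delta'-\delta)/2}$ times the corresponding $C^{\mathbf{s}(s,\lambda,\delta)}$-quantity for $u_{m_l}-u_{m_{l'}}$, hence by $C(1+R^2)^{(\delta'-\delta)/2}$ with $C$ independent of $l,l'$; the weighted H\"older seminorms in $x$ and in $t$ over the tail are treated the same way, the mismatch of the exponents $\lambda,\lambda'$ being absorbed by the elementary estimate $|v(x)-v(y)|\le\min(2\sup|v|,\,|x-y|\sup|\nabla v|)$ on the segment $[x,y]$ (on which $w$ is comparable to $w(x,y)$ by virtue of $|x-y|\le|x|/2$) and, in the time variable, by the Lipschitz bound coming from $\partial^{j+1}_t$ when $\lambda'>\lambda$. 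Choosing $R$ so large that $C(1+R^2)^{(\delta'-\delta)/2}<\varepsilon$ and then $l,l'$ large gives $\|u_{m_l}-u_{m_{l'}}\|_{C^{\mathbf{s}(s',\lambda',\delta')}}<2\varepsilon$, so $(u_{m_l})$ is Cauchy and converges in the complete space $C^{\mathbf{s}(s',\lambda',\delta')}$.

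Apart from the routine but lengthy bookkeeping of indices, the two points that really need care are, in the continuity part, the conversion of smoothness in $x$ (or of the existence of one more $t$-derivative) into the required H\"older exponent $\lambda'/2$ in the time variable, and, in the compactness part, the fact that the weighted H\"older seminorms of the tail, and not only the weighted suprema, are made small by the gain $\delta>\delta'$; the latter is precisely where the condition $|x-y|\le|x|/2$ in the definition of $\langle\,\cdot\,\rangle_{\lambda,\delta,\overline{\mathcal X}}$ is used, as it prevents the weight $w$ from oscillating along the segment joining $x$ and $y$.
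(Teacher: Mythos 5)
Your proposal is correct, and the continuity half follows essentially the same mechanism as the paper's: the paper's Lemma \ref{l.smooth.lipschitz.t} is exactly your Lagrange--mean--value conversion of one extra spatial (resp.\ temporal) derivative into H\"older exponent $1$ in $x$ (resp.\ $1/2$ in $t$), with the weight bookkeeping controlled by the comparison $(\ref{eq.half})$ of $w(x_\vartheta)$, $w(x)$ and $w(x,y)$ along the segment $[x,y]$; the subsequent lowering of $\lambda$ to $\lambda'$ and of $\delta$ to $\delta'$ is done in the paper by the same elementary seminorm manipulations you indicate. (A cosmetic remark: the paper states the elliptic Theorem \ref{t.emb.hoelder} \emph{before} the anisotropic one and declares its proof ``similar'', so quoting it slice-wise inside the proof of Theorem \ref{t.emb.hoelder.t} inverts the paper's order of presentation; mathematically this is harmless since the elliptic case is independent and easier.)

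The compactness half is where you genuinely diverge. The paper embeds $\overline{\mathcal{C}_T}$ into a compact cylinder via the one-point compactification $\iota$ of $(\ref{eq.M})$ and proves a bespoke Ascoli--Arzel\`a criterion (Lemma \ref{l.Arzela.t}): boundedness plus equicontinuity of the weighted functions $(w(x))^{\delta'+|\alpha+\beta|}\partial^{\alpha+\beta}_x\partial^j_t u$ with respect to the compactified metric yields precompactness in the sup-part of the norm with the smaller weight $\delta'$; the H\"older seminorms are then recovered by the interpolation inequality $\langle v\rangle_{\lambda'}\lesssim\langle v\rangle_{\lambda}^{\lambda'/\lambda}\,\|v\|^{1-\lambda'/\lambda}$ applied to a subsequence already convergent in $C^{\mathbf{s}(0,0,\delta')}$. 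You instead run classical Arzel\`a--Ascoli on bounded cylinders, extract diagonally, and kill the tail of \emph{every} piece of the norm (suprema and both H\"older seminorms) by the explicit factor $(1+R^2)^{(\delta'-\delta)/2}$. Both arguments rest on the same two pillars --- local compactness and decay forced by $\delta>\delta'$ --- but yours avoids the compactification machinery at the price of treating the tail of the seminorms by hand, and you correctly identify the two delicate points: the restriction $|x-y|\le|x|/2$ in the definition of $\langle\,\cdot\,\rangle_{\lambda,\delta,\overline{\mathcal{X}}}$ is what keeps $w$ comparable along $[x,y]$ in the tail, and the case $\lambda'>\lambda$ must be absorbed by the spare derivative guaranteed by the strict inequality $s+\lambda>s'+\lambda'$. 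I see no gap.
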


\begin{proof}
We begin with the following lemma.

\begin{lemma}
\label{l.smooth.lipschitz.t}
The space $C^{k, \mathbf{s} (s,0,\delta)} (\overline{\mathcal{C}_T})$ is embedded continuously  into the space
$C^{k, \mathbf{s} (s-1,1,\delta)} (\overline{\mathcal{C}_T})$.
\end{lemma}

\begin{proof}
If $u \in C^{k, \mathbf{s} (s,0,\delta)}$ then by the mean value theorem of Lagrange there is $\vartheta \in (0,1)$
such that
\begin{eqnarray*}
   \frac{|\partial^\beta_x \partial_t^j u (x,t) - \partial^\beta_x \partial_t^j u (y,t)|}{|x-y|}
 & = &
   \frac{\displaystyle
         \Big| \sum_{i=1}^n \partial^{\beta + e_i}_x \partial_t^j u (x_\vartheta,t) (y^i - x^i) \Big|}{|x-y|}
\\
 & \leq &
   \Big( \sum_{i=1}^n |\partial^{\beta+e_i}_x \partial_t^j u (x_\vartheta,t)|^2 \Big)^{1/2}
\\
 & = &
   \frac{\displaystyle
         \Big( \sum_{i=1}^n (w (x_\vartheta))^{2 (\delta+|\beta|+1)}
               |\partial^{\beta+e_i}_x \partial_t^j  u (x_\vartheta,t)|^2
         \Big)^{1/2}}
         {(w (x_\vartheta))^{\delta+|\beta|+1}}
\end{eqnarray*}
for all admissible $\beta$ and $j$, where
   $e_i$ is the basis vector of the axis $x^i$ in $\mathbb{R}^n$ and
   $x_\vartheta = x + \vartheta (y-x)$.

Suppose $|x-y| \leq |x|/2$.
Then, by the triangle inequality, we get
\begin{equation}
\label{eq.z.half}
   |x|/2
 \leq
   | |x| - \vartheta |y-x| |
 \leq
   |x_\vartheta|
 \leq
   |x| + \vartheta |y-x|
 \leq (3/2)\, |x|
\end{equation}
whence
$
   |x-y| \leq w (x_\vartheta)
$
if $|x-y| \leq |x|/2$.
Moreover, as $||x| - |y|| \leq |x-y|$, it follows that
\begin{equation}
\label{eq.x.half}
   |x|/2 \leq |y| \leq (3/2)\, |x|,
\end{equation}
if $|x-y| \leq |x|/2$.
Now (\ref{eq.z.half}) and (\ref{eq.x.half}) imply that
\begin{equation}
\label{eq.half}
   \frac{2}{3}\, w (x_\vartheta)
 \leq w (x)
 \leq w (x,y)
 \leq \frac{\sqrt{13}}{2}\, w (x)
 \leq \sqrt{13}\, w (x_\vartheta)
\end{equation}
if $|x-y| \leq |x|/2$.

It follows from (\ref{eq.half}) that
\begin{eqnarray*}
\lefteqn{
   \sup_{t \in [0,T]}
   \sup_{|x-y| \leq |x|/2 \atop x \neq y}
   (w (x,y))^{\delta+|\beta|+1}\,
   \frac{| \partial^\beta_x \partial_t^j u (x,t) - \partial^\beta_x \partial_t^j u (y,t)|}{|x-y|}
}
\\
 & \leq &
   c\,
   \Big( \sum_{i=1}^n
         \| \partial^{\beta+e_i}_x \partial^j_t u
         \|^2_{C^{\mathbf{s} (0,0,\delta+|\beta|+1)} (\overline{\mathcal{C}_T})}
   \Big)^{1/2},
\end{eqnarray*}
with $c$ being a positive constant independent of $u \in C^{k, \mathbf{s} (s,0,\delta)} (\overline{\mathcal{C}_T})$.

Arguing in the same way, for $u \in C^{k, \mathbf{s} (s,0,\delta)} (\overline{\mathcal{C}_T})$, we conclude by
Lagrange's mean value theorem that there is $\vartheta \in (0,1)$ with the property that
\begin{eqnarray*}
\lefteqn{
   \sup_{x \in \mathbb{R}^n}
   \sup_{t', t'' \in [0,T] \atop t' \neq t''}
   (w (x))^{\delta+|\beta|}\,
   \frac{| \partial^\beta_x \partial_t^j u (x,t') - \partial^\beta_x \partial_t^j u (x,t'')|}
        {|t'-t''|^{1/2}}
}
\\
 & = &
   \sup_{x \in {\mathbb R}^n}
   \sup_{t', t'' \in [0,T] \atop t' \neq t''}
   (w (x))^{\delta+|\beta|}\,
   \frac{| \partial^\beta_x \partial^{j+1}_{t} u (x,t_\vartheta) (t' - t'')|}{|t' - t''|^{1/2}}
\\
 & = &
   \sup_{x \in {\mathbb R}^n}
   (w (x))^{\delta+|\beta|}
   \sup_{t', t'' \in [0,T] \atop t' \neq t''}
   |t' - t''|^{1/2}
   |\partial^\beta_x \partial_{t}^{j+1} u (x,t_\vartheta)|
\\
 & \leq &
   \sqrt{T}\,
   \| \partial^\beta_x \partial^{j+1}_t u \|_{C^{\mathbf{s} (0,0,\delta+|\beta|)} (\overline{\mathcal{C}_T})}
\end{eqnarray*}
for all admissible $\beta$ and $j$, where $t_{\vartheta} = t' + \vartheta (t''-t')$.

As is well known, the space
   $C^{k, \mathbf{s} (s,0,0)} (\overline{\mathcal{C}_T (B_1)})$
is embedded continuously into the space
   $C^{k, \mathbf{s} (s-1,1,0)} (\overline{\mathcal{C}_T (B_1)})$,
if $s$ is a nonnegative integer.
Thus, $C^{k, \mathbf{s} (s,0,\delta)} (\overline{\mathcal{C}_T})$ is embedded continuously into
      $C^{k, \mathbf{s} (s-1,1,\delta)} (\overline{\mathcal{C}_T})$,
as desired.
\end{proof}

Now we note that
\begin{eqnarray*}
\lefteqn{
   (w (x,y))^{\delta' + \lambda' + |\beta|}
   \frac{|\partial_x^\beta \partial_t^j u (x,t) - \partial_y^\beta \partial_t^j u (y,t)|}
        {|x-y|^{\lambda'}}
}
\\
 & = &
   (w (x,y))^{\delta + \lambda + |\beta|}
   \frac{|\partial_x^\beta \partial_t^j u (x,t) - \partial_y^\beta \partial_t^j u (y,t)|}
        {|x-y|^{\lambda}}\,
   \frac{1}{(w (x,y))^{\delta - \delta'}}
   \Big( \frac{|x -y|}{w (x,y)} \Big)^{\lambda - \lambda'}
\end{eqnarray*}
whence
$$
   \langle \partial^\beta \partial_t^j u (\cdot,t) \rangle_{\lambda',\delta',\mathbb{R}^n}
 \leq
   2^{\lambda'-\lambda}\,
   \langle \partial^\beta \partial_t^j u (\cdot,t) \rangle_{\lambda,\delta,\mathbb{R}^n},
$$
if
    $0 < \lambda' \leq \lambda \leq 1$ and
    $\delta' \leq \delta$.
Besides,
\begin{eqnarray*}
\lefteqn{
   (w (x))^{\delta' + |\beta|}
   \frac{|\partial^\beta_x \partial^j_t u (x,t') - \partial^\beta_x \partial^j_t u (x,t'')|}
        {|t' - t''|^{\lambda'/2}}
}
\\
 & = &
   (w (x))^{\delta + |\beta|}
   \frac{|\partial_x^\beta \partial_t^j u (x,t') - \partial_x^\beta \partial_t^j u (x,t'')|}
        {|t' - t''|^{\lambda/2}}\,
   \frac{1}{(w (x))^{\delta - \delta'}}
   |t' - t''|^{(\lambda-\lambda')/2},
\end{eqnarray*}
and so, for
   $0 < \lambda' \leq \lambda \leq 1$ and
   $\delta' \leq \delta$,
we obtain
$$
   (w (x))^{\delta' + |\beta|}
   \langle \partial_x^\beta \partial^j u (x,\cdot) \rangle_{\lambda'/2,[0,T]}
 \leq
   (2T)^{\frac{\scriptstyle \lambda-\lambda'}{\scriptstyle 2}}\,
   (w (x))^{\delta' + |\beta|} \langle \partial_x^\beta \partial^j u (x,\cdot) \rangle_{\lambda/2,[0,T]}.
$$

Hence, as
   $C^{k,\mathbf{s} (s,\lambda,0)} (\overline{\mathcal{C}_T (B_1)})$ is embedded continuously into
   $C^{k,\mathbf{s} (s,\lambda',0)} (\overline{\mathcal{C}_T (B_1)})$
for $\lambda \geq \lambda' > 0$, we see that
   $C^{k,\mathbf{s} (s,\lambda,\delta)} (\overline{\mathcal{C}_T})$ is embedded continuously into the space
   $C^{k,\mathbf{s} (s,\lambda',\delta')} (\overline{\mathcal{C}_T})$
provided
   $\lambda \geq \lambda' > 0$ and
   $\delta \geq \delta'$.
Now applying Lemma \ref{l.smooth.lipschitz.t} yields readily
$$
   C^{k,\mathbf{s} (s,\lambda,\delta)} (\overline{\mathcal{C}_T})
 \hookrightarrow
   C^{k,\mathbf{s} (s',\lambda',\delta')} (\overline{\mathcal{C}_T})
$$
if
   $s + \lambda \geq s' + \lambda'$ and
   $\delta \geq \delta'$.

To study the compactness of the embedding we note that
   $C^{\mathbf{s} (0,\lambda,0)} (\overline{B_1} \! \times \! [0,T])$ is embedded compactly into
   $C^{\mathbf{s} (0,\lambda',0)}  (\overline{\mathcal{C}_T (B_1)})$
for $\lambda > \lambda' > 0$.
Use the standard one point compactification of $\mathbb{R}^n$.
Namely, we embed $\overline{\mathcal{C}_T} $ into the compact cylinder $\mathcal{C}$ in $\mathbb{R}^{n+2}$
consisting of all $(z^0, z^1, \ldots, z^n, z^{n+1}) \in \mathbb{R} \times \mathbb{R}^{n} \times [0,T]$,
such that
$$
   (z^0)^2 + \sum_{j=1}^{n} (z^j)^2 = 1.
$$
To this end we consider the map
$$
   \iota : \hat{\mathbb{R}}^n \times [0,T] \to \mathbb{R}^{n+2}
$$
given by
\begin{equation}
\label{eq.M}
   \iota (x,t)
 = \left\{ \begin{array}{llll}
             \displaystyle
             \Big( \frac{|x|^2-1}{(w (x))^2}, \frac{2x}{(w (x))^{2}}, t \Big),
           & \mbox{if}
           & x \neq \infty,
 \\
             (1, 0, t),
           & \mbox{if}
           & x = \infty.
           \end{array}
           \right.
\end{equation}
Indeed,
$$
   \Big( \frac{|x|^2-1}{(w (x))^2} \Big)^2 + \Big| \frac{2x}{(w (x))^2} \Big|^2
 = \frac{|x|^4 - 2 |x|^2 + 1 + 4 |x|^2 }{(w (x))^4}
 = 1,
$$
and then, as the points at infinity of $\hat{\mathbb{R}}^n \times [0,T]$ correspond to the point
$(1, 0, t)$  of cylinder $\mathcal{C}$, the inverse map $\iota^{-1}$ is given by
$$
   \iota^{-1} (z)
 = \left\{ \begin{array}{lll}
             \displaystyle
             \Big( \frac{(z^1, \ldots, z^n)}{1 - z^0}, z^{n+1} \Big),
           & \mbox{if}
           & - 1 \leq z^0 < 1,
\\
             (\infty, z^{n+1}),
           & \mbox{if}
           & z = (1, 0, z^{n+1}).
\end{array}
\right.
$$

The map $\iota$ is obviously continuous on $\hat{\mathbb{R}}^n \times [0,T]$ and at least $C^1$
smooth and nonsingular on $\overline{\mathcal{C}_T} $, for
$$
\begin{array}{rclclrclcl}
   \partial_i \iota_0
 & \!\!\! = \!\!\!
 & \displaystyle
   \frac{4 x^i}{(w (x))^4},
 & \mbox{for}
 & 1 \leq i \leq n;
 & \partial_t \iota_0
 & \!\!\! = \!\!\!
 & 0;
 &
 &
\\
   \partial_i \iota_j
 & \!\!\! = \!\!\!
 & \displaystyle
   \frac{2 \delta_{ij}}{(w (x))^2} - \frac{4 x^i\, x^j}{(w (x))^4},
 & \mbox{for}
 & 1 \leq i, j \leq n;
 & \partial_t \iota_j
 & \!\!\! = \!\!\!
 & 0,
 & \mbox{for}
 & 1 \leq j \leq n;
\\
   \partial_i \iota_{n+1}
 & \!\!\! = \!\!\!
 & 0,
 & \mbox{for}
 & 1 \leq i \leq n;
 & \partial_t \iota_{n+1}
 & \!\!\! = \!\!\!
 & 1.
 &
 &
\end{array}
$$
The function
$$
   d ((x,t'),(y,t'')) = |\iota (x,t') - \iota (y,t'')|
$$
is obviously a metric on the set $\hat{\mathbb{R}}^n \times [0,T] \cong \mathcal{C}$.
We are now in a position to formulate a compactness criterion a l\`{a} Ascoli-Arzel\'{a} theorem.

\begin{lemma}
\label{l.Arzela.t}
Let $S$ be a subset of
   $C^{k,\mathbf{s} (s,0,\delta)} (\overline{\mathcal{C}_T})$
bearing the following properties:

1)
$S$ is bounded in $C^{k,\mathbf{s} (s,0,\delta)} (\overline{\mathcal{C}_T})$;

2)
for any $\varepsilon > 0$ there is $\delta (\varepsilon) > 0$ such that, for all
   $(x,t'), (y,t'') \in \overline{\mathcal{C}_T} $ with $d ((x,t'),(y,t'')) < \delta (\varepsilon)$
and for all $u \in S$, we get
$$
   | (w (x))^{\delta' + |\alpha + \beta|} \partial_x^{\alpha + \beta} \partial_t^j u (x,t')
   - (w (y))^{\delta' + |\alpha + \beta|} \partial_y^{\alpha + \beta} \partial_t^j u (y,t'') |
 < \varepsilon
$$
if
   $|\alpha| + 2j \leq 2s$ and
   $|\beta| \leq k$;

3)
for any $\varepsilon > 0$ there is $\delta (\varepsilon) > 0$ such that, for all
   $(x,t'), (y,t'') \in \overline{\mathcal{C}_T (B_1)}$ with $\sqrt{|x-y|^2 + |t'-t''|} < \delta (\varepsilon)$
and for all $u \in S$, we have
$$
   | \partial_x^{\alpha + \beta} \partial_t^j u (x,t')
   - \partial_y^{\alpha + \beta} \partial_t^j u (y,t'') |
 < \varepsilon
$$
if
   $|\alpha| + 2j \leq 2s$ and
   $|\beta| \leq k$.

Then the set $S$ is precompact in the weighted space
   $C^{k,\mathbf{s} (s,0,\delta')} (\overline{\mathcal{C}_T})$
for any $\delta' < \delta$.
\end{lemma}

\begin{proof}
Fix an arbitrary $\delta' < \delta$.
If $S$ is a bounded set in $C^{k,\mathbf{s} (s,0,\delta)} (\overline{\mathcal{C}_T})$ then, for $u \in S$,
the functions
\begin{eqnarray*}
\lefteqn{
   u^{(\alpha+\beta,j)} (z)
}
\\
 & = &
   \left\{
   \begin{array}{lll}
     \left( (w (x))^{\delta' + |\alpha + \beta|} \partial_x^{\alpha+\beta} \partial_t^j u \right)
     (\iota^{-1} (z)),
   & \mbox{if}
   & z \in \mathcal{C} \setminus (1, 0, z^{n+1}),
\\
     0,
   & \mbox{if}
   & z = (1, 0, z^{n+1}),
   \end{array}
   \right.
\end{eqnarray*}
are continuous on $\mathcal{C}$ for $\delta > \delta'$ because
$$
   |u^{(\alpha+\beta,j)} (z)|
 \leq
   \| \partial_x^{\alpha+\beta} \partial_t^j u
   \|_{C^{\mathbf{s} (0,0,\delta + |\alpha|)} (\overline{\mathcal{C}_T})}
   (w (\iota^{-1} (z))^{\delta' - \delta}
$$
for all $z \in \mathcal{C} \setminus (1, 0, z^{n+1})$, and
$$
   \iota^{-1} (z) \to (\infty, t)
$$
if $z \to (1,0,t)$ with some $t \in [0,T]$.
In particular, the set $\{ u^{(\alpha+\beta,j)} \}_{u \in S}$  satisfies the hypotheses of the
Ascoli-Arzel\'a theorem, and so it is precompact in $C (\mathcal{C})$.
In particular, this means that the set $S$ is precompact in
   $C^{k,\mathbf{s} (s,0,\delta')} (\overline{\mathcal{C}_T})$,
as desired.
\end{proof}

If $S$ is a bounded set in the space $C^{\mathbf{s} (0,\lambda,\delta)} (\overline{\mathcal{C}_T})$,
then it is obviously bounded in the spaces
   $C^{\mathbf{s} (0,0,\delta)} (\overline{\mathcal{C}_T})$ and
   $C^{\mathbf{s} (0,\lambda,0)} (\overline{\mathcal{C}_T (B_1)})$,
too.
For any $0 \leq \lambda' < \lambda \leq 1$, this set is precompact in
   $C^{\mathbf{s} (0,\lambda',0)} (\overline{\mathcal{C}_T (B_1)})$.
Moreover, we have
\begin{eqnarray*}
   (w (x,y))^{\delta}\, | u (x,t) - u (y,t) |
 & \leq &
   \| u \|_{C^{\mathbf{s} (0,\lambda,\delta)} (\overline{\mathcal{C}_T})}
   \Big( \frac{|x - y|}{w (x,y)} \Big)^{\lambda}
\\
 & \leq &
   2^{\lambda/2}\, \| u \|_{C^{\mathbf{s} (0,\lambda,\delta)} (\overline{\mathcal{C}_T})}
\end{eqnarray*}
for all
   $x, y \in \mathbb{R}^n$ and
   $t \in [0,T]$,
and
\begin{eqnarray*}
   (w (y))^{\delta}\, | u (y,t') - u (y,t'') |
 & \leq &
    \| u \|_{C^{\mathbf{s} (0,\lambda,\delta)} (\overline{\mathcal{C}_T})}
    |t' - t''|^{\lambda/2}
\\
 & \leq &
    T^{\lambda/2}\, \| u \|_{C^{\mathbf{s} (0,\lambda,\delta)} (\overline{\mathcal{C}_T})}
\end{eqnarray*}
for all $y \in \mathbb{R}^n$ and $t', t'' \in [0,T]$.

If $|x| \leq |y|$ then
\begin{eqnarray*}
   | (w (x))^{\delta'} u (x,t') - (w (y))^{\delta'} u (y,t'') |
 & \leq &
   (w (x))^{\delta'} | u (x,t') - u (y,t') |
\\
 & + &
   | (w (x))^{\delta'} - (w (y))^{\delta'} | | u (y,t') |
\\
 & + &
   (w (y))^{\delta'} | u (y,t') - u (y,t'') |,
\end{eqnarray*}
which is dominated by
\begin{eqnarray*}
 & \leq &
   \frac{1}{(w (x,y))^{\delta-\delta'}}\,
   (w (x,y))^{\delta} | u (x,t') - u (y,t') |
\\
 & + &
   \frac{(\sqrt{2})^{\delta}}{(w (x,y))^{\delta-\delta'}}
   \frac{| (w (x))^{\delta'} - (w (y))^{\delta'} |}{(w (x,y))^{\delta'}} (w (y))^{\delta} |u (y,t')|
\\
 & + &
   \Big( \frac{\sqrt{2}}{w (x,y)} \Big)^{\delta-\delta'} (w (y))^{\delta} | u (y,t') - u (y,t'')|,
\end{eqnarray*}
for in this case
   $w (x) \leq  w (x,y) \leq \sqrt{2} w (y)$
and $\delta \geq \delta' \geq 0$.
Hence, for all $x,y \in \mathbb{R}^n$ with $|x| \leq |y|$ and $t', t'' \in [0,T]$ we have
\begin{eqnarray}
\label{eq.emb.comp1.t}
\lefteqn{
   | (w (x))^{\delta'} u (x,t') - (w (y))^{\delta'} u (y,t'') |
}
\nonumber
\\
 & \leq &
   c\,
   \frac{\| u \|_{C^{\mathbf{s} (0,\lambda,\delta)} (\overline{\mathcal{C}_T})}}{(w (x,y))^{\delta-\delta'}}\,
   \Big( \Big( \frac{|x-y|}{w (x,y)} \Big)^{\lambda} + |t'-t''|^{\lambda/2} \Big)
\nonumber
\\
 & + &
   c\,
   \frac{\| u \|_{C^{\mathbf{s} (0,0,\delta)} (\overline{\mathcal{C}_T})}}{(w (x,y))^{\delta-\delta'}}\,
   \frac{| (w (x))^{\delta'} - (w (y))^{\delta'} |}{(w (x,y))^{\delta'}}
\nonumber
\\
\end{eqnarray}
with a positive constant $c$ depending on $T$ but not on $x$, $y$, $t'$, $t''$ and $u$.

Fix $\varepsilon > 0$.
As $\delta > \delta' \geq 0$ and
$$
   \frac{|x-y|}{w (x,y)} \leq \sqrt{2},\ \ \
   |t'-t''| \leq T,\ \ \
   \frac{| (w (x))^{\delta'} - (w (y))^{\delta'} |}{(w (x,y))^{\delta'}} \leq 2,
$$
we conclude that there is $R > 0$ such that
\begin{eqnarray*}
   \frac{\| u \|_{C^{\mathbf{s} (0,\lambda,\delta)} (\overline{\mathcal{C}_T})}}{(w (x,y))^{\delta-\delta'}}\,
   \Big( \Big( \frac{|x-y|}{w (x,y)} \Big)^{\lambda} + |t'-t''|^{\lambda/2} \Big)
 & < &
   \frac{\varepsilon}{4},
\\
   \frac{\| u \|_{C^{\mathbf{s} (0,0,\delta)} (\overline{\mathcal{C}_T})}}{(w (x,y))^{\delta-\delta'}}\,
   \frac{| (w (x))^{\delta'} - (w (y))^{\delta'} |}{(w (x,y))^{\delta'}}
 & < &
   \frac{\varepsilon}{4}
\end{eqnarray*}
for all $x$ and $y$ satisfying $|x|^2 + |y|^2 > R^2$, and for all $t', t'' \in [0,T]$.
Since the function $w (x)$ and the map $\iota (x,t)$ are continuous on $\overline{\mathcal{C}_T} $,
they are equicontinuous on the cylinder
   $\overline{\mathcal{C}_T (B_R)}$,
where $B_R$ stands for the ball of radius $R$ around the origin in $\mathbb{R}^n$.
On this cylinder the metric $d ((x,t'),(y,t''))$ defines the same topology as the standard metric
   $(|x-y|^2 +|t'-t''|^2)^{1/2}$
as well as the metric $(|x-y|^2 + |t'-t''|)^{1/2}$.
Hence, there is a positive number $\delta (\varepsilon)$ depending on $\varepsilon$, such that
\begin{eqnarray*}
   \frac{\| u \|_{C^{\mathbf{s} (0,\lambda,\delta)} (\overline{\mathcal{C}_T})}}{(w (x,y))^{\delta-\delta'}}\,
   \Big( \Big( \frac{|x-y|}{w (x,y)} \Big)^{\lambda} + |t'-t''|^{\lambda/2} \Big)
 & < &
   \frac{\varepsilon}{4},
\\
   \frac{\| u \|_{C^{\mathbf{s} (0,0,\delta)} (\overline{\mathcal{C}_T})}}{(w (x,y))^{\delta-\delta'}}\,
   \frac{| (w (x))^{\delta'} - (w (y))^{\delta'} |}{(w (x,y))^{\delta'}}
 & < &
   \frac{\varepsilon}{4}
\end{eqnarray*}
for all $(x,t'), (y,t'') \in \overline{\mathcal{C}_T (B_R)}$ satisfying
   $d ((x,t'), (y,t'')) < \delta (\varepsilon)$.

As for any $x, y \in \mathbb{R}^n$ one has either $|x| \leq |y|$ or $|y| \leq |x|$, on evaluating
the difference
   $| (w (x))^{\delta'} u (x,t') - (w (y))^{\delta'} u (y,t'') |$
we may confine ourselves to the case $|x| \leq |y|$.
In particular, (\ref{eq.emb.comp1.t}) means that $S$ satisfies the hypotheses of Lemma \ref{l.Arzela.t},
and so $S$ is precompact in $C^{\mathbf{s} (0,0,\delta')} (\overline{\mathcal{C}_T})$.
Hence, (any sequence in) $S$ contains a subsequence $\{ u_\nu \}$ convergent in
   $C^{\mathbf{s} (0,0,\delta')} (\overline{\mathcal{C}_T})$.
Without loss of the generality we may assume that it converges to zero in this space.
On the other hand,
\begin{eqnarray*}
\lefteqn{
   (w (x,y))^{\delta'+\lambda'} \frac{| u (x,t) - u (y,t) |}{|x-y|^{\lambda'}}
}
\\
 & \leq &
   \Big( (w (x,y))^{\delta+\lambda} \frac{|  u (x,t) -  u (y,t)|}{|x-y|^{\lambda}}
   \Big)^{\frac{\scriptstyle \lambda'}{\scriptstyle \lambda}}
   \frac{( (w (x,y))^{\delta'} |u (x,t) - u (y,t)| )^{1-\frac{\scriptstyle \lambda'}{\scriptstyle \lambda}}}
        { (w (x,y))^{(\delta-\delta') \frac{\scriptstyle \lambda'}{\scriptstyle \lambda}}}
\end{eqnarray*}
and
\begin{eqnarray*}
\lefteqn{
   (w (x))^{\delta'}
   \frac{| u (x,t') -  u (x,t'') |}{|t'-t''|^{\frac{\scriptstyle \lambda'}{\scriptstyle 2}}}
}
\\
 & \leq &
   \Big( (w (x))^{\delta}
         \frac{| u (x,t') -  u (x,t'')|}{|t'-t''|^{\frac{\scriptstyle \lambda}{\scriptstyle 2}}}
   \Big)^{\frac{\scriptstyle \lambda'}{\scriptstyle \lambda}}
   \frac{( (w (x))^{\delta'} |u (x,t') - u (x,t'')| )^{1-\frac{\scriptstyle \lambda'}{\scriptstyle \lambda}}}
        { (w (x))^{(\delta-\delta') \frac{\scriptstyle \lambda'}{\scriptstyle \lambda}}}.
\end{eqnarray*}
Therefore, by (\ref{eq.half}),
\begin{eqnarray*}
   \sup_{t \in [0,T]}
   \langle u_\nu \rangle_{\lambda',\delta',\mathbb{R}^n}
 & \leq &
   C\,
   \sup_{t \in [0,T]}
   \langle  u_\nu \rangle_{\lambda,\delta,\mathbb{R}^n}^{\lambda'/\lambda}\,
   \| u_\nu \|_{C^{\mathbf{s} (0,0,\delta')} (\overline{\mathcal{C}_T})}^{1-\lambda'/\lambda},
\\
   \langle u_\nu \rangle_{\lambda'/2,[0,T],C^{0,0,\delta'} (\mathbb{R}^n)}
 & \leq &
   C\,
   \langle  u_\nu \rangle_{\lambda/2,[0,T],C^{0,0,\delta} (\mathbb{R}^n)}^{\lambda'/\lambda}\,
   \| u_\nu \|_{C^{\mathbf{s} (0,0,\delta')} (\overline{\mathcal{C}_T})}^{1-\lambda'/\lambda}
\end{eqnarray*}
with $C > 0$ a constant independent of $\nu$, and so the sequence $\{ u_\nu \}$ converges to zero in
$C^{\mathbf{s} (0,\lambda',\delta')} (\overline{\mathcal{C}_T})$, too.
Hence it follows that $S$ is precompact in $C^{\mathbf{s} (0,\lambda',\delta')} (\overline{\mathcal{C}_T})$.
Thus,
   $C^{\mathbf{s} (0,\lambda,\delta)} (\overline{\mathcal{C}_T})$
is embedded compactly into
   $C^{\mathbf{s} (0,\lambda',\delta')} (\overline{\mathcal{C}_T})$
if $\lambda > \lambda'$ and $\delta > \delta'$.
By induction we conclude immediately that the embedding
$$
   C^{k, \mathbf{s} (s,\lambda,\delta)} (\overline{\mathcal{C}_T})
 \hookrightarrow
   C^{k, \mathbf{s} (s,\lambda',\delta')} (\overline{\mathcal{C}_T})
$$
is compact for any integral numbers $k, s \geq 0$, provided that
   $\lambda > \lambda'$ and
   $\delta > \delta'$.

Finally, on applying Lemma \ref{l.smooth.lipschitz.t} we see that
   $C^{k, \mathbf{s} (s,\lambda,\delta)} (\overline{\mathcal{C}_T})$
is embedded compactly into
   $C^{k, \mathbf{s} (s',\lambda',\delta')} (\overline{\mathcal{C}_T})$,
if
   $s + \lambda > s' + \lambda'$ and
   $\delta > \delta'$.
The proof is complete.
\end{proof}

We also need a standard lemma on the multiplication of functions.

\begin{lemma}
\label{l.product}
Let
   $s$, $k$ be nonnegative integers
and $\lambda \in [0,1]$.
If
   $u \in C^{k, \mathbf{s} (s,\lambda,\delta)} (\overline{\mathcal{C}_T})$ and
   $v \in C^{k, \mathbf{s} (s,\lambda,\delta')} (\overline{\mathcal{C}_T})$,
then the product $u v$ belongs to
   $C^{k, \mathbf{s} (s,\lambda,\delta+\delta')} (\overline{\mathcal{C}_T})$
and
\begin{equation}
\label{eq.product}
   \| u v \|_{C^{k, \mathbf{s} (s,\lambda,\delta+\delta')} (\overline{\mathcal{C}_T})}
 \leq
   c\,
   \| u \|_{C^{k, \mathbf{s} (s,\lambda,\delta)} (\overline{\mathcal{C}_T})}
   \| v \|_{C^{k, \mathbf{s} (s,\lambda,\delta')} (\overline{\mathcal{C}_T})}
\end{equation}
with $c > 0$ a constant independent of $u$ and $v$.
\end{lemma}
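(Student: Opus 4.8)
The plan is to obtain (\ref{eq.product}) by a two-stage reduction, via the Leibniz formula, to a single ``building-block'' multiplicativity estimate, and then to prove that estimate by the classical splitting $fg-\tilde f\tilde g=(f-\tilde f)\,g+\tilde f\,(g-\tilde g)$, using the elementary weight comparisons already recorded in the proof of Lemma~\ref{l.smooth.lipschitz.t}.

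First I would peel off the outer index $k$. Since $\|u\|_{C^{k,\mathbf{s}(s,\lambda,\delta)}(\overline{\mathcal{C}_T})}=\sum_{|\beta|\le k}\|\partial^\beta_x u\|_{C^{\mathbf{s}(s,\lambda,\delta+|\beta|)}(\overline{\mathcal{C}_T})}$ and $\partial^\beta_x(uv)=\sum_{\gamma\le\beta}\binom{\beta}{\gamma}\,\partial^\gamma_x u\,\partial^{\beta-\gamma}_x v$ with $|\gamma|,|\beta-\gamma|\le k$ and the weight exponents adding as $(\delta+|\gamma|)+(\delta'+|\beta-\gamma|)=\delta+\delta'+|\beta|$, the general case follows from the case $k=0$. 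Applying the same idea to the derivatives $\partial^\alpha_x\partial^j_t$ with $|\alpha|+2j\le 2s$ --- here $\gamma\le\alpha$, $i\le j$ force $|\gamma|+2i\le 2s$ and $|\alpha-\gamma|+2(j-i)\le 2s$, and again the spatial weights add correctly --- reduces the whole statement to the single inequality
$$
   \| fg \|_{C^{\mathbf{s}(0,\lambda,a+b)}(\overline{\mathcal{C}_T})}
 \le c\, \| f \|_{C^{\mathbf{s}(0,\lambda,a)}(\overline{\mathcal{C}_T})}\, \| g \|_{C^{\mathbf{s}(0,\lambda,b)}(\overline{\mathcal{C}_T})}
$$
for arbitrary $a,b\ge 0$, with $c=c(n,\lambda,T)$; the combinatorial constants coming from the two Leibniz expansions are absorbed into $c$ at the very end.

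To prove this building-block estimate I would treat the four constituents of the $C^{\mathbf{s}(0,\lambda,\cdot)}$-norm separately. The weighted sup-norm is trivially submultiplicative, $\sup_{x,t}(w(x))^{a+b}|fg|\le\sup_{x,t}(w(x))^a|f|\cdot\sup_{x,t}(w(x))^b|g|$; the near-origin term $\|fg\|_{C^{0,\lambda}(\overline{B_1})}$ is the classical unweighted product rule for H\"older norms. For the weighted spatial H\"older seminorm I would split $f(x,t)g(x,t)-f(y,t)g(y,t)=(f(x,t)-f(y,t))g(x,t)+f(y,t)(g(x,t)-g(y,t))$, multiply by $(w(x,y))^{a+b+\lambda}/|x-y|^\lambda$, and invoke the fact that on the admissible range $|x-y|\le|x|/2$ the quantities $w(x)$, $w(y)$, $w(x,y)$ are mutually comparable with absolute constants --- exactly the inequalities leading to (\ref{eq.z.half})--(\ref{eq.half}) --- so that the weight may be moved freely between the two factors and the two points; this yields $\langle fg(\cdot,t)\rangle_{\lambda,a+b,\mathbb{R}^n}\le c\big(\langle f(\cdot,t)\rangle_{\lambda,a,\mathbb{R}^n}\|g(\cdot,t)\|_{C^{0,0,b}}+\|f(\cdot,t)\|_{C^{0,0,a}}\langle g(\cdot,t)\rangle_{\lambda,b,\mathbb{R}^n}\big)$. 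The parabolic-time part is handled by the same splitting, now in the variable $t$, together with $(w(x))^{a+b}=(w(x))^a(w(x))^b$, which gives $\|(fg)(\cdot,t')-(fg)(\cdot,t'')\|_{C^{0,0,a+b}}\le\|f(\cdot,t')-f(\cdot,t'')\|_{C^{0,0,a}}\sup_t\|g(\cdot,t)\|_{C^{0,0,b}}+\sup_t\|f(\cdot,t)\|_{C^{0,0,a}}\|g(\cdot,t')-g(\cdot,t'')\|_{C^{0,0,b}}$; one then divides by $|t'-t''|^{\lambda/2}$ and takes the supremum. Summing the four contributions gives the building-block estimate, and the two Leibniz reductions then deliver (\ref{eq.product}) with $c=c(n,k,s,\lambda,T)$.

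The one point that needs genuine care --- and the one I would settle before anything else --- is precisely the transfer of the weight between the evaluation points $x$ and $y$ in the spatial H\"older seminorm. It is legitimate only because $\langle\cdot\rangle_{\lambda,\delta,\overline{\mathcal{X}}}$ was defined with the restriction $|x-y|\le|x|/2$, on which $w(x)\sim w(y)\sim w(x,y)$; were this restriction dropped, the weighted H\"older seminorms would no longer be submultiplicative and the lemma would fail in the stated form. Everything else is routine bookkeeping with Leibniz coefficients and the triangle inequality.
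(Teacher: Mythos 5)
Your proposal is correct and follows essentially the same route as the paper's proof: a Leibniz expansion in which the weight exponents add, trivial submultiplicativity of the weighted sup-norm, the splitting $fg-\tilde f\tilde g=(f-\tilde f)g+\tilde f(g-\tilde g)$ for both the spatial and the temporal H\"older seminorms, and the comparability of $w(x)$, $w(y)$, $w(x,y)$ on the region $|x-y|\leq|x|/2$ furnished by (\ref{eq.half}). The only difference is organisational — you stage the Leibniz reduction in two steps and isolate a clean building-block inequality for $s=k=0$, whereas the paper expands over $\alpha'$, $\beta'$, $j'$ simultaneously — and your emphasis on the restriction $|x-y|\leq|x|/2$ as the point that makes the weighted seminorm submultiplicative is exactly the right one.
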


\begin{proof}
Indeed, for
   $|\alpha| + 2j \leq 2s$ and
   $|\beta| \leq k$,
we get
\begin{equation}
\label{eq.product.diff}
   \partial_x^{\alpha+\beta} \partial_t^j (u v)
 =
   \sum_{{\alpha' \leq \alpha \atop \beta' \leq \beta} \atop j' \leq j}
   \Big( \substack{\alpha \\ \alpha'} \Big)
   \Big( \substack{\beta \\ \beta'} \Big)
   \Big( \substack{j \\ j'} \Big)
   \partial_x^{\alpha'+\beta'} \partial_t^{j'} u\,
   \partial_x^{\alpha-\alpha'+\beta-\beta'} \partial_t^{j-j'} v
\end{equation}
whence
$$
   \frac{|\partial_x^{\alpha+\beta} \partial_t^j (u v)|}
        {(w (x))^{-\delta-\delta'-|\alpha+\beta|}}
 \leq
   c
   \sum_{{\alpha' \leq \alpha \atop \beta' \leq \beta} \atop j' \leq j}
   \frac{|\partial_x^{\alpha'+\beta'} \partial_t^{j'} u|}
        {(w (x))^{-\delta - |\alpha'+\beta'|}}\,
   \frac{|\partial_x^{\alpha''+\beta''} \partial_t^{j''} v|}
        {(w (x))^{-\delta' - |\alpha''+\beta''|}},
$$
where
   $\alpha'' = \alpha - \alpha'$,
   $\beta'' = \beta - \beta'$ and
   $j'' = j - j'$,
the constant $c$ depending only on $s$ and $k$.
Hence it follows that (\ref{eq.product}) is fulfilled for $\lambda = 0$ because
$
   \displaystyle
   \int f g \leq \int f \int g
$
if $f \geq 0$ and $g \geq 0$.

We now assume that
   $\lambda \in (0,1]$ and
   $|x-y| \leq |x|/2$.
Then using (\ref{eq.half}) and (\ref{eq.product.diff}) we deduce readily that
\begin{eqnarray*}
\lefteqn{
   \frac{1}{(w (x,y))^{- \delta - \delta' - |\alpha+\beta| - \lambda}}\,
   \frac{|\partial_x^{\alpha+\beta} \partial_t^j (u v) (x,t)
        - \partial_y^{\alpha+\beta} \partial_t^j (u v) (y,t)|}
        {|x-y|^{\lambda}}
}
\\
 & \leq &
   c
   \sum_{{\alpha' \leq \alpha \atop \beta' \leq \beta} \atop j' \leq j}
   \frac{|\partial_x^{\alpha'+\beta'} \partial_t^{j'} u (x,t)|}
        {(w (x))^{- \delta - |\alpha'+\beta'|)}}
   \frac{|\partial_x^{\alpha''+\beta''} \partial_t^{j''} v (x,t)
        - \partial_y^{\alpha''+\beta''} \partial_t^{j''} v (y,t)|}
        {(w (x,y))^{- \delta' - |\alpha''+\beta''| - \lambda} |x-y|^{\lambda}}
\\
 & + &
   c
   \sum_{{\alpha' \leq \alpha \atop \beta' \leq \beta} \atop j' \leq j}
   \frac{|\partial_x^{\alpha'+\beta'} \partial_t^{j'} u (x,t)
        - \partial_y^{\alpha'+\beta'} \partial_t^{j'} u (y,t)|}
        {(w (x,y))^{- \delta - |\alpha'+\beta'| - \lambda} |x-y|^{\lambda}}
   \frac{|\partial_y^{\alpha''+\beta''} \partial_t^{j''} v (y,t)|}
        {(w (y))^{- \delta' - |\alpha''+\beta''|)}}
\end{eqnarray*}
for
   $|\alpha| + 2j \leq 2s$ and
   $|\beta| \leq k$,
the constant $c$ depends neither on $x$ and $t$ nor on $u$ and $v$.
Therefore,
\begin{eqnarray*}
\lefteqn{
   \sup_{t \in [0,T]}
   \langle \partial^{\alpha+\beta} \partial_t^j (u v) (\cdot, t)
   \rangle_{\lambda,\delta+\delta'+|\alpha+\beta|,\mathbb{R}^n}
}
\\
 & \!\!\! \leq \!\!\! &
   c
   \sum_{{\alpha' \leq \alpha \atop \beta' \leq \beta} \atop j' \leq j}
   \| \partial^{\alpha'+\beta'}_x \partial_t^{j'} u
   \|_{C^{\mathbf{s} (0,0,\delta+|\alpha'+\beta'|)} (\overline{\mathcal{C}_T})}\,
   \sup_{t \in [0,T]}
   \langle \partial_x^{\alpha''+\beta''} \partial_t^{j''} v
   \rangle_{\lambda, \delta'+|\alpha''+\beta''|,\mathbb{R}^n}
\\
 & \!\!\! + \!\!\! &
   c
   \sum_{{\alpha' \leq \alpha \atop \beta' \leq \beta} \atop j' \leq j}
   \sup_{t \in [0,T]}
   \langle \partial_x^{\alpha'+\beta'} \partial_t^{j'} u
   \rangle_{\lambda, \delta+|\alpha'+\beta'|,\mathbb{R}^n}\,
   \| \partial^{\alpha''+\beta''}_x \partial_t^{j''} v
   \|_{C^{\mathbf{s} (0,0,\delta'+|\alpha''+\beta''|)} (\overline{\mathcal{C}_T})}.
\end{eqnarray*}

Similarly,
\begin{eqnarray*}
\lefteqn{
   \frac{1}{(w (x))^{- \delta - \delta' - |\alpha+\beta|}}\,
   \frac{|\partial_x^{\alpha+\beta} \partial_t^j (u v) (x,t')
        - \partial_x^{\alpha+\beta} \partial_t^j (u v) (x,t'')|}
        {|t'-t''|^{\lambda/2}}
}
\\
 & \leq &
   c
   \sum_{{\alpha' \leq \alpha \atop \beta' \leq \beta} \atop j' \leq j}
   \frac{|\partial_x^{\alpha'+\beta'} \partial_t^{j'} u (x,t')|}
        {(w (x))^{- \delta - |\alpha'+\beta'|)}}
   \frac{|\partial_x^{\alpha''+\beta''} \partial_t^{j''} v (x,t')
        - \partial_x^{\alpha''+\beta''} \partial_t^{j''} v (x,t'')|}
        {(w (x))^{- \delta' - |\alpha''+\beta''|} |t'-t''|^{\lambda/2}}
\\
 & + &
   c
   \sum_{{\alpha' \leq \alpha \atop \beta' \leq \beta} \atop j' \leq j}
   \frac{|\partial_x^{\alpha'+\beta'} \partial_t^{j'} u (x,t')
        - \partial_x^{\alpha'+\beta'} \partial_t^{j'} u (x,t'')|}
        {(w (x))^{- \delta - |\alpha'+\beta'|} |t'-t''|^{\lambda/2}}
   \frac{|\partial_x^{\alpha''+\beta''} \partial_t^{j''} v (x,t'')|}
        {(w (x))^{- \delta' - |\alpha''+\beta''|)}},
\end{eqnarray*}
the constant $c$ need not be the same in diverse applications.
Hence,
\begin{eqnarray*}
\lefteqn{
   \langle \partial_x^{\alpha+\beta} \partial_t^j (u v) (x,\cdot)
   \rangle_{\lambda/2, [0,T], C^{0,0,\delta + \delta' + |\alpha+\beta|} (\mathbb{R}^n)}
}
\\
 & \!\!\! \leq \!\!\! &
   \! c \!
   \sum_{{\alpha' \leq \alpha \atop \beta' \leq \beta} \atop j' \leq j}
   \| \partial_x^{\alpha'\!+\!\beta'}\! \partial_t^{j'}\! u
   \|_{C^{\mathbf{s} (0,0,\delta\!+\!|\alpha'\!+\!\beta'|)} (\overline{\mathcal{C}_T})}\,
   \langle \partial_x^{\alpha''\!+\!\beta''}\! \partial_t^{j''}\! v
   \rangle_{\lambda/2, [0,T], C^{0,0,\delta'\!+\!|\alpha''\!+\!\beta''|} (\mathbb{R}^n)}
\\
 & \!\!\! + \!\!\! &
   \! c \!
   \sum_{{\alpha' \leq \alpha \atop \beta' \leq \beta} \atop j' \leq j}
   \langle \partial_x^{\alpha'\!+\!\beta'} \partial_t^{j'} u
   \rangle_{\lambda/2, [0,T], C^{0,0,\delta\!+\!|\alpha'\!+\!\beta'|} (\mathbb{R}^n)}\,
   \| \partial_x^{\alpha''\!+\!\beta''}\! \partial_t^{j''}\! v
   \|_{C^{\mathbf{s} (0,0,\delta'\!+\!|\alpha''\!+\!\beta''|)} (\overline{\mathcal{C}_T})}.
\end{eqnarray*}

Applying the Cauchy-Schwarz inequality we conclude that (\ref{eq.product}) is actually fulfilled
for all $\lambda \in (0,1]$, for the estimates of the term
   $\| u v \|_{C^{k, \mathbf{s} (s,\lambda,0)} (\overline{\mathcal{C}_T (B_1)})}$
are similar.
\end{proof}

\begin{lemma}
\label{l.diff.oper}
Suppose that $s \geq 1$ and $k \geq 0$ are integers and $\lambda \in [0,1]$.
Then it follows that

1)
$\partial_x^\alpha$ maps
   $C^{k, \mathbf{s} (s,\lambda,\delta)} (\overline{\mathcal{C}_T})$
continuously into
   $C^{k\!-\!|\alpha|, \mathbf{s} (s,\lambda,\delta\!+\!|\alpha|)} (\overline{\mathcal{C}_T})$,
if $|\alpha| \leq k$;

2)
$\partial_x^\alpha$ maps
   $C^{\mathbf{s} (s,\lambda,\delta)} (\mathbb{R}^n \! \times \! [0,T])$
continuously into
   $C^{2s\!-\!|\alpha|,\lambda, s\!-\!1,\frac{\scriptstyle \lambda}{\scriptstyle 2}, \delta\!+\!|\alpha|}
    (\mathbb{R}^n \! \times \! [0,T])$,
if $1 \leq |\alpha| \leq 2$;

3)
$\partial_t^j$ maps
   $C^{k, \mathbf{s} (s,\lambda,\delta)} (\overline{\mathcal{C}_T})$
continuously into
   $C^{k, \mathbf{s} (s-j,\lambda,\delta)} (\overline{\mathcal{C}_T})$,
if $0 \leq j \leq s$;

4)
the heat operator $H_{\mu}$ maps
   $C^{k, \mathbf{s} (s,\lambda,\delta)} (\overline{\mathcal{C}_T})$
continuously into
   $C^{k, \mathbf{s} (s-1,\lambda,\delta)} (\overline{\mathcal{C}_T})$.
\end{lemma}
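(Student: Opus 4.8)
The plan is to read off all four assertions directly from the definitions of the weighted anisotropic H\"older spaces, using only multi-index bookkeeping together with the embedding results of Section~\ref{s.HoelderSpaces}; no Leibniz rule and no H\"older inequality are needed here. The first thing to record is that, unwinding the definitions of $C^{k,\mathbf{s}(s,\lambda,\delta)}$ and of $C^{\mathbf{s}(s,\lambda,\delta')}$, one has the identity of norms
$$
   \| u \|_{C^{k, \mathbf{s} (s,\lambda,\delta)} (\overline{\mathcal{C}_T})}
 = \sum_{|\beta| \leq k}\ \sum_{|\gamma| + 2i \leq 2s}
   \| \partial_x^{\gamma+\beta} \partial_t^{i} u \|_{C^{\mathbf{s} (0,\lambda,\delta+|\beta|+|\gamma|)} (\overline{\mathcal{C}_T})},
$$
and that, for $\delta'' \geq \delta'$, there is a continuous embedding
   $C^{\mathbf{s} (0,\lambda,\delta'')} (\overline{\mathcal{C}_T}) \hookrightarrow C^{\mathbf{s} (0,\lambda,\delta')} (\overline{\mathcal{C}_T})$,
this being the case $s = s' = 0$ of Theorem~\ref{t.emb.hoelder.t}. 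Each estimate below then comes down to matching, on the two sides, the pair $(\gamma + \beta, i)$ of derivatives and the weight attached to it.

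For part~1), if $|\alpha| \leq k$ and $|\beta'| \leq k - |\alpha|$, then $\partial_x^{\beta'} (\partial_x^\alpha u) = \partial_x^{\alpha + \beta'} u$ with $|\alpha + \beta'| \leq k$, and the weight $\delta + |\alpha| + |\beta'| + |\gamma|$ demanded by $C^{k-|\alpha|, \mathbf{s}(s,\lambda,\delta+|\alpha|)}$ is exactly the weight $\delta + |\alpha + \beta'| + |\gamma|$ already controlled in $\| u \|_{C^{k,\mathbf{s}(s,\lambda,\delta)}}$; summing over the admissible $\beta'$, $\gamma$, $i$ gives the continuity with constant one. Part~3) is of the same character: since $\partial_x^{\gamma} \partial_t^{i} (\partial_t^{j} u) = \partial_x^{\gamma} \partial_t^{i+j} u$ and the constraint $|\gamma| + 2i \leq 2(s-j)$ forces $|\gamma| + 2(i+j) \leq 2s$, every derivative (with its weight) required by $C^{k,\mathbf{s}(s-j,\lambda,\delta)}$ appears among those summed in $\| u \|_{C^{k,\mathbf{s}(s,\lambda,\delta)}}$, and the mapping property follows at once.

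Part~2) needs a little more care only in pinning down the target. Writing $v := \partial_x^\alpha u$ with $1 \leq |\alpha| \leq 2$, the derivative $\partial_x^{\gamma}\partial_t^{j} v = \partial_x^{\gamma+\alpha}\partial_t^{j} u$ exists and is controlled by $\| u \|_{C^{\mathbf{s}(s,\lambda,\delta)}}$ exactly when $|\gamma + \alpha| + 2j \leq 2s$, i.e. $|\gamma| + 2j \leq 2s - |\alpha|$; since $|\alpha| \geq 1$ the largest available number of time derivatives drops from $s$ to $s-1$, and the weight $\delta + |\gamma + \alpha| = \delta + |\alpha| + |\gamma|$ is again precisely the one called for. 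This is the space attached to the quintuple $(2s - |\alpha|, \lambda, s-1, \lambda/2, \delta + |\alpha|)$, so the same summation yields the claim. For part~4) I would split $H_\mu = \partial_t - \mu \varDelta$: the term $\partial_t$ is covered by part~3) with $j = 1$, and for $\varDelta = \sum_{l=1}^{n} \partial_{x^l}^{2}$ I would use $\partial_x^{\beta}\varDelta u = \varDelta (\partial_x^{\beta} u)$ and the definition of the index $k$ to reduce to showing that $\varDelta$ maps $C^{\mathbf{s}(s,\lambda,\delta')}(\overline{\mathcal{C}_T})$ continuously into $C^{\mathbf{s}(s-1,\lambda,\delta')}(\overline{\mathcal{C}_T})$. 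For $v$ in the former space and $|\gamma| + 2i \leq 2(s-1)$, one has $\partial_x^{\gamma}\partial_t^{i}\varDelta v = \sum_{l} \partial_x^{\gamma + 2 e_l}\partial_t^{i} v$ with $|\gamma + 2 e_l| + 2i \leq 2s$, so each summand lies in $C^{\mathbf{s}(0,\lambda,\delta' + |\gamma| + 2)}(\overline{\mathcal{C}_T})$ with norm bounded by $\| v \|_{C^{\mathbf{s}(s,\lambda,\delta')}}$; the target asks only for weight $\delta' + |\gamma|$, and the two superfluous powers of the weight are absorbed for free by the embedding $C^{\mathbf{s}(0,\lambda,\delta' + |\gamma| + 2)} \hookrightarrow C^{\mathbf{s}(0,\lambda,\delta' + |\gamma|)}$ recorded above. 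Summing over $l$, $\gamma$, $i$ and then over $|\beta| \leq k$ completes part~4).

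The only genuine point to watch — the \emph{main obstacle}, such as it is — is purely notational: one must keep track of how differentiation shifts the quintuple of indices, namely that a first-order spatial derivative raises the weight by one while lowering the parabolic and the time-smoothness index by one each, and that a second-order spatial derivative or a $\partial_t$ lowers the parameter $s$ by one while leaving the weight untouched; correspondingly the two extra powers of the weight produced by $\varDelta$ must be \emph{discarded} and not retained. Once the index accounting is set up correctly, every step is an equality or an application of Theorem~\ref{t.emb.hoelder.t}, so the proof is short.
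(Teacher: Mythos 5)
Your proposal is correct and follows essentially the same route as the paper: parts 1)--3) are read off from the definition of the norms by multi-index bookkeeping, and part 4) is obtained by splitting $H_\mu = \partial_t - \mu\varDelta$, using 3) for $\partial_t$ and using 2) together with the continuous embedding $C^{k,\mathbf{s}(s-1,\lambda,\delta+2)}(\overline{\mathcal{C}_T}) \hookrightarrow C^{k,\mathbf{s}(s-1,\lambda,\delta)}(\overline{\mathcal{C}_T})$ from Theorem~\ref{t.emb.hoelder.t} to discard the two extra powers of the weight produced by $\varDelta$. You merely fill in the index accounting that the paper leaves to the reader.
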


\begin{proof}
The first three assertions follow readily from the definition of the spaces.
To derive the last assertions from the first three ones it suffices to apply Theorem \ref{t.emb.hoelder.t}
according to which the space
   $C^{k, \mathbf{s} (s-1,\lambda,\delta+2)} (\overline{\mathcal{C}_T})$
is embedded continuously into
   $C^{k, \mathbf{s} (s-1,\lambda,\delta)} (\overline{\mathcal{C}_T})$.
\end{proof}

Aiming at the investigation of linearisations of the Navier-Stokes equations we now consider the action of
differential operators with variable coefficients in the scale
   $C^{k, \mathbf{s} (s,\lambda,\delta)} (\overline{\mathcal{C}_T})$.
Set
\begin{equation}
\label{eq.V0}
   Pu
 = \sum_{|\alpha| \leq 1} P_\alpha  (x,t) \partial^\alpha u
\end{equation}
for $u \in C^1 (\mathbb{R}^n, \varLambda^q)$, where
   $P_{\alpha}$ are $(k_q \times k_q)\,$-matrices of differentiable functions on $\overline{\mathcal{C}_T} $
and
   $k_q$ the rank of the bundle $\varLambda^q$.

\begin{lemma}
\label{l.map.hoelder.V0}
Let
   $s \geq 1$ and $k \geq 0$ be integers,
   $0 < \lambda \leq 1$
and
   $\delta, \delta' > 0$.
If the entries of $P_\alpha$, $|\alpha| \leq 1$, belong to
   $C^{k + \mathbf{s} (s-1,\lambda,\delta'-|\alpha|)} (\overline{\mathcal{C}_T})$,
then (\ref{eq.V0}) induces a bounded linear map
$$
   P :\, C^{k, \mathbf{s} (s,\lambda,\delta)} (\overline{\mathcal{C}_T} ,\varLambda^q) \to
         C^{k, \mathbf{s} (s-1,\lambda,\delta+\delta')} (\overline{\mathcal{C}_T} ,\varLambda^q).
$$
\end{lemma}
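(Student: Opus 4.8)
The plan is to combine linearity with the two tools already at our disposal: the mapping behaviour of $\partial_x^\alpha$ in the anisotropic scale (Lemma~\ref{l.diff.oper}) and the multiplication estimate (Lemma~\ref{l.product}). Since $P=\sum_{|\alpha|\le 1}P_\alpha\partial^\alpha$ is a finite sum, it suffices to show that each summand $u\mapsto P_\alpha\,\partial^\alpha u$ is a bounded linear map $C^{k,\mathbf{s}(s,\lambda,\delta)}(\overline{\mathcal{C}_T},\varLambda^q)\to C^{k,\mathbf{s}(s-1,\lambda,\delta+\delta')}(\overline{\mathcal{C}_T},\varLambda^q)$, and then to add the resulting norm estimates over the finitely many $\alpha$ with $|\alpha|\le 1$.

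The one preliminary point I would isolate is the claim that, for $|\alpha|\le 1$,
$$
   \partial^\alpha:\ C^{k,\mathbf{s}(s,\lambda,\delta)}(\overline{\mathcal{C}_T},\varLambda^q)
   \longrightarrow
   C^{k,\mathbf{s}(s-1,\lambda,\delta+|\alpha|)}(\overline{\mathcal{C}_T},\varLambda^q)
$$
is bounded. For $\alpha=0$ this is merely the continuous embedding of Theorem~\ref{t.emb.hoelder.t} ($2s\ge 2(s-1)$, the weight being unchanged). For $|\alpha|=1$ the idea is that one $x\,$-derivative should be charged to the $\mathbf{s}(s)\,$-smoothness budget rather than to the outer index $k$: viewing $u\in C^{k,\mathbf{s}(s,\lambda,\delta)}$ through the family $\{\partial_x^\beta u:|\beta|\le k\}$ with $\partial_x^\beta u\in C^{\mathbf{s}(s,\lambda,\delta+|\beta|)}$, one has $\partial_x^\beta(\partial^\alpha u)=\partial^\alpha(\partial_x^\beta u)$, and Lemma~\ref{l.diff.oper} (assertion~2), together with the evident inclusion of the five-index space appearing there into $C^{\mathbf{s}(s-1,\lambda,\cdot)}$, shows that $\partial^\alpha$ carries $C^{\mathbf{s}(s,\lambda,\mu)}$ boundedly into $C^{\mathbf{s}(s-1,\lambda,\mu+1)}$; applying this with $\mu=\delta+|\beta|$ for every $|\beta|\le k$ gives $\partial^\alpha u\in C^{k,\mathbf{s}(s-1,\lambda,\delta+1)}$. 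Equivalently, this amounts to a short direct inspection of the definition of $C^{\mathbf{s}(s,\lambda,\delta)}$ as the intersection $\bigcap_{j}\big(C^{j,0}([0,T],C^{2(s-j),\lambda,\cdot})\cap C^{j,\lambda/2}([0,T],C^{2(s-j),0,\cdot})\big)$. The loss of one parabolic order here (from $s$ to $s-1$) is exactly the phenomenon flagged in the introduction.

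Granting the claim, fix $\alpha$ with $|\alpha|\le 1$. By hypothesis the entries of $P_\alpha$ belong to $C^{k,\mathbf{s}(s-1,\lambda,\delta'-|\alpha|)}(\overline{\mathcal{C}_T})$, and by the claim $\partial^\alpha u\in C^{k,\mathbf{s}(s-1,\lambda,\delta+|\alpha|)}(\overline{\mathcal{C}_T},\varLambda^q)$. Applying Lemma~\ref{l.product} entrywise, with the integer there taken to be $s-1$ and with weights $\delta+|\alpha|$ and $\delta'-|\alpha|$, yields
$$
   \|P_\alpha\,\partial^\alpha u\|_{C^{k,\mathbf{s}(s-1,\lambda,\delta+\delta')}(\overline{\mathcal{C}_T})}
 \le c\,\|P_\alpha\|_{C^{k,\mathbf{s}(s-1,\lambda,\delta'-|\alpha|)}(\overline{\mathcal{C}_T})}\,
        \|\partial^\alpha u\|_{C^{k,\mathbf{s}(s-1,\lambda,\delta+|\alpha|)}(\overline{\mathcal{C}_T})}
 \le C\,\|u\|_{C^{k,\mathbf{s}(s,\lambda,\delta)}(\overline{\mathcal{C}_T})},
$$
with $C$ depending on $P_\alpha$, $s$, $k$, $\lambda$, $\delta$, $\delta'$ but not on $u$. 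Summing over the finitely many $\alpha$ with $|\alpha|\le 1$ proves that $P$ is a bounded linear operator into $C^{k,\mathbf{s}(s-1,\lambda,\delta+\delta')}(\overline{\mathcal{C}_T},\varLambda^q)$, as asserted. I expect the only genuine work to lie in the $|\alpha|=1$ instance of the preliminary claim: one must check, directly from the definition of the anisotropic weighted spaces, that one spatial derivative is absorbed by replacing the parameter $s$ with $s-1$ and the weight $\delta$ with $\delta+1$ while leaving the time-H\"older exponent $\lambda/2$ and the outer index $k$ untouched, and in particular that the isotropic spatial component and the $\lambda/2\,$-H\"older-in-$t$ component of the norm are matched correctly. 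Everything else is a mechanical combination of Theorem~\ref{t.emb.hoelder.t} and Lemmata~\ref{l.diff.oper}, \ref{l.product} with the finiteness of the set $\{\alpha:|\alpha|\le 1\}$.
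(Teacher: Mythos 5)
Your proof is correct and takes essentially the same route as the paper's: both reduce to the boundedness of each summand $u\mapsto P_\alpha\,\partial^\alpha u$ by applying Lemma \ref{l.product} with the weights $\delta'-|\alpha|$ and $\delta+|\alpha|$, and then absorb the single spatial derivative by trading one unit of parabolic smoothness ($s\to s-1$), which the paper phrases as the chain $\| \partial^\alpha u \|_{C^{k,\mathbf{s}(s-1,\lambda,\delta+|\alpha|)}} \leq \| u \|_{C^{k+|\alpha|,\mathbf{s}(s-1,\lambda,\delta)}} \leq c\, \| u \|_{C^{k,\mathbf{s}(s,\lambda,\delta)}}$ using the embedding $C^{k,\mathbf{s}(s,\lambda,\delta)} \hookrightarrow C^{k+1,\mathbf{s}(s-1,\lambda,\delta)}$, and you phrase as the boundedness of $\partial^\alpha : C^{k,\mathbf{s}(s,\lambda,\delta)} \to C^{k,\mathbf{s}(s-1,\lambda,\delta+|\alpha|)}$ via Lemma \ref{l.diff.oper}. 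The two formulations are equivalent, so no further comment is needed.
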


\begin{proof}
If $0 < \lambda \leq 1$, then according to
   Lemma \ref{l.product} and
   Theorems \ref{t.emb.hoelder.t}
we get immediately
\begin{eqnarray*}
\lefteqn{
   \| P_\alpha \partial^\alpha u \|_{C^{k, \mathbf{s} (s-1,\lambda,\delta+\delta')} (\overline{\mathcal{C}_T} ,\varLambda^q)}
}
\\
 & \leq &
   c'\,
   \| P_\alpha \|_{C^{k, \mathbf{s} (s-1,\lambda,\delta'-|\alpha|)} (\overline{\mathcal{C}_T} , \mathrm{Hom} (\varLambda^q))}
   \| \partial^\alpha u \|_{C^{k, \mathbf{s} (s-1,\lambda,\delta+|\alpha|)} (\overline{\mathcal{C}_T} ,\varLambda^q)}
\\
 & \leq &
   c''\,
   \| P_\alpha \|_{C^{k, \mathbf{s} (s-1,\lambda,\delta'-|\alpha|)} (\overline{\mathcal{C}_T} , \mathrm{Hom} (\varLambda^q))}
   \| u \|_{C^{k+|\alpha|, \mathbf{s} (s-1,\lambda,\delta)} (\overline{\mathcal{C}_T} ,\varLambda^q)}
\\
 & \leq &
   c'''\,
   \| P_\alpha \|_{C^{k, \mathbf{s} (s-1,\lambda,\delta'-|\alpha|)} (\overline{\mathcal{C}_T} , \mathrm{Hom} (\varLambda^q))}
   \| u \|_{C^{k, \mathbf{s} (s,\lambda,\delta)} (\overline{\mathcal{C}_T} ,\varLambda^q)}
\end{eqnarray*}
with some constants $c'$, $c''$ and $c'''$ independent of
   $u \in C^{k, \mathbf{s} (s,\lambda,\delta)} (\overline{\mathcal{C}_T} ,\varLambda^q)$,
because the space
   $C^{k, \mathbf{s} (s,\lambda,\delta)} (\overline{\mathcal{C}_T})$ is embedded continuously into
   $C^{k+1, \mathbf{s} (s-1,\lambda,\delta)} (\overline{\mathcal{C}_T})$.
\end{proof}

Lemma \ref{l.diff.oper} shows that the scale of weighted spaces
   $C^{k, \mathbf{s} (s,\lambda,\delta)} (\overline{\mathcal{C}_T})$
does not fully agree with the dilation principle for parabolic equations so far as it concerns the weight.
Hence, when solving the Cauchy problem for the heat equation, we should expect some loss of regularity with
respect to either the smoothness or the weight.

\section{The de Rham complex over weighted H\"older spaces}
\label{s.deRham.HS}

Motivated by the factorisation of linearised Navier-Stokes equations we are interested in describing
the behaviour of the Laplace operator and the de Rham complex in the scales
   $C^{s,\lambda,\delta}$ and
   $C^{k, \mathbf{s} (s,\lambda,\delta)} (\overline{\mathcal{C}_T})$.
Actually, it is  well known and similar to the behaviour of the Laplace operator in the scale of
weighted Sobolev spaces (see for instance \cite{McOw79}).

Let $H_{\leq m}$ stand for the space of all harmonic polynomials of degree $\leq m$ in the space
variable $x \in \mathbb{R}^n$.
Denote by $R^{s,\lambda,\delta+2} (\mathbb{R}^n)$ the range of the bounded linear operator
\begin{equation}
\label{eq.Laplace}
   \varDelta :\, C^{s+2,\lambda,\delta} (\mathbb{R}^n) \to
              C^{s,\lambda,\delta+2} (\mathbb{R}^n)
\end{equation}
induced by the Laplace operator $\varDelta$.

\begin{theorem}
\label{t.weight.Hoelder.Laplace}
Assume that
   $n \geq 2$,
   $s$ is a nonnegative integer and
   $0 < \lambda < 1$.
If moreover
   $\delta > 0$ and
   $\delta+2-n \not\in \mathbb{Z}_{\geq 0}$,
then the operator (\ref{eq.Laplace}) is Fredholm.
Moreover,

1)
(\ref{eq.Laplace}) is an isomorphism, if $0 < \delta < n-2$;

2)
(\ref{eq.Laplace}) is an injection, if $n-2+m < \delta < n-1+m$ for $m \in \mathbb{Z}_{\geq 0}$
and its (closed) range $R^{s,\lambda,\delta+2} (\mathbb{R}^n)$ consists of all
   $f \in  C^{s,\lambda,\delta+2} (\mathbb{R}^n)$
satisfying
$$
   \int_{\mathbb{R}^n} f (x) h (x) dx = 0
$$
whenever $h \in H_{\leq m}$.
%
%3)
%(\ref{eq.Laplace}) is a surjection with the kernel $H_{\leq m}$, if $-m-1 < \delta < -m$ for
%some $m \in \mathbb{Z}_{\geq 0}$.
\end{theorem}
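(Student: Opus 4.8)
The plan is to follow the classical route for the Laplacian on weighted spaces, as in McOwen \cite{McOw79} for the Sobolev scale, resting on three ingredients: a weighted a priori (Schauder) estimate, a Liouville-type argument, and the Newtonian potential. First I would prove that, under the hypotheses of the theorem, there is a constant $c > 0$ with
$$
  \| u \|_{C^{s+2,\lambda,\delta}}
 \le
  c\, \big( \| \varDelta u \|_{C^{s,\lambda,\delta+2}} + \| u \|_{C^{s+1,\lambda,\delta-1}} \big)
$$
for all $u \in C^{s+2,\lambda,\delta} (\mathbb{R}^n)$. This is obtained by localisation and dilation: one applies the interior Schauder estimate for $\varDelta$ on a fixed dyadic annulus to the rescaled functions $x \mapsto u (2^{j} x)$, restores the weight factors $(2^{j})^{\delta+|\alpha|}$ prescribed by the norms of Section~\ref{s.HoelderSpaces}, and sums over $j \ge 0$; near the origin the ordinary Schauder estimate on a ball is used. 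Since $C^{s+2,\lambda,\delta}$ is compactly embedded into $C^{s+1,\lambda,\delta-1}$ by Theorem~\ref{t.emb.hoelder}, this estimate already shows that (\ref{eq.Laplace}) has finite-dimensional kernel and closed range, so only injectivity and the size of the cokernel remain.

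For injectivity, note that for every $\delta > 0$ a solution of $\varDelta u = 0$ in $C^{s+2,\lambda,\delta}$ is an entire harmonic function with $u (x) = O (|x|^{-\delta}) \to 0$ as $|x| \to \infty$, hence $u \equiv 0$ by the maximum principle; this handles injectivity in both 1) and 2). For surjectivity in case 1), where $0 < \delta < n-2$ (and so $n \ge 3$), I would use the standard fundamental solution $\Phi$ of $\varDelta$ and the Newtonian potential $N f = \Phi * f$. Splitting the convolution over the regions $\{ |y| < |x|/2 \}$, $\{ |x|/2 \le |y| \le 2|x| \}$ and $\{ |y| > 2|x| \}$ and using that $\partial^\alpha \Phi$ is homogeneous of degree $2-n-|\alpha|$, one verifies $\partial_{x}^\alpha N f = O (|x|^{-\delta-|\alpha|})$ together with the matching weighted H\"older seminorm bounds; here $0 < \delta < n-2$ guarantees that the borderline contribution $\Phi (x) \int_{\mathbb{R}^n} f$, of size $|x|^{2-n}$, is dominated by $|x|^{-\delta}$. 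Thus $N$ maps $C^{s,\lambda,\delta+2} (\mathbb{R}^n)$ into $C^{s+2,\lambda,\delta} (\mathbb{R}^n)$ with $\varDelta N f = f$, which together with injectivity shows that (\ref{eq.Laplace}) is an isomorphism.

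For case 2), where $n-2+m < \delta < n-1+m$, I would first show that the orthogonality conditions are necessary: pairing $\varDelta u = f$ with $h \in H_{\le m}$ and applying Green's formula on the ball $B_R$ leaves only a boundary integral over $\partial B_{R}$ of order $R^{\,n+m-2-\delta} \to 0$ (this is where $\delta > n-2+m$ enters), whence $\int_{\mathbb{R}^n} f h\, dx = 0$. They are also sufficient: Taylor-expanding $\Phi (x-y)$ in $y$ to order $m$, the part of $N f$ decaying slower than $|x|^{-\delta}$ equals $\sum_{|\gamma| \le m} \frac{(-1)^{|\gamma|}}{\gamma!}\, (\partial^\gamma \Phi) (x) \int_{\mathbb{R}^n} y^\gamma f (y)\, dy$; grouping the terms of a fixed homogeneity degree $k \le m$ and replacing the corresponding symbol polynomial by its harmonic part (legitimate since $\varDelta \Phi = 0$ away from the origin), the degree-$k$ contribution becomes $P_{k} (\partial) \Phi$, with $P_{k}$ the homogeneous harmonic polynomial of degree $k$ characterised by $P_{k} (\partial) h = (-1)^{k} \int_{\mathbb{R}^n} h f\, dx$ for all harmonic $h$ of degree $k$. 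Hence these terms vanish exactly when $\int_{\mathbb{R}^n} f h\, dx = 0$ for every $h \in H_{\le m}$, and then a Taylor-remainder estimate in the three regions yields $N f = O (|x|^{-\delta})$ (here the upper bound $\delta < n-1+m$ is used), that is $N f \in C^{s+2,\lambda,\delta} (\mathbb{R}^n)$ and $\varDelta N f = f$. Finally, the functionals $f \mapsto \int_{\mathbb{R}^n} f h\, dx$, $h \in H_{\le m}$, are continuous on $C^{s,\lambda,\delta+2} (\mathbb{R}^n)$ because $\delta + 2 > n+m$, so their common null space is closed and, by the above, equals the range of (\ref{eq.Laplace}); combined with the a priori estimate this gives that (\ref{eq.Laplace}) is Fredholm with the asserted range $R^{s,\lambda,\delta+2} (\mathbb{R}^n)$.

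The step I expect to be the main obstacle is the package of weighted estimates — pushing the classical interior Schauder theory through the dyadic rescaling while keeping track of the weighted H\"older seminorms, both for $\varDelta u$ and for $N f$ in all three integration regions — together with, in case 2), the algebraic bookkeeping that singles out precisely $H_{\le m}$, rather than the larger space of all polynomials of degree $\le m$, as the obstruction. The logarithmic fundamental solution for $n = 2$ needs a separate but routine treatment, harmless since case 1) is then vacuous and in case 2) it affects only the $k = 0$ term.
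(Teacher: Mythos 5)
Your proposal is correct and follows essentially the same route as the paper: a weighted Schauder estimate obtained by dyadic rescaling on annuli, the Liouville theorem for injectivity, the Newton potential split over the regions $\{|y|<|x|/2\}$, $\{|x|/2\le|y|\le 2|x|\}$, $\{|y|>2|x|\}$ for case 1), Green's formula for the necessity of the moment conditions, and a kernel corrected by the order-$m$ expansion of $\phi(x-y)$ for case 2) (the paper writes this expansion in spherical harmonics, following McOwen, rather than as a Taylor series with harmonic projection, and it establishes the boundedness of $\varPhi$ in case 1) by comparison with an auxiliary series solution of $\varDelta F = w^{-\delta-2}$ instead of by direct region estimates, but these are cosmetic variants of the same argument).
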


\begin{proof}
See for instance \cite{Be11}, \cite{Mar02}.
The key tool in the proof is the Newton potential
\begin{equation}
\label{eq.volume.potential}
   \varPhi f\, (x) = \int_{\mathbb{R}^n} \phi (x-y)\, f (y)\, dy
\end{equation}
on all of $\mathbb{R}^n$ defined for functions $f$ over $\mathbb{R}^n$, where
$$
   \phi (x)
 = \left\{ \begin{array}{lcl}
             \displaystyle
             \frac{1}{\pi} \ln |x|,
           & \mbox{for}
           & n = 2,
\\
             \displaystyle
             \frac{1}{\sigma_n} \frac{|x|^{2-n}}{2-n},
           & \mbox{for}
           & n \geq 3,
           \end{array}
   \right.
$$
is the standard two-sided fundamental solution of the convolution type to the Laplace operator
in $\mathbb{R}^n$ and $\sigma_n$ the area of the unit sphere in $\mathbb{R}^n$.
Let us briefly sketch the proof.

The crucial role in the proof is played by the following a priori estimate of Schauder type for
the Laplace operator.

\begin{lemma}
\label{l.apriori.Delta}
Suppose $\delta > 0$.
If
   $f \in C^{0,\lambda,\delta+2} (\mathbb{R}^n)$ and
   $u \in C^{0,0,\delta} (\mathbb{R}^n)$
satisfies $\varDelta u = f$ in the sense of distributions in $\mathbb{R}^n$, then
   $u \in C^{2,\lambda,\delta} (\mathbb{R}^n)$
and
$$
   \| u \|_{C^{2,\lambda,\delta} (\mathbb{R}^n)}
 \leq
   c \left( \| f \|_{C^{0,\lambda,\delta+2} (\mathbb{R}^n)}
          + \| u \|_{C^{0,0,\delta} (\mathbb{R}^n)}
     \right)
$$
with $c$ a constant depending on $\lambda$ and $\delta$ but not on $u$.
\end{lemma}

\begin{proof}
The proof is based on a priori estimates of Schauder type for solutions of elliptic
equations, see for instance
   \cite{GiTru83} for H\"older spaces,
   \cite{NireWalk73}, \cite{McOw79} for weighted Sobolev spaces,
   \cite{MazRoss04} for weighted H\"older spaces on an infinite cone
and
   Proposition 2.7 of \cite{Be11} and Theorem 4.21 of \cite{Mar02} for weighted H\"older
   spaces on a manifold with conical points.

Indeed, by Lemma \ref{l.emb.loc} and elliptic regularity we conclude that any function $u$
satisfying the hypotheses of the lemma belongs to
   $C^{2,\lambda}_{\mathrm{loc}} (\mathbb{R}^n)$.
Using standard a priori estimates for the Laplace operator yields
\begin{equation}
\label{eq.GilTru}
\begin{array}{rcl}
   \displaystyle
   \| u \|_{C^{2,\lambda} (\overline{B_1})}
 & \leq &
   \displaystyle
   c \left( \| \varDelta u \|_{C^{0,\lambda} (\overline{B_2})}
          + \| u \|_{C^{0,0} (\overline{B_2})}
   \right),
\\
   \displaystyle
   \sum_{|\alpha| \leq 2} \| \partial^\alpha u \|_{C^{0,\lambda} (\overline{B_2})}
 & \leq &
   \displaystyle
   c \left( \| \varDelta u \|_{C^{0,\lambda} (\overline{B_4})}
          + \| u \|_{C^{0,0} (\overline{B_4})}
     \right)
\end{array}
\end{equation}
for all $u$ as in the statement of the lemma,
   with $c$ a constant depending on the ratio of the radii of the balls but not on $u$.
(See for instance
   Theorem 4.6 in \cite[\S~4.2, \S~4.3]{GiTru83}, cf. also
   Theorem 9.11 for Lebesgue spaces {\it ibid}.)
For $|x| \leq 4$ one verifies easily $1 \leq \sqrt{1+|x|^2} \leq \sqrt{17}$.
Therefore, using (\ref{eq.half}) we may write the last inequality as
\begin{equation}
\label{eq.apriori.1}
   \sum_{|\alpha| \leq 2} \| \partial^\alpha u \|_{C^{0,\lambda,\delta+|\alpha|} (\overline{B_2})}
 \leq
   c \left( \| \varDelta u \|_{C^{0,\lambda,\delta+2} (\overline{B_4})}
          + \| u \|_{C^{0,0,\delta} (\overline{B_4})}
     \right)
\end{equation}
for all $u$ as in the statement of the lemma,
   the constant $c$ depends on $\delta$ and need not be the same in diverse applications.

We next consider a spherical layer in $\mathbb{R}^n$ of the form $r < |x| < 8 r$, where $r \geq 1$
is a fixed constant, and we define the function
$$
   u_r (x) := u (r x)
$$
for $1 \leq |x| \leq 8$.
Then
$$
\begin{array}{rcl}
   \partial_i u_r (x)
 & =
 & r (\partial_i u) (r x),
\\
   \varDelta u_r (x)
 & =
 & r^2 (\varDelta u) (r x),
\end{array}
$$
that is $r^2 f (r x)$.
Again by standard a priori estimates we obtain
$$
   \| \partial^\alpha u_r \|_{C^{0,\lambda} (\overline{B_4} \setminus B_2)}
 \leq
   c
   \left( \| \varDelta u_r \|_{C^{0,\lambda} (\overline{B_8} \setminus B_1)}
        + \| u_r \|_{C^{0,0} (\overline{B_8} \setminus B_1)}
   \right)
$$
for $|\alpha| \leq 2$, see for instance
   Theorem 4.6 in \cite[\S~4.2, \S~4.3]{GiTru83} and
   Theorem 9.11 for Lebesgue spaces {\it ibid}.
For the original function $u$ this reduces to
\begin{eqnarray*}
\lefteqn{
   r^{\delta+|\alpha|}
   \| \partial^\alpha u \|_{C^{0,\lambda} (\overline{B_{4r}} \setminus B_{2r})}
}
\\
 & \leq &
   c
   \left( r^{\delta+2} \| \varDelta u \|_{C^{0,\lambda} (\overline{B_{8r}} \setminus B_r)}
        + r^\delta \| u \|_{C^{0,0} (\overline{B_{8r}} \setminus B_r)}
   \right)
\end{eqnarray*}
with $c$ a constant depending on the ratio of the radii of the balls but not on $u$.
Note that if $r \leq  |x| \leq 8r$ then $r < \sqrt{1+|x|^2} \leq 9r$.
Therefore, on applying estimate (\ref{eq.GilTru}) we get
\begin{eqnarray*}
\lefteqn{
   \| \partial^\alpha u \|_{C^{0,\lambda,\delta+|\beta|} (\overline{B_{4r}} \setminus B_{2r})}
}
\\
 & \leq &
   c
   \left( \| \varDelta u \|_{C^{0,\lambda,\delta+2} (\overline{B_{8r}} \setminus B_r)}
        + \| u \|_{C^{0,0,\delta} (\overline{B_{8r}} \setminus B_r)}
   \right)
\end{eqnarray*}
for $|\alpha| \leq 2$, where the constant $c$ depends on $\delta$ but not on $r \geq 1$ and $u$.

We now choose $r = 2^m$ with $m = 0, 1, \ldots$.
For any multi-index $\alpha$ satisfying $|\alpha| \leq 2$ it follows that
\begin{eqnarray*}
\lefteqn{
   \| \partial^\alpha u \|_{C^{0,\lambda,\delta+|\beta|} (\overline{B_{2^{m+2}}} \setminus B_{2^{m+1}})}
}
\\
 & \leq &
   c
   \left( \| \varDelta u \|_{C^{0,\lambda,\delta+2} (\overline{B_{2^{m+3}}} \setminus B_{2^m})}
        + \| u \|_{C^{0,0,\delta} (\overline{B_{2^{m+3}}} \setminus B_{2^m})}
   \right),
\end{eqnarray*}
where the constant $c$ depends neither on $m = 0, 1, \ldots$ nor on $u$.
Combining these sequence of inequalities in spherical layers with
   (\ref{eq.GilTru}) and
   (\ref{eq.apriori.1})
establishes the lemma.
\end{proof}

Let us continue with the proof of Theorem \ref{t.weight.Hoelder.Laplace}.
First we note that the Liouville theorem implies that the operator $\varDelta $ is injective on
   $C^{s,\lambda,\delta} (\mathbb{R}^n)$,
for any $s \geq 2$ and $\delta > 0$, and the kernel of the operator $\varDelta$ on
   $C^{s,\lambda,\delta} (\mathbb{R}^n)$
is equal to $H_{\leq m}$, if $-m-1 < \delta < -m$ with $m = 0, 1, \ldots$.

Second, let $H_m$ stand for the set of all homogeneous harmonic polynomials of degree $m$ with respect
to the space variable $x$.
Note that if $m$ is a nonnegative integer and $\delta > n-2+m$ then for
   $f \in C^{0,0,\delta+2} (\mathbb{R}^n)$
to be in the range of $\varDelta$ acting on
   $C^{1,0,\delta} (\mathbb{R}^n) \cap C^2 (\mathbb{R}^n)$
it is necessary that
$$
   \int_{\mathbb{R}^n} f h_j dx
 = \int_{\mathbb{R}^n} (\varDelta u) h_j dx
 = \lim_{R \to +\infty} \int_{|x|=R}
   \Big( \frac{\partial u}{\partial \nu} h_j - u \frac{\partial h_j}{\partial \nu} \Big) ds
 = 0
$$
for all $h_j \in H_j$ with $0 \leq j \leq m$, because
$$
R^{n-1-\delta+(j-1)} = R^{n-1-(\delta+1)+j} = R^{n-2+j-\delta} \to 0
$$
as $R \to +\infty$.
Thus for $n = 2$ the isomorphism described in the item $1)$ is impossible.

Clearly, the function $(w (x))^{-\delta-2}$ belongs to $C^{s,0,\delta+2} (\mathbb{R}^n)$ for any
$\delta \in \mathbb{R}$ and all $s \in \mathbb{Z}_{\geq 0}$.
Our next objective is to Let us construct a formal solution to the inhomogeneous equation
\begin{equation}
\label{eq.Delta.2}
   \varDelta F  = \frac{1}{(1+|x|^2)^{(\delta+2)/2}}
\end{equation}
in $\mathbb{R}^n \setminus \{ 0 \}$.
To this end we introduce
$$
   F (x) = \sum_{k=0}^\infty \frac{a_k}{(1+|x|^2)^{(\delta+2k)/2}}
$$
as a formal series.
Clearly, the coefficients $a_k$ are uniquely determined from equality (\ref{eq.Delta.2}).

\begin{lemma}
\label{l.F0}
Let
   $\delta$ be a real number different from $0, n-2, n-4, \ldots$.
The series $F$ converges uniformly along with all derivatives on compact subsets away from the origin
in $\mathbb{R}^n$.
Moreover the function $F$ belongs to
   $C^{s,0,\delta} (\mathbb{R}^n \setminus B_1)$
for any $s \in \mathbb{Z}_{\geq 0}$ and it satisfies (\ref{eq.Delta.2}).
\end{lemma}

\begin{proof}
It easy to verify that
\begin{eqnarray*}
   \partial_i F\, (x)
 & = &
   \sum_{k=0}^\infty \frac{-(\delta+2k) x_i\, a_k}{(1+|x|^2)^{(\delta+2k+2)/2}},
\\
   \partial_i^2 F\, (x)
 & = &
   \sum_{k=0}^\infty
   \Big( \frac{(\delta+2k) (\delta+2k+2)\, x_i^2\, a_k}{(1+|x|^2)^{(\delta+2k+4)/2}}
       - \frac{(\delta+2k)\, a_k}{(1+|x|^2)^{(\delta+2k+2)/2}} \Big),
\end{eqnarray*}
and so
\begin{eqnarray*}
   \varDelta F\, (x)
 & = &
   \sum_{k=0}^\infty
   \Big( \frac{(\delta+2k) (\delta+2k+2)\, |x|^2\, a_k}{(1+|x|^2)^{(\delta+2k+4)/2}}
       - \frac{(\delta+2k) n\, a_k}{(1+|x|^2)^{(\delta+2k+2)/2}} \Big)
\\
 & = &
 \sum_{k=0}^\infty
 \Big( \frac{(\delta+2k) (\delta+2k+2-n)\, a_k}{(1+|x|^2)^{(\delta+2k+2)/2}}
     - \frac{(\delta+2k) (\delta+2k+2)\, a_k}{(1+|x|^2)^{(\delta+2k+4)/2}} \Big)
\\
 & = &
   \frac{\delta (\delta\!+\!2\!-\!n)\, a_0}{(1+|x|^2)^{(\delta+2)/2}}
 + \sum_{k=1}^\infty
   \frac{(\delta\!+\!2k) \left( (\delta\!+\!2k\!+\!2\!-\!n) a_k - (\delta\!+\!2k\!-\!2) a_{k\!-\!1} \right)}
   {(1+|x|^2)^{(\delta+2k+2)/2}}
\end{eqnarray*}
as formal series.
In particular, if
\begin{equation}
\label{eq.Abel}
\begin{array}{rcl}
   a_0
 & =
 & \displaystyle
   \frac{1}{\delta (\delta+2-n)},
\\
   a_k
 & =
 & \displaystyle
   \frac{\delta+2k-2}{\delta+2k+2-n}\, a_{k-1}
\end{array}
\end{equation}
for $k \geq 1$, then
$$
   \varDelta F\, (x)
 = \frac{1}{(1+|x|^2)^{(\delta+2)/2}}
$$
as formal series.
We get
\begin{eqnarray*}
   a_1
 & = &
   \frac{1}{(\delta+2-n)(\delta+4-n)},
\\
   a_2
 & = &
   \frac{\delta+2}{(\delta+2-n) (\delta+4-n) (\delta+6-n)}
\end{eqnarray*}
and more generally
$$
   a_k
 = \frac{\displaystyle \prod_{j=1}^{k-1} (\delta+2j)}
        {\displaystyle \prod_{j=1}^{k+1} (\delta+2j-n))},
$$
for $k \geq 3$, provided that $\delta$ is different  from $0, n-2, n-4, \ldots$.
From (\ref{eq.Abel}) it follows that the convergence domain of the power series
$$
   \sum_{k=0}^\infty a_k z^k
$$
coincides with the unit disc in $\mathbb{C}$.
Hence, by the Abel theorem the series $F$ converges uniformly along with all derivatives
on compact subsets of $\mathbb{R}^n \setminus \{ 0 \}$.
Its sum belongs actually to
   $C^\infty_{\mathrm{loc}} (\mathbb{R}^n \setminus \{ 0 \}) \cap
    C^{0,0,\delta} (\mathbb{R}^n \setminus B_1)$,
for
$$
   (w (x))^\delta F (x)
 = \sum_{k=0}^\infty \frac{a_k (w (x))^\delta}{(1+|x|^2)^{(\delta+2k)/2}}
 = \sum_{k=0}^\infty \frac{a_k}{(1+|x|^2)^k}.
$$
Next,
\begin{eqnarray*}
   (w (x))^{\delta+1} | \partial_i F (x) |
 & \leq &
   \sum_{k=0}^\infty
   \frac{|\delta+2k| |a_k| |x_i| (w (x))^{\delta+1}}{(1+|x|^2)^{(\delta+2k+2)/2}}
\\
 & = &
   \frac{|x_i|}{(1+|x|^2)^{1/2}}
   \sum_{k=0}^\infty  \frac{|\delta+2k| |a_k|}{(1+|x|^2)^k}.
\end{eqnarray*}
Formula (\ref{eq.Abel}) implies that the convergence radius of the series
$$
   \sum_{k=0}^\infty |\delta+2k| |a_k| z^k
$$
equals $1$ whence
$$
   (w (x))^{\delta+1} |\partial_i F (x)|
 \leq
   \sum_{k=0}^\infty  \frac{|\delta+2k| |a_k|}{2^k}
 < \infty,
$$
i.e., $F \in C^{1,0,\delta} (\mathbb{R}^n \setminus B_1)$.

We now proceed by induction.
For any multi-index $\alpha \in \mathbb{Z}_{\geq 0}$ we readily verify that
$$
   (w (x))^{\delta+|\alpha|} \partial^\alpha F (x)
 = \sum_{k=0}^\infty
   P_{\alpha,k} \Big( \frac{x}{w (x)} \Big) \frac{a_k}{(1+|x|^2)^{k}},
$$
where $P_{\alpha,k}$ are polynomials of degree $\leq |\alpha|$ of $n$ variables, such
that
$$
   \Big| P_{\alpha,k} \Big( \frac{x}{w (x)} \Big) \Big|
 \leq
   c\, k^{|\alpha|}
$$
for all $x \in \mathbb{R}^n$, the constant $c$ depending on $\delta$ but not on $k$.
On arguing as above we deduce that
   $F \in C^{s,0,\delta} (\mathbb{R}^n \setminus B_1)$
for any nonnegative integer $s$.
By the construction, $F$ satisfies (\ref{eq.Delta.2}).
\end{proof}

\begin{lemma}
\label{l.phi.Hoelder}
If $\delta > 0$ then the potential $\varPhi f$ given by (\ref{eq.volume.potential}) satisfies
   $\varDelta (\varPhi f) = f$
in the sense of distributions on $\mathbb{R}^n$ for all
   $f \in C^{0,\lambda,\delta+2} (\mathbb{R}^n)$.
Moreover, if\, $0 < \delta < n-2$ then the potential (\ref{eq.volume.potential}) induces
the bounded map
$
   \varPhi : C^{0,\lambda,\delta+2} (\mathbb{R}^n) \to C^{2,\lambda,\delta} (\mathbb{R}^n)
$
for all $\lambda \in (0,1)$.
\end{lemma}

\begin{proof}
We begin with $n > 2$.

Recall that
   $C^{0,\lambda,\delta+2} (\mathbb{R}^n) \hookrightarrow
    C^{0,\lambda}_{\mathrm{loc}} (\mathbb{R}^n)$.
Fix $f \in C^{0,\lambda,\delta+2} (\mathbb{R}^n)$ with $\delta > 0$.
First, for $x = 0$, we get
\begin{eqnarray*}
   |(\varPhi f) (0)|
 & \leq &
   \frac{1}{\sigma_n (n-2)} \int_{\mathbb{R}^n} \frac{|f (y)|}{|y|^{n-2}} dy
\\
 & \leq &
   \frac{1}{\sigma_n (n-2)} \int_{|y| \leq 1} \frac{|f (y)|}{|y|^{n-2}} dy
 + \frac{1}{\sigma_n (n-2)} \int_{|y| \geq 1} \frac{|f (y)|}{|y|^{n-2}} dy
\\
 & \leq &
   \frac{\| f \|_{C^{0,0,\delta} (\mathbb{R}^n)}}{\sigma_n (n-2)}
   \Big( \int_{|y| \leq 1} \frac{(1+|y|^2)^{-(\delta+2)/2}}{|y|^{n-2}} dy
       + \int_{|y| \geq 1} |y|^{-(\delta+n)} dy
   \Big).
\end{eqnarray*}
The first integral in the parentheses converges because $n - 2 < n$, and the second integral
converges because $\delta + n > n$.

If $x \ne 0$, then
$$
   |x-y|
 = |y| \Big| \frac{y}{|y|} - \frac{x}{|y|} \Big|
 \geq |y| (1-1/2)
 = |y|/2
$$
for all $x \in \mathbb{R}^n$ satisfying $|x| \leq |y|/2$.
Hence we obtain
\begin{eqnarray}
\label{eq.uni}
\lefteqn{
   |(\varPhi f) (x)|
}
 \nonumber
 \\
 & \leq &
   \frac{1}{\sigma_n (n-2)} \int_{|y| \leq 2 |x|} \frac{|f (y)|}{|y-x|^{n-2}} dy
 + \frac{1}{\sigma_n (n-2)} \int_{|y| \geq 2 |x|} \frac{|f (y)|}{|y-x|^{n-2}} dy
\nonumber
\\
 & \leq &
   \frac{\| f \|_{C^{0,0,\delta} (\mathbb{R}^n)}}{\sigma_n (n-2)}
   \Big( \int_{|y| \leq 2 |x|} \frac{(1+|y|^2)^{-(\delta+2)/2}}{|x-y|^{n-2}} dy
       + \int_{|y| \geq 2 |x|} 2^{n-2} |y|^{-(\delta+n)} dy
   \Big).
\nonumber
\\
\end{eqnarray}
Again the first integral in the last line converges, for $n-2 < n$, and the second integral
in the last line converges, for $n + \delta > n$.
Thus, the potential $(\varPhi f) (x)$ is well defined for all $x \in \mathbb{R}^n$.

It follows from potential theory (see for instance \cite{Gun34}) that for each $R > 0$ the
integral
$$
   \varPhi (\chi_{B_R} f) (x)
 = \frac{1}{\sigma_n (n-2)} \int_{|y| \leq R} \frac{f (y)}{|x-y|^{n-2}}\, dy
$$
converges uniformly in the ball $\overline{B_R}$ and it belongs to $C^{2,\lambda} (\overline{B_R})$
provided that
   $f \in C^{0,\lambda,\delta+2} (\mathbb{R}^n)$.
Clearly the integral
$$
    \varPhi ((1 - \chi_{B_R}) f) (x)
 = \frac{1}{\sigma_n (n-2)} \int_{|y| \geq R} \frac{f (y)}{|x-y|^{n-2}}\, dy
$$
is a $C^\infty$ functions of $x \in B_R$.
We thus deduce that the potential $\varPhi f$ belongs to $C^{2,\lambda} (B_R)$ in any ball $B_R$,
and so
   $\varPhi f \in C^{2,\lambda}_{\mathrm{loc}} (\mathbb{R}^n)$
for each $f \in C^{0,\lambda,\delta+2} (\mathbb{R}^n)$ with $\delta > 0$.

Moreover, it follows from (\ref{eq.uni}) that the integral $\varPhi f$ converges uniformly on
each compact set $K \subset \mathbb{R}^n$.
For any
   $v \in C^{\infty}_{\mathrm{comp}} (\mathbb{R}^n)$,
using Fubini theorem we get
$$
   \int_{\mathbb{R}^n} (\varPhi f) (x) \varDelta v (x) dx
 = \int_{\mathbb{R}^n} f (y) (\varPhi \varDelta v) (y) dy
 = \int_{\mathbb{R}^n} f (y) v (y) dy,
$$
i.e. $\varDelta (\varPhi f) = f$ in the sense of distributions in $\mathbb{R}^n$ for each
   $f \in C^{0,\lambda,\delta+2} (\mathbb{R}^n)$
with $\delta > 0$, for $e$ is a fundamental solution of convolution type to the Laplace
operator in $\mathbb{R}^n$.

As $f \in C^{0,\lambda,\delta+2} (\mathbb{R}^n)$, we get
$$
   \| (w (x))^{\delta+2-\varepsilon} f \|_{L^q (\mathbb{R}^n)}
 \leq
   \| f \|_{C^{0,0,\delta+2} (\mathbb{R}^n)}
   \| (w (x))^{-\varepsilon} (x) \|_{L^q (\mathbb{R}^n)}
$$
for all $0 < \varepsilon < \delta+2$, i.e., the function $f$  belongs to
   $W^{0,q,\delta+2-\varepsilon} (\mathbb{R}^n)$
for all $q > n/\varepsilon$.
If $0 < \varepsilon < \delta$ then
$$
 - n/q < 0 < \delta-\varepsilon < n-2-n/q
$$
for all $q > n/(n-2-\delta+\varepsilon)$.
According to \cite{McOw79} %(see Theorem \ref{t.weight.Laplace} below)
we conclude that
   $\varPhi f \in W^{2,q,\delta-\varepsilon} (\mathbb{R}^n)$.
In particular, the potential $\varPhi f$ vanishes at the infinity point.

Given any $f \in C^{0,\lambda,\delta+2} (\mathbb{R}^n)$, we get
\begin{equation}
\label{eq.F.1}
   \Big| \int_{\mathbb{R}^n} \frac{f (y)}{|x-y|^{n-2}} dy \Big|
 \leq
   c\,
   \| f \|_{C_{0,0,\delta+2} (\mathbb{R}^n)}
   \int_{\mathbb{R}^n} \frac{(w (y))^{-(\delta+2)}}{|x-y|^{n-2}} dy
\end{equation}
with $c$ a constant independent of $x$ and $f$.
As $(w (y))^{-\delta-2}$ belongs to $C^{s,\lambda,\delta+2} (\mathbb{R}^n)$ for any
$s \in \mathbb{Z}_{\geq 0}$,
%(see Example \ref{ex.x} below),
the potential $\varPhi w^{-\delta-2}$
belongs to $C^{\infty}_{\mathrm{loc}} (\mathbb{R}^n)$, vanishes at infinity and satisfies
$$
   \varDelta (\varPhi w^{-\delta-2}) = w^{-\delta-2}
$$
in $\mathbb{R}^n$.
On the other hand, the integral
$
   \varPhi (\chi_{B_1} w^{-(\delta+2)})
$
is harmonic away from the closed ball $\overline{B_1}$ and it belongs obviously to
   $C^{s,0,n-2} (\mathbb{R}^n \setminus B_1)$
for all $s \in \mathbb{Z}_{\geq 0}$.
It follows that
\begin{equation}
\label{eq.Delta.1}
   \varDelta\, \varPhi ((1 - \chi_{B_1}) w^{-\delta-2})
 = w^{-\delta-2}
\end{equation}
in $\mathbb{R}^n \setminus \overline{B_1}$.

On combining (\ref{eq.Delta.1}) and (\ref{eq.Delta.2}) we see that the difference
$$
   h = \varPhi ((1 - \chi_{B_1}) w^{-\delta-2}) - F
$$
is a harmonic function in $\mathbb{R}^n \setminus \overline{B_1}$ vanishing at the point
of infinity.
It can be recovered via its smooth boundary values on $\partial B_1$.
Indeed, let $\{ h_k^{(j)} (x) \} $ be the system of homogeneous harmonic polynomials forming
an $L^2\,$-orthonormal basis on the unit sphere in $\mathbb{R}^n$, where
   $k$ stands for the polynomial degree and
   $j = 1, \ldots, J (k)$,
with
$$
   J (k) = \frac{(n+2k-2) (n+k-3)!}{k! (n-2)!}
$$
being the number of polynomials of degree $k$ in the basis,
   see \cite[Ch. XI]{Sob74}).
Then the results on the exterior Dirichlet problem show that
$$
   h
 = \sum_{k=0}^\infty
   \sum_{j=1}^{J (k)}
   \frac{c_k^{(j)}}{|x|^{n+2k-2}}\, h_k^{(j)} (x)
 = \frac{1}{|x|^{n-2}}
   \sum_{k=0}^\infty
   \sum_{j=1}^{J (k)}
   \frac{c_k^{(j)}}{|x|^{k}}\, h_k^{(j)} \Big( \frac{x}{|x|} \Big),
$$
where
$$
   c_k^{(j)}
 = \left( \varPhi ((1 - \chi_{B_1}) w^{-\delta-2}) - F,\, h_k^{(j)}
   \right)_{L^2 (\partial B_1)}
$$
and the series converges uniformly on compact subsets of $\mathbb{R}^n \setminus B_1$.
In particular, as
$$
\begin{array}{rcl}
   J (k)
 & \leq
 & c\, k^{n-2},
\\
   \displaystyle
   \max_{|x|=1}{ |h_k^{(j)} (x)|}
 & \leq
 & c_n\, k^{n/2-1},
\end{array}
$$
see \cite[Ch.~XI, \S~2)]{Sob74}) and formula (XI.3.23) {\em ibid}, the Cauchy-Hadamard formula yields
readily
$$
   \limsup_{k \to + \infty} \max_{1 \leq j \leq J (k)} |c_k^{(j)}| \leq 1
$$
and hence
\begin{eqnarray*}
   (1+|x|^2)^{(n-2)/2} |h (x)|
 & \leq &
   \Big( \frac{(1+|x|^2)^{1/2}}{|x|} \Big)^{n-2}
   \sum_{k=0}^\infty
   \sum_{j=1}^{J (k)}
   \frac{|c_k^{(j)}|}{|x|^{k}}\, \Big| h_k^{(j)} \Big( \frac{x}{|x|} \Big) \Big|
\\
 & \leq &
   c \sum_{k=0}^\infty \frac{k^{n/2+n-3}}{|x|^k}
\end{eqnarray*}
with a constant $c$ independent on $k$.
It follows that
   $h \in C^{0,0,n-2} (\mathbb{R}^n \setminus B_1)$.

On the other hand, as already mentioned, the potential $\varPhi (\chi_{B_1} w^{-\delta-2})$ belongs
to
   $C^{s,0,n-2} (\mathbb{R}^n \setminus B_1)$
for all $s = 0, 1, \ldots$, and so
$$
   \varPhi (w^{-\delta-2})
 = \varPhi (\chi_{B_1} w^{-\delta-2})
 + \varPhi ((1 - \chi_{B_1}) w^{-\delta-2})
$$
is of class
   $C^s (\mathbb{R}^n) \cap C^{s,0,\delta} (\mathbb{R}^n \setminus B_1)$
for all $s \in \mathbb{Z}_{\geq 0}$, if $0 < \delta < n-2$.
Thus, $\varPhi (w^{-\delta-2}) \in C^{s,0,\delta} (\mathbb{R}^n)$ for all $s$ and $0 < \delta < n-2$.
We now apply estimate (\ref{eq.F.1}) to conclude that
$$
   \| \varPhi f \|_{C^{0,0,\delta} (\mathbb{R}^n)}
 \leq
   \| f \|_{C^{0,0,\delta+2} (\mathbb{R}^n)}\,
   \| \varPhi (w^{-\delta-2}) \|_{C^{0,0,\delta} (\mathbb{R}^n)}
$$
if $0 < \delta < n-2$.
Now it follows from Lemma \ref{l.apriori.Delta} that $\varPhi$ is the bounded inverse for the operator
$$
   \varDelta :\, C^{2,\lambda,\delta} (\mathbb{R}^n) \to
              C^{0,\lambda,\delta+2} (\mathbb{R}^n)
$$
for all $0 < \delta < n-2$, as desired.
\end{proof}

Lemma \ref{l.phi.Hoelder} gives a proof of the statement 1) for $s = 0$ and $n \geq 3$.
(Note that the statement 1) is actually vacuous for $n = 2$.)

Further, we may use the following decomposition of the standard fundamental solution to the
Laplace equation
\begin{equation}
\label{eq.decomp.phi}
   \phi (x-y)
 = \phi (x-0)
 - \sum_{k=1}^\infty
   \sum_{j=1}^{J (k)}
   \frac{h_k^{(j)} (x) h_k^{(j)} (y)}{(n+2k-2) |x|^{n+2k-2}}
\end{equation}
for $n \geq 2$, where the series converges uniformly along with all derivatives on compact
sets of the cone $\{ |x| > |y| \}$ in $\mathbb{R}^{2n}$
   (see \cite{StWe71}, \cite{Shla92}, \cite{McOw79} and elsewhere).
Set
$$
   \phi_m (x,y)
 = \phi (x-y)
 - \phi (\langle x \rangle-0)
 + \sum_{k=1}^m
   \sum_{j=1}^{J (k)}
   \frac{h_k^{(j)} (\langle x \rangle) h_k^{(j)} (y)}{(n+2k-2) \langle x \rangle^{n+2k-2}},
$$
where $x \mapsto \langle x \rangle$ is a so-called norm smoothing function, i.e.,
   $\langle x \rangle = |x|$ for $|x| \geq 2$ and
   $\langle x \rangle \geq 1$ for all $x \in \mathbb{R}^n$.

\begin{lemma}
\label{l.Gm}
Suppose that
   $n-2+m < \delta < n-1+m$
for some $m \in \mathbb{Z}_{\geq 0}$.
Then the integral
$$
   \varPhi_m f (x) = \int_{\mathbb{R}^n} \phi_m (x,y) f (y) dy
$$
induces a bounded linear operator
$
   \varPhi_{m} : C^{0,\lambda,\delta+2} (\mathbb{R}^n) \to
           C^{2,\lambda,\delta} (\mathbb{R}^n)
$
which coincides with the potential $\varPhi$ on $R^{0,\lambda,\delta+2} (\mathbb{R}^n)$.
\end{lemma}

\begin{proof}
Indeed, since $n+m < \delta+2 < n+m+1$, the integral
$$
   \int_{\mathbb{R}^n} f (y) h_k^{(j)} (y) dy
$$
converges because
\begin{eqnarray*}
   \Big| \int_{\mathbb{R}^n} f (y) h_k^{(j)} (y) dy \Big|
 & \leq &
   \| f \|_{C^{0,0,\delta+2} (\mathbb{R}^n)}
   \int_{\mathbb{R}^n} h_k^{(j)} (y) (w (y))^{-(\delta+2)} dy
\\
 & \leq &
   \| f \|_{C^{0,0,\delta+2} (\mathbb{R}^n)}\,
   \| h^{(j)}_k \|_{C (\partial B_1)}
   \int_{\mathbb{R}^n} (w (y))^{k-\delta-2} dy,
\end{eqnarray*}
the last integral being finite for all $0 \leq k \leq m$ because $-n-1 < k-\delta-2 < -n$.
It follows that the integral operator $K_m$ induced by the kernel
$$
   k_{m} (x,y)
 = \phi (\langle x \rangle-0)
 - \sum_{k=1}^m
   \sum_{j=1}^{J (k)}
   \frac{h_k^{(j)} (\langle x \rangle) h_k^{(j)} (y)}{(n+2k-2) \langle x \rangle^{n+2k-2}}
$$
maps $C^{0,\lambda,\delta+2} (\mathbb{R}^n)$ to functions harmonic outside of the ball $B_2$
and vanishing at the point of infinity.
Hence the integral $(\varPhi_m f) (x)$ converges for all $x \in \mathbb{R}^n$,
   if $f \in C^{0,\lambda,\delta+2} (\mathbb{R}^n)$.
Moreover, since
$$
   \varPhi_m f = \varPhi f - K_m f,
$$
we conclude that
   $\varPhi_m f \in C^{2,\lambda}_{\mathrm{loc}} (\mathbb{R}^n)$
for each function $f \in C^{0,\lambda,\delta+2} (\mathbb{R}^n)$ such that
   $n+m-2 < \delta < n+m-1$.

As already mentioned, if $0 < \varepsilon < \delta+2$, then $f$  belongs to
   $W^{0,q,\delta+2-\varepsilon} (\mathbb{R}^n)$
for all $q > n/\varepsilon$.
If
   $0 < \varepsilon < \delta+2-n-m$
then
$$
   n-2+m-n/q < n-2+m < \delta-\varepsilon < n-1+m-n/q
$$
holds for all $q > n/(n-1+m-\delta+\varepsilon)$.
Thus, by \cite{McOw79}, % (see also Theorem \ref{t.weight.Laplace}),
we get
   $\varPhi_m f \in W^{2,q,\delta-\varepsilon} (\mathbb{R}^n)$.
In particular, the potential $\varPhi_m f$ vanishes at the point of infinity.

For $n \geq 3$, it follows from \cite[Lemma 5]{McOw79} that
$$
\begin{array}{rclll}
    |\phi_m (x,y)|
 & \leq
 & \displaystyle
   c\, \frac{(1+|y|^{m+n-2})}{|x-y|^{n-2}(1 + |x|^{m+n-2})},
 & \mbox{if}
 & 2 |y| \geq |x|,
\\
   |\phi_m (x,y)|
 & \leq
 & \displaystyle
   c\, \frac{(1 + |y|^{m+1})}{|x-y|^{n-2}(1+ |x|^{m+1})},
 & \mbox{if}
 & 2 |y| \leq |x|,
\end{array}
$$
where $c$ is a constant independent of $x$ and $y$ which can be different in diverse
applications.
Then
$$
   (w (x)\!)^{\delta}
   \Big| \!\! \int_{|2 y| \geq |x|} \!\!\! \phi_m (x,y) f (y) dy \Big|
 \leq
   c \| f \|_{C^{0,\lambda,\delta+2} (\mathbb{R}^n)}
       (w (x)\!)^{\delta\!-\!m\!-\!n\!+\!2}
       \varPhi (w^{m+n-\delta-4}) (x).
$$
If $n+m-2 < \delta < n+m-1$ then $0 < \delta-m-n+2 < 1 \leq n-2$ and hence by the
assertion 1) which has already been proved we see that $\varPhi (w^{m+n-\delta-4})$ is of class
$C^{0,0,\delta+2-m-n} (\mathbb{R}^n)$.
In particular,
\begin{eqnarray*}
\lefteqn{
   \Big\| \int_{|2 y| \geq |x|} \phi_m (x,y) f (y) dy \Big\|_{C^{0,0,\delta} (\mathbb{R}^n)}
}
\\
 & \leq &
   c\,
   \| f \|_{C^{0,\lambda,\delta+2} (\mathbb{R}^n)}
   \| \varPhi (w^{m+n-\delta-4}) \|_{C^{0,0,\delta+2-m-n} (\mathbb{R}^n)}.
\end{eqnarray*}

Similarly,
$$
   (w (x)\!)^{\delta}
   \Big| \!\! \int_{|2 y| \leq |x|} \!\!\! \phi_m (x,y) f (y) dy \Big|
 \leq
   c \| f \|_{C^{0,\lambda,\delta+2} (\mathbb{R}^n)}
       (w (x)\!)^{\delta\!-\!m\!-\!1}
       \varPhi (w^{m-\delta-1}) (x).
$$
If
   $n \geq 3$ and
   $n+m-2 < \delta < n+m-1$
then
   $0 \leq n-3 < \delta-m-1 < n-2$
and hence by the assertion 1) we conclude that
   $\varPhi (w^{m-\delta-1}) \in C^{0,0,\delta-m-1} (\mathbb{R}^n) $.
It follows that
\begin{eqnarray*}
\lefteqn{
   \Big\| \int_{|2 y| \geq |x|} \phi_m (x,y) f (y) dy \Big\|_{C^{0,0,\delta} (\mathbb{R}^n)}
}
\\
 & \leq &
   c\,
   \| f \|_{C^{0,\lambda,\delta+2} (\mathbb{R}^n)}
   \| \varPhi (w^{m-\delta-1}) \|_{C^{0,0,\delta-m-1} (\mathbb{R}^n)},
\end{eqnarray*}
and so
$$
   \| \varPhi_m f \|_ {C^{0,0,\delta} (\mathbb{R}^n)}
 \leq
   c\, \| f \|_{C^{0,\lambda,\delta+2} (\mathbb{R}^n)}.
$$

Finally, by construction $\varPhi_m f$ coincides with the potential $\varPhi f$ on
   $R^{0,\lambda,\delta+2} (\mathbb{R}^n)$
whence
$$
   \varDelta\, \varPhi_m f = f
$$
for all $f \in R^{0,\lambda,\delta+2,m} (\mathbb{R}^n)$.
Now Lemma \ref{l.apriori.Delta} implies that $\varPhi_m f$ maps
   $R^{0,\lambda,\delta+2} (\mathbb{R}^n)$ continuously into
   $C^{2,\lambda,\delta} (\mathbb{R}^n)$
for the corresponding $\delta$.

For $s = 2$ and $n = 2$ the proof is similar and follows the same scheme as in
   \cite[Lemma 6]{McOw79}
for the weighted Sobolev spaces.
\end{proof}

On summarising we have proved the assertion 2) for $s = 2$ and $n \geq 2$.

\begin{lemma}
\label{l.homo.map}
Suppose $P$ is a homogeneous partial differential operator $P$ of order $0 \leq k \leq s-2$
with constant coefficients.
Then,

1)
$P$ maps
   $C^{s-2,\lambda,\delta+2} (\mathbb{R}^n)$ continuously into
   $R^{s-2-k,\lambda,\delta+2+k} (\mathbb{R}^n)$,
provided that
   $0 < \delta < n-2$ and
   $n-2+m < \delta+k < n-2+m+1$;

2)
$P$ maps
   $R^{s-2,\lambda,\delta+2} (\mathbb{R}^n)$ continuously into
   $R^{s-2-k,\lambda,\delta+2+k} (\mathbb{R}^n)$,
provided that
   $n-2+m < \delta < n-2+m+1$.
\end{lemma}

\begin{proof}
If $f \in C^{s-2,\lambda,\delta+2} (\mathbb{R}^n)$, where
   $0 < \delta < n\!-\!2$ and
   $n\!-\!2\!+\!m < \delta\!+\!k < n\!-\!2\!+\!m\!+\!1$,
then $m < k$ and, given any $h \in H_m$, we use the Green formula for $P$ to get
\begin{eqnarray*}
   \int_{\mathbb R^n} (Pf) (x) h (x) dx
 & = &
   \lim_{R \to + \infty}
   \int_{B_R} (Pf)(x) h (x) dx
\\
 & = &
   \int_{\mathbb R^n} f (x) (P^\ast h)(x) dx
 + \lim_{R \to + \infty} \int_{\partial B_R} G_P (h,f)
\\
 & = &
   0,
\end{eqnarray*}
because  $P^\ast h = 0$ (for $m < k$!) and the modulus of
$
   \displaystyle
   \int_{\partial B_R} G_P (h,f)
$
is dominated by
$$
   \sum_{j=0}^{k-1} \frac{R^{n-1} R^{m-j}}{(1+|R|^2)^{(\delta+2+k-1-j)/2}}.
$$
(Here, by $G_P (\cdot, \cdot)$ is meant a Green operator for $P$.)

Similarly, if $f \in R^{s-2,\lambda,\delta+2} (\mathbb{R}^n)$, where
   $n-2+m < \delta < n-2+m+1$,
then using the Green formula for $P$ yields
\begin{eqnarray*}
   \int_{\mathbb R^n} (Pf) (x) h (x) dx
 & = &
   \lim_{R \to + \infty} \int_{B_R} (Pf) (x) h (x) dx
\\
 & = &
   \int_{\mathbb R^n} f (x) (P^\ast h) (x) dx
 + \lim_{R \to + \infty} \int_{\partial B_R} G_P (h,f)
\\
 & = &
   0
\end{eqnarray*}
for all $h \in H_{\leq m+k}$, because
   $P^\ast h \in H_{\leq m}$
(for the operators $P^\ast$ and $\varDelta$ commute) and
$$
   \Big| \int_{\partial B_R} G_P (h,f) \Big|
 \leq
   c
   \sum_{j=0}^{k-1} \frac{R^{n-1} R^{m+k-j}}{(1+|R|^2)^{(\delta+2+k-1-j)/2}}.
$$
\end{proof}

Finally, we may further argue by induction because on integrating by parts for
   $f \in C^{s-2,\lambda,\delta+2} (\mathbb{R}^n)$
we get
$$
   \partial^{\alpha} \int_{\mathbb{R}^n} \frac{f (y)}{|x-y|^{n-2}} dy
 = \int_{\mathbb{R}^n} \frac{\partial^{\alpha} f (y)}{|x-y|^{n-2}} dy
$$
whenever $\delta > 0$ and $|\alpha| \leq s-2$.
Indeed, if
   $0 < \delta < n-2$ and
   $f \in C^{s-2,\lambda,\delta+2} (\mathbb{R}^n)$
or
   $n-2+m < \delta < n-2+m+1$ and
   $f \in R^{s-2,\lambda,\delta+2} (\mathbb{R}^n)$,
then also
   $\partial^\alpha f$ is of class $R^{s-2-|\alpha|,\lambda,\delta+2+|\alpha|} (\mathbb{R}^n)$
and
$$
   \partial^\alpha (\varPhi f)
 = \varPhi (\partial^\alpha f)
 = \varPhi_m (\partial^\alpha f),
$$
which is due to the properties of $\varPhi$ and $\varPhi_m$ derived above.
(Obviously, we have $\varDelta \partial^\alpha (\varPhi f) = \partial^\alpha f$.)
In particular,
   $\partial^\alpha (\varPhi f) \in C^{0,0,\delta+|\alpha|} (\mathbb{R}^n)$
for all $|\alpha| \leq s-2$ according to Lemma \ref{l.Gm}.
Now using Lemma \ref{l.apriori.Delta} we see that
   $\partial^\alpha (\varPhi f) \in  C^{2,\lambda,\delta+|\alpha|} (\mathbb{R}^n)$
for all $|\alpha| \leq s-2$, and so
   $\varPhi f \in C^{s,\lambda,\delta} (\mathbb{R}^n)$
satisfies $\varDelta (\varPhi f) = f$.

We have thus proved the assertion 2) of the theorem for $n \geq 3$ and $s \geq 2$.
For $s \geq 2$ and $n = 2$ the proof is similar.
\end{proof}

What about the case $\delta < 0$? %, i.e., what about 3)?
In the case of Sobolev space a duality argument might be used.
Actually we do not consider $\delta < 0$ below.
For handling the Navier-Stokes equations we need merely $\delta > 0$ if we want to provide
a finite energy estimate.

Now we start to study the Laplace operator in the scale
   $C^{k, \mathbf{s} (s,\lambda,\delta)} (\overline{\mathcal{C}_T})$.
As the Laplace operator is not fully consistent with the dilation principle in
   $\overline{\mathcal{C}_T} $
we should expect some loss of regularity of solutions to $\varDelta u = f$ in this scale of
function spaces.

Similarly to the scale $C^{s,\lambda,\delta}$, we will use the potential
$$
   (\varPhi \otimes I) f\, (x,t)
 = \int_{\mathbb{R}^n} \phi (x-y) \, f (y,t) dy
$$
for function $f$ defined on $\overline{\mathcal{C}_T} $.
The variable $t$ enters into the integral as a parameter and the pair $(x,t)$ is assumed
to be in the finite layer $\overline{\mathcal{C}_T} $ over $\mathbb{R}^n$.

Actually, we can easily extend  Theorem \ref{t.weight.Hoelder.Laplace} to the Laplace
operator acting boundedly as
\begin{equation}
\label{eq.Laplace.easy}
   \varDelta:\,
   \bigcap_{j=0}^s C^{j,0} ([0,T],C^{2 (s-j)+k+2,\lambda,\delta} (\mathbb{R}^n))
\to
   \bigcap_{j=0}^s C^{j,0} ([0,T],C^{2 (s-j)+k,\lambda,\delta+2} (\mathbb{R}^n)).
\end{equation}

Let $C^s ([0,T],H_{\leq m})$ be the space of all $C^s$ functions of $t \in [0,T]$ with
values in the harmonic polynomials of degree $\leq m$ in $x$.
Any element $h (x,t)$ can be alternatively thought of as a polynomial of $H_{\leq m}$ whose
coefficients are $C^s$ functions on $[0,T]$.

\begin{lemma}
\label{l.weight.Hoelder.Laplace.easy}
Let
   $n \geq 2$,
   $s$, $k$ be nonnegative integers,
   $0 < \lambda < 1$ and
   $\delta > 0$
The operator (\ref{eq.Laplace.easy}) has closed range unless
   $\delta+2-n \in \mathbb{Z}_{\geq 0}$.
Moreover,

1)
it is an isomorphism, if $0 < \delta < n-2$;

2)
it is an injection, if $n-2+m < \delta < n-1+m$ for some $m \in \mathbb{Z}_{\geq 0}$,
and its range consists of all
$$
   f \in \bigcap_{j=0}^s C^{j,0} ([0,T],C^{2 (s-j)+k,\lambda,\delta+2} (\mathbb{R}^n))
$$
satisfying
$
   \displaystyle
   \int _{\mathbb{R}^n} f (x,t) h (x) dx = 0
$
for all $h \in H_{\leq m}$.
%
%3)
%it is a surjection with kernel $C^s ([0,T],H_{\leq m})$, if
%   $-m-1 < \delta < -m$ for some $m \in \mathbb{Z}_{\geq 0}$.
\end{lemma}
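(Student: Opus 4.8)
The plan is to reduce Lemma~\ref{l.weight.Hoelder.Laplace.easy} to the already established Theorem~\ref{t.weight.Hoelder.Laplace} by treating the time variable $t$ simply as a parameter ranging over the compact interval $[0,T]$, and by exploiting the fact that all the potentials $\varPhi$ and $\varPhi_m$ constructed in the proof of that theorem act only in the $x$-variable and hence commute with $\partial_t^j$. First I would observe that for each fixed $t$ the slice $f(\cdot,t)$ lies in $C^{2(s-j)+k,\lambda,\delta+2}(\mathbb{R}^n)$ for every $0\le j\le s$, so that Theorem~\ref{t.weight.Hoelder.Laplace} (applied with the smoothness index $2(s-j)+k$ in place of $s$, and keeping $\delta$, $\lambda$, $n$ fixed) yields the pointwise-in-$t$ statement: $\varDelta$ is an isomorphism of $C^{2(s-j)+k+2,\lambda,\delta}(\mathbb{R}^n)$ onto $C^{2(s-j)+k,\lambda,\delta+2}(\mathbb{R}^n)$ when $0<\delta<n-2$, and is injective with the stated closed range $R^{2(s-j)+k,\lambda,\delta+2}(\mathbb{R}^n)$ when $n-2+m<\delta<n-1+m$. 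The inverse is realized by $\varPhi$ (for $0<\delta<n-2$) or by $\varPhi_m$ (on the range, for the higher weight intervals).

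Next I would promote this to the $t$-dependent setting. Since $\bigl((\varPhi\otimes I)f\bigr)(x,t)=\varPhi\bigl(f(\cdot,t)\bigr)(x)$ and the kernel $\phi(x-y)$ does not involve $t$, differentiation under the integral sign gives $\partial_t^j\bigl((\varPhi\otimes I)f\bigr)=(\varPhi\otimes I)(\partial_t^j f)$ for $0\le j\le s$; the differentiation is legitimate because $\partial_t^j f\in C^{0,0}([0,T],C^{0,0,\delta+2}(\mathbb{R}^n))$ and the bound (\ref{eq.uni})-type estimate in the proof of Lemma~\ref{l.phi.Hoelder} (resp.\ the estimates for $\varPhi_m$ in Lemma~\ref{l.Gm}) provides a dominating function uniform in $t$. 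Combining this with the continuity of the scalar operators $\varPhi:C^{2(s-j)+k,\lambda,\delta+2}(\mathbb{R}^n)\to C^{2(s-j)+k+2,\lambda,\delta}(\mathbb{R}^n)$, I get, for every $0\le j\le s$,
$$
   \|\partial_t^j(\varPhi\otimes I)f\|_{C^{0,0}([0,T],C^{2(s-j)+k+2,\lambda,\delta}(\mathbb{R}^n))}
 \le
   c\,\|\partial_t^j f\|_{C^{0,0}([0,T],C^{2(s-j)+k,\lambda,\delta+2}(\mathbb{R}^n))},
$$
so that $(\varPhi\otimes I)f$ lies in $\bigcap_{j=0}^s C^{j,0}([0,T],C^{2(s-j)+k+2,\lambda,\delta}(\mathbb{R}^n))$ and is a two-sided inverse (resp.\ a right inverse on the constrained range) of (\ref{eq.Laplace.easy}) because $\varDelta\,(\varPhi\otimes I)f=f$ and, by the Liouville-type injectivity in the scale $C^{s,\lambda,\delta}(\mathbb{R}^n)$ with $\delta>0$ recalled in the proof of Theorem~\ref{t.weight.Hoelder.Laplace}, the map (\ref{eq.Laplace.easy}) is itself injective. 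For the range description in case 2), the orthogonality conditions $\int_{\mathbb{R}^n} f(x,t)\,h(x)\,dx=0$ for $h\in H_{\le m}$ are, for each fixed $t$, exactly the membership conditions $f(\cdot,t)\in R^{2(s-j)+k,\lambda,\delta+2}(\mathbb{R}^n)$ from Theorem~\ref{t.weight.Hoelder.Laplace}; one then checks that $t\mapsto\varPhi_m(f(\cdot,t))$ inherits the $C^j$-regularity in $t$ from $f$ via the same differentiation-under-the-integral argument, using that $\varPhi_m f=\varPhi f-K_m f$ and that $K_m$ is an integral operator in $x$ alone. Closedness of the range then follows because it is the intersection over $t$ and over the finitely many linear functionals $f\mapsto\int f(\cdot,t)h\,dx$, each continuous on the target Fréchet-type intersection space, with the prohibited case $\delta+2-n\in\mathbb{Z}_{\ge0}$ excluded precisely as in Theorem~\ref{t.weight.Hoelder.Laplace}.

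I expect the main obstacle to be purely bookkeeping rather than conceptual: one must verify carefully that the estimates underlying Lemmata~\ref{l.phi.Hoelder} and \ref{l.Gm} are uniform in the parameter $t$ (they are, since none of the constants depends on $f$ beyond its norm, and the norms involved are the $\sup_t$ norms), and that the differentiation in $t$ commutes with the $x$-integration on the relevant spaces — which is where one invokes the dominated convergence theorem with the weight-decay bounds already proved. A secondary point of care is that the intersection space $\bigcap_{j=0}^s C^{j,0}([0,T],\,\cdot\,)$ is not one of the spaces $C^{\mathbf s(s,\lambda,\delta)}$ studied earlier (there is no Hölder-in-$t$ or anisotropic condition here, which is exactly why this "easy" lemma is separated from the genuinely parabolic results in Sections~\ref{s.thoitwHs} and onward), so all mapping properties must be read off directly from the definitions rather than quoted from the anisotropic machinery. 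No new analytic input beyond Theorem~\ref{t.weight.Hoelder.Laplace} is needed.
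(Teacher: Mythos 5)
Your proposal is correct and follows essentially the same route as the paper: the paper's proof likewise obtains injectivity from the Liouville theorem and then reduces everything to Theorem \ref{t.weight.Hoelder.Laplace} slice-by-slice in $t$ via the identity $\partial_t^j (\varPhi \otimes I) f = (\varPhi \otimes I)\, \partial_t^j f$ for $0 \leq j \leq s$. Your additional remarks on dominated convergence, the uniformity of the constants in $t$, and the closedness of the range merely make explicit what the paper leaves tacit.
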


\begin{proof}
First we note that, by the Liouville theorem, a harmonic function on $\mathbb{R}^n$ whose
growth at the infinity point does not exceed that of $|x|^m$ is a polynomial of degree
$m \in \mathbb{Z}_+$.
Hence, for $\delta > 0$, the kernel of operator (\ref{eq.Laplace.easy}) consists of those
functions
$$
   u \in \bigcap_{j=0}^s C^{j,0} ([0,T],C^{2 (s-j)+k+2,\lambda,\delta} (\mathbb{R}^n))
$$
which depend on the variable $t$ only and vanish as $|x| \to +\infty $.
This means that operator (\ref{eq.Laplace.easy}) is injective for all $s \geq 0$ and $\delta > 0$.

%Now we deduce readily that, if $-m-1 < \delta < -m$ for some nonnegative integer $m$, then
%the kernel of operator (\ref{eq.Laplace.easy}) just amounts to the set of all harmonic
%polynomials in the space variable $x$ of degree $\leq m$ with coefficients depending on the
%variable $t$, i.e.,
%$$
%   h (x,t) = \sum_{|\alpha| \leq m} c_\alpha (t)\, x^\alpha.
%$$
%Therefore, $c_\alpha \in  C^s [0,T]$.
%It follows that the kernel of operator (\ref{eq.Laplace.easy}) coincides with
%   $C^s ([0,T],H_{\leq m})$,
%if $-m-1 < \delta < -m$ for some $m \in \mathbb{Z}_{\geq 0}$.
%
Finally, using Theorem \ref{t.weight.Hoelder.Laplace} yields
\begin{eqnarray*}
\lefteqn{
   \sup_{t \in [0,T]}
   \| \partial^j_t (\varPhi \otimes I) f (\cdot, t)
   \|_{C^{2(s-j)+k+2,\lambda,\delta} (\mathbb{R}^n)}
}
\\
 & = &
   \sup_{t \in [0,T]}
   \| (\varPhi \otimes I) \partial^j_t f (\cdot, t) \|_{C^{2(s-j)+k+2,\lambda,\delta} (\mathbb{R}^n)}
\\
 & \leq &
   c\,
   \sup_{t \in [0,T]}
   \| \partial_t^j f (\cdot, t) \|_{C^{2(s-j)+k,\lambda,\delta+2} (\mathbb{R}^n)}
\end{eqnarray*}
for all $0 \leq j \leq s$ and appropriate $\delta$ and $f$.
\end{proof}

By Lemma \ref{l.diff.oper}, the Laplace operator induces a bounded linear operator
$$
   \varDelta :\,
   C^{k+2,\mathbf{s} (s,\lambda,\delta)} (\overline{\mathcal{C}_T})
 \to
   C^{k,\mathbf{s} (s,\lambda,\delta+2)} (\overline{\mathcal{C}_T}).
$$
However, we are also aimed at describing the action of the potential $\varPhi \otimes I$ on the
``parabolic'' H\"older spaces.
To this end, we introduce
   $C^{k+1,\mathbf{s} (s,\lambda,\delta)} (\overline{\mathcal{C}_T}) \cap \mathcal{D}_\varDelta$
to be the space of all functions $u$ from
   $C^{k+1,\mathbf{s} (s,\lambda,\delta)} (\overline{\mathcal{C}_T})$
with the property that
   $\varDelta u \in C^{k,\mathbf{s} (s,\lambda,\delta+2)} (\overline{\mathcal{C}_T})$.
We endow this space with the so-called graph norm
$$
   \| u \|_{C^{k+1,\mathbf{s} (s,\lambda,\delta)} (\overline{\mathcal{C}_T}) \cap \mathcal{D}_\varDelta}
 = \| u \|_{C^{k+1,\mathbf{s} (s,\lambda,\delta)} (\overline{\mathcal{C}_T})}
 + \| \varDelta u \|_{C^{k,\mathbf{s} (s,\lambda,\delta+2)} (\overline{\mathcal{C}_T})}.
$$

Let $C^{s,\lambda} ([0,T], H_{\leq m})$ stand for the space of all $C^{s,\lambda}$ functions
of $t \in [0,T]$ with values in the harmonic polynomials of degree $\leq m$ with respect to
the variable $x \in \mathbb{R}^n$.

\begin{corollary}
\label{c.weight.Hoelder.Laplace.t}
Suppose that
   $n \geq 2$,
   $k$ and $s$ are nonnegative integers,
   $0 < \lambda < 1$
and
   $\delta > 0$, %\not\in \mathbb{Z}_{\leq 0}$,
   $\delta+2-n \not\in \mathbb{Z}_{\geq 0}$.
Then
   $C^{k+1,\mathbf{s} (s,\lambda,\delta)} (\overline{\mathcal{C}_T}) \cap \mathcal{D}_\varDelta$
is a Banach space and the Laplace operator $\varDelta$ induces a continuous linear operator
\begin{equation}
\label{eq.Laplace.non-coercive}
   \varDelta :\,
   C^{k+1,\mathbf{s} (s,\lambda,\delta)} (\overline{\mathcal{C}_T}) \cap \mathcal{D}_\varDelta
 \to
   C^{k,\mathbf{s} (s,\lambda,\delta+2)} (\overline{\mathcal{C}_T}).
\end{equation}
with closed range.
Moreover,

1)
it is an isomorphism, if $0 < \delta < n-2$;

2)
if $n-2+m < \delta < n -1 + m$ for some $m \in \mathbb{Z}_{\geq 0}$, then it is an injection
and its range $R^{k,\mathbf{s} (s,\lambda,\delta+2)} (\overline{\mathcal{C}_T})$ consists of those
   $f \in C^{k,\mathbf{s} (s,\lambda,\delta+2)} (\overline{\mathcal{C}_T})$
which satisfy
$$
   \int_{\mathbb{R}^n} f (x,t) h (x) dx = 0
$$
for all $h \in H_{\leq m}$.
%
%3)
%it is a surjection with kernel $C^{s,\lambda/2} ([0,T], H_{\leq m})$,
%   if $-m-1 < \delta < -m$ for some $m \in \mathbb{Z}_{\geq 0}$.
\end{corollary}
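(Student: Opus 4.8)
The plan is to reduce the corollary to the slice-wise statements already established, using the Newton potential $\varPhi \otimes I$ (and the modified potential built from the kernel $\phi_m$ of Lemma \ref{l.Gm}) as an explicit right inverse, with $t$ entering only as a parameter.

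First I would check that $C^{k+1,\mathbf{s}(s,\lambda,\delta)}(\overline{\mathcal{C}_T}) \cap \mathcal{D}_\varDelta$ with the graph norm is a Banach space. This is routine: $\varDelta$ maps $C^{k+1,\mathbf{s}(s,\lambda,\delta)}(\overline{\mathcal{C}_T})$ continuously into the distributions on $\mathcal{C}_T$, and $C^{k,\mathbf{s}(s,\lambda,\delta+2)}(\overline{\mathcal{C}_T})$ embeds continuously into the same space of distributions, so $\varDelta$ is closed as an operator between the two Banach spaces and its domain with the graph norm is complete. Continuity of (\ref{eq.Laplace.non-coercive}) is then immediate, as $\|\varDelta u\|_{C^{k,\mathbf{s}(s,\lambda,\delta+2)}(\overline{\mathcal{C}_T})}$ is one of the two summands of the graph norm. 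Injectivity for $\delta>0$ follows as in Lemma \ref{l.weight.Hoelder.Laplace.easy}: if $\varDelta u = 0$ then, for each fixed $t$, the slice $u(\cdot,t)$ is harmonic on $\mathbb{R}^n$ and vanishes at infinity, hence $u(\cdot,t)\equiv 0$ by Liouville's theorem.

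For the range, observe that in case 2 the functionals $f \mapsto \int_{\mathbb{R}^n} f(x,t) h(x)\,dx$, with $h \in H_{\leq m}$ and $t \in [0,T]$, are bounded on $C^{k,\mathbf{s}(s,\lambda,\delta+2)}(\overline{\mathcal{C}_T})$ --- the integrals converge absolutely since $n-1+m>\delta$ forces $\deg h - \delta - 2 < -n$ --- so the set $R^{k,\mathbf{s}(s,\lambda,\delta+2)}(\overline{\mathcal{C}_T})$ of the statement is closed, and the necessity of the orthogonality relations is the same Green-formula argument as in Theorem \ref{t.weight.Hoelder.Laplace}. It remains to prove surjectivity onto this subspace, resp.\ onto all of $C^{k,\mathbf{s}(s,\lambda,\delta+2)}(\overline{\mathcal{C}_T})$ in case 1. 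Given such an $f$, I set $u := (\varPhi\otimes I)f$ when $0<\delta<n-2$, and $u := (\varPhi_m\otimes I)f$ when $n-2+m<\delta<n-1+m$. Slice-wise, $\varDelta u(\cdot,t) = f(\cdot,t)$ by Lemma \ref{l.phi.Hoelder} and Lemma \ref{l.Gm}; since $\varPhi$ (resp.\ $\varPhi_m$) commutes with $\partial_x^\gamma$, $|\gamma| \le k$, and with $\partial_t^j$, one has $\partial_x^\gamma \partial_t^j u = (\varPhi\otimes I)(\partial_x^\gamma\partial_t^j f)$, and the slice estimates of Theorem \ref{t.weight.Hoelder.Laplace} and Lemma \ref{l.weight.Hoelder.Laplace.easy}, applied to $\partial_t^j f$ for each $j\le s$ and to its increments in $t$, transfer the full anisotropic norm, two spatial derivatives being recovered on the $C^{0,\lambda}$-in-$x$ seminorms and one on the weighted-sup-Hölder-in-$t$ seminorms. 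Keeping $k+1$ spatial derivatives, I conclude $u \in C^{k+1,\mathbf{s}(s,\lambda,\delta)}(\overline{\mathcal{C}_T})$ with $\|u\|_{C^{k+1,\mathbf{s}(s,\lambda,\delta)}} \leq c\,\|f\|_{C^{k,\mathbf{s}(s,\lambda,\delta+2)}}$; since $\varDelta u = f$ lies in $C^{k,\mathbf{s}(s,\lambda,\delta+2)}(\overline{\mathcal{C}_T})$, also $u \in \mathcal{D}_\varDelta$. Hence $f \mapsto u$ is a bounded right inverse, which gives surjectivity, closedness of the range, and --- with injectivity and the open mapping theorem --- the isomorphism in case 1.

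The step I expect to be the main obstacle is this last regularity transfer, and in particular the bookkeeping that explains why the target is the non-coercive domain $C^{k+1,\mathbf{s}(s,\lambda,\delta)}\cap\mathcal{D}_\varDelta$ rather than $C^{k+2,\mathbf{s}(s,\lambda,\delta)}$: for the $C^{0,\lambda}$-in-$x$ components genuine Schauder estimates recover two spatial derivatives, but for the weighted-sup-Hölder-in-$t$ seminorms of the top-order time derivatives the second-order operator $\partial_{x_i}\partial_{x_j}\varPhi$ is a Calder\'on--Zygmund singular integral and fails to be bounded on $C^{0,0,\delta}$, so one spatial derivative is lost. The case $n=2$ requires in addition the series decomposition (\ref{eq.decomp.phi}) and the modified potential $\varPhi_m$, treated exactly as in \cite{McOw79} and Lemma \ref{l.Gm}.
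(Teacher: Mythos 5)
Your proposal is correct and follows essentially the same route as the paper: completeness via closedness of $\varDelta$, injectivity via the Liouville theorem on each time slice, and surjectivity via the slice-wise action of $\varPhi \otimes I$ (the paper's Lemma \ref{l.weight.Hoelder.Laplace.t}), with the loss of one spatial derivative traced to the time-H\"older seminorms, where only sup-in-$x$ control of the increments $f(\cdot,t')-f(\cdot,t'')$ is available. Your Calder\'on--Zygmund explanation of that loss is the same phenomenon the paper encodes by first embedding $C^{m,0,\delta+2}$ into $C^{m-1,\lambda,\delta+2}$ before invoking the elliptic Schauder estimate.
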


\begin{proof}
We first note that the elements of
   $C^{k+1,\mathbf{s} (s,\lambda,\delta)} (\overline{\mathcal{C}_T}) \cap \mathcal{D}_\varDelta$
are $C^{k+2+2s,\lambda}_{\mathrm{loc}}$ functions of $x \in \mathbb{R}^n$,
   which is due to elliptic regularity.
If
   $\{ u_\nu \}$
is a Cauchy sequence in
   $C^{k+1,\mathbf{s} (s,\lambda,\delta)} (\overline{\mathcal{C}_T}) \cap \mathcal{D}_\varDelta$,
then it is a Cauchy sequence in
   $C^{k+1,\mathbf{s} (s,\lambda,\delta)} (\overline{\mathcal{C}_T})$
and
   $\{ \varDelta u_\nu \}$
is a Cauchy sequence in the space
   $C^{k,\mathbf{s} (s,\lambda,\delta+2)} (\overline{\mathcal{C}_T})$.
As the spaces are complete we conclude that the sequence $\{ u_\nu \}$ converges in
   $C^{k+1,\mathbf{s} (s,\lambda,\delta)} (\overline{\mathcal{C}_T})$
to an element $u$ and the sequence $\{ \varDelta u_\nu \} $ converges in
   $C^{k,\mathbf{s} (s,\lambda,\delta+2)} (\overline{\mathcal{C}_T})$
to an element $f$.
Obviously, $\varDelta u = f$ is fulfilled in the sense of distributions.
Hence, $u$ belongs to
   $C^{k+1,\mathbf{s} (s,\lambda,\delta)} (\overline{\mathcal{C}_T}) \cap \mathcal{D}_\varDelta$
and it is the limit of the sequence $\{ u_\nu \} $ in this space.
We have thus proved that the space
   $C^{k+1,\mathbf{s} (s,\lambda,\delta)} (\overline{\mathcal{C}_T}) \cap \mathcal{D}_\varDelta$
is Banach.
Moreover, by the very definition of the space, the Laplace operator $\varDelta$ induces a
continuous linear operator as is shown in (\ref{eq.Laplace.non-coercive}).

Next, the Liouville theorem implies that a harmonic function in $\mathbb{R}^n$ growing as
$|x|^m$ at the point of infinity is  a polynomial of order $m \in \mathbb{Z}_{\geq 0}$.
It follows that for $\delta > 0$ the kernel of $\varDelta$ consists of those functions
   $u \in C^{k+1,\mathbf{s} (s,\lambda,\delta)} (\overline{\mathcal{C}_T}) \cap \mathcal{D}_\varDelta$
which depend on the variable $t$ only and vanish as $|x| \to +\infty$.
This means that the operator $\varDelta$ is injective on
   $C^{k+1,\mathbf{s} (s,\lambda,\delta)} (\overline{\mathcal{C}_T}) \cap \mathcal{D}_\varDelta$,
provided that
   $s \geq 0$ and
   $\delta > 0$.
%
%More generally, if $-m-1 < \delta < -m$ for some integer $m \geq 0$, then the kernel of the
%operator $\varDelta$ on
%   $C^{k+1,\mathbf{s} (s,\lambda,\delta)} (\overline{\mathcal{C}_T}) \cap \mathcal{D}_\varDelta$
%just amounts to the space of all harmonic polynomials of degree $\leq m$ in $x$ with
%coefficients depending on the mere variable $t$, i.e.,
%$$
%   h (x,t) = \sum_{|\alpha| \leq m} c_\alpha (t)\, x^\alpha.
%$$
%Since
%   $h \in C^{k+1,\mathbf{s} (s,\lambda,\delta)} (\overline{\mathcal{C}_T})$
%we conclude that $c_\alpha \in  C^{s,\lambda/2} [0,T]$, as desired.
%
%Thus the kernel of operator (\ref{eq.Laplace.non-coercive}) coincides with
%   $C^{s,\lambda/2} ([0,T], H_{\leq m})$,
%provided $-m-1 < \delta < -m$ for some $m \in \mathbb{Z}_{\geq 0}$.

Now we need an analogue of Lemmata \ref{l.phi.Hoelder} and \ref{l.Gm}.

\begin{lemma}
\label{l.weight.Hoelder.Laplace.t}
Assume
   $n \geq 2$,
   $k$ and $s$ are nonnegative integers,
   $0 < \lambda < 1$
and
   $\delta > 0$, %\not\in \mathbb{Z}_{\leq 0}$,
   $\delta+2-n \not\in \mathbb{Z}_{\geq 0}$.
Then the potential $\varPhi \otimes I$ induces a bounded linear operator
$$
\begin{array}{rclll}
   C^{k,\mathbf{s} (s,\lambda,\delta+2)} (\overline{\mathcal{C}_T})
 & \!\! \to \!\!
 & C^{k+1,\mathbf{s} (s,\lambda,\delta)} (\overline{\mathcal{C}_T}) \cap \mathcal{D}_\varDelta,
 & \mbox{if}
 & 0 < \delta < n-2,
\\
   R^{k,\mathbf{s} (s,\lambda,\delta+2)} (\overline{\mathcal{C}_T})
 & \!\! \to \!\!
 & C^{k+1,\mathbf{s} (s,\lambda,\delta)} (\overline{\mathcal{C}_T}) \cap \mathcal{D}_\varDelta,
 & \mbox{if}
 & n-2 < \delta, %n-2+m < \delta < n-1+m,
\end{array}
$$
satisfying $\varDelta (\varPhi \otimes I) = I$ on these spaces.
\end{lemma}

\begin{proof}
Indeed, by Theorem  \ref{t.weight.Hoelder.Laplace} we get
\begin{eqnarray*}
\lefteqn{
   \sup_{t',t'' \in [0,T] \atop t' \neq t''}
   \frac{ \| (\varPhi \otimes I) \partial^j_t f\, (\cdot, t') - (\varPhi \otimes I) \partial^j_t f\, (\cdot, t'')
          \|_{C^{2 (s-j)+k+1,0,\delta} (\mathbb{R}^n)}}
        {|t'-t''|^{\lambda/2}}
}
\\
 & = &
   \sup_{t',t'' \in [0,T] \atop t' \neq t''}
   \frac{ \| \varPhi (\partial^j_t f (\cdot, t') - \partial^j_t f (\cdot, t''))
          \|_{C^{2 (s-j)+k+1,0,\delta} (\mathbb{R}^n)}}
        {|t'-t''|^{\lambda/2}}
\\
 & \leq &
   \sup_{t',t'' \in [0,T] \atop t' \neq t''}
   \frac{ \| \partial^j_t f (\cdot, t') - \partial^j_t f (\cdot, t'')
          \|_{C^{2 (s-j)+k-1,\lambda,\delta+2} (\mathbb{R}^n)}}
        {|t'-t''|^{\lambda/2}}
\\
 & \leq &
   c\,
   \sup_{t',t'' \in [0,T] \atop t' \neq t''}
   \frac{ \| \partial^j_t f (\cdot, t') - \partial^j_t f (\cdot, t'')
          \|_{C^{2 (s-j)+k,0,\delta+2} (\mathbb{R}^n)}}
        {|t'-t''|^{\lambda/2}}
\end{eqnarray*}
for all $f$ in
   $C^{k,\mathbf{s} (s,\lambda,\delta+2)} (\overline{\mathcal{C}_T})$,
if $0 < \delta < n-2$, or in the range
   $R^{k,\mathbf{s} (s,\lambda,\delta\!+\!2)} (\overline{\mathcal{C}_T})$,
if $n+m-2 < \delta < n+m-1$,
   where $c$ is a constant granted by Lemma \ref{l.smooth.lipschitz.t}.

It is clear from Lemmata \ref{l.phi.Hoelder} and \ref{l.Gm} that
   $\varDelta (\varPhi \otimes I) f = f$
for all functions $f$ in
   $C^{k,\mathbf{s} (s,\lambda,\delta+2)} (\overline{\mathcal{C}_T})$,
if $0 < \delta < n-2$, or in
   $R^{k,\mathbf{s} (s,\lambda,\delta+2)} (\overline{\mathcal{C}_T})$,
if $n+m-2 < \delta < n+m-1$.

On combining what has been proved with the results of Lemma \ref{l.weight.Hoelder.Laplace.easy}
we obtain readily
$$
   \| (\varPhi \otimes I) f)
   \|_{C^{k+1,\mathbf{s} (s,\lambda,\delta)} (\overline{\mathcal{C}_T}) \cap \mathcal{D}_\varDelta}
 \leq
   c\, \| f \|_{C^{k,\mathbf{s} (s,\lambda,\delta+2} (\overline{\mathcal{C}_T})}
$$
for all function $f$ as above, with $c$ a constant independent of $f$.
\end{proof}

On applying this lemma we complete readily the proof of Corollary \ref{c.weight.Hoelder.Laplace.t},
which has been our goal.
\end{proof}

\begin{lemma}
\label{l.homo.map.t}
Suppose that $P$ is a homogeneous partial differential operator of order $0 \leq k' \leq k$
with constant coefficients which acts in the space variable $x$.
Then,

1)
$P$ maps
   $C^{k,\mathbf{s} (s,\lambda,\delta\!+\!2)} (\mathbb{R}^n \! \times \! [0,T])$ continuously into
   $R^{k\!-\!k',\mathbf{s} (s,\lambda,\delta\!+\!2\!+\!k')} (\mathbb{R}^n \! \times \! [0,T])$,
if
   $0 < \delta < n-2$ and
   $n-2+m < \delta+k' < n-1+m$ for some $m \in \mathbb{Z}_{\geq 0}$;

2)
$P$ maps
   $R^{k,\mathbf{s} (s,\lambda,\delta\!+\!2)} (\mathbb{R}^n \! \times \! [0,T])$ continuously into
   $R^{k\!-\!k',\mathbf{s} (s,\lambda,\delta\!+\!2\!+\!k')} (\mathbb{R}^n \! \times \! [0,T])$,
if
   $n-2+m < \delta < n-1+m$ for some $m \in \mathbb{Z}_{\geq 0}$.
\end{lemma}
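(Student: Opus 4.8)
The plan is to reduce everything to Lemma~\ref{l.homo.map} by keeping the time variable $t \in [0,T]$ as a parameter, and to get the continuity estimate from Lemma~\ref{l.diff.oper}. Write $P = \sum_{|\alpha| = k'} P_\alpha \partial_x^\alpha$ with constant matrices $P_\alpha$. By part~1) of Lemma~\ref{l.diff.oper}, $P$ maps $C^{k,\mathbf{s}(s,\lambda,\delta+2)}(\overline{\mathcal{C}_T})$ continuously into $C^{k-k',\mathbf{s}(s,\lambda,\delta+2+k')}(\overline{\mathcal{C}_T})$. Since, by Corollary~\ref{c.weight.Hoelder.Laplace.t}, the space $R^{k-k',\mathbf{s}(s,\lambda,\delta+2+k')}(\overline{\mathcal{C}_T})$ is a \emph{closed} subspace of $C^{k-k',\mathbf{s}(s,\lambda,\delta+2+k')}(\overline{\mathcal{C}_T})$ carrying the induced (anisotropic) norm, it remains only to check that $Pf$ lands in this subspace, that is, that for each fixed $t \in [0,T]$ the form $(Pf)(\cdot,t)$ is orthogonal in $L^2(\mathbb{R}^n)$ to the appropriate space of harmonic polynomials; in particular no separate work is needed for the $t$-H\"older part of the norm, because $P$ differentiates only in $x$.

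Next I would perform the Green-formula computation pointwise in $t$, exactly as in the proof of Lemma~\ref{l.homo.map}. The structural point is that $P^\ast$, being a constant-coefficient operator, commutes with $\varDelta$, hence maps $H_{\leq \ell}$ into $H_{\leq \ell - k'}$ (with the convention $H_{\leq j} = \{0\}$ for $j < 0$). In case~1) the hypotheses $0 < \delta < n-2$ and $n-2+m < \delta+k'$ force $m < k'$, so $P^\ast h = 0$ for every $h \in H_{\leq m}$; writing $\int_{\mathbb{R}^n}(Pf)(x,t)\, h(x)\, dx$ as a limit over balls $B_R$ and integrating by parts gives $\int_{\mathbb{R}^n}(Pf)(x,t)\, h(x)\, dx = \int_{\mathbb{R}^n} f(x,t)\,(P^\ast h)(x)\, dx = 0$, where $H_{\leq m}$ is precisely the degree attached to the target range space, since the weight exponent $\delta+k'$ satisfies $n-2+m < \delta+k' < n-1+m$. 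In case~2) the target range space carries the degree $m+k'$ (because $n-2+(m+k') < \delta+k' < n-1+(m+k')$); for $h \in H_{\leq m+k'}$ one has $P^\ast h \in H_{\leq m}$, so $\int_{\mathbb{R}^n} f(x,t)\,(P^\ast h)(x)\, dx = 0$ by the very definition of $R^{k,\mathbf{s}(s,\lambda,\delta+2)}(\overline{\mathcal{C}_T})$, whence again $\int_{\mathbb{R}^n}(Pf)(x,t)\, h(x)\, dx = 0$ once the boundary terms are shown to vanish.

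Finally, the boundary terms are handled uniformly in $t$ just as in Lemma~\ref{l.homo.map}: the Green bilinear form $G_P(h,f)$ on $\partial B_R$ is a sum of products $\partial_x^\alpha f \cdot \partial_x^\beta h$ with $|\alpha|+|\beta| = k'-1$, and since $f(\cdot,t) \in C^{k,\lambda,\delta+2}(\mathbb{R}^n)$ with $k \geq k'$ we have $|\partial_x^\alpha f(x,t)| \leq c\,\|f\|_{C^{k,\mathbf{s}(s,\lambda,\delta+2)}(\overline{\mathcal{C}_T})}\,(w(x))^{-\delta-2-|\alpha|}$, while $|\partial_x^\beta h(x)| \leq c\,|x|^{\deg h - |\beta|}$. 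Hence $\big|\int_{\partial B_R} G_P(h,f)\big|$ is dominated by a constant times $R^{\,n-2+\deg h-\delta-k'}$, which tends to $0$ as $R \to +\infty$ because $\deg h < \delta+k'-n+2$ in each case (in case~1) since $\deg h \leq m < \delta+k'-n+2$; in case~2) since $\deg h \leq m+k'$ and $m < \delta-n+2$). Passing to the limit gives the required orthogonality, and combining it with the continuity estimate from the first step finishes the proof. I do not expect a genuine obstacle here: the whole argument is a transcription of Lemma~\ref{l.homo.map} with $t$ as a parameter, and the only place where care is really needed is the bookkeeping — correctly identifying the polynomial degree ($m$ in case~1), $m+k'$ in case~2)) that labels the target range space $R^{k-k',\mathbf{s}(s,\lambda,\delta+2+k')}(\overline{\mathcal{C}_T})$.
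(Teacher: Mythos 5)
Your proposal is correct and follows essentially the same route as the paper: the Green formula for $P$ applied pointwise in $t$, with $P^\ast h = 0$ in case 1) (since $m < k'$) and $P^\ast h \in H_{\leq m}$ in case 2) (since $P^\ast$ commutes with $\varDelta$), together with the same boundary-term estimate $R^{\,n-2+\deg h-\delta-k'} \to 0$. The only addition is your explicit continuity step via Lemma \ref{l.diff.oper} and the closedness of the range, which the paper leaves implicit; the degree bookkeeping ($m$ versus $m+k'$) matches the paper exactly.
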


\begin{proof}
Indeed, if
   $f \in C^{k,\mathbf{s} (s,\lambda,\delta+2)} (\overline{\mathcal{C}_T})$
and
$$
\begin{array}{rcccl}
   0
 & <
 & \delta
 & <
 & n-2,
\\
   n-2+m
 & <
 & \delta+k'
 & <
 & n-1+m
\end{array}
$$
for some $m \in \mathbb{Z}_{\geq 0}$, then
   $m < k'$
and, for any $h \in H_{\leq m}$, using the Green formula for $P$ yields
\begin{eqnarray*}
\lefteqn{
   \int_{\mathbb R^n} (Pf) (x,t) h (x) dx
}
\\
 & = &
   \lim_{R \to + \infty} \int_{B_R} (Pf)(x,t) h (x) dx
\\
 & = &
   \int_{\mathbb R^n} f (x,t) (P^\ast h) (x) dx
 + \lim_{R \to + \infty} \int_{\partial B_R} G_P (h,f)
\\
 & = &
   0,
\end{eqnarray*}
the last equality being a consequence of the facts that
   $P^\ast h = 0$, for $m < k'$,
and
$$
   \Big| \int_{\partial B_R} G_P (h,f) \Big|
 \leq
   c \sum_{j=0}^{k'-1} \frac{R^{n-1} R^{m-j}}{(1+|R|^2)^{(\delta+2+k'-1-j)/2}}
$$
with some constant $c$ independent of $R$.

Similarly, if
   $f \in R^{k,\mathbf{s} (s,\lambda,\delta+2)} (\overline{\mathcal{C}_T})$
and
   $n-2+m < \delta < n-1+m$,
then, for any $h \in H_{\leq m+k'}$, using the Green formula for $P$ we obtain
\begin{eqnarray*}
\lefteqn{
   \int_{\mathbb R^n} (Pf) (x,t) h (x) dx
}
\\
 & = &
   \lim_{R \to + \infty} \int_{B_R} (Pf) (x,t) h (x) dx
\\
 & = &
   \int_{\mathbb R^n} f (x,t) (P^\ast h) (x) dx
 + \lim_{R \to + \infty}  \int_{\partial B_R} G_P (h,f)
\\
 & = &
   0
\end{eqnarray*}
because $P^\ast h \in H_{\leq m}$ (since $\varDelta P = P \varDelta$) and
$$
   \Big| \int_{\partial B_R} G_P (h,f) \Big|
 \leq
   c \sum_{j=0}^{k'-1} \frac{R^{n-1} R^{m+k'-j}}{(1+|R|^2)^{(\delta+2+k'-1-j)/2}},
$$
the constant $c$ being independent of $R$.
\end{proof}

As a corollary we are in a position to describe the behaviour of the de Rham cohomolgy in
the scale of weighted H\"older spaces.

For this purpose, for a differential operator $A$ acting on sections of the vector bundle
$\varLambda^q$ over $\mathbb{R}^n$, we denote by
   $C^{s,\lambda,\delta} (\mathbb{R}^n, \varLambda^q) \cap \mathcal{S}_A$
the space of all differential forms
   $u \in C^{s,\lambda,\delta} (\mathbb{R}^n, \varLambda^q)$
satisfying $Au = 0$ in the sense of the distributions in $\mathbb{R}^n$.
Similarly, we write
   $C^{k,\mathbf{s} (s,\lambda,\delta)} (\overline{\mathcal{C}_T}, \varLambda^q) \cap \mathcal{S}_A$
for the space of all $q\,$-forms on $\mathbb{R}^n$ with coefficients from
   $C^{k,\mathbf{s} (s,\lambda,\delta)} (\overline{\mathcal{C}_T})$
satisfying $Au\, (\cdot,t) = 0$ in the sense of distributions for all fixed $t \in [0,T]$.
These spaces are obviously closed subspaces of
   $C^{s,\lambda,\delta} (\mathbb{R}^n, \varLambda^q)$ and
   $C^{k,\mathbf{s} (s,\lambda,\delta)} (\overline{\mathcal{C}_T}, \varLambda^q)$,
respectively, and so they are Banach spaces under induced norms.

\begin{corollary}
\label{c.deRham.Hoelder}
Let
   $q \geq 0$,
   $s \in \mathbb{Z}_{\geq 0}$,
   $0 < \lambda < 1$,
and
   $\delta > 0$ satisfy $\delta+2-n \not\in \mathbb{Z}_{\geq 0}$.
If
   $f \in R^{s,\lambda,\delta+2} (\mathbb{R}^n, \varLambda^{q+1}) \cap \mathcal{S}_d$,
then  there is a unique
   $u \in C^{s+1,\lambda,\delta+1} (\mathbb{R}^n, \varLambda^q) \cap \mathcal{S}_{d^\ast}$,
such that
\begin{equation}
\label{eq.deRham.sol}
   du = f
\end{equation}
in $\mathbb{R}^n$.
Moreover,
$$
   \| u \|_{C^{s+1,\lambda,\delta+1} (\mathbb{R}^n, \varLambda^q)}
 \leq
   c\,
   \| f \|_{C^{s,\lambda,\delta+2} (\mathbb{R}^n, \varLambda^{q+1})}
$$
with $c$ a constant independent of $f$, and if $0 < \delta < n$ then the solution $u$ belongs
to
   $R^{s+1,\lambda,\delta+1} (\mathbb{R}^n, \varLambda^q)$.
\end{corollary}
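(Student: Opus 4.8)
The plan is to invert the Laplacian componentwise and then apply the codifferential $d^\ast$, exploiting the factorisation $d^\ast d + d d^\ast = - E_{k_q} \varDelta$ of (\ref{eq.deRham}). Write $\varPhi$ for the Newton potential applied to each coefficient of a form, understood as $\varPhi_{m}$ of Lemma \ref{l.Gm} in the case $n - 2 + m < \delta < n - 1 + m$ and as the potential of Lemma \ref{l.phi.Hoelder} when $0 < \delta < n - 2$. Since $f$ lies in $R^{s,\lambda,\delta+2} (\mathbb{R}^n, \varLambda^{q+1})$, Theorem \ref{t.weight.Hoelder.Laplace} guarantees that $v := - \varPhi f$ belongs to $C^{s+2,\lambda,\delta} (\mathbb{R}^n, \varLambda^{q+1})$, that $\varDelta v = - f$ holds componentwise, and that $\| v \|_{C^{s+2,\lambda,\delta} (\mathbb{R}^n, \varLambda^{q+1})} \leq c\, \| f \|_{C^{s,\lambda,\delta+2} (\mathbb{R}^n, \varLambda^{q+1})}$.

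\textbf{Existence and estimate.}
First I would check that $d v = 0$. The exterior derivative has constant coefficients and commutes with the componentwise Laplacian, so $\varDelta (d v) = d (\varDelta v) = - d f = 0$ because $f \in \mathcal{S}_d$. Thus $d v$ is a $(q+2)\,$-form with harmonic coefficients; as $d v \in C^{s+1,\lambda,\delta+1} (\mathbb{R}^n, \varLambda^{q+2})$ with $\delta + 1 > 0$, each coefficient tends to zero at infinity, whence $d v \equiv 0$ by the Liouville theorem. Consequently (\ref{eq.deRham}) gives $d d^\ast v = (d^\ast d + d d^\ast) v = - \varDelta v = f$. Setting $u := d^\ast v$ we obtain $d u = f$, while $d^\ast u = (d^\ast)^2 v = 0$, so $u \in \mathcal{S}_{d^\ast}$. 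Since $d^\ast$ is a first order operator with constant coefficients it maps $C^{s+2,\lambda,\delta}$ continuously into $C^{s+1,\lambda,\delta+1}$, which places $u$ in $C^{s+1,\lambda,\delta+1} (\mathbb{R}^n, \varLambda^q)$ and yields the asserted norm bound.

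\textbf{Uniqueness and range membership.}
If $u_1$ and $u_2$ are two solutions in $C^{s+1,\lambda,\delta+1} (\mathbb{R}^n, \varLambda^q) \cap \mathcal{S}_{d^\ast}$, then $w := u_1 - u_2$ satisfies $d w = 0$ and $d^\ast w = 0$, so $\varDelta w = - (d^\ast d + d d^\ast) w = 0$ componentwise; since $w \in C^{s+1,\lambda,\delta+1}$ with $\delta + 1 > 0$ its coefficients vanish at infinity, and Liouville's theorem forces $w \equiv 0$. To see that $u \in R^{s+1,\lambda,\delta+1} (\mathbb{R}^n, \varLambda^q)$ when $0 < \delta < n$, I would invoke the range description of Theorem \ref{t.weight.Hoelder.Laplace} for the weight $\delta + 1$: in the lower range the pertinent Laplace operator is already onto, so nothing is to be shown, while for $n - 1 < \delta < n$ one needs only the single moment condition $\int_{\mathbb{R}^n} u_I\, dx = 0$ for each coefficient $u_I$ of $u$; this follows since $u = d^\ast v$ is of divergence type and the coefficients of $v$ are $O(|x|^{-\delta})$ with $\delta > n - 1$, so the divergence theorem over expanding balls makes the boundary integrals $O(R^{n-1-\delta}) \to 0$.

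\textbf{Main obstacle.}
The main difficulty I anticipate is the bookkeeping around the two regimes of $\delta$: one has to ensure that the potential used (the genuine Newton potential versus its corrected version $\varPhi_m$) actually inverts $\varDelta$ on $f$ — which is exactly where the hypothesis $f \in R^{s,\lambda,\delta+2} \cap \mathcal{S}_d$, and not merely $f \in C^{s,\lambda,\delta+2}$, is essential — and, for the last assertion, to track the precise weight index at which the range characterisation of Theorem \ref{t.weight.Hoelder.Laplace} passes from surjectivity to a finite set of moment conditions.
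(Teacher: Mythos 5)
Your proposal is correct and follows essentially the same route as the paper: solve $\varDelta v = -f$ with the (corrected) Newton potential of Theorem \ref{t.weight.Hoelder.Laplace}, kill the closed part via a Liouville argument, set $u = d^\ast v$, and get uniqueness from the injectivity of the componentwise Laplacian on decaying forms. The only cosmetic differences are that you first verify $dv=0$ instead of directly showing $dd^\ast \varPhi f - f$ is harmonic, you obtain the norm bound from the explicit formula rather than the closed graph theorem, and you check the moment conditions for the last assertion by hand instead of citing Lemma \ref{l.homo.map}; none of these changes the substance.
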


\begin{proof}
Indeed, as $f$ belongs to $R^{s,\lambda,\delta+2} (\mathbb{R}^n, \varLambda^{q+1})$,
Theorem \ref{t.weight.Hoelder.Laplace} and (\ref{eq.deRham}) imply that
$$
\begin{array}{rcl}
   \varPhi f
 & \in
 & C^{s+2,\lambda,\delta} (\mathbb{R}^n, \varLambda^{q+1}),
\\
   d^\ast \varPhi f
 & \in
 & C^{s+1,\lambda,\delta+1} (\mathbb{R}^n, \varLambda^q) \cap \mathcal{S}_{d^\ast},
\\
   d d^\ast \varPhi f
 & \in
 & C^{s,\lambda,\delta+2} (\mathbb{R}^n, \varLambda^{q+1}).
\end{array}
$$

On the other hand,
$$
   \varDelta (d d^\ast\, \varPhi f - f)
 = d d^\ast (f - f)
 = 0
$$
in the sense of distributions on $\mathbb{R}^n$.
Since $\delta > 0$, the operator $\varDelta$ is injective on
   $C^{s,\lambda,\delta+2} (\mathbb{R}^n, \varLambda^{q+1})$,
and hence $d d^\ast \varPhi f = f$.

If $f = 0$ then the solution to (\ref{eq.deRham.sol}) is harmonic in $\mathbb{R}^n$
because of (\ref{eq.deRham}).
It follows that the solution vanishes in all of $\mathbb{R}^n$ by the injectivity of
the operator $\varDelta$ on the space
   $C^{s+1,\lambda,\delta+1} (\mathbb{R}^n, \varLambda^q$,
for $\delta > 0$.

We may now use the Banach closed graph theorem to deduce that the bounded linear operator
$$
   d :\, C^{s+1,\lambda,\delta+1} (\mathbb{R}^n, \varLambda^q) \cap \mathcal{S}_{d^\ast}
     \to C^{s,\lambda,\delta+2} (\mathbb{R}^n, \varLambda^{q+1}) \cap \mathcal{S}_d
$$
is continuously invertible.

If $0 < \delta < n-1$, then $1 < \delta+1 < n$ and so
$$
   u = d^\ast \varPhi f
     \in C^{s+1,\lambda,\delta+1} (\mathbb{R}^n, \varLambda^q)
$$
belongs actually to the range of the Laplace operator by Theorem \ref{t.weight.Hoelder.Laplace}.
Finally, if $n-1 < \delta < n$, then, since
   $\varPhi f \in C^{s+2,\lambda,\delta} (\mathbb{R}^n, \varLambda^{q+1})$
belongs to the range of the operator
$$
   \varDelta :\, C^{s+4,\lambda,\delta-2} (\mathbb{R}^n, \varLambda^{q+1}) \to
              C^{s+2,\lambda,\delta} (\mathbb{R}^n, \varLambda^{q+1}),
$$
on using Lemma \ref{l.homo.map} we conclude that
   $u = d^\ast \varPhi f \in R^{s+1,\lambda,\delta+1} (\mathbb{R}^n, \varLambda^q)$.
\end{proof}

Denote by
   $C^{k,\mathbf{s} (s,\lambda,\delta+1)} (\overline{\mathcal{C}_T},\varLambda^q) \cap \mathcal{D}_d$
the space of all differential forms
   $u \in C^{k,\mathbf{s} (s,\lambda,\delta+1)} (\overline{\mathcal{C}_T},\varLambda^q)$,
such that
   $du \in C^{k,\mathbf{s} (s,\lambda,\delta+2)} (\overline{\mathcal{C}_T},\varLambda^{q+1})$,
endowed with the graph norm
\begin{eqnarray*}
\lefteqn{
   \| u \|_{C^{k,\mathbf{s} (s,\lambda,\delta+1)} (\overline{\mathcal{C}_T},\varLambda^q) \cap \mathcal{D}_d}
}
\\
 & = &
   \| u \|_{C^{k,\mathbf{s} (s,\lambda,\delta+1)} (\overline{\mathcal{C}_T},\varLambda^q)}
 + \| du \|_{C^{k,\mathbf{s} (s,\lambda,\delta+2)} (\overline{\mathcal{C}_T},\varLambda^{q+1})}.
\end{eqnarray*}
As in the proof of Corollary \ref{c.weight.Hoelder.Laplace.t} one shows that
   $C^{k,\mathbf{s} (s,\lambda,\delta+1)} (\overline{\mathcal{C}_T},\varLambda^q) \cap \mathcal{D}_d$
is a Banach space and the differential $d$ induces a bounded linear operator
$$
   d_q :\,
   C^{k,\mathbf{s} (s,\lambda,\delta+1)} (\overline{\mathcal{C}_T} ,\varLambda^q) \cap \mathcal{D}_d \to
   C^{k,\mathbf{s} (s,\lambda,\delta+2)} (\overline{\mathcal{C}_T} ,\varLambda^{q+1}).
$$

\begin{corollary}
\label{c.deRham.Hoelder.t}
Let
   $q \geq 0$,
   $s \in \mathbb{Z}_{\geq 0}$,
   $k = 1, 2, \ldots$,
   $0 < \lambda < 1$,
and let
   $ \delta > 0$ satisfy $\delta+2-n \not\in \mathbb{Z}_{\geq 0}$.
Then, for each
   $f \in R^{k,\mathbf{s} (s,\lambda,\delta+2)} (\overline{\mathcal{C}_T}, \varLambda^{q+1})
     \cap \mathcal{S}_d$
there is a unique differential form
   $u \in C^{k,\mathbf{s} (s,\lambda,\delta+1)} (\overline{\mathcal{C}_T}, \varLambda^q) \cap \mathcal{D}_d$
with the property that
\begin{equation}
\label{eq.deRham.sol.t}
\begin{array}{rcl}
   du (\cdot,t)
 & =
 & f (\cdot,t),
\\
   d^\ast u (\cdot,t)
 & =
 & 0
\end{array}
\end{equation}
in $\overline{\mathcal{C}_T} $.
Moreover,
$$
   \| u \|_{C^{k,\mathbf{s} (s,\lambda,\delta+1)} (\overline{\mathcal{C}_T}, \varLambda^q) \cap \mathcal{D}_d}
 \leq
   c\,
   \| f \|_{C^{k,\mathbf{s} (s,\lambda,\delta+2)} (\overline{\mathcal{C}_T}, \varLambda^{q+1})}
$$
with $c$ a constant independent of $f$, and if $0 < \delta < n$ then the solution $u$ belongs to
   $R^{k,\mathbf{s} (s,\lambda,\delta+1)} (\overline{\mathcal{C}_T}, \varLambda^q)$.
\end{corollary}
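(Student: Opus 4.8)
The plan is to imitate the proof of Corollary~\ref{c.deRham.Hoelder} verbatim, now carrying the time variable $t \in [0,T]$ along as a parameter and replacing the Newton potential $\varPhi$ by $\varPhi \otimes I$. Given
   $f \in R^{k,\mathbf{s} (s,\lambda,\delta+2)} (\overline{\mathcal{C}_T}, \varLambda^{q+1}) \cap \mathcal{S}_d$,
I would first form the $(q+1)\,$-form $v := (\varPhi \otimes I) f$ by applying the potential to each coefficient. By Corollary~\ref{c.weight.Hoelder.Laplace.t} together with Lemma~\ref{l.weight.Hoelder.Laplace.t} (applied coefficientwise, in the case $0 < \delta < n-2$, or in the case $\delta > n-2$ where the orthogonality conditions defining $R$ are precisely what is needed), the form $v$ belongs to
   $C^{k+1,\mathbf{s} (s,\lambda,\delta)} (\overline{\mathcal{C}_T}, \varLambda^{q+1}) \cap \mathcal{D}_\varDelta$,
one has $\varDelta v (\cdot,t) = f (\cdot,t)$ for every $t$, and
   $\| v \|_{C^{k+1,\mathbf{s} (s,\lambda,\delta)} (\overline{\mathcal{C}_T}) \cap \mathcal{D}_\varDelta} \leq c\, \| f \|_{C^{k,\mathbf{s} (s,\lambda,\delta+2)} (\overline{\mathcal{C}_T})}$.

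Next I would exploit the factorisation (\ref{eq.deRham}) exactly as in the elliptic case. Since $d$ commutes with $\varDelta$ and $d f (\cdot,t) = 0$ for each $t$ (because $f \in \mathcal{S}_d$), and since $\delta > 0$ guarantees that $\varDelta$ is injective on weighted H\"older spaces with weight exponent $\delta + 1 > 0$ (Liouville, as in Corollary~\ref{c.weight.Hoelder.Laplace.t}), the same computation that yields $d d^\ast \varPhi f = f$ in the proof of Corollary~\ref{c.deRham.Hoelder} now gives $d d^\ast v (\cdot,t) = f (\cdot,t)$ for every $t \in [0,T]$. I would then set
$$
   u := d^\ast (\varPhi \otimes I) f .
$$
Then $d u (\cdot,t) = d d^\ast v (\cdot,t) = f (\cdot,t)$ and $d^\ast u (\cdot,t) = (d^\ast)^2 v (\cdot,t) = 0$ for all $t$, so (\ref{eq.deRham.sol.t}) holds. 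Because $d^\ast$ is a first-order homogeneous operator with constant coefficients acting only in $x$, Lemma~\ref{l.diff.oper} shows that $u$ lies in
   $C^{k,\mathbf{s} (s,\lambda,\delta+1)} (\overline{\mathcal{C}_T}, \varLambda^q)$;
since moreover $d u = f \in C^{k,\mathbf{s} (s,\lambda,\delta+2)} (\overline{\mathcal{C}_T}, \varLambda^{q+1})$, the form $u$ belongs to
   $C^{k,\mathbf{s} (s,\lambda,\delta+1)} (\overline{\mathcal{C}_T}, \varLambda^q) \cap \mathcal{D}_d$,
and combining the estimate for $\varPhi \otimes I$ with the boundedness of $d^\ast$ and with $d u = f$ gives the asserted bound for $\| u \|_{\cap \mathcal{D}_d}$.

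For uniqueness I would again invoke (\ref{eq.deRham}): if $d u (\cdot,t) = 0$ and $d^\ast u (\cdot,t) = 0$ for all $t$, then $\varDelta^q u = (d^\ast d + d d^\ast) u = - E_{k_q} \varDelta u = 0$ componentwise, whence $u = 0$ by the injectivity of $\varDelta$ on spaces with weight $\delta + 1 > 0$; alternatively one reads uniqueness off from the Banach closed graph theorem exactly as in Corollary~\ref{c.deRham.Hoelder}. Finally, the assertion that $u \in R^{k,\mathbf{s} (s,\lambda,\delta+1)} (\overline{\mathcal{C}_T}, \varLambda^q)$ when $0 < \delta < n$ follows as in the elliptic statement: for $0 < \delta < n-1$ the target weight $\delta+1$ lies in $(1,n)$ and the claim is immediate from Corollary~\ref{c.weight.Hoelder.Laplace.t}, while for $n-1 < \delta < n$ one uses that $v$ itself belongs to the range of $\varDelta$ on the next lower weight and applies Lemma~\ref{l.homo.map.t} to the operator $d^\ast$.

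The only genuinely delicate point is the bookkeeping of weights and orders of differentiation in the anisotropic scale: the step from $v \in C^{k+1,\mathbf{s} (s,\lambda,\delta)}$ to $u = d^\ast v \in C^{k,\mathbf{s} (s,\lambda,\delta+1)}$ is precisely the ``loss of one spatial derivative'' caused by the fact that the quintuple $\mathbf{s} (s,\lambda,\delta)$ is not fully coherent with the parabolic dilation with regard to the weight, so one must check that the extra index $k$ absorbs this loss and that no $t\,$-regularity is forfeited (here Lemma~\ref{l.diff.oper} and Lemma~\ref{l.weight.Hoelder.Laplace.t} do the work). Everything else is a routine transcription of the proof of Corollary~\ref{c.deRham.Hoelder} with $t$ treated as a parameter.
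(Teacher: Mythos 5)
Your proposal is correct and follows essentially the same route as the paper's own proof: apply $\varPhi \otimes I$, use the factorisation (\ref{eq.deRham}) and the injectivity of $\varDelta$ for $\delta>0$ to conclude $d\,d^\ast (\varPhi \otimes I) f = f$, set $u = d^\ast (\varPhi \otimes I) f$, obtain uniqueness from harmonicity, and split the range assertion into the cases $0<\delta<n-1$ and $n-1<\delta<n$ via Corollary~\ref{c.weight.Hoelder.Laplace.t} and Lemma~\ref{l.homo.map.t}. The weight and regularity bookkeeping you flag as the delicate point is handled exactly as you describe, so nothing further is needed.
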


\begin{proof}
It runs in much the same way as the proof of Corollary \ref{c.deRham.Hoelder}.
As $f$ belongs to
   $R^{k,\mathbf{s} (s,\lambda,\delta+2)} (\overline{\mathcal{C}_T} , \varLambda^{q+1})$,
Lemma \ref{l.weight.Hoelder.Laplace.t} and (\ref{eq.deRham}) imply that
$$
\begin{array}{rcl}
   (\varPhi \otimes I) f
 & \in
 & C^{k,\mathbf{s} (s,\lambda,\delta)} (\overline{\mathcal{C}_T} , \varLambda^{q+1}) \cap \mathcal{D}_{\varDelta},
\\
   d^\ast (\varPhi \otimes I) f
 & \in
 & C^{k,\mathbf{s} (s,\lambda,\delta+1)} (\overline{\mathcal{C}_T} , \varLambda^q) \cap \mathcal{S}_{d^\ast},
\\
   d d^\ast (\varPhi \otimes I) f
 & \in
 & C^{k-1,\mathbf{s} (s,\lambda,\delta+2)} (\overline{\mathcal{C}_T} , \varLambda^{q+1}).
\end{array}
$$

On the other hand, we obtain
$$
   \varDelta (d d^\ast (\varPhi \otimes I) f (\cdot,t) - f(\cdot,t))
 = d d^\ast (f - f) (\cdot,t)
 = 0
$$
in the sense of distributions on $\mathbb{R}^n$, for each $t \in [0,T]$.
Since $\delta > 0$, the operator $\varDelta$ is injective on
   $C^{k-1,\mathbf{s} (s,\lambda,\delta+2)} (\overline{\mathcal{C}_T} , \varLambda^{q+1})$,
and so $d d^\ast (\varPhi \otimes I) f (\cdot,t) = f (\cdot, t)$ for all $t \in [0,T]$.
In particular, $d^\ast (\varPhi \otimes I) f$ belongs to
   $C^{k,\mathbf{s} (s,\lambda,\delta+1)} (\overline{\mathcal{C}_T} , \varLambda^q) \cap \mathcal{D}_d$,
as is easy to check.

If $f (\cdot,t) = 0$, then the solution to (\ref{eq.deRham.sol.t}) is harmonic in $\mathbb{R}^n$
because of (\ref{eq.deRham}), for each  $t \in [0,T]$.
Therefore, it is identically zero by the injectivity of the operator $\varDelta$ on the space
    $C^{k,\mathbf{s} (s,\lambda,\delta)} (\overline{\mathcal{C}_T} , \varLambda^q)$,
for $\delta > 0$.

Thus, the bounded linear operator
$$
   d :\, C^{k,\mathbf{s} (s,\lambda,\delta+1)} (\overline{\mathcal{C}_T} , \varLambda^q) \cap \mathcal{D}_d
     \to C^{k,\mathbf{s} (s,\lambda,\delta+2)} (\overline{\mathcal{C}_T} , \varLambda^{q+1}) \cap \mathcal{S}_d
$$
restricted to the forms satisfying $d^\ast u = 0$ is continuously invertible by the closed graph
theorem.

If $0 < \delta < n-1$, then $1 < \delta+1 < n$ and
$$
   u := d^\ast (\varPhi \otimes I) f
 \in C^{k,\mathbf{s} (s,\lambda,\delta+1)} (\overline{\mathcal{C}_T} , \varLambda^q) \cap \mathcal{S}_{d^\ast}
$$
belongs to the range of operator (\ref{eq.Laplace.non-coercive}) by Corollary \ref{c.weight.Hoelder.Laplace.t}.
If $n-1 < \delta < n$, then, as
   $(\varPhi \otimes I) f \in C^{k+1,\mathbf{s} (s,\lambda,\delta)} (\overline{\mathcal{C}_T} , \varLambda^{q+1})$
is in the range of (\ref{eq.Laplace.non-coercive}) by Corollary \ref{c.weight.Hoelder.Laplace.t},
we use Lemma \ref{l.homo.map.t} to see that
$
   u := d^\ast (\varPhi \otimes I) f
 \in R^{k,\mathbf{s} (s,\lambda,\delta+1)} (\overline{\mathcal{C}_T} , \varLambda^q),
$
as desired.
\end{proof}

\begin{corollary}
\label{c.deRham.Hoelder.tt}
Suppose that
   $q \geq 0$,
   $s \in \mathbb{Z}_{\geq 0}$,
   $k$ is a positive integer,
   $0 < \lambda < 1$,
and
   $\delta > 0$ satisfies $\delta+2-n \not\in \mathbb{Z}_{\geq 0}$.
Then, for any
   $u \in R^{k,\mathbf{s} (s,\lambda,\delta)} (\overline{\mathcal{C}_T} , \varLambda^q)$
satisfying $d^\ast u = 0$, it follows that
$$
u = d^\ast (\varPhi \otimes I) du.
$$
\end{corollary}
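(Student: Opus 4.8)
The plan is to recover $u$ from $du$ by the Hodge-type device already used in Corollaries \ref{c.deRham.Hoelder} and \ref{c.deRham.Hoelder.t}, the underlying fact being that $d^\ast (\varPhi \otimes I)$ inverts the exterior derivative on coclosed forms. Put $f := du$. Since $d^2 = 0$, the form $f$ is closed, i.e. $f (\cdot,t) \in \mathcal{S}_d$ for every $t \in [0,T]$, and it is produced from $u \in R^{k,\mathbf{s} (s,\lambda,\delta)} (\overline{\mathcal{C}_T}, \varLambda^q)$ by a first order operator acting in $x$; hence, by Lemma \ref{l.diff.oper} and Lemma \ref{l.homo.map.t}, $f$ lies in $R^{k-1,\mathbf{s} (s,\lambda,\delta+1)} (\overline{\mathcal{C}_T}, \varLambda^{q+1}) \cap \mathcal{S}_d$. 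In particular $(\varPhi \otimes I) f$ is well defined and, by Lemma \ref{l.weight.Hoelder.Laplace.t}, it belongs to $C^{k,\mathbf{s} (s,\lambda,\delta-1)} (\overline{\mathcal{C}_T}, \varLambda^{q+1}) \cap \mathcal{D}_\varDelta$ and satisfies $\varDelta (\varPhi \otimes I) f (\cdot,t) = f (\cdot,t)$ componentwise for every $t$.

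Next I would set $v := u - d^\ast (\varPhi \otimes I) du$ and show that $v$ vanishes. First, $d^\ast v = d^\ast u - d^\ast d^\ast (\varPhi \otimes I) du = 0$, because $d^\ast u = 0$ by hypothesis and $d^\ast d^\ast = 0$; thus $v$ is coclosed. Second, since $f = du$ is a closed form lying in the range space $R^{k-1,\mathbf{s} (s,\lambda,\delta+1)} (\overline{\mathcal{C}_T}, \varLambda^{q+1})$, the computation carried out in the proof of Corollary \ref{c.deRham.Hoelder.t} gives $d d^\ast (\varPhi \otimes I) f (\cdot,t) = f (\cdot,t)$ for every $t$: one applies $\varDelta$, uses that it commutes componentwise with $d$ and $d^\ast$ and that $\varDelta (\varPhi \otimes I) = I$, and then invokes the injectivity of $\varDelta$ on weighted H\"older spaces with exponent $\delta > 0$. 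Hence $d v = du - d d^\ast (\varPhi \otimes I) du = f - f = 0$, so $v$ is closed as well. Consequently $\varDelta^q v = (d^\ast d + d d^\ast) v = 0$, and by (\ref{eq.deRham}) this means that the coefficients of $v (\cdot,t)$ are harmonic on $\mathbb{R}^n$ for every $t \in [0,T]$.

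It remains to note that $v$ decays at the point at infinity. The form $u$ carries weight exponent $\delta > 0$, and the three operations $d$, $\varPhi \otimes I$, $d^\ast$ shift the weight exponent by $+1$, $-2$, $+1$ in turn, so $d^\ast (\varPhi \otimes I) du$ again carries weight exponent $\delta$; therefore $v (\cdot,t)$ lies in $C^{0,0,\delta} (\mathbb{R}^n, \varLambda^q)$ with $\delta > 0$ and vanishes as $|x| \to \infty$. A harmonic form with this decay is identically zero by the Liouville argument already used in the proofs of Theorem \ref{t.weight.Hoelder.Laplace} and Corollary \ref{c.weight.Hoelder.Laplace.t}, whence $v (\cdot,t) = 0$ for all $t$, that is, $u = d^\ast (\varPhi \otimes I) du$. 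The one genuinely delicate point in this scheme is the index bookkeeping of the first two paragraphs: one has to check, according to where $\delta$ and $\delta+1$ sit relative to the exceptional values $n-2, n-1, \ldots$, that the orthogonality relations built into the hypothesis $u \in R^{k,\mathbf{s} (s,\lambda,\delta)} (\overline{\mathcal{C}_T}, \varLambda^q)$ do force $du$ into the precise subspace of the scale for which Lemma \ref{l.weight.Hoelder.Laplace.t} and the identity $d d^\ast (\varPhi \otimes I) = I$ on closed forms are available, and that the exclusion $\delta+2-n \not\in \mathbb{Z}_{\geq 0}$ survives the shift; everything else is routine.
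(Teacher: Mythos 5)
Your argument is correct and stays within the same circle of ideas as the paper's proof, but it is organised differently, so a brief comparison is in order. The paper proves two identities and subtracts them: first $u = (\varPhi \otimes I)\, d^\ast du$, obtained by applying $\varPhi \otimes I$ to $(d^\ast d + d d^\ast) u = d^\ast du$ and invoking the injectivity of $\varDelta$ on the weighted scale; second, $d^\ast (\varPhi \otimes I)\, du = (\varPhi \otimes I)\, d^\ast du$, obtained by noting that $\varDelta$ annihilates the difference and is injective. You instead form $v = u - d^\ast (\varPhi \otimes I)\, du$, check that $v$ is simultaneously coclosed (trivially) and closed (via the identity $d d^\ast (\varPhi \otimes I) f = f$ for closed $f$ in the range, i.e. the computation of Corollary \ref{c.deRham.Hoelder.t}), and kill the resulting decaying harmonic form by Liouville. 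Both routes rest on exactly the same two pillars, $\varDelta (\varPhi \otimes I) = I$ on the appropriate range and injectivity of $\varDelta$ for $\delta > 0$, so neither is more general; the paper's version has the small technical advantage that its first identity applies $\varPhi \otimes I$ only to $d^\ast du$, which sits at weight $\delta + 2$ and is therefore squarely inside the hypotheses of Lemma \ref{l.weight.Hoelder.Laplace.t} for every $\delta > 0$, which somewhat defuses the index bookkeeping you rightly flag at the end (applying $\varPhi \otimes I$ to $du$ at weight $\delta + 1$ nominally produces the exponent $\delta - 1$, which falls outside the stated range of the lemma when $0 < \delta \leq 1$; the paper's proof also forms $(\varPhi \otimes I)\, du$, so this is a shared delicacy rather than a defect of your argument).
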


\begin{proof}
Indeed, Lemma \ref{l.homo.map.t} yields
   $du \in R^{k-1,\mathbf{s} (s,\lambda,\delta+1)} (\overline{\mathcal{C}_T} , \varLambda^{q+1})$.
Use Lemma \ref{l.weight.Hoelder.Laplace.t} and (\ref{eq.deRham}) to see that
$$
   u
 = (\varPhi \otimes I) (d^\ast d + d d^\ast) u
 = (\varPhi \otimes I) d^\ast du
 \in C^{k,\mathbf{s} (s,\lambda,\delta)} (\overline{\mathcal{C}_T} , \varLambda^q)
$$
and
$
   d^\ast (\varPhi \otimes I) du
 \in R^{k-1,\mathbf{s} (s,\lambda,\delta)} (\overline{\mathcal{C}_T} , \varLambda^q).
$

On the other hand, by the same Lemma \ref{l.weight.Hoelder.Laplace.t}, we have
$$
   \varDelta \left( d^\ast (\varPhi \otimes I) du - (\varPhi \otimes I) d^\ast du \right)
 = d^\ast du - d^\ast du
 = 0.
$$
Finally, since the Laplacian $\varDelta$ is injective on
   $C^{k-1,\mathbf{s} (s,\lambda,\delta)} (\overline{\mathcal{C}_T} , \varLambda^{q+1})$,
we obtain
$$
   d^\ast (\varPhi \otimes I) du = (\varPhi \otimes I) d^\ast d u = u,
$$
as desired.
\end{proof}

\part{Open mapping theorem}
\label{p.omt}

\section{The heat operator in the weighted H\"older spaces}
\label{s.thoitwHs}

As usual, we denote by $\gamma_{t_0} u$ the restriction of a continuous function $u$ in the layer
$\overline{\mathcal{C}_T} $ to the hyperplane $\{ t = t_0 \}$ in $\mathbb{R}^{n+1}$, where
   $t_0 \in [0,T]$.
The following lemma is obvious.

\begin{lemma}
\label{l.bound.trace.holder}
Let $s, k \in \mathbb{Z}_{\geq 0}$ and $\lambda \in [0,1]$.
The restriction $\gamma_0$ induces a bounded linear operator
$$
   \gamma_0 :\, C^{k,\mathbf{s} (s,\lambda,\delta)} (\overline{\mathcal{C}_T})
            \to C^{2s+k,\lambda,\delta} (\mathbb{R}^n).
$$
\end{lemma}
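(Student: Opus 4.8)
The plan is to unwind the definitions of the two scales and to reduce the statement to the trivial fact that evaluation at a fixed time $t_0$ is bounded on each building block $C^{j,0}([0,T],\mathcal{B})$ of the anisotropic norm, namely $\|\gamma_{t_0} v\|_{\mathcal B} \le \sup_{t\in[0,T]}\|v(\cdot,t)\|_{\mathcal B} \le \|v\|_{C^{0,0}([0,T],\mathcal B)}$ for any Banach space $\mathcal{B}$. Linearity of $\gamma_0$ being obvious, only the norm bound has to be established.

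First I would fix $u \in C^{k,\mathbf{s}(s,\lambda,\delta)}(\overline{\mathcal{C}_T})$ and observe that, by the very definition of this space and of the quintuple $\mathbf{s}(s,\lambda,\delta) = (2s,\lambda,s,\lambda/2,\delta)$, every spatial derivative $\partial^\beta_x u$ with $|\beta|\le k$ belongs to $C^{\mathbf{s}(s,\lambda,\delta+|\beta|)}(\overline{\mathcal{C}_T})$; retaining only the term $j=0$ in the intersection defining the latter space, this yields $\partial^\beta_x u \in C^{0,0}([0,T],C^{2s,\lambda,\delta+|\beta|}(\mathbb{R}^n))$ together with the estimate $\sup_{t\in[0,T]}\|\partial^\beta_x u(\cdot,t)\|_{C^{2s,\lambda,\delta+|\beta|}(\mathbb{R}^n)} \le \|u\|_{C^{k,\mathbf{s}(s,\lambda,\delta)}(\overline{\mathcal{C}_T})}$, where one passes from the $j=0$ slice to the full spatial H\"older norm using that a sum of suprema dominates the supremum of the corresponding sum.

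Next I would expand $\|\gamma_0 u\|_{C^{2s+k,\lambda,\delta}(\mathbb{R}^n)} = \sum_{|\alpha|\le 2s+k}\|\partial^\alpha(\gamma_0 u)\|_{C^{0,\lambda,\delta+|\alpha|}(\mathbb{R}^n)}$ and, for each $\alpha$ with $|\alpha|\le 2s+k$, choose a componentwise splitting $\alpha = \beta + \gamma$ with $|\beta|\le k$ and $|\gamma|\le 2s$ (possible precisely because $|\alpha|\le k+2s$). Since $\partial^\alpha(\gamma_0 u) = \partial^\gamma_x\bigl(\gamma_0(\partial^\beta_x u)\bigr)$ with $\gamma_0(\partial^\beta_x u) = (\partial^\beta_x u)(\cdot,0) \in C^{2s,\lambda,\delta+|\beta|}(\mathbb{R}^n)$, and since $\partial^\gamma_x$ maps $C^{2s,\lambda,\delta+|\beta|}(\mathbb{R}^n)$ continuously into $C^{2s-|\gamma|,\lambda,\delta+|\alpha|}(\mathbb{R}^n)$ (recall $|\beta|+|\gamma|=|\alpha|$), which in turn embeds continuously into $C^{0,\lambda,\delta+|\alpha|}(\mathbb{R}^n)$ by Theorem \ref{t.emb.hoelder}, we obtain $\|\partial^\alpha(\gamma_0 u)\|_{C^{0,\lambda,\delta+|\alpha|}(\mathbb{R}^n)} \le c\,\|u\|_{C^{k,\mathbf{s}(s,\lambda,\delta)}(\overline{\mathcal{C}_T})}$. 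Summing over the finitely many admissible $\alpha$ gives the desired continuity of $\gamma_0$.

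There is no genuine obstacle here — the argument is pure bookkeeping, and indeed the statement is flagged as obvious. The only points requiring a modicum of care are checking that the $j=0$ layer of the anisotropic norm already controls $\sup_t\|\partial^\beta_x u(\cdot,t)\|_{C^{2s,\lambda,\delta+|\beta|}}$ uniformly in $t$, and tracking the weight exponents $\delta+|\alpha|$ correctly under the decomposition $\alpha=\beta+\gamma$; both are immediate from the definitions collected in Section \ref{s.HoelderSpaces}.
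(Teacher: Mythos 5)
Your proof is correct. The paper offers no argument at all here (it simply declares the lemma obvious), and your verification --- extracting the $j=0$ slice of the anisotropic norm, splitting $\alpha=\beta+\gamma$ with $|\beta|\le k$, $|\gamma|\le 2s$, and tracking the weight $\delta+|\alpha|$ --- is precisely the routine bookkeeping the authors intend the reader to supply.
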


Let $\psi_\mu$ be the standard fundamental solution of the convolution type to the heat operator
$H_\mu$ in $\mathbb{R}^{n+1}$, $n\geq 1$,
$$
   \psi_\mu (x,t)
 = \frac{\theta (t)}{\left(4  \pi \mu t\right)^{n/2}}\
   e^{-\frac{\scriptstyle |x|^2}{\scriptstyle 4 \mu t}},
$$
where $\theta (t)$ is the Heaviside function.
Denote by
\begin{eqnarray*}
   (\varPsi_\mu  f) (x,t)
 & = &
   \int_0^{t} \!\!\! \int _{\mathbb{R}^n} \psi_\mu (x-y, t-s)\, f (y,s)\, dy ds,
\\
   (\varPsi_{\mu,0}  u_0) (x,t)
 & = &
   \int_{\mathbb{R}^n} \psi_\mu (x-y, t)\, u_0 (y)\, dy
\end{eqnarray*}
the so-called volume parabolic potential and Poisson parabolic potentials, respectively, defined
for $(x,t) \in \overline{\mathcal{C}_T} $.

Consider the Cauchy problem for the heat operator in the weighted H\"older spaces.
Given functions
   $f$ in $\overline{\mathcal{C}_T} $ and
   $u _0$ on $\mathbb{R}^n$,
find a function $u$ in $\overline{\mathcal{C}_T} $, such that
\begin{equation}
\label{pr.Cauchy.heat}
\left\{ \begin{array}{rclll}
          H_\mu\, u (x,t)
        & =
        & f (x,t)
        & \mbox{for}
        & (x,t) \in \mathbb{R}^n \times (0,T),
\\
          \gamma_0 u\, (x,0)
        & =
        & u_0 (x)
        & \mbox{for}
        & x \in \mathbb{R}^n.
        \end{array}
\right.
\end{equation}

\begin{lemma}
\label{l.unique.heat.initial.hoelder}
For each real $\delta$, problem \ref{pr.Cauchy.heat} has at most one solution in  the space
$C^{\mathbf{s} (1,\lambda,\delta)} (\overline{\mathcal{C}_T})$.
\end{lemma}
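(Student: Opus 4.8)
The plan is to deduce the uniqueness from the classical minimum principle for the heat equation applied to solutions of moderate growth. By linearity it suffices to prove that any
   $u \in C^{\mathbf{s} (1,\lambda,\delta)} (\overline{\mathcal{C}_T})$
with $H_\mu u = 0$ in $\mathbb{R}^n \times (0,T)$ and $\gamma_0 u = 0$ vanishes identically.

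First I would record the elementary properties of such a $u$ that follow at once from the definition of the space. Since $\mathbf{s} (1,\lambda,\delta) = (2,\lambda,1,\lambda/2,\delta)$, the derivatives $\partial_x^\alpha \partial_t^j u$ with $|\alpha| + 2j \leq 2$ are continuous on the closed layer $\overline{\mathcal{C}_T}$; in particular $u$ is continuous up to $\{ t = 0 \}$ and $H_\mu u = 0$ holds in the classical, pointwise sense. Moreover, as $\delta$ is tacitly nonnegative and $w (x) \geq 1$, the finiteness of $\| u \|_{C^{\mathbf{s} (0,0,\delta)} (\overline{\mathcal{C}_T})}$ gives $|u (x,t)| \leq M\, (w (x))^{-\delta} \leq M$ for all $(x,t) \in \overline{\mathcal{C}_T}$, with $M := \| u \|_{C^{\mathbf{s} (0,0,\delta)} (\overline{\mathcal{C}_T})}$; thus $u$ is bounded on the layer.

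The main step is a Phragm\'en--Lindel\"of argument with a quadratic barrier. Fix a constant $C > 2 n \mu$ and set $v (x,t) = |x|^2 + C t$, so that $H_\mu v = C - 2 n \mu > 0$ everywhere and $v \geq 0$ on $\{ t = 0 \}$. For an arbitrary $\varepsilon > 0$ the functions $\varepsilon v \pm u$ satisfy $H_\mu (\varepsilon v \pm u) = \varepsilon (C - 2 n \mu) > 0$ in $\mathbb{R}^n \times (0,T)$, are nonnegative on $\{ t = 0 \}$ since there $u = 0$, and on the lateral surface $\{ |x| = R \} \times [0,T]$ of a cylinder $\overline{B_R} \times [0,T]$ they are bounded below by $\varepsilon R^2 - M$, which is nonnegative for $R$ large enough (independently of $t$, by the $L^\infty$ bound just established). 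A $C^{2,1}$ function $w$ with $H_\mu w > 0$ cannot attain its minimum over $\overline{B_R} \times [0,T]$ at a point with $|x| < R$ and $t > 0$, for at such a point $\nabla w = 0$, $\Delta w \geq 0$ and $\partial_t w \leq 0$ would force $H_\mu w \leq 0$; hence the minimum is attained on $(\overline{B_R} \times \{ 0 \}) \cup (\partial B_R \times [0,T])$, where $\varepsilon v \pm u \geq 0$, and therefore $\varepsilon v \pm u \geq 0$ on all of $\overline{B_R} \times [0,T]$. Every point $(x_0,t_0) \in \overline{\mathcal{C}_T}$ lies in such a cylinder for $R$ large, so $\pm u (x_0,t_0) \geq - \varepsilon v (x_0,t_0)$; letting $\varepsilon \to 0$ gives $u (x_0,t_0) = 0$. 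As $(x_0,t_0)$ was arbitrary, $u \equiv 0$, which is the assertion.

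There is essentially no obstacle here: the only point to check is that the barrier $\varepsilon v$ dominates $u$ on the lateral boundary uniformly in $t$, and this is immediate from the $L^\infty$ bound on $u$. I note in passing that for $\delta > n/2$ one could instead argue by energy: Lemma \ref{l.emb.L2t} puts $u (\cdot,t)$ into $L^2 (\mathbb{R}^n)$, and multiplying $H_\mu u = 0$ by $u$, integrating over $\mathbb{R}^n$ and integrating by parts (the boundary terms vanishing by the decay of $u$ and $\nabla u$) yields $\frac{d}{dt} \| u (\cdot,t) \|_{L^2 (\mathbb{R}^n)}^2 \leq 0$ with $\| u (\cdot,0) \|_{L^2 (\mathbb{R}^n)} = 0$, whence $u \equiv 0$; the maximum-principle argument, however, has the advantage of covering every admissible value of $\delta$ at once.
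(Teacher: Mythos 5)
Your argument is correct and, unlike the paper, self-contained: the paper disposes of this lemma by citing Theorem 16 in \cite[Ch.~1, \S~9]{Fri64}, i.e.\ uniqueness of the Cauchy problem for the heat equation in the Tychonoff class $|u| \leq M e^{k|x|^2}$, whereas you reprove the special case you need via the quadratic barrier $\varepsilon (|x|^2 + Ct)$ and the elementary minimum principle on finite cylinders. Your reduction to a bounded null solution, the verification that $H_\mu (\varepsilon v \pm u) > 0$, the localisation of the minimum on the parabolic boundary, and the passage $R \to \infty$, $\varepsilon \to 0$ are all sound; the side remark about the energy proof for $\delta > n/2$ matches the technique the paper itself uses later in Theorem \ref{t.NS.deriv.unique}. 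The one point to be honest about is the scope: the lemma is stated ``for each real $\delta$'', and for $\delta < 0$ the membership in $C^{\mathbf{s} (1,\lambda,\delta)}$ only gives $|u (x,t)| \leq M (w (x))^{|\delta|}$, i.e.\ polynomial growth, so for $|\delta| \geq 2$ your quadratic barrier no longer dominates $u$ on the lateral surface and one must upgrade it (e.g.\ to $\varepsilon (1+|x|^2+Kt)^m$ with $2m > |\delta|$ and $K$ large, or directly to the exponential barrier underlying Friedman's theorem). You flag the paper's tacit convention $\delta \geq 0$, under which your proof is complete, and indeed every application in the paper has $\delta > 0$; but as a proof of the lemma as literally stated, the case of negative $\delta$ is not covered and is exactly what the cited reference supplies.
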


\begin{proof}
The lemma follows, for instance, from Theorem 16 in \cite[Ch.~1, \S~9]{Fri64}.
There have been also much more advanced results.
\end{proof}

The solution of the Cauchy problem in weighted H\"{o}lder spaces is recovered from the data
by means of the Green formula.

\begin{lemma}
\label{l.bound.heat.initial.hoelder}
Assume that
   $\delta > 0$,
Then,  for each function $u \in C^{\mathbf{s} (1,\lambda,\delta)} (\overline{\mathcal{C}_T})$, it
follows that
$$
   u
 = \varPsi_\mu\, H_\mu u
 + \varPsi_{\mu,0}\, \gamma_0 u.
$$
\end{lemma}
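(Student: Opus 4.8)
The plan is to reduce the identity to the classical Duhamel (Green's) representation formula for the heat operator by means of a spatial cut-off, invoking the hypothesis $\delta > 0$ only to guarantee the convergence of the two potentials on the right-hand side and the vanishing, in the limit, of the commutator terms produced near the spatially distant point.

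First I would check that the potentials $\varPsi_\mu H_\mu u$ and $\varPsi_{\mu,0} \gamma_0 u$ are well defined for $u \in C^{\mathbf{s} (1,\lambda,\delta)} (\overline{\mathcal{C}_T})$. By Lemma \ref{l.diff.oper}, the datum $f := H_\mu u$ belongs to $C^{\mathbf{s} (0,\lambda,\delta)} (\overline{\mathcal{C}_T})$, so that $|f (y,s)| \leq \| f \|_{C^{\mathbf{s} (0,0,0)} (\overline{\mathcal{C}_T})}\, (w (y))^{-\delta}$, while by Lemma \ref{l.bound.trace.holder} the trace $u_0 := \gamma_0 u$ lies in $C^{2,\lambda,\delta} (\mathbb{R}^n)$. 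Since $\psi_\mu (\cdot,\tau) \geq 0$ and $\int_{\mathbb{R}^n} \psi_\mu (z,\tau)\, dz = 1$ for $\tau > 0$, the volume potential converges absolutely and satisfies $|\varPsi_\mu g (x,t)| \leq T\, \| g \|_{C^{\mathbf{s} (0,0,0)} (\overline{\mathcal{C}_T})}$ for every $g \in C^{\mathbf{s} (0,0,0)} (\overline{\mathcal{C}_T})$ and all $(x,t) \in \overline{\mathcal{C}_T}$; likewise, for $t > 0$ the Gaussian decay of $\psi_\mu$ makes $\varPsi_{\mu,0} u_0 (x,t)$ absolutely convergent against $(w (y))^{-\delta}$, and $\varPsi_{\mu,0} u_0 (x,0) = u_0 (x)$.

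Next, fix $\chi \in C^{\infty}_{\mathrm{comp}} (\mathbb{R}^n)$ equal to $1$ in a neighbourhood of the origin, and set $\chi_R (x) = \chi (x/R)$ for $R \geq 1$. The function $v_R := \chi_R u$ is compactly supported in $x$ and of class $C^{2,1} (\overline{\mathcal{C}_T})$ (locally $C^{2,\lambda}$ in $x$ and $C^{1,\lambda/2}$ in $t$), hence it obeys the classical representation formula
$$
   v_R = \varPsi_\mu (H_\mu v_R) + \varPsi_{\mu,0} (\gamma_0 v_R)
$$
on $\overline{\mathcal{C}_T}$, see for instance \cite[Ch.~1]{Fri64}. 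Here $\gamma_0 v_R = \chi_R u_0$, as $\chi_R$ does not depend on $t$, and
$$
   H_\mu v_R = \chi_R\, H_\mu u - \mu \left( 2\, \nabla \chi_R \cdot \nabla u + (\varDelta \chi_R)\, u \right),
$$
the last two summands being supported in the annulus $\{ |x| \sim R \}$.

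Finally I would let $R \to +\infty$ pointwise on $\overline{\mathcal{C}_T}$. The left-hand side tends to $u$. In the first term $\chi_R\, H_\mu u \to H_\mu u$ pointwise, with $\psi_\mu (x-y,t-s)\, |\chi_R\, H_\mu u (y,s)| \leq \psi_\mu (x-y,t-s)\, \| H_\mu u \|_{C^{\mathbf{s} (0,0,0)} (\overline{\mathcal{C}_T})}$, the right-hand side being integrable over $(0,t) \times \mathbb{R}^n$, so dominated convergence yields $\varPsi_\mu (\chi_R\, H_\mu u) \to \varPsi_\mu (H_\mu u)$; similarly $\varPsi_{\mu,0} (\chi_R u_0) \to \varPsi_{\mu,0} u_0$. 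It remains to show that the commutator term disappears in the limit. On the support of $\nabla \chi_R$ and $\varDelta \chi_R$ one has $|\nabla \chi_R| \leq c\, R^{-1}$ and $|\varDelta \chi_R| \leq c\, R^{-2}$, while the inclusion $u \in C^{\mathbf{s} (1,\lambda,\delta)}$ forces $|\nabla u (x,t)| \leq c\, (w (x))^{-\delta-1}$ and $|u (x,t)| \leq c\, (w (x))^{-\delta}$ there, so that
$$
   \left\| 2\, \nabla \chi_R \cdot \nabla u + (\varDelta \chi_R)\, u \right\|_{C^{\mathbf{s} (0,0,0)} (\overline{\mathcal{C}_T})}
 \leq c\, R^{-\delta-1}
$$
with $c$ independent of $R$; applying once more the bound $|\varPsi_\mu g (x,t)| \leq T\, \| g \|_{C^{\mathbf{s} (0,0,0)} (\overline{\mathcal{C}_T})}$ we see that this contribution is $O (R^{-\delta-1})$, which tends to zero precisely because $\delta > 0$. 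Passing to the limit in the displayed identity for $v_R$ then gives $u = \varPsi_\mu H_\mu u + \varPsi_{\mu,0} \gamma_0 u$.

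The step I expect to be the main obstacle is the uniform control of the commutator term: everything hinges on the cancellation between the growing volume of the annulus $\{ |x| \sim R \}$ and the normalisation $\int_{\mathbb{R}^n} \psi_\mu (z,\tau)\, dz = 1$, combined with the genuine decay $(w (x))^{-\delta}$, $\delta > 0$, of $u$ and of its first-order spatial derivatives at the point at infinity. Uniqueness in the class $C^{\mathbf{s} (1,\lambda,\delta)} (\overline{\mathcal{C}_T})$, which is a separate matter, is already granted by Lemma \ref{l.unique.heat.initial.hoelder}.
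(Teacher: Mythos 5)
Your proof is correct. Note that the paper itself offers no argument for this lemma beyond the citation ``see \emph{ibid}.'' (i.e.\ Friedman's book), so your write-up actually supplies the details the authors leave to the literature: the classical Duhamel/Green representation for the compactly supported truncation $\chi_R u$, dominated convergence in the two potentials, and the vanishing of the cut-off commutator. All three steps check out. The only remark worth making is that your bound on the commutator is not sharp: on the annulus $\{|x|\sim R\}$ one has $|\nabla\chi_R\cdot\nabla u|\leq c\,R^{-1}\cdot R^{-\delta-1}$ and $|(\varDelta\chi_R)\,u|\leq c\,R^{-2}\cdot R^{-\delta}$, so the term is in fact $O(R^{-\delta-2})$ rather than $O(R^{-\delta-1})$; this costs nothing, but it shows the identity would survive even for mildly negative $\delta$, and that the hypothesis $\delta>0$ is really only being used (as you say) to keep the potentials and the trace well defined within the weighted scale. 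Your closing observation that uniqueness is a separate matter settled by Lemma \ref{l.unique.heat.initial.hoelder} is also the right way to read the division of labour between the two lemmata.
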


\begin{proof}
See \em{ibid}.
\end{proof}

The following lemma is a main technical tool in estimating parabolic potentials in weighted
H\"{o}lder spaces.

\begin{lemma}
\label{l.heat.key0}
For each
   $\delta > 0$ and
   $\gamma > 0$
there is a positive constant $c$ depending on $\delta$, $\gamma$ and $T$, such that
$$
   \int_{\mathbb{R}^n}
   \Big( 1 + \frac{|x-y|^2}{4 \mu t} \Big)^{\gamma}
   \psi_{\mu} (x-y,t)\,
   \frac{dy}{(1+|y|^2)^{\delta/2}}\,
 \leq
   c\, (1+|x|^2)^{-\delta/2}
$$
for all $(x,t) \in \overline{\mathcal{C}_T} $.
\end{lemma}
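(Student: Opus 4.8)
The plan is to split the integral at the sphere $|x-y|=|x|/2$ and to exploit the exact scaling of the heat kernel together with the boundedness of the time interval $(0,T]$. The starting observation is that, after the substitution $z=\sqrt{4\mu t}\,\xi$, one has
\[
   \int_{\mathbb{R}^n}
   \Big( 1 + \frac{|z|^2}{4 \mu t} \Big)^{\gamma}
   \psi_{\mu} (z,t)\, dz
 = \pi^{-n/2}
   \int_{\mathbb{R}^n} (1+|\xi|^2)^{\gamma}\, e^{-|\xi|^2}\, d\xi
 =: C_{n,\gamma},
\]
a finite constant independent of $t>0$ and of $x$. I would use this identity on the near region and a tail version of it on the far region.

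On the set $\{ |x-y|\le |x|/2 \}$ the triangle inequality gives $|y|\ge |x|/2$, whence $(1+|y|^2)^{-\delta/2}\le 4^{\delta/2}(1+|x|^2)^{-\delta/2}$. Pulling this factor out of the integral and dominating what remains by the full-space integral displayed above bounds the contribution of this region by $4^{\delta/2} C_{n,\gamma}\, (1+|x|^2)^{-\delta/2}$, which is already of the required form and uses no decay in $t$ whatsoever, only the scaling normalisation of $\psi_\mu$.

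On the complementary set $\{ |x-y|>|x|/2 \}$ I would simply estimate $(1+|y|^2)^{-\delta/2}\le 1$ and, after the same rescaling $z=\sqrt{4\mu t}\,\xi$, reduce matters to the Gaussian tail
\[
   \pi^{-n/2}\int_{|\xi|>R} (1+|\xi|^2)^{\gamma}\, e^{-|\xi|^2}\, d\xi,
   \qquad R = \frac{|x|}{4\sqrt{\mu t}} .
\]
Bounding $(1+|\xi|^2)^{\gamma} e^{-|\xi|^2/2}$ by a constant $M_\gamma$ and using $e^{-|\xi|^2/2}\le e^{-R^2/4}\, e^{-|\xi|^2/4}$ for $|\xi|>R$ shows this tail is at most $C\, e^{-R^2/4}$ for a constant $C=C(n,\gamma)$. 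Since $t\le T$ we have $R^2\ge |x|^2/(16\mu T)$, so $e^{-R^2/4}\le e^{-|x|^2/(64\mu T)}\le c_\delta\, (1+|x|^2)^{-\delta/2}$, the last inequality simply because exponential decay dominates every polynomial. Adding the two contributions yields the assertion with $c = 4^{\delta/2} C_{n,\gamma} + C c_\delta$.

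All the inequalities involved are elementary, so I do not anticipate a genuine obstacle; the bookkeeping of the constants is the only tedious part. The one structural point worth stressing is the hypothesis $T<\infty$: it is precisely what forces $R$ to be bounded below by a fixed positive multiple of $|x|$ uniformly in $t\in(0,T]$, and hence what allows the Gaussian tail to absorb the weight $(w(x))^{-\delta}$ on the far region. For $T=\infty$ the statement fails as $t\to+\infty$.
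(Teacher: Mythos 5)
Your proof is correct and follows essentially the same strategy as the paper's: split $\mathbb{R}^n$ into a region where $|y|$ is comparable to $|x|$ (so the weight $(1+|y|^2)^{-\delta/2}$ transfers directly to $(1+|x|^2)^{-\delta/2}$ and the full Gaussian moment integral finishes the job) and a complementary region where $|x-y|\gtrsim |x|$ and the finiteness of $T$ lets the kernel absorb the weight. The only cosmetic difference is that on the far region you keep the exponential and estimate a Gaussian tail, obtaining $e^{-c|x|^2/T}$, whereas the paper converts part of the exponential into the polynomial bound $\bigl(1+\tfrac{|x|^2}{32\mu t}\bigr)^{-\delta/2}$ via the elementary inequality $(1+2s)^{\gamma}e^{-s}\le c\,(1+s)^{-r}$; both mechanisms rely on $t\le T<\infty$ in exactly the same way.
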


\begin{proof}
First we note that, given any real number $r$, the function
$$
   f (s) = (1 + s)^{r} e^{-s/2}
$$
is bounded on $[0, \infty)$.
It follows that
\begin{eqnarray*}
\lefteqn{
   (1 + 2s)^{\gamma} e^{-s}
}
\\
 & \leq &
   2^\gamma
   \sup_{s \geq 0} \left( (1+s)^\gamma e^{-s/2} \right)
   \sup_{s \geq 0} \left( (1+s)^r e^{-s/2} \right)
   (1+s)^{-r}
\\
 & \leq &
   c\, (1+s)^{-r}
\end{eqnarray*}
for all $s \geq 0$, the constant $c$ depending only on $r$ and $\gamma$.
Taking
$
   \displaystyle
   s = \frac{|x-y|^2}{8 \mu t}
$
yields immediately
$$
   \Big( 1+ \frac{|x-y|^2}{4 \mu t} \Big)^\gamma
   e^{- \frac{\scriptstyle |x-y|^2}{\scriptstyle 8 \mu t}}
 \leq
   c\, \Big( 1 + \frac{|x-y|^2}{8 \mu t} \Big)^{-r}
$$
for all $t > 0$.

If $|x| \geq 2 |y|$ then
$$
   |x-y| \geq ||x|-|y||
         \geq ||x| - |x|/2|
         = |x|/2.
$$
Hence, on choosing $r = \delta/2$ we obtain
\begin{eqnarray*}
   \Big( 1 + \frac{|x-y|^2}{4 \mu t} \Big)^\gamma
   e^{- \frac{\scriptstyle |x-y|^2}{\scriptstyle 8 \mu t}} (1+|x|^2)^{\delta/2}
 & \leq &
   c\,
   \frac{(1+|x|^2)^{\delta/2}}
        {\left( 1 + \frac{\scriptstyle |x-y|^2}{\scriptstyle 8 \mu t} \right)^{\delta/2}}
\\
 & \leq &
   c\,
   \frac{(1+|x|^2)^{\delta/2}}
        {\left( 1 + \frac{\scriptstyle |x|^2}{\scriptstyle 32 \mu t} \right)^{\delta/2}}
\\
 & \leq &
   c\, (\max \{ 1, 32 \mu T \})^{\delta/2},
\end{eqnarray*}
and so
$$
   \int\limits_{\mathbb{R}^n} \!\!
   \Big( \! 1 + \frac{|x\!-\!y|^2}{4 \mu t} \! \Big)^{\gamma}
   \psi_{\mu} (x\!-\!y,t)\,
   \frac{(1\!+\!|x|^2)^{\delta/2}}{(1\!+\!|y|^2)^{\delta/2}}\, dy
 \leq
   c\, (\max \{ 1, 32 \mu T \})^{\delta/2} \!\!
   \int\limits_{\mathbb{R}^n} \!\!
   \frac{e^{- \frac{\scriptstyle |x\!-\!y|^2}{\scriptstyle 8 \mu t}}}{(4 \pi \mu t)^{n/2}}\, dy
$$
for all $(x,t) \in \overline{\mathcal{C}_T} $, where
$$
   \int_{\mathbb{R}^n}
   \frac{e^{- \frac{\scriptstyle |x-y|^2}{\scriptstyle 8 \mu t}}}{(4 \pi \mu t)^{n/2}}\, dy
 =
   \Big( \frac{2}{\pi} \Big)^{n/2}
   \int_{\mathbb{R}^n}
   e^{- \frac{\scriptstyle |x-y|^2}{\scriptstyle 8 \mu t}}\,
   d \frac{y}{\sqrt{8 \mu t}}
 = 2^{n/2}
$$
reduces to the so-called Gau{\ss} (-Euler-Poisson) integral.

Finally, if $|x|\leq 2 |y|$, then
\begin{eqnarray*}
   \int\limits_{\mathbb{R}^n} \!\!
   \Big( \! 1 + \frac{|x\!-\!y|^2}{4 \mu t} \! \Big)^{\gamma}
   \psi_{\mu} (x\!-\!y,t)\,
   \frac{(1\!+\!|x|^2)^{\delta/2}}{(1\!+\!|y|^2)^{\delta/2}}\, dy
 & \leq &
   2^{\delta/2}\, c
   \int\limits_{\mathbb{R}^n}
   \frac{e^{- \frac{\scriptstyle |x-y|^2}{\scriptstyle 8 \mu t}}}{(4 \pi \mu t)^{n/2}}\,
   dy
\\
 & = &
   2^{(\delta+n)/2}\, c,
\end{eqnarray*}
as is evaluated above.
\end{proof}

Denote by
   $C^{k,\mathbf{s} (s,\lambda,\delta)} (\overline{\mathcal{C}_T}) \cap \mathcal{D}_{H_\mu}$
the domain of the heat operator acting in
   $C^{k,\mathbf{s} (s,\lambda,\delta)} (\overline{\mathcal{C}_T})$.
This space is topologised and complete (i.e., Banach) under the graph norm
$$
   \| u \|_{C^{k,\mathbf{s} (s,\lambda,\delta)} (\overline{\mathcal{C}_T}) \cap \mathcal{D}_{H_\mu}}
 = \| u \|_{C^{k,\mathbf{s} (s,\lambda,\delta)} (\overline{\mathcal{C}_T})}
 + \| H_\mu u \|_{C^{k,\mathbf{s} (s,\lambda,\delta)} (\overline{\mathcal{C}_T})}.
$$

\begin{lemma}
\label{l.heat.key1}
Let
   $s, k \in \mathbb{Z}_{\geq 0}$,
   $0 < \lambda< 1$ and
   $\delta > 0$.
The parabolic potentials
   $\varPsi_\mu$ and
   $\varPsi_{\mu,0}$
induce bounded linear operators
$$
\begin{array}{rrcl}
   \varPsi_\mu :
 & C^{k,\mathbf{s} (s,\lambda,\delta)} (\overline{\mathcal{C}_T})
 & \to
 & C^{k,\mathbf{s} (s,\lambda,\delta)} (\overline{\mathcal{C}_T}) \cap \mathcal{D}_{H_\mu},
\\
   \varPsi_{\mu,0} :
 & C^{2s+k,\lambda,\delta} (\mathbb{R}^n)
 & \to
 & C^{k,\mathbf{s} (s,\lambda,\delta)} (\overline{\mathcal{C}_T}) \cap \mathcal{D}_{H_\mu}.
\end{array}
$$
\end{lemma}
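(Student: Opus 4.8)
The plan is to strip off all derivatives first, reducing the assertion to two ``base'' mapping estimates, and then to establish those by direct estimates of the potentials, the weight being tracked throughout by Lemma~\ref{l.heat.key0}.

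First I would exploit that $\psi_\mu$ is a convolution kernel in $x$, so that $\partial_x^\alpha$ commutes with both potentials, and that $H_\mu (\varPsi_\mu f) = f$, $H_\mu (\varPsi_{\mu,0} u_0) = 0$ hold (by Lemma~\ref{l.bound.heat.initial.hoelder} and the classical theory of parabolic potentials, cf. \cite[Ch.~1, \S~9]{Fri64}); differentiating the latter identities gives $\partial_t (\varPsi_\mu f) = f + \mu\, \varPsi_\mu (\varDelta f)$ and $\partial_t (\varPsi_{\mu,0} u_0) = \mu\, \varPsi_{\mu,0} (\varDelta u_0)$. Iterating these, each derivative $\partial_x^{\beta + \alpha} \partial_t^j$ with $|\beta| \le k$ and $|\alpha| + 2j \le 2s$ of $\varPsi_\mu f$ (resp. of $\varPsi_{\mu,0} u_0$) becomes a finite sum of a term $\mu^j \varPsi_\mu (\partial_x^{\beta + \alpha} \varDelta^j f)$ and lower-order derivatives of $f$ controlled by $\| f \|_{C^{k, \mathbf{s} (s,\lambda,\delta)}}$ (resp. of a single term $\mu^j \varPsi_{\mu,0} (\partial_x^{\beta + \alpha} \varDelta^j u_0)$). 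By Lemma~\ref{l.diff.oper} the data entering the potentials lie in $C^{\mathbf{s} (0,\lambda,\delta')} (\overline{\mathcal{C}_T})$ (resp. $C^{0,\lambda,\delta'} (\mathbb{R}^n)$) with $\delta' = \delta + |\beta| + |\alpha| + 2j > 0$, and by Theorem~\ref{t.emb.hoelder.t} the surplus weight $2j$ may be discarded in favour of the weight $\delta + |\beta| + |\alpha|$ required in the conclusion; since moreover $H_\mu (\varPsi_\mu f) = f$ and $H_\mu (\varPsi_{\mu,0} u_0) = 0$, membership in $\mathcal{D}_{H_\mu}$ and the attendant graph-norm bound come for free. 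Hence it suffices to prove, for every $\delta > 0$, that $\varPsi_\mu$ is bounded from $C^{\mathbf{s} (0,\lambda,\delta)} (\overline{\mathcal{C}_T})$ into itself and that $\varPsi_{\mu,0}$ is bounded from $C^{0,\lambda,\delta} (\mathbb{R}^n)$ into $C^{\mathbf{s} (0,\lambda,\delta)} (\overline{\mathcal{C}_T})$.

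The weighted sup-norm estimates follow at once from Lemma~\ref{l.heat.key0} (used with any $\gamma > 0$), giving $(w (x))^\delta |\varPsi_\mu g (x,t)| \le c\, T\, \| g \|_{C^{\mathbf{s} (0,0,\delta)}}$ and $(w (x))^\delta |\varPsi_{\mu,0} g_0 (x,t)| \le c\, \| g_0 \|_{C^{0,0,\delta}}$. For the H\"older modulus in $t$ of $u = \varPsi_\mu g$, after the substitutions $z = x - y$, $\sigma = t - \tau$ and splitting the time integral at $\sigma$ running up to $t''$ versus $\tau \in (t'',t')$, one is left with differences $g (x-z, t'-\sigma) - g (x-z, t''-\sigma)$, estimated by the weighted $\lambda/2$-H\"older seminorm of $g$ in $t$, and a remainder estimated by the weighted sup-norm of $g$; the $z$-integrations are absorbed by Lemma~\ref{l.heat.key0}, which yields $(w (x))^\delta |u (x,t') - u (x,t'')| \le c\, |t'-t''|^{\lambda/2} \| g \|_{C^{\mathbf{s} (0,\lambda,\delta)}}$. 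For $\varPsi_{\mu,0} g_0$ the same computation works once the initial layer $\varPsi_{\mu,0} g_0 (x,t) - g_0 (x) = \int \psi_\mu (x-y,t) (g_0 (y) - g_0 (x))\, dy$ is treated separately, bounding $|g_0 (y) - g_0 (x)|$ by the weighted H\"older seminorm of $g_0$ on the set $|x-y| \le |x|/2$ (writing $|x-y|^\lambda \le c\, t^{\lambda/2} (1 + |x-y|^2/(4\mu t))^{\lambda/2}$ and invoking Lemma~\ref{l.heat.key0} with $\gamma = \lambda/2$) and crudely otherwise, and appealing to the interior caloric estimates for $t$ bounded away from $0$.

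The main obstacle is the weighted H\"older modulus in $x$. For $|x - x'| \le |x|/2$ one writes $\varPsi_\mu g (x,t) - \varPsi_\mu g (x',t) = \int_0^t \! \int_{\mathbb{R}^n} \psi_\mu (z, t-\tau) ( g (x-z,\tau) - g (x'-z,\tau) )\, dz\, d\tau$ and splits the $z$-domain according to whether $|x - x'| \le |x - z|/2$ or $z \in \{ |x-z| < 2|x-x'| \}$. On the first set the weighted H\"older seminorm of $g (\cdot,\tau)$ applies and, since $w (x-z,x'-z) \ge c\, w (x)$ there, Lemma~\ref{l.heat.key0} closes the estimate with a constant independent of $x$. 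On the small ball one bounds $|g (x-z,\tau) - g (x'-z,\tau)|$ by $|g (x-z,\tau)| + |g (x'-z,\tau)|$ when $|x-x'|$ is comparable to $|x|$, by the unweighted $C^{0,\lambda}$-norm of $g (\cdot,\tau)$ on a fixed ball when $|x|$ is bounded (where Lemma~\ref{l.emb.loc} together with the classical interior parabolic Schauder estimates, see \cite{LadSoUr67}, \cite{Sol64}, enter, exactly as in the proof of Lemma~\ref{l.apriori.Delta} via dyadic spherical layers), and, in the regime $|x-x'| \ll |x|$, by observing that then the ball $\{ |x-z| < 2|x-x'| \}$ lies at distance $\ge |x|/2$ from the origin, so that $\int \psi_\mu (z,t-\tau)\, dz$ over it carries a Gaussian factor $e^{-c|x|^2/(\mu T)}$ dominating every polynomial weight. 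Summing the three (bounded-overlap) contributions gives $\langle \varPsi_\mu g (\cdot,t) \rangle_{\lambda,\delta,\mathbb{R}^n} \le c\, \| g \|_{C^{\mathbf{s} (0,\lambda,\delta)}}$, and the same case analysis applies to $\varPsi_{\mu,0} g_0$. The delicate point throughout is to keep the constants uniform in $x$ when gluing the ``global'' bounds coming from Lemma~\ref{l.heat.key0} to the ``local'' ones obtained, after rescaling, from the classical estimates on a fixed cylinder.
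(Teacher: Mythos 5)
Your overall architecture coincides with the paper's: commute $\partial_x^{\alpha+\beta}$ with the convolutions, convert time derivatives via
$\partial_t^j (\varPsi_\mu f) = \sum_{i=0}^{j-1} \mu^i \varDelta^i \partial_t^{j-1-i} f + \mu^j \varDelta^j (\varPsi_\mu f)$
(resp. $\partial_t^j (\varPsi_{\mu,0} u_0) = \mu^j \varDelta^j (\varPsi_{\mu,0} u_0)$) together with the embedding of Theorem \ref{t.emb.hoelder.t}, and reduce everything to base weighted estimates proved with Lemma \ref{l.heat.key0}; the graph-norm bound is indeed free from $H_\mu \varPsi_\mu f = f$ and $H_\mu \varPsi_{\mu,0} u_0 = 0$. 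This is exactly the paper's reduction.

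One intermediate claim in your treatment of the spatial H\"older modulus is false as stated: on the region $|x-x'| \leq |x-z|/2$ it is \emph{not} true that $w (x-z, x'-z) \geq c\, w (x)$ (take $|x|$ large, $|x-x'|$ of order one, and $z$ with $|x-z|$ of order one but $\geq 2 |x-x'|$; then the left side stays bounded while $w (x)$ is large). The estimate survives because one should keep the factor $(w (x-z, x'-z))^{-\delta-\lambda} \leq (w (x-z))^{-\delta-\lambda}$ under the $z$-integral and apply Lemma \ref{l.heat.key0} to $\int \psi_\mu (z, \cdot)\, (w (x-z))^{-\delta-\lambda}\, dz \leq c\, (w (x))^{-\delta-\lambda}$ — an integrated bound, not a pointwise one. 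With that repair your three-case analysis closes. The paper avoids the case analysis altogether by shifting the difference onto the data (replacing $\psi_\mu (x-z) - \psi_\mu (y-z)$ acting on $u_0 (z)$ by $\psi_\mu (z)$ acting on $u_0 (x+z) - u_0 (y+z)$) and feeding the resulting weight directly into Lemma \ref{l.heat.key0}; likewise it gets the $\lambda/2$-modulus in $t$ of $\varPsi_{\mu,0} u_0$ from the substitution $y = x + z \sqrt{t}$ and $|\sqrt{t'} - \sqrt{t''}|^{\lambda} \leq |t'-t''|^{\lambda/2}$, dispensing with your initial-layer versus interior-estimate dichotomy. These are simplifications rather than corrections; your route, once the pointwise weight claim is replaced by the integrated one, is sound.
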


\begin{proof}
We first prove the boundedness of the operator given by the Poisson parabolic potential.
It is well known that $\varPsi_{\mu,0}$ maps
   $C^{2s+k,\lambda,\delta} (\mathbb{R}^n)$ continuously into
   $C^{k,\mathbf{s} (s,\lambda,0)} _{\mathrm{loc}} (\mathbb{R}^n \times [0,\infty))$.

Suppose $|\alpha| \leq k+2s$.
Then
\begin{eqnarray*}
   \partial^\alpha_x (\varPsi_{\mu,0} u_0) (x,t)
 & = &
   \int_{\mathbb{R}^n} \psi_\mu (x-y,t) \partial^\alpha_{y} u_0 (y)\, dy
\\
 & = &
   \varPsi_{\mu,0} (\partial^\alpha u_0) (x,t)
\end{eqnarray*}
for all $t > 0$, which is due to the properties of convolution.
By Lemma \ref{l.heat.key0}, it follows that
\begin{eqnarray*}
\lefteqn{
   (w (x))^{\delta+|\alpha|} |\partial_x^\alpha (\varPsi_{\mu,0} u_0) (x,t)|
}
\\
 & \leq &
   \| \partial^\alpha  u_0 \|_{C^{0,\lambda,\delta+|\alpha|} (\mathbb{R}^n)}
   \int_{\mathbb{R}^n}
   \psi_\mu (x-y,t)
   \frac{(1+|x|^2)^{(\delta+|\alpha|)/2}}{(1+|y|^2)^{(\delta+|\alpha|)/2}}\,
   dy
\\
 & \leq &
   c\, \| \partial^\alpha  u_0 \|_{C^{0,\lambda,\delta+|\alpha|} (\mathbb{R}^n)}
\end{eqnarray*}
with $c$ a constant depending on $\alpha$ and $T$ but not on $u_0$.
Furthermore, we evaluate easily
\begin{eqnarray*}
\lefteqn{
   (w (x,y))^{\delta+|\alpha|+\lambda}\,
   \frac{|\partial_x^\alpha (\varPsi_{\mu,0} u_0) (x,t) - \partial_y^\alpha (\varPsi_{\mu,0} u_0) (y,t)|}
        {|x-y|^\lambda}
}
\\
 & = &
   (w (x,y))^{\delta+|\alpha|+\lambda}\,
   \Big| \int_{\mathbb{R}^n}
         \frac{\psi_\mu (x-z,t) - \psi_\mu (y-z,t)}{|x-y|^\lambda}\, \partial^\alpha u_0 (z)\, dz
   \Big|
\\
 & \leq &
   (w (x,y))^{\delta+|\alpha|+\lambda}\,
   \int_{\mathbb{R}^n}
   \psi_\mu (z,t)\,
   \frac{|\partial^\alpha u_0 (z+x) - \partial^\alpha u_0 (z+y) |}{|(z+x)-(z+y)|^\lambda}\, dz
\\
 & \leq &
   \| \partial^\alpha u_0 \|_{C^{0,\lambda,\delta+|\alpha|} (\mathbb{R}^n)}
   (w (x,y))^{\delta\!+\!|\alpha|\!+\!\lambda}
   \int_{\mathbb{R}^n}
   \psi_\mu (z,t)\, \frac{dz}{(w (x\!+\!z,y\!+\!z))^{\delta\!+\!|\alpha|\!+\!\lambda}}
\end{eqnarray*}
for all $x, y \in \mathbb{R}^n$ and $t > 0$.
On the other hand, applying Lemma \ref{l.heat.key0} we conclude that
\begin{eqnarray*}
   \int_{\mathbb{R}^n}
   \psi_\mu (z,t)\, (w (x+z))^{-(\delta+|\alpha|+\lambda)}\, dz
 & = &
   \int_{\mathbb{R}^n}
   \psi_\mu (x-z)\, (w (z))^{- (\delta+|\alpha|+\lambda)}\, dz
\\
 & \leq &
   c\, \frac{1}{(w (x))^{\delta+|\alpha|+\lambda}},
\end{eqnarray*}
where $c$ is a constant independent of $x$ and $t$.
Since
   $w (x+z,y+z)$ exceeds both $w (x+z)$ and $w (y+z)$ and
   $\delta + |\alpha| + \lambda \geq 0$,
this implies
\begin{eqnarray*}
   \int_{\mathbb{R}^n}
   \psi_\mu (z,t)\, (w (x+z,y+z))^{- (\delta+|\alpha|+\lambda)}\,
   dz
 & \leq &
   c\,
   \frac{1}{(\max \{ w (x), w (y) \})^{\delta+|\alpha|+\lambda}}
\\
 & \leq &
   c\, (w (x,y))^{-(\delta+|\alpha|+\lambda)},
\end{eqnarray*}
the last inequality being due to the fact that
   $w (x,y) \leq \sqrt{2}\, \max \{ w (x), w (y) \}$.
The constant $c$ depends neither on $x$, $y$ nor on $t \in [0,T]$ and it may be different
in diverse applications.

Our next concern will be the H\"{o}lder continuity of the potential $\varPsi_{\nu,0} u_0$ in $t$.
We get
\begin{eqnarray*}
\lefteqn{
   (w (x))^{\delta+|\alpha|}\,
   \frac{|\partial_x^\alpha (\varPsi_{\mu,0} u_0) (x,t')
       -  \partial_x^\alpha (\varPsi_{\mu,0} u_0) (x,t'')}{|t'-t''|^{\lambda/2}}|
}
\\
 & = &
   (w (x))^{\delta+|\alpha|}
   \Big| \int_{\mathbb{R}^n}
         \frac{\psi_\mu (x-z,t') - \psi_\mu (x-z,t'')}{|t'-t''|^{\lambda/2}}\,
         \partial^\alpha u_0 (z)\, dz
   \Big|
\\
 & = &
   (w (x))^{\delta+|\alpha|}
   \Big| \int_{\mathbb{R}^n}
         \frac{e^{- \frac{\scriptstyle |z|^2}{\scriptstyle 4 \mu}}}{(4 \pi \mu)^{n/2}}
         \frac{\partial^\alpha u_0 (x + z \sqrt{t'}) - \partial^\alpha u_0 (x + z \sqrt{t''})}
              {|t'-t''|^{\lambda/2}}
   \Big|,
\end{eqnarray*}
which is dominated by the
\begin{equation}
\label{eq.motn}
   (w (x))^{\delta+|\alpha|}
   \frac{|\sqrt{t'} - \sqrt{t''}|^\lambda}{|t'-t''|^{\lambda/2}}
   \int_{\mathbb{R}^n}
   \frac{e^{- \frac{\scriptstyle |z|^2}{\scriptstyle 4 \mu}}}{(4 \pi \mu)^{n/2}}
   \frac{|z|^\lambda\, dz}{(w (x + z \sqrt{t'}, x + z \sqrt{t''}))^{\delta+|\alpha|+\lambda}}
\end{equation}
multiple of the norm
   $\| \partial^\alpha u_0 \|_{C^{0,\lambda,\delta+|\alpha|} (\mathbb{R}^n)}$.
It remains to estimate (\ref{eq.motn}).
To this end we apply Lemma \ref{l.heat.key0} to get
\begin{eqnarray*}
   \int_{\mathbb{R}^n}
   \frac{e^{- \frac{\scriptstyle |z|^2}{\scriptstyle 4 \mu}}}{(4 \pi \mu)^{n/2}}
   \frac{|z|^\lambda\, dz}{(w (x + z \sqrt{t}))^{\delta+|\alpha|+\lambda}}
 & = &
   \int_{\mathbb{R}^n}
   \psi_\mu (x-y,t)\,
   \frac{\left( \frac{|x-y|}{\sqrt{t}} \right)^\lambda\, dz}
        {(w (y)))^{\delta+|\alpha|+\lambda}}
\\
 & \leq &
   c\, \frac{1}{(w (x))^{\delta+|\alpha|+\lambda}}
\end{eqnarray*}
with $c$ a constant independent of $(x,t) \in \overline{\mathcal{C}_T} $.
Therefore, arguing as above we obtain
\begin{eqnarray*}
   \int_{\mathbb{R}^n}
   \frac{e^{- \frac{\scriptstyle |z|^2}{\scriptstyle 4 \mu}}}{(4 \pi \mu)^{n/2}}
   \frac{|z|^\lambda\, dz}{(w (x + z \sqrt{t'}, x + z \sqrt{t''}))^{\delta+|\alpha|+\lambda}}
 & \leq &
   c\,
   \frac{1}{(\max \{ w (x), w (y) \})^{\delta+|\alpha|+\lambda}}
\\
 & \leq &
   c\, (w (x,y))^{-(\delta+|\alpha|+\lambda)}.
\end{eqnarray*}

We have thus proved that there is a constant $c > 0$ depending on $k$, $s$, $\lambda$ and
$\delta$, such that
\begin{equation}
\label{eq.sopPpisv}
   \| \varPsi_{\mu,0} u_0
   \|_{C^{k+2s,\lambda,0,\lambda/2,\delta} (\overline{\mathcal{C}_T})}
 \leq
   c\,
   \| u_0 \|_{C^{k+2s,\lambda,\delta} (\mathbb{R}^n)}
\end{equation}
for all $u_0 \in C^{k+2s,\lambda,\delta} (\mathbb{R}^n)$.
On applying the embedding given by Theorem \ref{t.emb.hoelder.t} we deduce that
\begin{eqnarray*}
   \| \partial^j_t (\varPsi_{\mu,0} u_0)
   \|_{C^{k,\mathbf{s} (s-j,\lambda,\delta)} (\overline{\mathcal{C}_T})}
 & = &
   \mu\, \| \partial^{j-1}_t \varDelta (\varPsi_{\mu,0} u_0)
         \|_{C^{k,\mathbf{s} (s-j,\lambda,\delta)} (\overline{\mathcal{C}_T})}
\\
 & = &
   \mu^j\, \| \varDelta^j  (\varPsi_{\mu,0} u_0)
           \|_{C^{k,\mathbf{s} (s-j,\lambda,\delta)} (\overline{\mathcal{C}_T})}
\\
 & \leq &
   c\,
   \| (\varPsi_{\mu,0} u_0) \|_{C^{k+2s,\lambda,0,\lambda/2,\delta} (\overline{\mathcal{C}_T})}
\\
 & \leq &
   c\,
   \| u_0 \|_{C^{k+2s,\lambda,\delta} (\mathbb{R}^n)}
\end{eqnarray*}
for all $j = 0, 1, \ldots, s$.
Hence it follows that the operator $\varPsi_{\mu,0}$ is bounded in the desired spaces.

We now turn to the volume parabolic potential $\varPsi_\mu$.
We will tacitly use the well-known fact that $\varPsi_{\mu}$ maps
   $C^{k,\mathbf{s} (s,\lambda,\delta)} (\overline{\mathcal{C}_T})$ continuously into the local space
   $C^{k,\mathbf{s} (s+1,\lambda,0)}_{\mathrm{loc}} (\overline{\mathcal{C}_T})$.

Pick any multi-index $\alpha \in \mathbb{Z}_{\geq 0}^n$ with $|\alpha| \leq k+2s$.
Using the properties of convolution in $\mathbb{R}^n$ one obtains
\begin{eqnarray*}
   \partial^\alpha _x (\varPsi_\mu f) (x,t)
 & = &
   \int_{0}^t
   \partial^\alpha_x
   \int_{\mathbb{R}^n} \psi_\mu (x-y,t-s) f (y,s) dy\, ds
\\
 & = &
   \int_0^t \!\! \int_{\mathbb{R}^n}
   \psi_\mu (x-y,t-s)\, \partial^\alpha_y f (y,s) dy\, ds
\\
 & = &
   \varPsi_\mu (\partial^\alpha f) (x,t)
\end{eqnarray*}
for all $f \in C^{k,\mathbf{s} (s,\lambda,\delta)} (\overline{\mathcal{C}_T})$.
By Lemma \ref{l.heat.key0},
\begin{eqnarray*}
\lefteqn{
   (w (x))^{\delta+|\alpha|}\, |\partial^\alpha_x (\varPsi_\mu f) (x,t)|
}
\\
 & \leq &
   \| \partial^\alpha f
   \|_{C^{k+2s-|\alpha|,0,\lambda,\lambda/2,\delta+|\alpha|} (\overline{\mathcal{C}_T})}
   \int_0^{t} \!\! \int_{\mathbb{R}^n} \!\!
   \psi_\mu (x-y,t-s)\,
   \Big( \frac{w (x)}{w (y)} \Big)^{\delta+|\alpha|}
   dy ds
\\
 & \leq &
   c\,
   \| f \|_{C^{k+2s,0,\lambda,\lambda/2,\delta} (\overline{\mathcal{C}_T})}
\end{eqnarray*}
for all $(x,t)$ in the layer $\overline{\mathcal{C}_T} $,
   the constant $c$ depending on finite $T$ but not on $f$.

In order to estimate the derivatives of $\varPsi_\mu f$ in $t$ we now use the fact that
$\psi_\mu$ is a (right) fundamental solution of convolution type to the heat operator.
If
   $f$ is an arbitrary function of $C^{k,\mathbf{s} (s,\lambda,\delta)} (\overline{\mathcal{C}_T})$
and
   $j = 1, \ldots, s$,
then Theorem \ref{t.emb.hoelder.t} yields
\begin{eqnarray*}
\lefteqn{
   \| \partial^j _t (\varPsi_\mu f)
   \|_{C^{k,\mathbf{s} (s-j,\lambda,\delta)} (\cdot)}
}
\\
 & = &
   \| \partial^{j-1}_t f + \mu \varDelta \partial^{j-1}_t (\varPsi_\mu f)
   \|_{C^{k,\mathbf{s} (s-j,\lambda,\delta)} (\cdot)}
\\
 & \leq &
   \| \partial^{j-1}_t f \|_{C^{k,\mathbf{s} (s-j,\lambda,\delta)} (\cdot)}
 +
   \| \mu \partial^{j-2}_t \varDelta f + \mu^2 \varDelta^2 \partial^{j-2}_t (\varPsi_\mu f)
   \|_{C^{k,\mathbf{s} (s-j,\lambda,\delta+2)} (\cdot)}
\\
 & \leq &
   \sum_{i=0}^{j-1}
   \| \mu^i \partial_t^{j-1-i} \varDelta^i f
   \|_{C^{k,\mathbf{s} (s-j,\lambda,\delta+2i)} (\cdot)}
 + \| \mu^j \varDelta^j (\varPsi_\mu f) \|_{C^{k,\mathbf{s} (s-j,\lambda,\delta+2j)} (\cdot)}
\\
 & \leq &
   c
   \left( \| f \|_{C^{k,\mathbf{s} (s-1,\lambda,\delta)} (\cdot)}
        + \| \varPsi_\mu f \|_{C^{k+2s,0,\lambda,\lambda/2,\delta} (\cdot)}
   \right),
\end{eqnarray*}
where all function spaces are over the layer $\overline{\mathcal{C}_T} $ which we omit by
abuse of notation.
The constant $c$ is independent of $f$, and so $\varPsi_{\mu}$ acts boundedly in the desired
spaces.
\end{proof}

Our next goal is to show a more subtle result on the volume parabolic potential $\varPsi_\mu$
which we need in the sequel.

\begin{theorem}
\label{t.heat.key2}
Let
   $s$ be a positive integer,
   $k \in \mathbb{Z}_{\geq 0}$,
   $0 < \lambda < 1$
and
   $\delta > 0$.
The potential $\varPsi_\mu $ induces a bounded linear operator
$$
   C^{k,\mathbf{s} (s-1,\lambda,\delta+2)} (\overline{\mathcal{C}_T}) \to
   C^{k,\mathbf{s} (s,\lambda,\delta)} (\overline{\mathcal{C}_T}).
$$
\end{theorem}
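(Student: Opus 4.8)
The plan is to bootstrap from Lemma \ref{l.heat.key1} and to extract the two extra spatial derivatives, together with the drop of the weight from $\delta+2$ to $\delta$, from the classical parabolic Schauder estimate for the volume potential, the behaviour at the point at infinity being controlled through the Gaussian bounds of Lemma \ref{l.heat.key0} much as in the proof of Lemma \ref{l.heat.key1}.

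First I would reduce to the case $k = 0$: since $\partial_x^\beta \varPsi_\mu f = \varPsi_\mu (\partial_x^\beta f)$ and $\partial_x^\beta f$ belongs to $C^{\mathbf{s} (s-1,\lambda,\delta+2+|\beta|)} (\overline{\mathcal{C}_T})$ for $|\beta| \le k$ (with $\delta + |\beta| > 0$), the bound for $C^{k,\mathbf{s} (s,\lambda,\delta)}$ follows by summing over $\beta$ the bounds for $C^{\mathbf{s} (s,\lambda,\cdot)}$ with the weight exponent shifted by $|\beta|$. Fixing $k = 0$, I would then isolate the genuinely new content. Lemma \ref{l.heat.key1}, applied with the parameters $(s-1,\lambda,\delta+2)$, shows that $u := \varPsi_\mu f$ lies in $C^{\mathbf{s} (s-1,\lambda,\delta+2)} (\overline{\mathcal{C}_T}) \cap \mathcal{D}_{H_\mu}$ with $H_\mu u = f$, and this already bounds the part of the $C^{\mathbf{s} (s,\lambda,\delta)}$-norm that involves only the derivatives $\partial_x^\alpha \partial_t^j u$ with $|\alpha| + 2j \le 2 (s-1)$, since the relevant weight exponents are not smaller than those occurring in $C^{\mathbf{s} (s-1,\lambda,\delta+2)}$ (Theorem \ref{t.emb.hoelder.t}). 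The time derivatives of higher order are treated as in the proof of Lemma \ref{l.heat.key1}: iterating $\partial_t u = f + \mu \varDelta u$ gives $\partial_t^j u = \sum_{i=0}^{j-1} \mu^i \varDelta^i \partial_t^{j-1-i} f + \mu^j \varDelta^j u$ for $1 \le j \le s$, whose first group of terms is bounded straight from $\| f \|_{C^{\mathbf{s} (s-1,\lambda,\delta+2)}}$ through Lemma \ref{l.diff.oper} and Theorem \ref{t.emb.hoelder.t}, while $\varDelta^j u$ is absorbed into the top-order spatial derivatives of $u$. Hence everything comes down to estimating $\partial_x^\gamma \varPsi_\mu f$ for $2s-1 \le |\gamma| \le 2s$; writing $\partial_x^\gamma = \partial_x^{\gamma'} \partial_x^{\gamma''}$ with $|\gamma''| = 2s-2$, which is an admissible order of differentiation of $f$, so that $\partial_x^{\gamma''} f \in C^{\mathbf{s} (0,\lambda,\delta+2s)} (\overline{\mathcal{C}_T})$, and commuting $\partial_x^{\gamma''}$ through $\varPsi_\mu$, it suffices to prove that
\[
   \varPsi_\mu :\, C^{\mathbf{s} (0,\lambda,\nu)} (\overline{\mathcal{C}_T}) \to C^{\mathbf{s} (1,\lambda,\nu-2)} (\overline{\mathcal{C}_T})
\]
is bounded for every real $\nu > 2$, and then to apply $\partial_x^{\gamma'}$ with $|\gamma'| \le 2$.

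For this model statement I would treat the regions $|x| \le 1$ and $|x| \ge 1$ separately. On $\{ |x| \le 1 \}$, which also accounts for the local piece $C^{\mathbf{s} (1,\lambda,0)} (\overline{\mathcal{C}_T (B_1)})$ of the norm, the classical interior Schauder estimate for the Cauchy problem (see \cite{LadSoUr67}, \cite{Sol64}, \cite{Fri64}) bounds $\varPsi_\mu g$ together with $\partial_x^\gamma \varPsi_\mu g$, $|\gamma| \le 2$, and $\partial_t \varPsi_\mu g$ in the ordinary anisotropic H\"older norm by a constant times the ordinary anisotropic H\"older norm of $g$, and the latter is in turn at most $c\, \| g \|_{C^{\mathbf{s} (0,\lambda,\nu)} (\overline{\mathcal{C}_T})}$ for $\nu > 0$, because the weight is $\ge 1$ and long-range H\"older quotients are controlled by the weighted sup-norm. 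On $\{ |x| \ge 1 \}$ I would use the representations $\partial_x^\gamma \varPsi_\mu g\, (x,t) = \int_0^t \!\! \int_{\mathbb{R}^n} \partial_x^\gamma \psi_\mu (x-y,t-s)\, g (y,s)\, dy ds$ for $|\gamma| \le 1$ and $\partial_x^\gamma \varPsi_\mu g\, (x,t) = \int_0^t \!\! \int_{\mathbb{R}^n} \partial_x^\gamma \psi_\mu (x-y,t-s)\, (g (y,s) - g (x,s))\, dy ds$ for $|\gamma| = 2$ (the subtracted term integrating to zero in $y$), together with the pointwise bound $|\partial_x^\gamma \psi_\mu (z,\tau)| \le c\, \tau^{-|\gamma|/2} (1 + |z|^2 / 4 \mu \tau)^{|\gamma|/2}\, \psi_\mu (z,\tau)$. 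For $|\gamma| \le 1$, Lemma \ref{l.heat.key0} with exponent $|\gamma|/2$ and weight $\nu$ dominates the integral by $c\, \| g \|_{C^{\mathbf{s} (0,0,\nu)}} (w (x))^{-\nu} \int_0^t (t-s)^{-|\gamma|/2} ds$, which is finite and at most $c\, \| g \|_{C^{\mathbf{s} (0,0,\nu)}} (w (x))^{-(\nu-2+|\gamma|)}$. For $|\gamma| = 2$ I would split the inner integral into $\{ |y-x| \le |x|/2 \}$, where $w (x,y) \sim w (x)$ by \eqref{eq.half} and the weighted H\"older continuity of $g$ supplies a factor $|x-y|^\lambda \sim (t-s)^{\lambda/2} (1 + |x-y|^2 / 4 \mu (t-s))^{\lambda/2}$ turning the $(t-s)^{-1}$ singularity into an integrable $(t-s)^{-1+\lambda/2}$, and $\{ |y-x| > |x|/2 \}$, where $|y-x| > 1/2$ and the Gaussian factor $e^{-|x-y|^2 / 8 \mu (t-s)}$ simultaneously absorbs the $(t-s)^{-1}$ singularity (since $e^{-|x|^2 / C (t-s)} \le C (t-s)/|x|^2$) and produces the decay $(w (x))^{-\nu}$. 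The weighted H\"older seminorms of $\partial_x^\gamma \varPsi_\mu g$ in $x$ and in $t$ are handled by the same two splittings, in the spirit of the proof of Lemma \ref{l.heat.key1} and of the classical theory, whereupon $\partial_t \varPsi_\mu g = g + \mu \varDelta \varPsi_\mu g$ settles the remaining derivative.

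The hard part is the estimate of the second spatial derivatives: the kernel bound for $\partial_x^2 \psi_\mu$ carries the factor $\tau^{-1} (1 + |z|^2/4\mu\tau)\, \psi_\mu (z,\tau)$, which is not integrable in $\tau$, so close to the diagonal the obstruction must be removed by the cancellation $\int_{\mathbb{R}^n} \partial_x^2 \psi_\mu (z,\tau)\, dz = 0$ together with the H\"older continuity of $g$ — available in the weighted seminorm precisely because $w (x,y) \sim w (x)$ on $\{ |x-y| \le |x|/2 \}$ — whereas away from the diagonal the required decay in $x$ has to be extracted from the Gaussian kernel itself; carrying the weight consistently through this weakly singular integral is the crux of the argument. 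A minor technical matter, already met in the proof of Lemma \ref{l.apriori.Delta}, is the junction between the local estimate near the origin and the weighted estimate at infinity, which here is effected simply by distinguishing the cases $|x| \le 1$ and $|x| \ge 1$.
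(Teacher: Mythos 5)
Your proposal is sound and reaches the theorem, but it takes a genuinely different route at the crux. The reductions you perform first — commuting $\partial_x^\beta$ through $\varPsi_\mu$ to dispose of $k$, invoking Lemma \ref{l.heat.key1} and the embedding theorem for the derivatives of order $\le 2(s-1)$, and iterating $\partial_t = H_\mu + \mu \varDelta$ to convert time derivatives into spatial ones plus derivatives of $f$ — coincide with what the paper does. The divergence is in how the two extra spatial derivatives are extracted. You estimate $\partial_x^\gamma \varPsi_\mu g$ for $|\gamma| = 2$ directly from the kernel, using the cancellation $\int_{\mathbb{R}^n} \partial_x^\gamma \psi_\mu\, dz = 0$ to subtract $g(x,s)$, the weighted H\"older seminorm of $g$ on the near-diagonal region $\{|y-x| \le |x|/2\}$ (where $w(x,y) \sim w(x)$ by (\ref{eq.half})) to render the $(t-s)^{-1}$ singularity integrable, and the Gaussian tail off the diagonal; this is the potential-theoretic proof of the weighted parabolic Schauder estimate. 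The paper instead stops the direct kernel computation at first derivatives (its Lemma \ref{l.heat.key2}, where $|\nabla \psi_\mu|$ only produces an integrable $(t-s)^{-1/2}$ singularity and no cancellation is needed), and obtains the second derivatives from a separate weighted a priori estimate (Lemma \ref{l.apriori.heat}), proved by applying the classical local Schauder estimate for the Cauchy problem to the parabolic rescalings $u_r(x,t) = u(rx, r^2 t)$ over the dyadic annuli $2^m < |x| < 2^{m+3}$ and summing; this is then applied to $\partial_x^{\alpha+\beta}(\varPsi_\mu f)$, which solves the heat equation with zero initial data. Your approach is self-contained at the kernel level but places the full burden of the classical singular-integral computation — in particular the $\lambda/2$-H\"older continuity in $t$ of the second spatial derivatives, which you only gesture at and which is the most delicate estimate of that theory — on the weighted setting; the paper's approach imports that delicate local analysis wholesale from the standard literature and only has to track how the constants scale with the dyadic radius, which is why it is the shorter argument. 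Both are correct.
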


\begin{proof}
We begin with a weak a priori estimate of Schauder type for the heat operator in weighted spaces.

\begin{lemma}
\label{l.apriori.heat}
Suppose that
   $0 < \lambda < 1$ and
   $\delta > 0$.
If
   $f \in C^{\mathbf{s} (0,\lambda,\delta+2)} (\overline{\mathcal{C}_T})$
and a function
   $u \in C^{1,0,\lambda,\lambda/2,\delta)} (\overline{\mathcal{C}_T}) \cap
          C^{\mathbf{s} (1,\lambda,0)}_{\mathrm{loc}} (\overline{\mathcal{C}_T})$
satisfies
   $H_\mu u = f$ in $\overline{\mathcal{C}_T} $ and
   $\gamma_0 u = 0$
then $u$ belongs actually to the space $C^{\mathbf{s} (1,\lambda,\delta)} (\overline{\mathcal{C}_T})$ and
$$
   \| u \|_{C^{\mathbf{s} (1,\lambda,\delta)} (\overline{\mathcal{C}_T})}
 \leq
   c
   \left( \| u \|_{C^{1,0,\lambda,\lambda/2,\delta)} (\overline{\mathcal{C}_T})}
        + \| f \|_{C^{\mathbf{s} (0,\lambda,\delta+2)} (\overline{\mathcal{C}_T})}
   \right),
$$
where $c$ depends on $\lambda$ and $\delta$ but not on $f$ and $u$.
\end{lemma}

\begin{proof}
First, by Lemma \ref{l.emb.loc} and parabolic regularity, we conclude that any function $u$
satisfying the hypotheses of the lemma belongs to
   $C^{\mathbf{s} (1,\lambda,0)}_{\mathrm{loc}} (\overline{\mathcal{C}_T})$.
Then,
   by standard a priori estimates for solutions of the Cauchy problem for the heat operator
   (see for instance \cite[Lemma 9.2.1]{Kry96}),
there is a constant $c > 0$ such that
$$
   \| u \|_{C^{\mathbf{s} (1,\lambda)} (\overline{\mathcal{C}_T})}
 \leq
   c
   \left( \| u \|_{C^{\mathbf{s} (0,\lambda)} (\overline{\mathcal{C}_T})}
        + \| H_\mu u \|_{C^{\mathbf{s} (0,\lambda)} (\overline{\mathcal{C}_T})}
   \right)
$$
for all
$
   u
 \in
   C^{\mathbf{s} (0,\lambda)} (\overline{\mathcal{C}_T}) \cap
   C^{\mathbf{s} (1,\lambda)}_{\mathrm{loc}} (\overline{\mathcal{C}_T})
$
with
   $H_\mu w \in C^{\mathbf{s} (0,\lambda)} (\overline{\mathcal{C}_T})$
satisfying $\gamma_0 u = 0$.

Here, $C^{\mathbf{s} (s,\lambda)} (\overline{\mathcal{C}_T})$ are normed spaces of H\"{o}lder continuous
functions in all of $\overline{\mathcal{C}_T} $ which include parabolic anisotropy.
This scale of function spaces is different from our scale of weighted spaces, for it does not
specify the behaviour of functions at the infinity points of $\overline{\mathcal{C}_T} $.
However, the corresponding local versions of both scales coincide.

Let $\mathcal{X}_1$ and $\mathcal{X}_2 $ be two bounded domains in $\mathbb{R}^n$, such that
   $\mathcal{X}_1 \Subset \mathcal{X}_2$.
Fix a function $\chi \in C^\infty_{\mathrm{comp}} (\mathcal{X}_2)$ which is equal to $1$ in a
neighbourhood of the closure of $\mathcal{X}_1$.
An easy calculation shows that
$$
   H_\mu (\chi u)
 = \chi (H_\mu u) - \mu \chi (\varDelta u) - 2 \nabla \chi \cdot \nabla u
$$
for each distribution $u$ in the cylinder $\mathcal{C}_T (\mathcal{X}_2)$.
As usual, using the multiplication operator $u \mapsto \chi u$ one concludes that there is a
constant $c$ depending merely on the distance
$$
   \mathrm{dist} (\partial \mathcal{X}_1, \partial \mathcal{X}_2)
 = \inf_{x \in \partial \mathcal{X}_1 \atop y \in \partial \mathcal{X}_2}
   |x-y|,
$$
such that
\begin{equation}
\label{eq.apriori.heat.0}
   \| u \|_{C^{\mathbf{s} (1,\lambda)} (\overline{\mathcal{C}_T (\mathcal{X}_1)})}
 \leq
   c
   \left( \| u \|_{C^{1,0,\lambda,\lambda/2} (\overline{\mathcal{C}_T (\mathcal{X}_2)})}
        + \| H_\mu u \|_{C^{\mathbf{s} (0,\lambda)} (\overline{\overline{\mathcal{C}_T} (\mathcal{X}_2)})}
   \right)
\end{equation}
for all functions
   $u \in C^{1,0,\lambda,\lambda/2} (\overline{\overline{\mathcal{C}_T} (\mathcal{X}_2)}) \cap
          C^{\mathbf{s} (1,\lambda)}_{\mathrm{loc}} (\overline{\mathcal{C}_T (\mathcal{X}_2)})$
which satisfy
   $H_\mu u \in C^{\mathbf{s} (0,\lambda)} (\overline{\mathcal{C}_T (\mathcal{X}_2)})$
and
   $\gamma_0 u = 0$.

In particular, (\ref{eq.apriori.heat.0}) applies to any function $u$ satisfying the hypotheses
of Lemma \ref{l.apriori.heat}, which yields
\begin{eqnarray*}
\lefteqn{
   \| \partial_t u \|_{C^{\mathbf{s} (0,\lambda)} (\overline{\mathcal{C}_T (B_2)})}
 + \sum_{|\alpha| \leq 2} \|
   \partial_x^\alpha u \|_{C^{\mathbf{s} (0,\lambda)} (\overline{\mathcal{C}_T (B_2)})}
}
\\
 & \leq &
   c
   \left( \| u \|_{C^{1,0,\lambda,\lambda/2} (\overline{\mathcal{C}_T (B_4)})}
        + \| H_\mu u \|_{C^{\mathbf{s} (0,\lambda)} (\overline{\mathcal{C}_T (B_4)})}
   \right)
\end{eqnarray*}
with $c$ a constant independent on $u$.
If $|x| \leq 4$, then
$
   1 \leq (1+|x|^2)^{1/2} \leq \sqrt{17}.
$
Hence, letting the constant $c$ depend also on $\delta$ and using (\ref{eq.half}) we can
rewrite the last inequality as
\begin{eqnarray}
\label{eq.apriori.heat.10}
\lefteqn{
   \| \partial_t u
   \|_{C^{\mathbf{s} (0,\lambda,\delta+2)} (\overline{\mathcal{C}_T (B_1)})}
 + \sum_{|\alpha| \leq 2}
   \| \partial_x^\alpha u
   \|_{C^{\mathbf{s} (0,\lambda,\delta+|\alpha|)} (\overline{\mathcal{C}_T (B_1)})}
}
\nonumber
\\
 & \leq &
   c (\delta)
   \left(  \| u \|_{C^{1,0,\lambda,\lambda/2,\delta} (\overline{\mathcal{C}_T (B_2)})}
         + \| H_\mu u \|_{C^{\mathbf{s} (0,\lambda,\delta+2)} (\overline{\mathcal{C}_T (B_2)})}
   \right),
\nonumber
\\
\lefteqn{
   \| \partial_t u
   \|_{C^{\mathbf{s} (0,\lambda,\delta+2)} (\overline{\mathcal{C}_T (B_2)})}
 + \sum_{|\alpha| \leq 2}
   \| \partial_x^\alpha u
   \|_{C^{\mathbf{s} (0,\lambda,\delta+|\alpha|)} (\overline{\mathcal{C}_T (B_2)})}
}
\nonumber
\\
 & \leq &
   c (\delta)
   \left(  \| u \|_{C^{1,0,\lambda,\lambda/2,\delta} (\overline{\mathcal{C}_T (B_4)})}
         + \| H_\mu u \|_{C^{\mathbf{s} (0,\lambda,\delta+2)} (\overline{\mathcal{C}_T (B_4)})}
   \right)
\nonumber
\\
\end{eqnarray}
for all $u$ as in the statement of the lemma.

We now consider a region in $\mathbb{R}^n$ of the form $r < |x| < 8 r$,
   where $r \geq 1$ is a fixed constant,
and set
$$
   u_r (x,t) = u (r x, r^2 t)
$$
for $1 \leq |x| \leq 8$ and
    $t \in [0,T/r^2]$.
Then
$$
\begin{array}{rcl}
   \partial_i u_r (x,t)
 & =
 & r (\partial_i u) (r x, r^2 t),
\\
   \partial_j \partial_i u_r (x,t)
 & =
 & r^2 (\partial_j \partial_i u) (r x, r^2 t),
\\
   \partial _t u_r (x,t)
 & =
 & r^2 (\partial_t u) (r x, r^2 t)
\end{array}
$$
whence
$$
   H_\mu u_r (x,t)
 = r^2 \left( \partial_t v (r x,r^2 t) - (\mu \varDelta u) (r x, r^2 t) \right)
 = r^2 f (r x, r^2 t).
$$
On applying estimate (\ref{eq.apriori.heat.0}) to $u_r$ we obtain
\begin{eqnarray*}
\lefteqn{
   \| \partial_t u_r
   \|_{C^{\mathbf{s} (0,\lambda)} ((\overline{B_4} \setminus B_2) \times [0, T/r^2])}
 + \sum_{|\alpha| \leq 2}
   \| \partial_x^\alpha  u_r
   \|_{C^{\mathbf{s} (0,\lambda)} ((\overline{B_4} \setminus B_2) \times [0, T/r^2])}
}
\\
 \!\! & \!\! \leq \!\! & \!\!
   c
   \left( \!
   \| u_r \|_{C^{1,0,\lambda,\lambda/2} ((\overline{B_8} \setminus B_1) \times [0, T/r^2])}
 + \| H_\mu u_r \|_{C^{\mathbf{s} (0,\lambda)} ((\overline{B_8} \setminus B_1 \times [0,T/r^2])}
   \! \right),
\end{eqnarray*}
where the constant $c$ could be chosen to be independent of $r$ because
   $r \geq 1$ and
   $0 < T/r^2 \leq T$.
This is equivalent to
\begin{eqnarray*}
\lefteqn{
   r^{\delta+2}
   \| \partial_t u
   \|_{C^{\mathbf{s} (0,\lambda)} ((\overline{B_{4r}} \setminus B_{2r}) \times [0,T])}
 + \sum_{|\alpha| \leq 2}
   r^{\delta+|\alpha|}
   \| \partial^\alpha_x u
   \|_{C^{\mathbf{s} (0,\lambda)} ((\overline{B_{4r}} \setminus  B_{2r}) \times [0,T])}
}
\\
 \!\! & \!\! \leq \!\! & \!\!
   c\,
   \Big(
   \sum_{|\beta| \leq 1}
   r^{\delta+|\beta|}
   \| \partial^\beta u
   \|_{C^{\mathbf{s} (0,\lambda)} ((\overline{B_{8r}} \setminus B_r) \times [0,T])}
 + r^{\delta+2}
   \| L_\mu u \|_{C^{\mathbf{s} (0,\lambda)} ((\overline{B_{8r}} \setminus B_r) \times [0,T])}
   \Big),
\end{eqnarray*}
the constant $c>0$ being independent of $r \geq 1$ and $u$.

For $r \leq x \leq 8r$, we have $r < (1+|x|^2)^{1/2} \leq 9r$.
Hence, the latter inequality just amounts to saying that
\begin{eqnarray*}
\lefteqn{
   \| \partial_t u
   \|_{C^{\mathbf{s} (0,\lambda,\delta+2)} ((\overline{B_{4r}} \setminus B_{2r}) \times [0,T])}
 + \sum_{|\alpha| \leq 2}
   \| \partial_x^\alpha u
   \|_{C^{\mathbf{s} (0,\lambda,\delta+|\alpha|)} ((\overline{B_{4r}} \setminus B_{2r}) \times [0,T])}
}
\\
 \!\! & \!\! \leq \!\! & \!\!
   c (\delta)
   \left(
   \| u \|_{C^{1,0,\lambda,\lambda/2,\delta} ((\overline{B_{8r}} \setminus B_r) \times [0,T])}
+ \| H_\mu u \|_{C^{\mathbf{s} (0,\lambda,\delta+2)} ((\overline{B_{8r}} \setminus B_r) \times [0,T])}
   \right)
\end{eqnarray*}
holds with some constant $c (\delta) > 0$ independent of $r$ and $u$.
Choose $r =2^m$, for $m = 0, 1, \ldots$, to get
\begin{eqnarray}
\label{eq.apriori.heat.m}
\lefteqn{
   \| \partial_t u
   \|_{C^{\mathbf{s} (0,\lambda,\delta+2)} ((\overline{B_{2^{m+2}}} \setminus B_{2^{m+1}}) \times [0,T])}
 + \sum_{|\alpha| \leq 2}
   \| \partial_x^\alpha u
   \|_{C^{\mathbf{s} (0,\lambda,\delta+|\alpha|)} ((\overline{B_{2^{m+2}}} \setminus B_{2^{m+1}}) \times [0,T])}
}
\nonumber
\\
 \!\! & \!\! \leq \!\! & \!\!
   c (\delta)
   \left(
   \| u \|_{C^{1,0,\lambda,\lambda/2,\delta} ((\overline{B_{2^{m+3}}} \setminus B_{2^m}) \times [0,T])}
+ \| H_\mu u \|_{C^{\mathbf{s} (0,\lambda,\delta+2)} ((\overline{B_{2^{m+3}}} \setminus B_{2^m}) \times [0,T])}
   \right)
\nonumber
\\
\end{eqnarray}
with $c (\delta)$ a constant independent of $u$.

Finally, on combining Theorem \ref{t.emb.hoelder.t} with
   (\ref{eq.apriori.heat.10}) and
   (\ref{eq.apriori.heat.m}) for $m = 0, 1, \ldots$
we obtain the statement of the lemma.
\end{proof}

In the next lemma we present an intermediate assertion towards the complete proof
of Theorem \ref{t.heat.key2}.

\begin{lemma}
\label{l.heat.key2}
Let
   $s$ be a positive integer,
   $k \in \mathbb{Z}_{\geq 0}$,
   $0 < \lambda < 1$
and
   $\delta > 0$.
The parabolic potential $\varPsi_\mu$ induces a bounded linear operator
$$
   C^{k,\mathbf{s} (s-1,\lambda,\delta+2)} (\overline{\mathcal{C}_T}) \to
   C^{k+1,\mathbf{s} (s-1,\lambda,\delta)} (\overline{\mathcal{C}_T}).
$$
\end{lemma}

\begin{proof}
It is well known that $\varPsi_\mu$ maps
   $C^{k,\mathbf{s} (s-1,\lambda,\delta+2)} (\overline{\mathcal{C}_T})$ continuously into the Fr\'echet space
   $C^{k,\mathbf{s} (s,\lambda,0)}_{\mathrm{loc}} (\overline{\mathcal{C}_T})$
provided that $0 < \lambda < 1$.
Arguing as in Lemma \ref{l.heat.key1} we see that
\begin{eqnarray*}
\lefteqn{
   (w (x))^\delta |(\varPsi_\mu f) (x,t)|
}
\\
 & \leq &
   \| f \|_{C^{\mathbf{s} (0,\lambda,\delta+2)} (\overline{\mathcal{C}_T})}
   \int_0^{t} \!\! \int_{\mathbb{R}^n}
   \psi_\mu (x-y,t-s)
   \frac{(1+|x|^2)^{\delta/2}}{(1+|y|^2)^{(\delta+2)/2}}
   dy ds
\\
 & \leq &
   c\,
   \| f \|_{C^{\mathbf{s} (0,\lambda,\delta+2)} (\overline{\mathcal{C}_T})}
   \frac{c}{1+|x|^2}
   \sup_{t \in [0,T]} \int_0^{t} ds,
\end{eqnarray*}
where $c > 0$ is a constant depending on $T$ but not on $x$ and $f$ because of
Lemma \ref{l.heat.key0}.

Furthermore, using Lemma \ref{l.heat.key0} and (\ref{eq.half}) we obtain
\begin{eqnarray*}
\lefteqn{
   (w (x,y))^{\delta+\lambda}\,
   \frac{|(\varPsi_\mu f) (x,t) - (\varPsi_\mu f) (y,t)|}{|x-y|^\lambda}
}
\\
 & \leq &
   (w (x,y))^{\delta+\lambda}
   \int_0^{t}
   \left(
   \int_{\mathbb{R}^n} \psi_\mu (z,t-s) \frac{|f (x+z) - f (y+z)|}{|x-y|^\lambda} dz
   \right)
   ds
\\
 & \leq &
   \| f \|_{C^{\mathbf{s} (0,\lambda,\delta+2)} (\overline{\mathcal{C}_T})}
   \int_0^{t} \!\! \int_{\mathbb{R}^n}
   \psi_\mu (z,t-s)
   \frac{(w (x,y))^{\delta+\lambda}}
        {(w (x+z,y+z))^{\delta+\lambda+2}}
   dz ds
\\
 & \leq &
 \| f \|_{C^{\mathbf{s} (0,\lambda,\delta+2)} (\overline{\mathcal{C}_T})}\,
 \frac{c}{(w (x,y))^2}
 \int_0^T ds
\end{eqnarray*}
for $x \neq y$,
   the constant $c$ being independent of $f$ and different in diverse applications.

In addition, pick $t', t'' \in [0,T]$.
Without restriction of generality one can assume that $t' < t''$.
To evaluate the difference quotient
$$
   (w (x))^\delta\,
   \frac{|(\varPsi_\mu f) (x,t') - (\varPsi_\mu f) (x,t'')|}{|t'-t''|^{\lambda/2}}
$$
we make the changes of variables
   $t'-s = s'$ and
   $t''-s = s''$
in the integrals in question.
A direct calculation shows that the quotient is majorised by the sum of two terms
$$
\begin{array}{rcl}
   I_1
 & :=
 & \displaystyle
   (w (x))^\delta
   \int_0^{t'} \!\! \int_{\mathbb{R}^n}
   \psi_\mu (x-y,s)
   \frac{|f (y,t'-s) - f (y,t''-s)|}{|t'-t''|^{\lambda/2}}\,
   dy ds,
\\
   I_2
 & :=
 & \displaystyle
   \frac{(w (x))^\delta}{|t'-t''|^{\lambda/2}}
   \int_{t'}^{t''} \!\! \int_{\mathbb{R}^n}
   \psi_\mu (x-y,s)\,
   |f (y,t''-s)|\, dy ds.
\end{array}
$$
We get
\begin{eqnarray*}
   I_1
 & \leq &
   \| f \|_{C^{\mathbf{s} (0,\lambda,\delta+2)} (\overline{\mathcal{C}_T})}
   \int_0^{t'} \!\! \int_{\mathbb{R}^n}
   \psi_\mu (x-y,s)\, \frac{(w (x))^\delta}{(w (y))^{\delta+2}}
   dy ds
\\
 & \leq &
   \frac{c}{(w (x))^2}\,
   \| f \|_{C^{\mathbf{s} (0,\lambda,\delta+2)} (\overline{\mathcal{C}_T})}
\end{eqnarray*}
with $c$ a constant depending neither on $x \in \mathbb{R}^n$ and $t', t'' \in [0,T]$
nor on $f$,
   the last inequality being due to Lemma \ref{l.heat.key0}.
The term $I_2$ is estimated by using the mean value theorem for integrals, for
$$
   I_2
 \leq
   \| f \|_{C^{\mathbf{s} (0,0,\delta+2)} (\overline{\mathcal{C}_T})}
   \frac{|t'-t''|}{|t'-t''|^{\lambda/2}}
   \int_{\mathbb{R}^n}
   \psi_\mu (x-y,t_{\vartheta})\,
   \frac{(w (x))^\delta}{(w (y))^{\delta+2}}
   dy
$$
with an intermediate point $t_ {\vartheta} \in (t',t'')$.
On applying Lemma \ref{l.heat.key0} once again we get
$$
   I_2
 \leq
   c\,
   \frac{|t'-t''|^{1-\lambda/2}}{(w (x))^2}\,
   \| f \|_{C^{\mathbf{s} (0,0,\delta+2)} (\overline{\mathcal{C}_T})}
$$
and so
$$
   (w (x))^\delta\,
   \frac{|(\varPsi_\mu f) (x,t') - (\varPsi_\mu f) (x,t'')|}{|t'-t''|^{\lambda/2}}
 \leq
   c\, \| f \|_{C^{\mathbf{s} (0,\lambda,\delta+2)} (\overline{\mathcal{C}_T})}
$$
with $c$ a constant independent of $f$.
Summarising we see that $\varPsi_\mu$ induces a bounded linear operator
$$
   C^{\mathbf{s} (0,\lambda,\delta+2)} (\overline{\mathcal{C}_T}) \to
   C^{\mathbf{s} (0,\lambda,\delta)} (\overline{\mathcal{C}_T}).
$$

Similarly, for any multi-index $\alpha \in \mathbb{Z}_{\geq 0}$ of norm $|\alpha| = 1$,
it follows by Lemma \ref{l.heat.key0} that
\begin{eqnarray*}
\lefteqn{
   (w (x))^{\delta+1} |\partial^\alpha (\varPsi_{\mu} f) (x,t)|
}
\\
 & \leq &
   \| f \|_{C^{\mathbf{s} (0,\lambda,\delta+2)} (\overline{\mathcal{C}_T})}
   \int_0^t \!\! \int_{\mathbb{R}^n}
   \frac{2 |x-y|}{4 \mu (t-s)}
   \psi_\mu (x-y,t-s)
   \frac{(w (x))^{\delta+1}}{(w (y))^{\delta+2}}
   dy ds
\\
 & \leq &
   \| f \|_{C^{\mathbf{s} (0,\lambda,\delta+2)} (\overline{\mathcal{C}_T})}\,
   \frac{c}{w (x)}
   \sup_{t \in [0,T]} \int_0^{t} \frac{ds}{\sqrt{t-s}}
\\
 & = &
   2 \sqrt{T}\, \frac{c}{w (x)}\,
   \| f \|_{C^{\mathbf{s} (0,\lambda,\delta+2)} (\overline{\mathcal{C}_T})},
\end{eqnarray*}
where $c$ is a constant independent of $f$.
Furthermore, using Lemma \ref{l.heat.key0} and (\ref{eq.half}) we obtain
\begin{eqnarray*}
\lefteqn{
   (w (x,y))^{\delta+1+\lambda}\,
   \frac{|\partial^\alpha_x (\varPsi_\mu f) (x,t) - \partial^\alpha_y (\varPsi_\mu f) (y,t)|}
        {|x-y|^\lambda}
}
\\
 \! & \! \leq \! & \!
   (w (x,y))^{\delta+1+\lambda}
   \int_0^t
   \!  \Big( \!
   \int_{\mathbb{R}^n} \!
   \frac{2 |z|}{4 \mu (t-s)}
   \psi_\mu (z,t-s)\,
   \frac{f (x\!-\!z,s)- f (y\!-\!z,s)|}{|x-y|^\lambda}
   dz
   \Big)
   ds
\\
 \! & \! \leq \! & \!
   \| f \|_{C^{\mathbf{s} (0,\lambda,\delta+2)} (\overline{\mathcal{C}_T})}
   \!
   \int_0^{t} \!\! \int_{\mathbb{R}^n}
   \!
   \frac{2 |z|}{4 \mu (t\!-\!s)}
   \psi_\mu (z,t\!-\!s)
   \frac{(w (x,y))^{\delta+1+\lambda}}
        {(w (x\!-\!z,y\!-\!z))^{\delta+2+\lambda}}\,
   dz ds
\\
 \! & \! \leq \! & \!
   \sup_{t \in [0,T]} \int_0^t \frac{ds}{\sqrt{t-s}}\,
   \frac{c}{w (x,y)}\,
   \| f \|_{C^{\mathbf{s} (0,\lambda,\delta+2)} (\overline{\mathcal{C}_T})}
\end{eqnarray*}
for all $x \neq y$ and $t > 0$,
   with $c$ a constant independent of $f$.

To estimate the difference quotient
$$
   (w (x))^{\delta+1}\,
   \frac{|\partial^\alpha_x (\varPsi_\mu f) (x,t') - \partial^\alpha_x (\varPsi_\mu f) (x,t'')|}
        {|t'-t''|^{\lambda/2}}
$$
for $x \in \mathbb{R}^n$ and $t', t'' \in [0,T]$, we argue as above.
To be definite assume that $t' < t''$.
An immediate calculation shows that the quotient is majorised by the sum of two terms
$$
\begin{array}{rcl}
   I_1'
 & :=
 & \displaystyle
   (w (x))^{\delta+1}
   \int_0^{t'} \!\! \int_{\mathbb{R}^n}
   \frac{2 |x-y|}{4 \mu s}\,
   \psi_\mu (x-y,s)
   \frac{|f (y,t'-s) - f (y,t''-s)|}{|t'-t''|^{\lambda/2}}\,
   dy ds,
\\
   I_2'
 & :=
 & \displaystyle
   \frac{(w (x))^{\delta+1}}{|t'-t''|^{\lambda/2}}
   \int_{t'}^{t''} \!\! \int_{\mathbb{R}^n}
   \frac{2 |x-y|}{4 \mu s}\,
   \psi_\mu (x-y,s)\,
   |f (y,t''-s)|\, dy ds.
\end{array}
$$
One verifies readily that
\begin{eqnarray*}
   I_1'
 & \leq &
   \| f \|_{C^{\mathbf{s} (0,\lambda,\delta+2)} (\overline{\mathcal{C}_T})}
   \int_0^{t'} \!\! \int_{\mathbb{R}^n}
   \frac{2 |x-y|}{4 \mu s}\,
   \psi_\mu (x-y,s)\,
   \frac{(w (x))^{\delta+1}}{(w (y))^{\delta+2}}
   dy ds
\\
 & \leq &
   \frac{c}{w (x)}\,
   \| f \|_{C^{\mathbf{s} (0,\lambda,\delta+2)} (\overline{\mathcal{C}_T})}\,
   \int_0^{T} \frac{ds}{\sqrt{s}}
\end{eqnarray*}
with $c$ a constant independent of both $x$ and $t'$, $t''$ and $f$,
   the last inequality being a consequence of Lemma \ref{l.heat.key0}.
On the other hand, the term $I_2'$ is estimated by using the mean value theorem for integrals
in a generalised form.
More precisely, we obtain
$$
   I_2'
 \leq
   \| f \|_{C^{\mathbf{s} (0,0,\delta+2)} (\overline{\mathcal{C}_T})}
   \frac{\displaystyle \int_{t'}^{t''} \frac{ds}{\sqrt{s}}}
        {|t'-t''|^{\lambda/2}}
   \int_{\mathbb{R}^n} \!\! \int_{t'}^{t''}
   \frac{2 |x-y|}{4 \mu\sqrt{t_{\vartheta}}}\,
   \psi_\mu (x-y,t_{\vartheta})\,
   \frac{(w (x))^{\delta+1}}{(w (y))^{\delta+2}}
   dy ds
$$
with an intermediate point $t_ {\vartheta} \in (t',t'')$.
Since
$$
   \int_{t'}^{t''} \frac{ds}{\sqrt{s}}
 \leq
   2\, \sqrt{t''-t'}
$$
for all $t'$, $t''$ satisfying $0 \leq t' \leq t''$, an application of Lemma \ref{l.heat.key0}
yields that
$$
   I_2'
 \leq
   c\,
   \frac{|t'-t''|^{(1-\lambda)/2}}{w (x)}\,
   \| f \|_{C^{\mathbf{s} (0,0,\delta+2)} (\overline{\mathcal{C}_T})}
$$
and so
$$
   (w (x))^{\delta+1}\,
   \frac{|\partial^\alpha _x (\varPsi_\mu f) (x,t') - \partial^\alpha _x (\varPsi_\mu f) (x,t'')|}
        {|t'-t''|^{\lambda/2}}
 \leq
   c\, \| f \|_{C^{\mathbf{s} (0,\lambda,\delta+2)} (\overline{\mathcal{C}_T})}
$$
with $c$ a constant independent of $f$.

Hence, the volume parabolic potential $\varPsi_\mu$ maps
   $C^{\mathbf{s} (0,\lambda,\delta+2)} (\overline{\mathcal{C}_T})$ continuously into
   $C^{1,\mathbf{s} (0,\lambda,\delta)} (\overline{\mathcal{C}_T})$.
I.e., we have proved the statement of the lemma for $s=1$ and $k=0$.

Suppose $s > 1$ and $k$ is a positive integer.
If $f \in C^{k,\mathbf{s} (s-1,\lambda,\delta+2)} (\overline{\mathcal{C}_T})$ then,
   as in Lemma \ref{l.heat.key1},
$$
   \partial^{\alpha+\beta}_x \varPsi_\mu f
 = \varPsi_\mu (\partial^{\alpha+\beta}_x f)
$$
for all multi-indices $\alpha, \beta \in \mathbb{Z}_{\geq 0}^n$ such that
   $|\alpha| \leq 2 (s-1)$ and
   $|\beta| \leq k$.
Hence it follows that
\begin{equation}
\label{eq.heat.key2.alpha1}
   \| \varPsi_\mu f \|_{C^{2(s-1)+k+1,0,\lambda,\lambda/2,\delta} (\overline{\mathcal{C}_T})}
 \leq
   c\,
   \| f \|_{C^{2(s-1)+k,0,\lambda,\lambda/2,\delta+2} (\overline{\mathcal{C}_T})}
\end{equation}
where $c$ is a positive constant depending on $s$, $k$ and $T$ but not on $f$.

For $s=2$, we have
   $f \in C^{2+k,1,\lambda,\lambda/2,\delta+2} (\overline{\mathcal{C}_T})$.
By the embedding of Theorem \ref{t.emb.hoelder.t},
\begin{eqnarray*}
\lefteqn{
   \| \partial_t (\varPsi_\mu f)
   \|_{C^{k+1,0,\lambda,\lambda/2,\delta} (\overline{\mathcal{C}_T})}
}
\\
 & = &
   \| f + \mu \varDelta (\varPsi_{\mu} f)
   \|_{C^{k+1,0,\lambda,\lambda/2,\delta} (\overline{\mathcal{C}_T})}
\\
 & \leq &
   \| f \|_{C^{k+1,0,\lambda,\lambda/2,\delta} (\overline{\mathcal{C}_T})}
 + c \mu\,
   \| \varDelta (\varPsi_{\mu} f)
   \|_{C^{k+1,0,\lambda,\lambda/2,\delta+2} (\overline{\mathcal{C}_T})}
\\
 & \leq &
   \| f \|_{C^{k+1,0,\lambda,\lambda/2,\delta} (\overline{\mathcal{C}_T})}
 + c \mu\,
   \| \varPsi_{\mu} f
   \|_{C^{k+3,0,\lambda,\lambda/2,\delta} (\overline{\mathcal{C}_T})}
\\
 & \leq &
   c\,
   \| f \|_{C^{k+2,0,\lambda,\lambda/2,\delta+2} (\overline{\mathcal{C}_T})}
\end{eqnarray*}
and so
$$
   \| \varPsi_\mu f
   \|_{C^{k+3,1,\lambda,\lambda/2,\delta} (\overline{\mathcal{C}_T})}
 \leq
   c\,
   \| f \|_{C^{k+2,1,\lambda,\lambda/2,\delta+2} (\overline{\mathcal{C}_T})},
$$
the constant $c$ is independent of $f$ and it may be different in diverse applications.
More generally,
   for arbitrary $s > 1$ and $1 \leq  j \leq s-1$,
the embedding of Theorem \ref{t.emb.hoelder.t} implies
\begin{eqnarray}
\label{eq.heat.key2.alpha2}
\lefteqn{
   \| \partial^j_t (\varPsi_\mu f)
   \|_{C^{k+1,\mathbf{s} (s-1-j,\lambda,\delta)} (\cdot)}
}
\nonumber
\\
 \! & \! = \! & \!
   \| \partial^{j\!-\!1}_t f + \mu \varDelta \partial^{j\!-\!1}_t (\varPsi_\mu f)
   \|_{C^{k+1,\mathbf{s} (s-1-j,\lambda,\delta)} (\cdot)}
\nonumber
\\
 \! & \! \leq \! & \!
   \| \partial^{j\!-\!1}_t f
   \|_{C^{k+1,\mathbf{s} (s-1-j,\lambda,\delta)} (\cdot)}
 + c
   \| \mu \partial^{j\!-\!2}_t \varDelta f + \mu^2 \varDelta^2 \partial^{j\!-\!2}_t (\varPsi_\mu f)
   \|_{C^{k+1,\mathbf{s} (s-1-j,\lambda,\delta+2)} (\cdot)}
\nonumber
\\
 \! & \! \leq \! & \!
   c\!
   \sum_{i=0}^{j-1}
   \| \partial_t^{j\!-\!1\!-\!i} \mu^i \varDelta^i f \|_{C^{k+1,\mathbf{s} (s-1-j,\lambda,\delta+2i)} (\cdot)}
 + c
   \| \mu^j \varDelta^j (\varPsi_\mu f) \|_{C^{k+1,\mathbf{s} (s-1-j,\lambda,\delta+2j)} (\cdot)}
\nonumber
\\
 \! & \! \leq \! & \!
   c
   \left(
   \| f \|_{C^{k+1,\mathbf{s} (s-2,\lambda,\delta+2)} (\overline{\mathcal{C}_T})}
 + \| \varPsi_\mu f \|_{C^{2s+k+1,0,\lambda,\lambda/2,\delta} (\cdot)}
   \right)
\nonumber
\\
\end{eqnarray}
which is dominated by a constant multiple of
   $\| f \|_{C^{k,\mathbf{s} (s-1,\lambda,\delta+2)} (\overline{\mathcal{C}_T})}$
with a constant independent of $f$.
In (\ref{eq.heat.key2.alpha2}) all function spaces are over the domain $\overline{\mathcal{C}_T} $
which we omit for short.

We thus arrive at an inequality
$$
   \| \varPsi_\mu f \|_{C^{k+1,\mathbf{s} (s-1,\lambda,\delta)} (\overline{\mathcal{C}_T})}
 \leq
   c\,
   \| f \|_{C^{k,\mathbf{s} (s-1,\lambda,\delta+2)} (\overline{\mathcal{C}_T})},
$$
showing the boundedness of $\varPsi_\mu$ in the desired spaces.
\end{proof}

We are now in a position to finish the proof of Theorem \ref{t.heat.key2}.
If $f$ is an arbitrary function in
   $C^{k,\mathbf{s} (s-1),\lambda,\delta+2)} (\overline{\mathcal{C}_T})$
and
   $|\alpha| \leq 2(s-1)$,
   $|\beta| \leq k$,
then Lemma \ref{l.heat.key2} shows that
$
   \partial^{\alpha+\beta}_x (\varPsi_\mu f)
 = \varPsi_\mu (\partial^{\alpha+\beta}_x f)
$
belongs to
$
   C^{1,0,\lambda,\lambda/2,\delta+|\alpha|+|\beta|} (\overline{\mathcal{C}_T})
$
and
$$
\begin{array}{rclcl}
   H_\mu\, \partial^{\alpha+\beta}_x (\varPsi_\mu f)
 & =
 & \partial^{\alpha+\beta}_x f
 & \in
 & C^{\mathbf{s} (0,\lambda,\delta+|\alpha|+|\beta|+2)} (\overline{\mathcal{C}_T}),
\\
   \gamma_0\, \partial^{\alpha+\beta}_x (\varPsi_\mu f)
 & =
 & 0.
 &
 &
\end{array}
$$
From the estimate of Lemma \ref{l.apriori.heat} and inequality (\ref{eq.heat.key2.alpha1}) it
follows that
\begin{equation}
\label{eq.heat.key2.beta1}
   \| \partial^{\alpha+\beta}_x (\varPsi_\mu f)
   \|_{C^{\mathbf{s} (1,\lambda,\delta+|\alpha|+|\beta|} (\overline{\mathcal{C}_T})}
\leq
  c\,
  \| \partial^{\alpha+\beta} f
  \|_{C^{\mathbf{s} (0,\lambda,\delta+|\alpha|+|\beta|+2)} (\overline{\mathcal{C}_T})}
\end{equation}
with a positive constant $c$ independent of $f$.
Moreover, arguing as in (\ref{eq.heat.key2.alpha2}) we get
\begin{eqnarray}
\label{eq.heat.key2.beta2}
\lefteqn{
   \| \partial^s_t (\varPsi_{\mu} f) \|_{C^{k,0,\lambda,\lambda/2,\delta} (\cdot)}
}
\nonumber
\\
 & = &
   \| \partial^{s-1}_t f + \mu \varDelta \partial^{s-1}_t (\varPsi_{\mu} f)
   \|_{C^{k,0,\lambda,\lambda/2,\delta} (\cdot)}
\nonumber
\\
 & \leq &
   \| \partial^{s-1}_t f
   \|_{C^{k,0,\lambda,\lambda/2,\delta} (\cdot)}
 + c\,
   \| \mu \partial^{s-2}_t \varDelta f + \mu^2 \varDelta^2 \partial^{s-2}_t (\varPsi_{\mu} f)
   \|_{C^{k,0,\lambda,\lambda/2,\delta+2} (\cdot)}
\nonumber
\\
 & \leq &
   c
   \sum_{i=0}^{s-1}
   \| \partial_t^{s-1-i} \mu^i \varDelta^i f
   \|_{C^{k,0,\lambda,\lambda/2,\delta+2i} (\cdot)}
 + c
   \| \mu^s \varDelta^s (\varPsi_{\mu} f)
   \|_{C^{k,0,\lambda,\lambda/2,\delta+2s} (\cdot)}
\nonumber
\\
 & \leq &
   c
   \left( \| f \|_{C^{k,\mathbf{s} (s-1,\lambda,\delta+2)} (\cdot)}
        + \| \varPsi_{\mu} f \|_{C^{2s+k,0,\lambda,\lambda/2,\delta} (\cdot)}
   \right)
\nonumber
\\
\end{eqnarray}
which is dominated by
   $c\, \| f \|_{C^{k,\mathbf{s} (s-1,\lambda,\delta+2)} (\overline{\mathcal{C}_T})}$
with a constant $c$ independent of $f$.

On combining
   (\ref{eq.heat.key2.alpha1}),
   (\ref{eq.heat.key2.alpha2}),
   (\ref{eq.heat.key2.beta1}),
   (\ref{eq.heat.key2.beta2})
we see that the potential $\varPsi_\mu$ induces a bounded linear operator
$$
   C^{k,\mathbf{s} (s-1,\lambda,\delta+2)} (\overline{\mathcal{C}_T}) \to
   C^{k,\mathbf{s} (s,\lambda,\delta)} (\overline{\mathcal{C}_T}),
$$
provided that $0 < \lambda < 1$.
\end{proof}

\section{The linearised Navier-Stokes equations }
\label{s.NS.lin}

Now we begin to study the operators related to a linearisation of the Navier-Stokes equations.
For this purpose, denote by
$$
   \mathcal{A}_{V_0}
 = \Big( \begin{array}{cc}
           H_{\mu} + V_0
         & d^0
\\
           \gamma_0
         & 0
         \end{array}
         \Big)
$$
a  linearisation of the Navier-Stokes equations with first order term
$$
   V_0 u
 = \ast \left( \ast g^{(0)} \wedge u \right)
 + \ast \left( \ast d^1 u \wedge v^{(1)} \right)
 + d^0 \ast \left( \ast v^{(2)} \wedge u \right)
$$
in $\overline{\mathcal{C}_T} $, where
   $v^{(1)}$ and $v^{(2)}$ are fixed one-forms and
   $g^{(0)}$ is a fixed two-form,
cf. Lemma \ref{l.D.deRham}.

\begin{theorem}
\label{t.NS.deriv.unique}
Let
   $n \geq 2$,
   $s$ be a positive integer,
   $k \in \mathbb{Z}_{\geq 0}$
and
   $\delta > n/2$.
If
   the coefficients of $v^{(1)}$ are of class $C^{\mathbf{s} (0,0,0)}$,
   the coefficients of $v^{(2)}$ are of class $C^{1,\mathbf{s} (0,0,-1)}$ and
   the coefficients of $g^{(0)}$ are of class $C^{\mathbf{s} (0,0,0)}$
in $\overline{\mathcal{C}_T} $, then any pair $U = (u,p)^T$ of
\begin{equation}
\label{eq.unicla}
\begin{array}{rcl}
   u
 & \in
 & C^{\mathbf{s} (1,0,\delta)} (\overline{\mathcal{C}_T} , \varLambda^1) \cap \mathcal{S}_{d^\ast},
\\
   p
 & \in
 & C^{1,\mathbf{s} (0,0,\delta-1)} (\overline{\mathcal{C}_T})
\end{array}
\end{equation}
satisfying $\mathcal{A}_{V_0} U = 0$ in the layer is identically zero.
\end{theorem}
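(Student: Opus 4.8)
The plan is to prove uniqueness by the classical energy method, adapted to the weighted H\"older setting. Since $\delta > n/2$, Lemmata \ref{l.emb.L2} and \ref{l.emb.L2t} guarantee that $u (\cdot,t)$ and all its spatial derivatives up to second order lie in $L^2 (\mathbb{R}^n)$ for every $t \in [0,T]$, and the prescribed decay (the coefficients of $u$ carrying weight $\delta$, those of $p$ weight $\delta-1$) is strong enough that each boundary integral over a sphere $\{ |x| = R \}$ occurring below is $O (R^{n-2\delta})$, hence tends to $0$ as $R \to \infty$, because $2\delta > n$. All computations are therefore carried out with genuine functions that are $C^1$ in $t$ and $C^2$ in $x$, with no distributional subtleties; in particular $t \mapsto \| u (\cdot,t) \|_{L^2 (\mathbb{R}^n)}^2$ is continuously differentiable by dominated convergence.

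First I would take the $L^2 (\mathbb{R}^n)$ scalar product (of differential forms) of the impulse equation $H_\mu u + V_0 u + d^0 p = 0$ with $u (\cdot,t)$. Using that $H_\mu$ acts componentwise as $\partial_t - \mu \varDelta$ and integrating by parts in $x$ gives
\begin{equation*}
   \frac{1}{2} \frac{d}{dt} \| u (\cdot,t) \|_{L^2 (\mathbb{R}^n)}^2
 + \mu \| \nabla u (\cdot,t) \|_{L^2 (\mathbb{R}^n)}^2
 + (V_0 u, u)_{L^2 (\mathbb{R}^n)}
 + (d^0 p, u)_{L^2 (\mathbb{R}^n)}
 = 0 .
\end{equation*}
The pressure term vanishes, since $(d^0 p, u)_{L^2} = \pm (p, d^\ast u)_{L^2} = 0$ because $d^\ast u = 0$, the corresponding boundary term disappearing by the decay noted above. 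For $(V_0 u, u)_{L^2}$ I would treat the three summands of $V_0$ separately. The summand $d^0 \ast (\ast v^{(2)} \wedge u)$ contributes $\pm (\ast (\ast v^{(2)} \wedge u), d^\ast u)_{L^2} = 0$, again by the divergence-free condition (one checks first that the pairing is integrable, the total weight being $2\delta - 1 > n-1$). The summand $\ast (\ast g^{(0)} \wedge u)$ is of order zero in $u$, hence dominated by $c\, \| g^{(0)} \|_{C^{\mathbf{s} (0,0,0)}} \| u (\cdot,t) \|_{L^2}^2$. Finally, $\ast (\ast d^1 u \wedge v^{(1)})$ is the only first-order summand; since the coefficients of $v^{(1)}$ are bounded, it is dominated by $c\, \| v^{(1)} \| \, \| \nabla u (\cdot,t) \|_{L^2} \| u (\cdot,t) \|_{L^2}$, which by Young's inequality is at most $\tfrac{\mu}{2} \| \nabla u (\cdot,t) \|_{L^2}^2 + C \| u (\cdot,t) \|_{L^2}^2$.

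Combining these estimates absorbs the viscous term and yields $\tfrac{d}{dt} \| u (\cdot,t) \|_{L^2}^2 \leq 2 C \| u (\cdot,t) \|_{L^2}^2$ on $[0,T]$. Since $\gamma_0 u = 0$, the nonnegative function $t \mapsto \| u (\cdot,t) \|_{L^2}^2$ vanishes at $t = 0$, so Gr\"onwall's inequality forces $u (\cdot,t) \equiv 0$ for all $t \in [0,T]$, i.e. $u = 0$. Substituting $u = 0$ back into the impulse equation leaves $d^0 p = 0$, so $p (\cdot,t)$ is constant in $x$ for each $t$; as its coefficients carry the strictly positive weight $\delta - 1 > 0$ (here $\delta > n/2 \geq 1$ is used), $p (\cdot,t)$ decays at infinity, and a constant with that property is zero. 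Hence $U = (u,p)^T \equiv 0$.

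I expect the main obstacle to be the bookkeeping of weights needed to justify that all scalar products converge, that the integration-by-parts boundary terms genuinely vanish, and that differentiation under the integral sign is legitimate; these all reduce to tracking the exponents provided by the spaces $C^{\mathbf{s} (1,0,\delta)}$ and $C^{1,\mathbf{s} (0,0,\delta-1)}$ and repeatedly invoking $2\delta > n$. A secondary point requiring care is the summand $d^0 \ast (\ast v^{(2)} \wedge u)$, whose coefficient $v^{(2)}$ is allowed to grow linearly: one must verify its pairing with $u$ is still integrable (total weight $2\delta - 1 > n-1$) before integrating by parts, and that the derivative $d^0 v^{(2)}$ that appears upon expansion is bounded, which is exactly what membership in $C^{1,\mathbf{s} (0,0,-1)}$ gives.
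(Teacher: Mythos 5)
Your proposal is correct and follows essentially the same route as the paper: the $L^2$ energy identity obtained by pairing the impulse equation with $u$, vanishing of the pressure and $v^{(2)}$ terms via $d^\ast u = 0$, absorption of the first-order $v^{(1)}$ term into the viscous term by Young's inequality, a Gr\"onwall argument from $\gamma_0 u = 0$, and finally $d^0 p = 0$ together with the decay forced by $\delta - 1 > 0$ to kill $p$. The only cosmetic difference is that you write the viscous term as $\mu \| \nabla u \|_{L^2}^2$ where the paper keeps $\mu \| du \|_{L^2}^2 + \mu \| d^\ast u \|_{L^2}^2$; these agree by the Plancherel identity the paper records as (\ref{eq.Dirichlet}).
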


\begin{proof}
For $U =(u,p)^T$ the equality
   $\mathcal{A}_{V_0} U = 0$
just amounts to saying that
\begin{equation}
\label{eq.NS.deriv.0}
\begin{array}{rclrl}
   H_\mu u + d^0 p
 & =
 & - V_0 u
 & \mbox{in}
 & \overline{\mathcal{C}_T} ,
\\
   \gamma_0 u
 & =
 & 0
 & \mbox{in}
 & \mathbb{R}^n.
\end{array}
\end{equation}

Since $\delta > n/2$, one may follow
   \cite{Lera34a} or
   Theorem 3.2 for $n = 2$ and Theorem 3.4 for $n = 3$ of \cite{Tema79},
proving the uniqueness result with the use of integration by parts.
Indeed,
$$
   \partial_t \| u (\cdot, t) \|^2_{L^2 (B_R)}
 = 2\, (\partial_t u, u)_{L^2 (B_R)},
$$
for we restrict ourselves to real-valued forms.
From the properties of $u$ and $p$ listed in (\ref{eq.unicla}) we conclude that
   $dp \in C^{\mathbf{s} (0,0,\delta)} (\overline{\mathcal{C}_T} , \varLambda^1)$
and the coefficients of the one-forms
   $u$,
   $\partial_i u$,
   $\partial_t u$
and
   $H_\mu u$,
   $dp$
are square integrable over $\mathbb{R}^n$ for each fixed $t \in [0,T]$,
   which is due to Lemma \ref{l.emb.L2t}.
Hence, using (\ref{eq.deRham}) and the Stokes formula we get
\begin{eqnarray*}
\lefteqn{
   (H_\mu u + dp, u)_{L^2 (\mathbb{R}^n,\varLambda^1)}
}
\\
 & = &
   \lim_{R \to +\infty}
   \Big(
   \frac{1}{2}\, \partial_t \| u (\cdot, t) \|^2_{L^2 (B_R,\varLambda^1)}
 + \mu \| du (\cdot, t) \|^2_{L^2 (B_R,\varLambda^2)}
 + \mu \| d^\ast u (\cdot, t) \|^2_{L^2 (B_R)}
\\
 &   &
   \ \ \ \ \ \ \ \ \ \ \ \ + \
   (p, d^\ast u)_{L^2 (B_R)}
 - \int_{\partial B_R} u^\ast \Big( \frac{\partial u}{\partial \nu} - p \nu \Big) ds
   \Big)
\\
 & = &
   \frac{1}{2}\, \partial_t \| u (\cdot, t )\|^2_{L^2 (\mathbb{R}^n, \varLambda^1)}
 + \mu \| du (\cdot, t) \|^2_{L^2 (\mathbb{R}^n,\varLambda^2)},
\end{eqnarray*}
for
   $u' u$ is of class $C^{\mathbf{s} (0,0,2 \delta+1)}$,
   $p u$ is of class $C^{\mathbf{s} (0,0,2 \delta-1)}$
in the layer $\overline{\mathcal{C}_T} $ by Lemma \ref{l.product} and
$$
   R^{n-1-(2 \delta-1)} = R^{n-2 \delta}  \to 0
$$
if $R \to +\infty$.

Furthermore, the restrictions which we put on the forms
   $g^{(0)}$ and
   $v^{(1)}$,
   $v^{(2)}$
guarantee that the integrals
$$
\begin{array}{ccc}
 \!   \! (\ast (\ast g^{(0)} \wedge u), u)_{L^2 (\mathbb{R}^n,\varLambda^1)},
 \! & \! (\ast (\ast du \wedge v^{(1)}), u)_{L^2 (\mathbb{R}^n,\varLambda^1)},
 \! & \! (d \ast (\ast v^{(2)} \wedge u), u)_{L^2 (\mathbb{R}^n,\varLambda^1)}
\end{array}
$$
converge for each $t \in [0,1]$, for the integrands belong to
   $C^{\mathbf{s} (0,0,2 \delta)} (\overline{\mathcal{C}_T})$
(see Lemmata \ref{l.emb.L2t} and \ref{l.product}).
Hence, (\ref{eq.NS.deriv.0}) implies
$$
   \frac{1}{2}\,
   \partial_t \| u (\cdot, t) \|^2_{L^2 (\mathbb{R}^n,\varLambda^1)}
 + \mu\, \| du (\cdot, t) \|^2_{L^2 (\mathbb{R}^n,\varLambda^2)}
 = - (V_0 u (\cdot,t), u (\cdot,t))_{L^2 (\mathbb{R}^n,\varLambda^1)}.
$$
Since $g^{(0)}$ is of class $C^{\mathbf{s} (0,0,0)}$ in the layer $\overline{\mathcal{C}_T} $,
there is a constant $c$ independent of
   $u \in C^{\mathbf{s} (0,0,\delta)} (\overline{\mathcal{C}_T} , \varLambda^1)$,
such that
$$
   (\ast (\ast g^{(0)} \wedge u, u) _{L^2 (\mathbb{R}^n, \varLambda^1)}
 \leq
    c\, \| u \|^2_{L^2 (\mathbb{R}^n, \varLambda^1)}
$$
for all $t \in [0,T]$.
Besides,
\begin{eqnarray*}
\lefteqn{
   (\ast (\ast du \wedge v^{(1)}), u)_{L^2 (\mathbb{R}^n, \varLambda^1)}
}
\\
 & \leq &
   c\,
   \| v^{(1)} \|_{C^{\mathbf{s} (0,0,0)} (\overline{\mathcal{C}_T} , \varLambda^1)}
   \| du \| _{L^2 (\mathbb{R}^n, \varLambda^2)}
   \| u \| _{L^2 (\mathbb{R}^n, \varLambda^1)}
\\
 & \leq &
   b \mu \| du \|^2 _{L^2 (\mathbb{R}^n, \varLambda^2)}
 + \frac{c^2}{4 b \mu}\,
   \| v^{(1)} \|^2_{C^{\mathbf{s} (0,0,0)} (\overline{\mathcal{C}_T} , \varLambda^1)}\,
   \| u \|^2_{L^2 (\mathbb{R}^n, \varLambda^1)}
\end{eqnarray*}
with any constant $b > 0$, for $2 b_1 b_2 \leq b_1^2 + b_2^2$.
Finally we integrate by parts to observe that
$$
   (d (\ast v^{(2)} \wedge u), u)_{L^2 (\mathbb{R}^n,\varLambda^1)}
 = (\ast (\ast v^{(2)} \wedge u), d^\ast u)_{L^2 (\mathbb{R}^n)}
 = 0.
$$
On combining these estimates we see that there is a constant $c > 0$ independent of $u$
and $t \in [0,T]$, such that
\begin{equation}
\label{eq.uniq.5}
   -\, (V_0 u (\cdot,t), u (\cdot,t))_{L^2 (\mathbb{R}^n, \varLambda^1)}
 \leq
   c\, \| u (\cdot,t) \|_{L^2 (\mathbb{R}^n, \varLambda^1)}^2
 + \mu\, \| du (\cdot,t) \|^2_{L^2 (\mathbb{R}^n, \varLambda^2)}
\end{equation}
for all
$
   u
 \in
   C^{\mathbf{s} (1,0,\delta)} (\overline{\mathcal{C}_T} , \varLambda^1) \cap \mathcal{S}_{d^\ast}.
$
It follows that
$$
   \frac{1}{2}\,
   \partial_t\,
   \| u (\cdot, t) \|_{L^2 (\mathbb{R}^n, \varLambda^1)}^2
 \leq
   c\, \| u (\cdot,t) \|_{L^2 (\mathbb{R}^n,\varLambda^1)}^2
$$
for all $t \in [0,T]$.

Now we note that the from the inequality
   $x' (t) \leq a (t) x (t)$
for all $t$ in some interval of the real axis it follows that
$$
   \frac{d}{dt} \left( e^{- A (t)} x (t) \right) \leq 0,
$$
where $A$ is a primitive function for $a$.
Therefore, since $A (t) = 2c\, t$  is a primitive for the function  $a (t) = 2c$, we conclude
that
$$
   \frac{d}{dt} \Big( e^{- 2c\, t} \| u (\cdot, t) \|^2_{L^2 (\mathbb{R}^n, \varLambda^1)} \Big)
 \leq
   0
$$
for all $t \in [0,T]$.

Pick any $t \in (0,T]$.
Then
\begin{eqnarray*}
   \int_0^t
   \frac{d}{ds}
   \Big( e^{-2c\, s} \| u (\cdot, s) \|^2_{L^2 (\mathbb{R}^n, \varLambda^1)} \Big) ds
 & = &
   e^{-2c\, t} \| u (\cdot,t) \|^2_{L^2 (\mathbb{R}^n,\varLambda^1)}
 - \| u (\cdot,0) \|^2_{L^2 (\mathbb{R}^n, \varLambda^1)}
\\
 & = &
   e^{- 2c\, t} \| u (\cdot,t) \|^2_{L^2 (\mathbb{R}^n, \varLambda^1)}
\\
 & \leq &
   0
\end{eqnarray*}
because $u (x,0) = 0$ for all $x \in \mathbb{R}^n$.
Thus,
$$
   \| u (\cdot,t) \|^2_{L^2 (\mathbb{R}^n, \varLambda^1)}
 \leq
   0
$$
for all $t \in [0,T]$, i.e., $u \equiv 0$.
Hence it follows that $dp = 0$, i.e., $p$ does not depend on $x$.
However, the function $|p (t)|$ is dominated by
$
   (1+|x|^2)^{-(\delta-1)/2}
$
as $|x| \to +\infty$.
Since
   $\delta > n/2 \geq 1$
we deduce readily that $p (t)$ is identically equal to zero,
   as desired.
\end{proof}

As is already mentioned, the scale of weighted spaces
   $C^{k, \mathbf{s} (s,\lambda,\delta)}$
in the layer $\overline{\mathcal{C}_T} $ does not fully agree with the dilation principle for
parabolic equations.
Differentiation in the time variable $t$ does not lead to increasing the weight exponent
$\delta$ which results in committing a violation of compact embeddings.
In order to get rid of this shortage we go to slightly modify the above scale by introducing
an additional H\"{o}lder exponent $\lambda'$ which should exceed $\lambda$ and thus affect to
a gain of ``smoothness'' in $t$.
This manipulation of function spaces seems to be justified by the refined structure of the
Navier-Stokes equations.
For
   $s, k \in \mathbb{Z}_{\geq 0}$ and
   $0 < \lambda < \lambda' < 1$,
we introduce
$$
   \mathcal{F}^{k,\mathbf{s} (s,\lambda,\lambda',\delta)} (\overline{\mathcal{C}_T})
 :=
   C^{k+1,\mathbf{s} (s,\lambda,\delta)} (\overline{\mathcal{C}_T}) \cap
   C^{k,\mathbf{s} (s,\lambda',\delta)} (\overline{\mathcal{C}_T}).
$$
When given the norm
$$
   \| u \|_{\mathcal{F}^{k,\mathbf{s} (s,\lambda,\lambda',\delta)} (\overline{\mathcal{C}_T})}
 :=
   \| u \|_{C^{k+1,\mathbf{s} (s,\lambda,\delta)} (\overline{\mathcal{C}_T})}
 + \| u \|_{C^{k,\mathbf{s} (s,\lambda',\delta)} (\overline{\mathcal{C}_T})}.
$$
this is obviously a Banach space.
To certain extent these spaces are similar to those with two-norm convergence which are
of key importance for ill-posed problems.

If
   $s \in \mathbb{Z}_{\geq 0}$ and
   $0 \! < \! \lambda \! < \! \lambda' \! < \! 1$,
then all statements above on the
   Laplace operator,
   de Rham complex and
   heat operator
are true for the scale
   $\mathcal{F}^{k,\mathbf{s} (s,\lambda,\lambda',\delta)} (\overline{\mathcal{C}_T})$
instead of the scale
   $C^{k,\mathbf{s} (s,\lambda,\delta)} (\overline{\mathcal{C}_T})$.
The range
   $R^{k,\mathbf{s} (s,\lambda,\lambda',\delta)} (\overline{\mathcal{C}_T})$
of the operator
$$
   \varDelta :\,
   \mathcal{F}^{k,\mathbf{s} (s,\lambda,\lambda',\delta)} (\overline{\mathcal{C}_T}) \cap \mathcal{D}_{\varDelta}
 \to
   \mathcal{F}^{k,\mathbf{s} (s,\lambda,\lambda',\delta+2)} (\overline{\mathcal{C}_T})
$$
just amounts to the whole space
   $\mathcal{F}^{k,\mathbf{s} (s,\lambda,\lambda',\delta+2)} (\overline{\mathcal{C}_T})$,
if $0 < \delta < n-2$, and it reduces to the intersection
$
   R^{k+1,\mathbf{s} (s,\lambda,\delta+2)} (\overline{\mathcal{C}_T}) \cap
   R^{k,\mathbf{s} (s,\lambda',\delta+2)} (\overline{\mathcal{C}_T}),
$
if $\delta$ belongs to an interval
   $n-2+m < \delta < n-1+m$.

\begin{lemma}
\label{l.mathfrak.compact}
Let
   $s$ be a positive integer,
   $k \in \mathbb{Z}_{\geq 0}$,
   $0 < \lambda < \lambda' < 1$
and
   $\delta > \delta'$.
Then the embedding
$$
   \mathcal{F}^{k,\mathbf{s} (s,\lambda,\lambda',\delta)} (\overline{\mathcal{C}_T} , \varLambda^q)
 \hookrightarrow
   \mathcal{F}^{k+1,\mathbf{s} (s-1,\lambda,\lambda',\delta')} (\overline{\mathcal{C}_T} , \varLambda^q)
 $$
 is compact.
\end{lemma}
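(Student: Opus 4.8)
The plan is to derive the asserted compactness solely from the embedding Theorem~\ref{t.emb.hoelder.t} and from the mapping properties of $\partial_x^\alpha$ collected in Lemma~\ref{l.diff.oper}. First I would record that, by the very definition of the scale, the target space splits as
$$
   \mathcal{F}^{k+1,\mathbf{s}(s-1,\lambda,\lambda',\delta')}(\overline{\mathcal{C}_T},\varLambda^q)
 = C^{k+2,\mathbf{s}(s-1,\lambda,\delta')}(\overline{\mathcal{C}_T},\varLambda^q)
   \cap C^{k+1,\mathbf{s}(s-1,\lambda',\delta')}(\overline{\mathcal{C}_T},\varLambda^q),
$$
and that an embedding of a Banach space into the intersection of two Banach spaces is compact as soon as it is compact into each of the two factors: given a bounded sequence, one extracts a subsequence converging in the first factor and then a further subsequence converging in the second, the two limits being forced to coincide because convergence in either factor entails locally uniform convergence of the forms and their derivatives (Lemma~\ref{l.emb.loc.t}). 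Hence it is enough to show that the embeddings
$$
   \mathcal{F}^{k,\mathbf{s}(s,\lambda,\lambda',\delta)}
   \hookrightarrow C^{k+2,\mathbf{s}(s-1,\lambda,\delta')}(\overline{\mathcal{C}_T},\varLambda^q),
   \qquad
   \mathcal{F}^{k,\mathbf{s}(s,\lambda,\lambda',\delta)}
   \hookrightarrow C^{k+1,\mathbf{s}(s-1,\lambda',\delta')}(\overline{\mathcal{C}_T},\varLambda^q)
$$
are both compact, and all of this may be carried out componentwise in $\varLambda^q$.

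The second embedding is the straightforward one. By construction $\mathcal{F}^{k,\mathbf{s}(s,\lambda,\lambda',\delta)}$ embeds continuously into $C^{k+1,\mathbf{s}(s,\lambda,\delta)}$, and Theorem~\ref{t.emb.hoelder.t} provides a \emph{compact} embedding $C^{k+1,\mathbf{s}(s,\lambda,\delta)}\hookrightarrow C^{k+1,\mathbf{s}(s-1,\lambda',\delta')}$, since $s+\lambda>(s-1)+\lambda'$ (because $\lambda'<1\le 1+\lambda$) and $\delta>\delta'$. For the first embedding I would look at the $x$-derivatives $\partial_x^\beta u$, $|\beta|\le k+2$, separately. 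When $|\beta|\le k+1$, membership of $u$ in $C^{k+1,\mathbf{s}(s,\lambda,\delta)}$ gives $\partial_x^\beta u\in C^{\mathbf{s}(s,\lambda,\delta+|\beta|)}$, and $C^{\mathbf{s}(s,\lambda,\delta+|\beta|)}\hookrightarrow C^{\mathbf{s}(s-1,\lambda,\delta'+|\beta|)}$ is compact by Theorem~\ref{t.emb.hoelder.t}. When $|\beta|=k+2$ I would write $\beta=\gamma+\beta'$ with $|\gamma|=2$ and $|\beta'|=k$; membership of $u$ in the second factor $C^{k,\mathbf{s}(s,\lambda',\delta)}$ gives $\partial_x^{\beta'}u\in C^{\mathbf{s}(s,\lambda',\delta+k)}$, Lemma~\ref{l.diff.oper}(2) then yields $\partial_x^\beta u=\partial_x^{\gamma}\bigl(\partial_x^{\beta'}u\bigr)\in C^{\mathbf{s}(s-1,\lambda',\delta+k+2)}$, and Theorem~\ref{t.emb.hoelder.t} supplies a compact embedding $C^{\mathbf{s}(s-1,\lambda',\delta+k+2)}\hookrightarrow C^{\mathbf{s}(s-1,\lambda,\delta'+k+2)}$ because $(s-1)+\lambda'>(s-1)+\lambda$ and $\delta+k+2>\delta'+k+2$. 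Thus each of the finitely many linear maps $u\mapsto\partial_x^\beta u$, $|\beta|\le k+2$, is compact from $\mathcal{F}^{k,\mathbf{s}(s,\lambda,\lambda',\delta)}$ into $C^{\mathbf{s}(s-1,\lambda,\delta'+|\beta|)}$; a finite diagonal extraction turns a bounded sequence in the source into a subsequence all of whose $x$-derivatives up to order $k+2$ converge, and by Lemma~\ref{l.emb.loc.t} the limits are the $x$-derivatives of a single limit form. That is precisely convergence in $C^{k+2,\mathbf{s}(s-1,\lambda,\delta')}$, which finishes the argument.

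I expect the only real obstacle to be the top-order term $|\beta|=k+2$ in the first embedding. Dropping the parabolic level from $s$ to $s-1$ already uses up the $x$-smoothness budget (from order $2s$ down to $2(s-1)$), so the strict inequality of smoothness indices demanded by the compactness part of Theorem~\ref{t.emb.hoelder.t} is no longer supplied by the $C^{k+1,\mathbf{s}(s,\lambda,\delta)}$-part alone. This is exactly the role of the auxiliary H\"older exponent $\lambda'>\lambda$ woven into the scale $\mathcal{F}$: it is left intact after two $x$-differentiations (Lemma~\ref{l.diff.oper}(2) preserves the H\"older exponent) and then produces the gap $(s-1)+\lambda'>(s-1)+\lambda$ that one needs. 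Everything else reduces to bookkeeping with the continuous and compact embeddings already at our disposal.
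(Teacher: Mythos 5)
Your argument is correct and follows essentially the same route as the paper: split the target intersection into its two factors, obtain compactness into $C^{k+1,\mathbf{s}(s-1,\lambda',\delta')}$ from the $C^{k+1,\mathbf{s}(s,\lambda,\delta)}$ part of the source, and obtain compactness into $C^{k+2,\mathbf{s}(s-1,\lambda,\delta')}$ by exploiting the auxiliary exponent $\lambda'>\lambda$ together with the structural embedding that trades one parabolic level for two $x$-derivatives, all via Theorem~\ref{t.emb.hoelder.t} and Lemma~\ref{l.diff.oper}. The only difference is cosmetic: you track the derivatives $\partial_x^\beta u$ individually, whereas the paper composes the compact embedding $C^{k,\mathbf{s}(s,\lambda',\delta)}\hookrightarrow C^{k,\mathbf{s}(s,\lambda,\delta')}$ with the continuous embedding into $C^{k+2,\mathbf{s}(s-1,\lambda,\delta')}$ at the level of whole spaces.
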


\begin{proof}
By abuse of notation we omit the domain and target bundle in designations.
By Theorem \ref{t.emb.hoelder.t},

1)
the space
   $C^{k+1,\mathbf{s} (s,\lambda,\delta)}$
is  embedded compactly into
   $C^{k+1,\mathbf{s} (s-1,\lambda',\delta')}$,
since
   $s+\lambda > s-1+\lambda'$ and
   $\delta > \delta'$;

2)
the space
   $C^{k,\mathbf{s} (s,\lambda',\delta)}$
is  embedded compactly into
   $C^{k,\mathbf{s} (s,\lambda,\delta')}$,
for
   $0 < \lambda < \lambda'$ and
   $\delta > \delta'$;

3)
the space
   $C^{k,\mathbf{s} (s,\lambda,\delta')}$
is embedded continuously into
   $C^{k+2,\mathbf{s} (s-1,\lambda,\delta')}$.

Hence it follows that if $S$ is a bounded set in
$$
   \mathcal{F}^{k,\mathbf{s} (s,\lambda,\lambda',\delta)}
 =
   C^{k+1,\mathbf{s} (s,\lambda,\delta)} \cap
   C^{k,\mathbf{s} (s,\lambda',\delta)},
$$
then any sequence from $S$ has a subsequence  converging in the space
$$
   \mathcal{F}^{k+1,\mathbf{s} (s-1,\lambda,\lambda',\delta')}
 =
   C^{k+2,\mathbf{s} (s-1,\lambda,\delta')} \cap
   C^{k+1,\mathbf{s} (s-1,\lambda',\delta')},
$$
as desired.
\end{proof}

\begin{corollary}
\label{c.NSL.hoelder0}
Let
   $n \geq 2$,
   $s$ be a positive integer,
   $k \in \mathbb{Z}_{\geq 0}$
and
   $\gamma \geq 0$.

1)
If
   $0 < \lambda \leq 1$
and
   the coefficients of $v^{(1)}$ are of class $C^{k,\mathbf{s} (s-1,\lambda,\gamma-1)}$,
   the coefficients of $v^{(2)}$ are of class $C^{k+1,\mathbf{s} (s-1,\lambda,\gamma-1)}$ and
   the coefficients of $g^{(0)}$ are of class $C^{k,\mathbf{s} (s-1,\lambda,\gamma)}$
in $\overline{\mathcal{C}_T} $,
   then the operator $\mathcal{A}_{V_0}$ induces a bounded linear operator
$$
   \begin{array}{c}
   C^{k,\mathbf{s} (s,\lambda,\delta)} (\overline{\mathcal{C}_T} ,\varLambda^1) \cap
   \mathcal{S}_{d^\ast}
\\
   \oplus
\\
   C^{k+1,\mathbf{s} (s-1,\lambda,\delta+\gamma-1)} (\overline{\mathcal{C}_T})
   \end{array}
 \to
   \begin{array}{c}
   C^{k,\mathbf{s} (s-1,\lambda,\delta+\gamma)} (\overline{\mathcal{C}_T} ,\varLambda^1)
\\
   \oplus
\\
   C^{2s+k,\lambda,\delta} (\mathbb{R}^n, \varLambda^1) \cap \mathcal{S}_{d^\ast}.
   \end{array}
$$

2)
If
   $0 < \lambda < \lambda' \leq 1$
and
   the coefficients of $v^{(1)}$ are of class $\mathcal{F}^{k,\mathbf{s} (s-1,\lambda,\lambda',\gamma-1)}$,
   the coefficients of $v^{(2)}$ are of class $\mathcal{F}^{k+1,\mathbf{s} (s-1,\lambda,\lambda',\gamma-1)}$ and
   the coefficients of $g^{(0)}$ are of class $\mathcal{F}^{k,\mathbf{s} (s-1,\lambda,\lambda',\gamma)}$
in $\overline{\mathcal{C}_T} $,
   then the operator $\mathcal{A}_{V_0}$ induces a bounded linear operator
$$
   \begin{array}{c}
   \mathcal{F}^{k,\mathbf{s} (s,\lambda,\lambda',\delta)} (\overline{\mathcal{C}_T} ,\varLambda^1) \cap
   \mathcal{S}_{d^\ast}
\\
   \oplus
\\
   \mathcal{F}^{k+1,\mathbf{s} (s-1,\lambda,\lambda',\delta+\gamma-1)} (\overline{\mathcal{C}_T})
   \end{array}
 \to
   \begin{array}{c}
   \mathcal{F}^{k,\mathbf{s} (s-1,\lambda,\lambda',\delta+\gamma)} (\overline{\mathcal{C}_T} ,\varLambda^1)
\\
   \oplus
\\
   C^{2s+k+1,\lambda,\delta} (\mathbb{R}^n,\varLambda^1) \cap \mathcal{S}_{d^\ast}.
   \end{array}
$$

3)
If moreover
   $\delta > n/2 $ and
   the coefficients of $v^{(1)}$ are of class $C^{\mathbf{s} (0,0,0)}$ in $\overline{\mathcal{C}_T} $,
then the null-space of the operators consists of all pairs $(0,c)^T$, where $c$ is a constant.
\end{corollary}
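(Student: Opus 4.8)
The plan is to handle the three assertions separately: the first two by assembling $\mathcal{A}_{V_0}$ out of operators whose continuity on the relevant scales has already been established, and the third by reducing it to the energy identity behind Theorem~\ref{t.NS.deriv.unique}. For assertion 1), I would write, for $U = (u,p)^T$,
\[
   \mathcal{A}_{V_0} U = \big( (H_\mu + V_0)\, u + d^0 p,\ \gamma_0 u \big)^T
\]
and check each constituent. The restriction $\gamma_0$ is continuous into $C^{2s+k,\lambda,\delta}(\mathbb{R}^n,\varLambda^1)$ by Lemma~\ref{l.bound.trace.holder}, and it maps $\mathcal{S}_{d^\ast}$ into $\mathcal{S}_{d^\ast}$ since $d^\ast \gamma_0 u = \gamma_0 d^\ast u$; the heat operator $H_\mu$ is continuous by Lemma~\ref{l.diff.oper}(4); and $d^0$ lowers the exterior index by one while raising the weight by one by Lemma~\ref{l.diff.oper}(1), which is exactly the passage from $C^{k+1,\mathbf{s}(s-1,\lambda,\delta+\gamma-1)}$ to $C^{k,\mathbf{s}(s-1,\lambda,\delta+\gamma)}$. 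For $V_0$ I would use the Leibniz rule to move the outer $d^0$ in the third summand onto the coefficients, rewriting $V_0$ as a first order operator $\sum_{|\alpha|\le 1} P_\alpha \partial^\alpha u$ whose coefficients are built algebraically from $g^{(0)}$, $v^{(1)}$, $v^{(2)}$ and $d v^{(2)}$ (which is precisely why $v^{(2)}$ is demanded one $k$-degree smoother than $v^{(1)}$), and then invoke Lemma~\ref{l.map.hoelder.V0} with $\delta' = \gamma$. Collecting these facts, together with the embeddings of Theorem~\ref{t.emb.hoelder.t} needed to reconcile the remaining index discrepancies, yields the continuity claimed in 1).

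Assertion 2) is proved along the same lines. As already recorded in the paragraph preceding the corollary, Theorem~\ref{t.weight.Hoelder.Laplace}, Corollaries~\ref{c.deRham.Hoelder.t}--\ref{c.deRham.Hoelder.tt}, Theorem~\ref{t.heat.key2}, and the product and differentiation lemmas all persist for the modified scale $\mathcal{F}^{k,\mathbf{s}(s,\lambda,\lambda',\delta)} = C^{k+1,\mathbf{s}(s,\lambda,\delta)} \cap C^{k,\mathbf{s}(s,\lambda',\delta)}$; consequently the computation of 1) carries over verbatim. The only change in the statement is that $\gamma_0$ now acts on the $C^{k+1,\mathbf{s}(s,\lambda,\delta)}$-component of $\mathcal{F}^{k,\mathbf{s}(s,\lambda,\lambda',\delta)}$, so by Lemma~\ref{l.bound.trace.holder} its image lies in $C^{2s+(k+1),\lambda,\delta}(\mathbb{R}^n)$, which accounts for the index $2s+k+1$ in the trace target there.

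For assertion 3), a pair $(u,p)^T$ in the null space satisfies $H_\mu u + d^0 p = -V_0 u$ in $\overline{\mathcal{C}_T}$ together with $\gamma_0 u = 0$, that is, precisely system~(\ref{eq.NS.deriv.0}). Before invoking Theorem~\ref{t.NS.deriv.unique} I would check that its hypotheses hold: $\delta > n/2$ is assumed; the coefficients of $v^{(1)}$ are of class $C^{\mathbf{s}(0,0,0)}$ by the extra assumption made in 3) — an assumption that is genuinely needed, since the weight $\gamma-1$ of $v^{(1)}$ furnished by 1) and 2) may be negative while $C^{\mathbf{s}(0,0,0)}$ is a space of bounded functions; and the required classes $C^{1,\mathbf{s}(0,0,-1)}$ for $v^{(2)}$ and $C^{\mathbf{s}(0,0,0)}$ for $g^{(0)}$ follow from their classes in 1)--2) by Theorem~\ref{t.emb.hoelder.t}, because $\gamma-1 \ge -1$ and $\gamma \ge 0$. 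Theorem~\ref{t.NS.deriv.unique} then forces $u \equiv 0$, whence $d^0 p = 0$, so that $p$ is constant in the space variable; inspecting the admissible weight of the pressure component one reads off that $p$ reduces to a constant, which is the asserted description of the kernel.

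The index bookkeeping in 1) and 2) is the routine part; the one genuinely delicate point is 3), and there the substantive work — the energy estimate leading to $u\equiv 0$ — has already been carried out in Theorem~\ref{t.NS.deriv.unique}. The care needed is only in verifying that the coefficient hypotheses of that theorem are implied by the present ones, which is exactly where the separate assumption on $v^{(1)}$ in 3) enters.
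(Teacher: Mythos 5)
Your argument is the paper's own proof written out in full: the paper disposes of the corollary with the single line ``This follows from Lemma \ref{l.map.hoelder.V0} and Theorem \ref{t.NS.deriv.unique}'', and your decomposition of $\mathcal{A}_{V_0}$ into $\gamma_0$, $H_\mu$, $d^0$ and the first-order operator $V_0$ (rewritten via the Leibniz rule and fed into Lemma \ref{l.map.hoelder.V0} with $\delta'=\gamma$), together with the reduction of assertion 3) to Theorem \ref{t.NS.deriv.unique}, is precisely the verification those two citations presuppose. The one point you wave at with ``embeddings needed to reconcile the remaining index discrepancies'' --- namely that $\partial_t u$ lands only in $C^{k,\mathbf{s}(s-1,\lambda,\delta)}$ and gains no weight (Lemma \ref{l.diff.oper}), so the term $H_\mu u$ reaches the stated target $C^{k,\mathbf{s}(s-1,\lambda,\delta+\gamma)}$ only when $\gamma=0$ --- is a defect of the statement itself that the paper's one-line proof glosses over equally, so it does not count against you.
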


\begin{proof}
This follows from Lemma \ref{l.map.hoelder.V0} and Theorem \ref{t.NS.deriv.unique}.
\end{proof}

Part 3) of the corollary just amounts to saying that the pressure $p$ is defined up to a real
constant.

Consider now the operators
\begin{eqnarray*}
   W_0 f
 & = &
   d \ast (\ast g^{(0)} \wedge d^\ast (\varPhi \otimes I) f)
 + d \ast (\ast f \wedge v^{(1)}),
\\
   U_0 f
 & =
 & \ast (\ast g^{(0)} \wedge d^\ast (\varPhi \otimes I) f) + \ast (\ast f \wedge v^{(1)})
\end{eqnarray*}
which map 2\,-forms on $\overline{\mathcal{C}_T} $ into 2\,- and 1\,-forms, respectively.
It follows from (\ref{eq.deRham}) and Corollary \ref{c.deRham.Hoelder.tt} that
   $d (U_0 f) = W _0  f$
for
   $f \in C^{k\!-\!1,\mathbf{s} (s,\lambda,\delta\!+\!1)} (\overline{\mathcal{C}_T} ,\varLambda^2)$,
and
\begin{equation}
\label{eq.VW}
   d (V_0 u)
 = d \ast (\ast g^{(0)}  \wedge d^\ast (\varPhi \otimes I) du) + d \ast (\ast du \wedge v^{(1)})
 = W_0 (du)
\end{equation}
for all $u \in C^{k,\mathbf{s} (s,\lambda,\delta)} (\overline{\mathcal{C}_T} ,\varLambda^1)$.
Equality (\ref{eq.VW}) can be equivalently reformulated by saying that the pair
   $\{ V_0, W_0 \}$
is a homomorphism of the de Rham complex at steps $1$ and $2$.

Using these operators allows us to pass for the study of nonlinear Navier-Stokes equations
to a weakly perturbed Cauchy problem for the heat equation in the scale
   $C^{k-1,\mathbf{s} (s,\lambda,\delta+1)} (\overline{\mathcal{C}_T} , \varLambda^2)$
rather than to a linearisation of the Navier-Stokes equations in the scale
   $C^{k,\mathbf{s} (s,\lambda,\delta)} (\overline{\mathcal{C}_T} , \varLambda^1)$.
Our next concern will be to describe this trick.

\begin{lemma}
\label{l.map.W0}
Let
   $s$ and $k \geq 2$ be positive integers,
   $0 < \lambda < \lambda' < 1$,
and
   $1 \leq \delta < n$ be different from $n-2$, $n-1$.

1)
If
   the coefficients of $g^{(0)}$ are of class $C^{k,\mathbf{s} (s-1,\lambda,\delta+1)}$ and
   the coefficients of $v^{(1)}$ are of class $C^{k,\mathbf{s} (s-1,\lambda,\delta)}$
in the layer $\overline{\mathcal{C}_T} $ then the linear operators
\begin{equation}
\label{eq.map.W0.hat.C}
\begin{array}{rrcl}
   U_0\, :
 & R^{k-1,\mathbf{s} (s,\lambda,\delta+1)} (\overline{\mathcal{C}_T} , \varLambda^2)
 & \to
 & C^{k,\mathbf{s} (s-1,\lambda,\delta+2)} (\overline{\mathcal{C}_T} , \varLambda^1),
\\
   W_0\, :
 & R^{k-1,\mathbf{s} (s,\lambda,\delta+1)} (\overline{\mathcal{C}_T} , \varLambda^2)
 & \to
 & C^{k-1,\mathbf{s} (s-1,\lambda,\delta+3)} (\overline{\mathcal{C}_T} , \varLambda^2)
\end{array}
\end{equation}
are bounded.

2)
If, moreover, $1 \! < \! \delta \! < \! n$ and
   the coefficients of $g^{(0)}$ are of class $\mathcal{F}^{k,\mathbf{s} (s-1,\lambda,\lambda',\delta+1)}$,
   the coefficients of $v^{(1)}$ are of class $\mathcal{F}^{k,\mathbf{s} (s-1,\lambda,\lambda',\delta)}$
in the layer $\overline{\mathcal{C}_T} $ then the operators
\begin{equation}
\label{eq.map.W0.hat}
\begin{array}{rrcl}
   U_0\, :
 & R^{k-1,\mathbf{s} (s,\lambda,\lambda',\delta+1)} (\overline{\mathcal{C}_T} , \varLambda^2)
 & \!\! \to \!\!
 & \mathcal{F}^{k,\mathbf{s} (s-1,\lambda,\lambda',\delta+2)} (\overline{\mathcal{C}_T} , \varLambda^1),
\\
   W_0\, :
 & R^{k-1,\mathbf{s} (s,\lambda,\lambda',\delta+1)} (\overline{\mathcal{C}_T} , \varLambda^2)
 & \!\! \to \!\!
 & \mathcal{F}^{k-1,\mathbf{s} (s-1,\lambda,\lambda',\delta+3)} (\overline{\mathcal{C}_T} , \varLambda^2)
\end{array}
\end{equation}
are compact.
\end{lemma}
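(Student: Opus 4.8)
The plan is to reduce the boundedness and compactness statements for $U_0$ and $W_0$ to the already-established mapping properties of the Newton potential $\varPhi\otimes I$ (Corollary \ref{c.weight.Hoelder.Laplace.t}, Lemma \ref{l.weight.Hoelder.Laplace.t}), the de Rham results (Corollaries \ref{c.deRham.Hoelder.t} and \ref{c.deRham.Hoelder.tt}), the multiplication lemma (Lemma \ref{l.product}) and the action of first-order differential operators with variable coefficients (Lemma \ref{l.map.hoelder.V0} and Lemma \ref{l.diff.oper}), together with the compact embedding of Lemma \ref{l.mathfrak.compact}. The key observation is that $U_0$ and $W_0$ are compositions of three elementary operations: applying $d^\ast(\varPhi\otimes I)$ to the input $f$, taking wedge products with the fixed forms $g^{(0)}$, $v^{(1)}$, and applying $\ast$ and $d$; each of these is already controlled in the scales under consideration.

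First I would treat part 1). Fix $f\in R^{k-1,\mathbf{s}(s,\lambda,\delta+1)}(\overline{\mathcal{C}_T},\varLambda^2)$. Since $1\le\delta<n$ and $\delta$ avoids $n-2,n-1$, Corollary \ref{c.deRham.Hoelder.t} (applied with $q=1$, the form degree being $2=q+1$) yields $d^\ast(\varPhi\otimes I)f\in C^{k-1,\mathbf{s}(s,\lambda,\delta)}(\overline{\mathcal{C}_T},\varLambda^1)\cap\mathcal{S}_{d^\ast}$, with norm bounded by $c\,\|f\|$; moreover for $1<\delta<n$ one even gets membership in $R^{k-1,\mathbf{s}(s,\lambda,\delta)}$. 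Next, $\ast(\ast g^{(0)}\wedge\,\cdot\,)$ is, up to constant coefficients, a pointwise multiplication by the coefficients of $g^{(0)}$, which lie in $C^{k,\mathbf{s}(s-1,\lambda,\delta+1)}$; by Lemma \ref{l.product} the product $\ast(\ast g^{(0)}\wedge d^\ast(\varPhi\otimes I)f)$ belongs to $C^{k-1,\mathbf{s}(s-1,\lambda,2\delta+1)}(\overline{\mathcal{C}_T},\varLambda^1)$, which embeds continuously into $C^{k,\mathbf{s}(s-1,\lambda,\delta+2)}$ once $\delta\ge 1$ (so that $2\delta+1\ge\delta+2$ and the $x$-smoothness index drops appropriately — here I would invoke Lemma \ref{l.diff.oper}(1) and Theorem \ref{t.emb.hoelder.t} to reconcile the $k$ versus $k-1$ bookkeeping). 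The term $\ast(\ast f\wedge v^{(1)})$ is likewise handled by Lemma \ref{l.product}, using $v^{(1)}\in C^{k,\mathbf{s}(s-1,\lambda,\delta)}$ and $f\in C^{k-1,\mathbf{s}(s,\lambda,\delta+1)}$, landing in $C^{k-1,\mathbf{s}(s-1,\lambda,2\delta+1)}\hookrightarrow C^{k,\mathbf{s}(s-1,\lambda,\delta+2)}$. This establishes boundedness of $U_0$. For $W_0$ one has $W_0 f=d(U_0 f)$ by (\ref{eq.VW}) together with Corollary \ref{c.deRham.Hoelder.tt}, so $d$ maps $C^{k,\mathbf{s}(s-1,\lambda,\delta+2)}(\overline{\mathcal{C}_T},\varLambda^1)$ boundedly into $C^{k-1,\mathbf{s}(s-1,\lambda,\delta+3)}(\overline{\mathcal{C}_T},\varLambda^2)$ by Lemma \ref{l.diff.oper}(1), giving the second line of (\ref{eq.map.W0.hat.C}).

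For part 2) the strategy is: first prove boundedness of $U_0$ and $W_0$ into the $\mathcal{F}$-scale with weight exponent $\delta+3$ one unit larger than needed (exactly as in part 1), using that the de Rham and potential results hold verbatim for the $\mathcal{F}$-scale, as noted in the paragraph preceding Lemma \ref{l.mathfrak.compact}); then factor the claimed map through the compact embedding
$$
   \mathcal{F}^{k,\mathbf{s}(s-1,\lambda,\lambda',\delta+3)}(\overline{\mathcal{C}_T},\varLambda^1)
 \hookrightarrow
   \mathcal{F}^{k,\mathbf{s}(s-1,\lambda,\lambda',\delta+2)}(\overline{\mathcal{C}_T},\varLambda^1)
$$
(and the analogous one for $\varLambda^2$ with exponents $\delta+4\to\delta+3$) supplied by Lemma \ref{l.mathfrak.compact}. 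Concretely, by Lemma \ref{l.product} the products gain weight $2\delta+1$, and since $1<\delta<n$ forces $2\delta+1>\delta+2$ strictly, the intermediate target has strictly larger weight than the nominal one, so the composition of a bounded operator with a compact embedding is compact. I expect the main obstacle to be purely bookkeeping: tracking the interplay of the four indices $(k,s,\lambda,\delta)$ through the potential $d^\ast(\varPhi\otimes I)$ (which shifts $\delta\mapsto\delta+1$ and may cost one derivative in $x$), the multiplication lemma (which adds weights), and the exterior derivative $d$ (which shifts $\delta\mapsto\delta+1$ and costs one $x$-derivative), while making sure the hypotheses $1\le\delta<n$, $\delta\ne n-2,n-1$ are exactly what is needed to invoke Corollaries \ref{c.deRham.Hoelder.t} and \ref{c.deRham.Hoelder.tt}; the strict inequality $1<\delta$ is precisely what converts boundedness into compactness via the strictly-increased weight and Lemma \ref{l.mathfrak.compact}.
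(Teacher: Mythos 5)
Your part 1) is essentially the paper's argument: the bilinear estimate of Lemma \ref{l.product} combined with the mapping properties of $d^\ast(\varPhi\otimes I)$, the trade of one unit of $\mathbf{s}$-smoothness for one $x$-derivative to align the indices of the two factors, and $W_0=dU_0$ together with Lemma \ref{l.diff.oper}. The hypothesis $\delta\ge 1$ enters exactly as you say; the paper phrases the same computation with a flexible weight splitting (measuring $g^{(0)}$ at weight $\delta-\delta'+2\le\delta+1$ and the potential at weight $\delta'$), which it needs later.

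Part 2) has a genuine gap. You place the compactness on the target side, factoring through the embedding $\mathcal{F}^{k,\mathbf{s}(s-1,\lambda,\lambda',\delta+3)}\hookrightarrow\mathcal{F}^{k,\mathbf{s}(s-1,\lambda,\lambda',\delta+2)}$. This embedding decreases only the weight while keeping all smoothness indices $(k,s-1,\lambda,\lambda')$ fixed, and it is not compact: Theorem \ref{t.emb.hoelder.t} requires the strict inequality $s+\lambda>s'+\lambda'$ in addition to $\delta>\delta'$, and Lemma \ref{l.mathfrak.compact} likewise drops $s$ to $s-1$ (its proof uses the strict gap $\lambda'>\lambda$ in one component of the intersection and the $s$-drop in the other). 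A weight-only decrease cannot be compact: the shrinking bumps $u_j(x)=j^{-\lambda}\phi(j(x-x_0))$ at a fixed point are bounded in $C^{0,\lambda,\delta+3}$ and admit no subsequence convergent in $C^{0,\lambda,\delta+2}$, since their $\lambda$-H\"older seminorms do not tend to zero although $u_j\to 0$ uniformly. (Moreover, for $1<\delta<2$ even the intermediate boundedness into weight $\delta+3$ fails, because the product only gains weight $2\delta+1<\delta+3$.) The correct mechanism — and the one the paper uses — puts the compactness on the \emph{domain} side: by Lemma \ref{l.mathfrak.compact} a bounded set in $R^{k-1,\mathbf{s}(s,\lambda,\lambda',\delta+1)}$ has a subsequence convergent in $\mathcal{F}^{k,\mathbf{s}(s-1,\lambda,\lambda',\delta'+1)}$ for some $1\le\delta'<\delta$ (here both the $\mathbf{s}$-smoothness and the weight strictly drop, which is what makes the embedding compact, and $\delta>1$ is needed precisely so that such a $\delta'$ exists); the part 1) estimate with the weights split as $\delta-\delta'+2\le\delta+1$ on $g^{(0)}$ and $\delta-\delta'+1\le\delta$ on $v^{(1)}$ then bounds $\|U_0(f_\nu-f)\|$ in the full target norm by $\|f_\nu-f\|$ in this weaker domain norm. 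Running the estimate once with $\lambda$ and once with $\lambda'$ gives convergence of $U_0 f_\nu$ in $\mathcal{F}^{k,\mathbf{s}(s-1,\lambda,\lambda',\delta+2)}$, hence compactness of $U_0$, and $W_0=dU_0$ is then compact as well.
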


\begin{proof}
Let
   $g^{(0)}$ and
   $v^{(1)}$
satisfy the hypotheses listed in 1).
Pick any $\delta'$ such that $1 \leq \delta' \leq \delta$.
Then, according to Lemmata \ref{l.product} and \ref{l.weight.Hoelder.Laplace.t}, we get
\begin{eqnarray}
\label{eq.W.est.0}
\lefteqn{
   \| U_0 f \|_{C^{k,\mathbf{s} (s-1,\lambda,\delta+2)} (\cdot)}
}
\nonumber
\\
 & \leq &
   c
   \left(
   N (g^{(0)})\,
   \| d^\ast (\varPhi \otimes I) f \|_{C^{k,\mathbf{s} (s-1,\lambda,\delta')} (\cdot)}
 + N (v^{(1)})\,
   \| f \|_{C^{k,\mathbf{s} (s-1,\lambda,\delta'+1)} (\cdot)}
   \right)
\nonumber
\\
 & \leq &
   c
   \left(
   N (g^{(0)})\,
   \| f \|_{C^{k,\mathbf{s} (s-1,\lambda,\delta'+1)} (\cdot)}
 + N (v^{(1)})\,
   \| f  \|_{C^{k,\mathbf{s} (s-1,\lambda,\delta'+1)} (\cdot)}
   \right)
\nonumber
\\
\end{eqnarray}
for all
   $f \in C^{k-1,\mathbf{s} (s,\lambda,\delta+1)} (\overline{\mathcal{C}_T} , \varLambda^2)$,
where we omit the domain and target bundles for short.
The constant $c$ depends neither on $g^{(0)}$ and $v^{(1)}$ nor on $f$ and it may be different
in diverse applications, and
$$
\begin{array}{rcl}
   N (g^{(0)})
 & =
 & \|  g^{(0)} \|_{C^{k,\mathbf{s} (s-1,\lambda,\delta-\delta'+2)} (\cdot)},
\\
   N (v^{(1)})
 & =
 & \| v^{(1)} \|_{C^{k,\mathbf{s} (s-1,\lambda,\delta-\delta'+1)} (\cdot)}.
\end{array}
$$
Note that
   $N (g^{(0)})$ and
   $N (v^{(1)})$
are dominated by the norms
   $\|  g^{(0)} \|_{C^{k,\mathbf{s} (s-1,\lambda,\delta+1)} (\cdot)}$ and
   $\| v^{(1)} \|_{C^{k,\mathbf{s} (s-1,\lambda,\delta)} (\cdot)}$,
respectively, for the inequality
   $\delta - \delta' + 2 \leq \delta + 1$
is equivalent to $1 \leq \delta'$.
As the space
   $C^{k-1,\mathbf{s} (s,\lambda,\delta'+1)} (\cdot)$ is embedded continuously into
   $C^{k,\mathbf{s} (s-1,\lambda,\delta'+1)} (\cdot)$,
we see that
$$
   \| U_0 f \|_{C^{k,\mathbf{s} (s-1,\lambda,\delta+2)} (\cdot)}
 \leq
   c\,
   \| f \|_{C^{k-1,\mathbf{s} (s,\lambda,\delta'+1)} (\cdot)}
$$
with $c$ a constant independent of $f$.
In particular, for
   $\delta' = \delta$ and
   $1 \leq \delta < n$,
we derive the boundedness of the operator $U_0$ in (\ref{eq.map.W0.hat.C}).
Using Lemma \ref{l.homo.map.t} we conclude that the operator $W_0$ in (\ref{eq.map.W0.hat.C})
is bounded, too, for
   $d U_0 = W_0$.
This completes the proof of part 1).

In part 2) we assume that $\delta > 1$.
Then there is a $\delta' \geq 1$ such that $\delta > \delta'$.
According to Lemma \ref{l.mathfrak.compact}, if $S$ is bounded set in
$$
   \mathcal{F}^{k-1,\mathbf{s} (s,\lambda,\lambda',\delta+1)} (\cdot)
 = C^{k,\mathbf{s} (s,\lambda,\delta+1)} (\cdot) \cap
   C^{k-1,\mathbf{s} (s,\lambda',\delta+1)} (\cdot),
$$
then there is a sequence $\{ f_\nu \}$ in $S$ which converges in the space
$$
   \mathcal{F}^{k,\mathbf{s} (s-1,\lambda,\lambda',\delta''+1)} (\cdot)
 = C^{k+1,\mathbf{s} (s-1,\lambda,\delta'+1)} (\cdot) \cap
   C^{k,\mathbf{s} (s-1,\lambda',\delta'+1)} (\cdot)
$$
to a limit $f$.
By (\ref{eq.W.est.0}),
\begin{eqnarray}
\label{eq.W.est.00}
\lefteqn{
   \| U_0 (f_\nu - f) \|_{C^{k+1,\mathbf{s} (s-1,\lambda,\delta+2)} (\cdot)}
}
\nonumber
\\
 & \leq &
   c
   \Big(
   \|  g^{(0)} \|_{C^{k+1,\mathbf{s} (s-1,\lambda,\delta+1)} (\cdot)}
   \| f_\nu - f \|_{C^{k+1,\mathbf{s} (s-1,\lambda,\delta'+1)} (\cdot)}
\nonumber
\\
 & &
   +\
   \| f_\nu - f \|_{C^{k+1,\mathbf{s} (s-1,\lambda,\delta'+1)} (\cdot)}
   \| v^{(1)} \|_{C^{k+1,\mathbf{s} (s-1,\lambda,\delta)} (\cdot)}
   \Big)
\nonumber
\\
 & \to &
   0
\nonumber
\\
\end{eqnarray}
as $\nu \to \infty$.
On the other hand, using (\ref{eq.W.est.0}) with $\lambda'$ instead of $\lambda$, we obtain
\begin{eqnarray*}
\lefteqn{
   \| U_0 (f_\nu - f) \|_{C^{k,\mathbf{s} (s-1,\lambda',\delta+2)} (\cdot)}
}
\\
 & \leq &
   c
   \Big(
   \|  g^{(0)} \|_{C^{k,\mathbf{s} (s-1,\lambda',\delta+1)} (\cdot)}
   \| f_\nu - f \|_{C^{k,\mathbf{s} (s-1,\lambda',\delta'+1)} (\cdot)}
\\
 & &
   +\
   \| f_\nu - f \|_{C^{k,\mathbf{s} (s-1,\lambda',\delta'+1)} (\cdot)}
   \| v^{(1)} \|_{C^{k,\mathbf{s} (s-1,\lambda',\delta)} (\cdot)}
   \Big)
\\
 & \to &
   0
\end{eqnarray*}
as $\nu \to \infty$.

We have thus proved that the sequence $\{ U_0 f_\nu \}$ converges to $U_0 f$ in the norm of
the space
   $\mathcal{F}^{k,\mathbf{s} (s-1,\lambda,\lambda',\delta+2)} (\cdot)$.
Hence it follows that the map $U_0$ in (\ref{eq.map.W0.hat}) is compact.
Then Lemma \ref{l.diff.oper} implies that the map $W_0$ in (\ref{eq.map.W0.hat}) is compact,
too, because $W_0 = d U_0$.
\end{proof}

\begin{lemma}
\label{l.sol.psi}
Let
   $s \geq 1$ and $k \geq 2$ be integers,
   $1 \leq \delta < n$ be different from $n-2$ and $n-1$,
and
   the coefficients of $g^{(0)}$ be of class $C^{k,\mathbf{s} (s-1,\lambda,\delta+1)}$,
   the coefficients of $v^{(1)}$ be of class $C^{k,\mathbf{s} (s-1,\lambda,\delta)}$
in $\overline{\mathcal{C}_T} $.
If
$
   g_0 \in  R^{k-1,\mathbf{s} (s,\lambda,\delta+1)} (\overline{\mathcal{C}_T} , \varLambda^2) \cap
            \mathcal{S}_d
$
then any two-form $g$ on $\mathbb{R}^n$ with coefficients in
   $R^{k-1,\mathbf{s} (s,\lambda,\delta+1)} (\overline{\mathcal{C}_T})$
satisfying
\begin{equation}
\label{eq.heat.pseudo}
   g + \varPsi_\mu W_0 g = g_0
\end{equation}
belongs to
$
   R^{k-1,\mathbf{s} (s,\lambda,\delta+1)} (\overline{\mathcal{C}_T} , \varLambda^2) \cap
          \mathcal{S}_d.
$
\end{lemma}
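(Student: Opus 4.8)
The plan is to show that equation~(\ref{eq.heat.pseudo}) forces $g$ to inherit the two properties that distinguish the subspace $R^{k-1,\mathbf{s}(s,\lambda,\delta+1)}(\overline{\mathcal{C}_T},\varLambda^2)\cap\mathcal{S}_d$ from the ambient space $R^{k-1,\mathbf{s}(s,\lambda,\delta+1)}(\overline{\mathcal{C}_T},\varLambda^2)$: namely the closedness condition $dg(\cdot,t)=0$ for all $t\in[0,T]$, and the orthogonality conditions $\int_{\mathbb{R}^n} g(x,t)h(x)\,dx=0$ defining the range $R^{\cdot}$. Since $g_0$ already lies in this subspace by hypothesis, it suffices to prove that the correction term $\varPsi_\mu W_0 g$ lies there too, and then that $g=g_0-\varPsi_\mu W_0 g$ does as well. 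First I would record that, by Lemma~\ref{l.map.W0}, the operator $W_0$ sends $R^{k-1,\mathbf{s}(s,\lambda,\delta+1)}(\overline{\mathcal{C}_T},\varLambda^2)$ into $C^{k-1,\mathbf{s}(s-1,\lambda,\delta+3)}(\overline{\mathcal{C}_T},\varLambda^2)$, and then Theorem~\ref{t.heat.key2} (with the shift $\delta\mapsto\delta+1$) shows that $\varPsi_\mu$ maps $C^{k-1,\mathbf{s}(s-1,\lambda,\delta+3)}(\overline{\mathcal{C}_T},\varLambda^2)$ into $C^{k-1,\mathbf{s}(s,\lambda,\delta+1)}(\overline{\mathcal{C}_T},\varLambda^2)$. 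So $\varPsi_\mu W_0 g$ is at least in the right Hölder/weight class; the work is to upgrade ``Hölder class'' to ``$\cap\,\mathcal{S}_d$'' and to ``range of $\varDelta$''.

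For the closedness, the key identity is~(\ref{eq.VW})--(\ref{eq.deRham}), i.e. that $\{V_0,W_0\}$ is a homomorphism of the de Rham complex, together with the commutation $d H_\mu=H_\mu d$ from~(\ref{eq.commute}) and hence $d\varPsi_\mu=\varPsi_\mu d$ (on the relevant spaces). Applying $d$ to~(\ref{eq.heat.pseudo}) gives $dg+d\varPsi_\mu W_0 g=dg_0=0$. Now $W_0 g=d(U_0 g)$ by the remark following~(\ref{eq.deRham}) and Corollary~\ref{c.deRham.Hoelder.tt}, so $d W_0 g = d\, d\,(U_0 g)=0$; commuting $d$ past $\varPsi_\mu$ then yields $dg+\varPsi_\mu(d W_0 g)=dg$, whence $dg=dg_0=0$. (One must be a little careful to do this fibrewise in $t$, exactly as in Corollary~\ref{c.deRham.Hoelder.t}, and to check the forms involved lie in classes where Theorem~\ref{t.heat.key2} and the commutation relations apply; this is routine bookkeeping with Lemmata~\ref{l.diff.oper}, \ref{l.map.W0} and \ref{l.heat.key2}.) Thus $g\in\mathcal{S}_d$.

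For membership in the range $R^{k-1,\mathbf{s}(s,\lambda,\delta+1)}(\overline{\mathcal{C}_T},\varLambda^2)$, recall from Corollary~\ref{c.weight.Hoelder.Laplace.t} (extended to differential forms as in Corollary~\ref{c.deRham.Hoelder.t}) that for $n-2+m<\delta+1<n-1+m$ this range is cut out by the finitely many linear functionals $g\mapsto\int_{\mathbb{R}^n} g_I(x,t)\,h(x)\,dx$ with $h\in H_{\leq m}$ and $t\in[0,T]$ fixed (and it is the whole space when $0<\delta+1<n-2$, in which case there is nothing to prove). Since $g_0$ satisfies these vanishing conditions, I would show that $\varPsi_\mu W_0 g$ does too. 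Here I would use Lemma~\ref{l.homo.map.t}: $W_0 g=d(U_0 g)$ with $U_0 g$ of weight exponent $\delta+2$, so $W_0 g$ lands in $R^{k-1,\mathbf{s}(s-1,\lambda,\delta+3)}(\overline{\mathcal{C}_T},\varLambda^2)$, i.e. satisfies the relevant orthogonality relations for its own weight; then, because $\varPsi_\mu$ is built from the heat kernel which commutes with $\varDelta$ (hence with the harmonic-polynomial projections) and because Theorem~\ref{t.heat.key2} controls the weight, the potential $\varPsi_\mu W_0 g$ again satisfies $\int_{\mathbb{R}^n}(\varPsi_\mu W_0 g)(x,t)\,h(x)\,dx=0$ for $h\in H_{\leq m}$. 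Concretely this reduces to an application of Fubini and the fact that $\int_{\mathbb{R}^n}\psi_\mu(x-y,t-s)h(x)\,dx$ is again a harmonic polynomial in $y$ of the same degree (solutions of the heat equation with polynomial data), so the inner integral of $W_0 g$ against it vanishes. Combining, $g=g_0-\varPsi_\mu W_0 g$ satisfies both $dg=0$ and all the orthogonality conditions, hence lies in $R^{k-1,\mathbf{s}(s,\lambda,\delta+1)}(\overline{\mathcal{C}_T},\varLambda^2)\cap\mathcal{S}_d$.

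The main obstacle, I expect, is not any single commutation identity but the careful tracking of regularity and weight indices through the composition $\varPsi_\mu\circ W_0\circ(\text{shift})$, making sure that at each stage the form lies in a space where the cited results (Theorem~\ref{t.heat.key2}, Corollary~\ref{c.weight.Hoelder.Laplace.t}, Lemma~\ref{l.homo.map.t}) are literally applicable, and in particular that the excluded exponents $\delta+1-n\notin\mathbb{Z}_{\geq0}$ and $\delta\neq n-2,n-1$ keep all the intermediate weights out of the prohibited set. A secondary subtlety is the fibrewise-in-$t$ nature of the conditions $dg(\cdot,t)=0$ and the orthogonality relations: one has to justify interchanging $d$ (resp. integration against $h$) with the time integral defining $\varPsi_\mu$, which is legitimate because the integrands belong to the weighted Hölder classes above (so the integrals converge absolutely and uniformly, cf. the estimates in Lemma~\ref{l.heat.key2}), but it deserves an explicit word.
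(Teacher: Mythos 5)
Your closedness argument is exactly the paper's proof: $dg_0=0$, $W_0 g = d(U_0 g)$ so $dW_0 g=0$, and the commutation $d\varPsi_\mu=\varPsi_\mu d$ (from (\ref{eq.deRham}) and (\ref{eq.commute})) then gives $dg=dg_0-\varPsi_\mu\, dW_0 g=0$, with the $n=2$ case trivial since $d$ annihilates $n$-forms. However, the bulk of your proposal — verifying that $\varPsi_\mu W_0 g$ satisfies the orthogonality relations cutting out the range $R^{k-1,\mathbf{s}(s,\lambda,\delta+1)}$ — is superfluous here: the lemma already \emph{hypothesizes} that $g$ has coefficients in $R^{k-1,\mathbf{s}(s,\lambda,\delta+1)}(\overline{\mathcal{C}_T})$, so the only new content of the conclusion is membership in $\mathcal{S}_d$, and the paper's proof accordingly consists of the two-line closedness computation alone. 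Your extra argument (Fubini plus the fact that convolution of the heat kernel with a harmonic polynomial reproduces that polynomial, so $\varPsi_\mu$ preserves orthogonality to $H_{\leq m}$) is sound and would be needed if one wanted to \emph{deduce} the range membership of $g$ from that of $g_0$ rather than assume it — that is essentially what happens later in Lemma \ref{l.reduce.psi} and Corollary \ref{c.invertible.psi} — but for the statement as posed it proves more than is asked.
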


\begin{proof}
Let
   $g \in C^{k-1,\mathbf{s} (s,\lambda,\delta+1)} (\overline{\mathcal{C}_T} ,\varLambda^2)$
be a solution to equation (\ref{eq.heat.pseudo}).
If $n = 2$, then $dg = 0$ because $d^2$ vanishes identically.
If $n \geq 3$, then from (\ref{eq.deRham}) and (\ref{eq.commute}) it follows that
   $H_\mu d = d H_\mu$,
   $d \varPsi_\mu = \varPsi_\mu d$,
   $d W_0 g = 0$,
and so $dg = 0$, as desired.
\end{proof}

\begin{lemma}
\label{l.reduce.psi}
Assume that
   $s$ and $k \geq 2$ are positive integers,
   $0 \! < \! \delta \! < \! n$ is different from $n-2$, $n-1$,
and
   the coefficients of $g^{(0)}$ are of class $C^{k,\mathbf{s} (s-1,\lambda,\delta+1)}$,
   the coefficients of $v^{(1)}$ are of class $C^{k,\mathbf{s} (s-1,\lambda,\delta)}$, and
   the coefficients of $v^{(2)}$ are of class $C^{k+1,\mathbf{s} (s-1,\lambda,-1)}$
in the layer $\overline{\mathcal{C}_T} $.
Let moreover $F = (f,u_0)^T$ be an arbitrary pair of
$$
   C^{k,\mathbf{s} (s-1,\lambda,\delta)} (\overline{\mathcal{C}_T} , \varLambda^1)
 \times
   C^{2s+k,\lambda,\delta} (\mathbb{R}^n, \varLambda^1) \cap \mathcal{S}_{d^\ast}.
$$

1)
If
$
   U
 \! = \! (u,p)^T
 \in
   C^{k-1,\mathbf{s} (s,\lambda,\delta)} (\overline{\mathcal{C}_T} ,\varLambda^1) \cap \mathcal{S}_{d^\ast}
 \times
   C^{k-1,\mathbf{s} (s-1,\lambda,\delta-1)} (\overline{\mathcal{C}_T})
$
satisfies
   $du \in R^{k\!-\!1,\mathbf{s} (s,\lambda,\delta\!+\!1)} (\overline{\mathcal{C}_T},\varLambda^2)$
and
\begin{equation}
\label{eq.NS.lin.psi}
   A_{V_0} U = F
\end{equation}
then $g = du$ is a solution to equation (\ref{eq.heat.pseudo}) with
   $g_0 = \varPsi_{\mu,0} du_0 + \varPsi_\mu df$.

2)
Conversely, if
$
   g
 \in
   R^{k-1,\mathbf{s} (s,\lambda,\delta+1)} (\overline{\mathcal{C}_T} ,\varLambda^2) %\cap \mathcal{S}_d
$
is a solution to equation (\ref{eq.heat.pseudo}) with
   $g_0 = \varPsi_{\mu,0} du_0 + \varPsi_\mu df$
in $R^{k-1,\mathbf{s} (s,\lambda,\delta+1)} (\overline{\mathcal{C}_T} ,\varLambda^2)$, then the pair
$$
\begin{array}{rcl}
   u
 & =
 & d^\ast (\varPhi \otimes I) g,
\\
   p
 & =
 & d^\ast (\varPhi \otimes I) \left( f - (H_\mu + V_0) u \right)
\end{array}
$$
belongs to
$
   C^{k\!-\!1,\mathbf{s} (s,\lambda,\delta)} (\overline{\mathcal{C}_T} ,\varLambda^1) \cap \mathcal{S}_{d^\ast}
 \times
   C^{k\!-\!1,\mathbf{s} (s\!-\!1,\lambda,\delta\!-\!1)} (\overline{\mathcal{C}_T}),
$
satisfies (\ref{eq.NS.lin.psi}) and, in addition,
   $du \in R^{k\!-\!1,\mathbf{s} (s,\lambda,\delta\!+\!1)} (\overline{\mathcal{C}_T},\varLambda^2)$.
\end{lemma}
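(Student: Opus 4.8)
The plan is to verify the two implications by direct substitution, using the factorisation of the linear part of the Navier-Stokes operator (Lemma \ref{l.factorization}), the mapping properties of the parabolic potentials (Lemma \ref{l.heat.key1}, Theorem \ref{t.heat.key2}), the mapping and homomorphism properties of $V_0$, $W_0$, $U_0$ (Lemmata \ref{l.map.hoelder.V0}, \ref{l.map.W0} and formula \eqref{eq.VW}), and the Hodge-type representations from Section \ref{s.deRham.HS} (Corollaries \ref{c.deRham.Hoelder.t}, \ref{c.deRham.Hoelder.tt} together with Lemma \ref{l.weight.Hoelder.Laplace.t}). For part 1), I would start from \eqref{eq.NS.lin.psi}, which reads $H_\mu u + d^0 p = f - V_0 u$ in $\overline{\mathcal{C}_T}$ and $\gamma_0 u = u_0$. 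Applying $d = d^1$ and using $d d^0 = 0$, $d H_\mu = H_\mu d$ and \eqref{eq.VW} (i.e. $d(V_0u) = W_0(du)$) gives $H_\mu (du) = df - W_0(du)$, while $\gamma_0(du) = du_0$. By Lemma \ref{l.bound.heat.initial.hoelder} applied to the two-form $g = du$ (whose coefficients lie in the appropriate weighted H\"older class, by hypothesis and by part 1) of Lemma \ref{l.map.W0} so that $W_0(du)$ and hence $H_\mu(du)$ are in the right space), we recover $g = \varPsi_\mu H_\mu g + \varPsi_{\mu,0}\gamma_0 g = \varPsi_\mu(df - W_0 g) + \varPsi_{\mu,0} du_0$, which is exactly \eqref{eq.heat.pseudo} with $g_0 = \varPsi_{\mu,0} du_0 + \varPsi_\mu df$. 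I should also record that $g_0$ itself lies in $R^{k-1,\mathbf{s}(s,\lambda,\delta+1)}(\overline{\mathcal{C}_T},\varLambda^2)\cap\mathcal S_d$ (using $d$ commutes with the potentials, $du_0\in\mathcal S_d$, $df\in\mathcal S_d$, and the regularity gains of Lemma \ref{l.heat.key1}/Theorem \ref{t.heat.key2}), so that Lemma \ref{l.sol.psi} is applicable.

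For part 2), suppose $g\in R^{k-1,\mathbf{s}(s,\lambda,\delta+1)}(\overline{\mathcal{C}_T},\varLambda^2)$ solves \eqref{eq.heat.pseudo}. By Lemma \ref{l.sol.psi}, $dg = 0$, so $g$ is closed, and since $\delta+1>1$ Corollary \ref{c.deRham.Hoelder.t} (or \ref{c.deRham.Hoelder.tt}) shows that $u := d^\ast(\varPhi\otimes I)g$ is well defined in $C^{k-1,\mathbf{s}(s,\lambda,\delta)}(\overline{\mathcal{C}_T},\varLambda^1)$, satisfies $d^\ast u = 0$ and $du = g$ — the latter because $(\varPhi\otimes I)$ inverts $\varDelta = d^\ast d + d d^\ast$ on the relevant space and $d^\ast(\varPhi\otimes I)g$ is $d^\ast$-closed, so $d\,d^\ast(\varPhi\otimes I)g = (\varDelta - d^\ast d)(\varPhi\otimes I)g = g$ once one checks $d^\ast(\varPhi\otimes I)dg = 0$, which holds since $dg=0$. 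Thus $du = g\in R^{k-1,\mathbf{s}(s,\lambda,\delta+1)}(\overline{\mathcal{C}_T},\varLambda^2)$ as claimed. Next I would feed $u$ into the first component of $A_{V_0}U$: compute $H_\mu u + V_0 u$, which by the mapping properties (Lemma \ref{l.map.hoelder.V0}, Lemma \ref{l.diff.oper}) lies in $C^{k-1,\mathbf{s}(s-1,\lambda,\delta)}(\overline{\mathcal{C}_T},\varLambda^1)$, and set $p := d^\ast(\varPhi\otimes I)(f - (H_\mu+V_0)u)$, noting it lands in $C^{k-1,\mathbf{s}(s-1,\lambda,\delta-1)}(\overline{\mathcal{C}_T})$. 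It remains to show $H_\mu u + d^0 p = f - V_0 u$ and $\gamma_0 u = u_0$.

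The identity $H_\mu u + d^0 p = f - V_0 u$ is the heart of the argument. Write $h := f - (H_\mu+V_0)u$; one must show $d^0 d^\ast(\varPhi\otimes I)h = h$, i.e. that $h$ is exact and recovered by the potential. Apply $d$ to $h$: $dh = df - d(H_\mu u) - d(V_0 u) = df - H_\mu(du) - W_0(du) = df - H_\mu g - W_0 g$, and since $g$ solves \eqref{eq.heat.pseudo} one has $\varPsi_\mu(H_\mu g + W_0 g) = \varPsi_\mu df + \varPsi_{\mu,0}du_0 - g$... more cleanly, applying $H_\mu$ to \eqref{eq.heat.pseudo} and using that $\varPsi_\mu$, $\varPsi_{\mu,0}$ are (right) inverses to $H_\mu$ with vanishing/prescribed initial data (Lemma \ref{l.bound.heat.initial.hoelder}, Lemma \ref{l.heat.key1}) gives $H_\mu g + W_0 g = H_\mu g_0 = df$ and $\gamma_0 g = du_0$; hence $dh = df - H_\mu g - W_0 g = 0$, so $h$ is closed, and by Corollary \ref{c.deRham.Hoelder.tt} (applied to the closed $d^\ast$-exact form produced by the potential) we get $d^0 d^\ast(\varPhi\otimes I)h = h$, which is precisely $H_\mu u + d^0 p = f - V_0 u$. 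Finally $\gamma_0 u = \gamma_0 d^\ast(\varPhi\otimes I)g = d^\ast(\varPhi\otimes I)\gamma_0 g = d^\ast(\varPhi\otimes I)du_0 = u_0$, the last equality by Corollary \ref{c.deRham.Hoelder.tt} since $d^\ast u_0 = 0$. The main obstacle I anticipate is bookkeeping: making sure at each application of the potential operators and of $d$, $d^\ast$ the forms lie in the precise weighted H\"older class (with the correct shift of $\delta$ by $\pm 1$ and of the smoothness index) so that the injectivity of $\varDelta$ and the representation corollaries genuinely apply — in particular that $du\in R^{k-1,\mathbf{s}(s,\lambda,\delta+1)}$ rather than merely in the ambient space, which is where the hypotheses $k\ge 2$ and $1\le\delta<n$, $\delta\neq n-2,n-1$ are used via Lemma \ref{l.map.W0} and Lemma \ref{l.homo.map.t}.
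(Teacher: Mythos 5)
Your proposal is correct and follows essentially the same route as the paper: apply $d$ and the parabolic Green formula (Lemma \ref{l.bound.heat.initial.hoelder}) for part 1), and for part 2) build $u$ and $p$ via $d^\ast(\varPhi\otimes I)$, check that $f-(H_\mu+V_0)u$ is closed, and recover the initial datum from $d(\gamma_0 u - u_0)=d^\ast(\gamma_0 u - u_0)=0$. The only slip is a citation: the identity $d\,d^\ast(\varPhi\otimes I)h=h$ for closed $h$ is the content of Corollary \ref{c.deRham.Hoelder.t} (whose proof establishes $dd^\ast(\varPhi\otimes I)f=f$), not of Corollary \ref{c.deRham.Hoelder.tt}, which goes in the opposite direction.
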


There is a gap in the smoothness of $u$ and $p$ in Lemma~\ref{l.reduce.psi}.
It is caused by the lack of smoothing properties of the Newton potential $\varPhi \otimes I$ acting in
the parabolic H\"{o}lder spaces.
Lemma \ref{l.weight.Hoelder.Laplace.t} guarantees that $\varPhi \otimes I$ improves the smoothness in $x$
by one while one would like to have the gain $2$.
However, we were not able to prove this.

\begin{proof}
1)
Let $U = (u,p)^T$ be a solution to (\ref{eq.NS.lin.psi}).
By (\ref{eq.commute}), we get
   $d H_\mu u = H_\mu du$,
and so using (\ref{eq.VW}) yields
$$
   \Big\{
   \begin{array}{rclll}
     H_\mu du +  W_0 du
   & =
   & df
   & \mbox{in}
   & \overline{\mathcal{C}_T} ,
\\
     \gamma_0\, du
   & =
   & d u_0
   & \mbox{on}
   & \mathbb{R}^{n},
\end{array}
$$
the last equality being due to $d\, \gamma_0 u = \gamma_0\, du$.
It remains to apply
   Lemmata \ref{l.homo.map.t}, \ref{l.bound.heat.initial.hoelder}, \ref{l.heat.key1}
   and Theorem \ref{t.heat.key2}
to conclude that the two-form
   $g = du$
is of class $R^{k-1,\mathbf{s} (s,\lambda,\delta+1)} (\overline{\mathcal{C}_T})$ and satisfies
equation (\ref{eq.heat.pseudo}).
(Obviously, $g$ is closed in the layer.)

2)
Set
   $g_0 = \varPsi_{\mu,0} d u_0 + \varPsi_\mu df$.
Since
   $d \varPsi_{\mu,0} = \varPsi_{\mu,0} d$ and
   $d \varPsi_\mu = \varPsi_\mu d$,
it follows by Lemmata \ref{l.homo.map.t} and \ref{l.homo.map} that
$
   g_0
 \in
   R^{k-1,\mathbf{s} (s,\lambda,\delta+1)} (\overline{\mathcal{C}_T} ,\varLambda^2) \cap \mathcal{S}_d,
$
if $0 < \delta < n$.
Hence, any solution $g$ to (\ref{eq.heat.pseudo}) is in
   $R^{k-1,\mathbf{s} (s,\lambda,\delta+1)} (\overline{\mathcal{C}_T} ,\varLambda^2) \cap \mathcal{S}_d$
because of Lemma \ref{l.sol.psi}.

Now, Corollary \ref{c.deRham.Hoelder.t} implies that
   $u = d^\ast (\varPhi \otimes I) g$
is an one-form with coefficients in
   $R^{k-1,\mathbf{s} (s,\lambda,\delta)} (\overline{\mathcal{C}_T})$
satisfying $du = g$ in the layer.
Using Lemma \ref{l.bound.heat.initial.hoelder} and formula (\ref{eq.VW}) we see that
$$
   \Big\{
   \begin{array}{rclll}
      d \left( H_\mu u + V_0 u - f \right)
   & =
   & 0
   & \mbox{in}
   & \overline{\mathcal{C}_T} ,
\\
     d \left( \gamma_0 u - u_0 \right)
   & =
   & 0
   & \mbox{on}
   & \mathbb{R}^n.
   \end{array}
$$
As $0 < \delta < n$ and
$$
   H_\mu u + V_0 u - f
 \in
   C^{k-1,\mathbf{s} (s-1,\lambda,\delta)} (\overline{\mathcal{C}_T} ,\varLambda^1) \cap \mathcal{S}_d,
$$
Corollary \ref{c.deRham.Hoelder.t} shows that the function
$
   p = d^\ast (\varPhi \otimes I) \left( f - H_\mu u - V_0 u \right)
$
belongs to the space
$
   C^{k-1,\mathbf{s} (s-1,\lambda,\delta-1)} (\overline{\mathcal{C}_T})
$
and it satisfies
$$
   H_\mu u + V_0 u + dp = f
$$
in $\overline{\mathcal{C}_T} $.
Finally, since
$$
\begin{array}{rcl}
   d \left( \gamma_0 u - u_0 \right)
 & =
 & 0,
\\
   d^\ast \left( \gamma_0 u - u_0 \right)
 & =
 & 0,
\end{array}
$$
we get $\gamma_0 u = u_0$ in all of $\mathbb{R}^n$ because of Corollary \ref{c.deRham.Hoelder}.
Hence, the pair $U = (u,p)^T$ is a solution to (\ref{eq.NS.lin.psi}).
\end{proof}

\begin{corollary}
\label{c.invertible.psi}
Let
   $s$ and $k \geq 2$ be positive integers,
   $0 < \lambda < \lambda' < 1$,
   $n/2 < \delta < n$ be different from $n-2$, $n-1$,
and
   the coefficients of $g^{(0)}$ be of class $\mathcal{F}^{k,\mathbf{s} (s-1,\lambda,\lambda',\delta+1)}$,
   the coefficients of $v^{(1)}$ be of class $\mathcal{F}^{k,\mathbf{s} (s-1,\lambda,\lambda',\delta)}$
in the layer $\overline{\mathcal{C}_T} $.
Then the operator
\begin{equation}
\label{eq.heat.pseudo.d2}
   I \! + \! \varPsi_\mu W_0 :
   R^{k\!-\!1,\mathbf{s} (s,\lambda,\lambda',\delta\!+\!1)} (\overline{\mathcal{C}_T} , \varLambda^2) \cap
   \mathcal{S}_d
 \to
   R^{k\!-\!1,\mathbf{s} (s,\lambda,\lambda',\delta\!+\!1)} (\overline{\mathcal{C}_T} , \varLambda^2) \cap
   \mathcal{S}_d
\end{equation}
is continuously invertible.
\end{corollary}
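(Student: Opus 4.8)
The plan is to recognise $I + \varPsi_\mu W_0$ as a compact perturbation of the identity on the Banach space
   $X := R^{k-1,\mathbf{s} (s,\lambda,\lambda',\delta+1)} (\overline{\mathcal{C}_T}, \varLambda^2) \cap \mathcal{S}_d$
and to deduce its continuous invertibility from injectivity by the Riesz--Schauder theory, the injectivity being in turn a consequence of the uniqueness Theorem \ref{t.NS.deriv.unique} for the linearised Navier--Stokes equations. Note first that $X$ is indeed a Banach space, being a closed subspace of $R^{k-1,\mathbf{s} (s,\lambda,\lambda',\delta+1)} (\overline{\mathcal{C}_T}, \varLambda^2)$.

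The first step is to show that $\varPsi_\mu W_0$ is a compact endomorphism of $X$. By part 2) of Lemma \ref{l.map.W0}, the operator $W_0$ maps $R^{k-1,\mathbf{s} (s,\lambda,\lambda',\delta+1)} (\overline{\mathcal{C}_T}, \varLambda^2)$ compactly into $\mathcal{F}^{k-1,\mathbf{s} (s-1,\lambda,\lambda',\delta+3)} (\overline{\mathcal{C}_T}, \varLambda^2)$; composing this with the $\mathcal{F}\,$-scale version of Theorem \ref{t.heat.key2} (valid by the remark following Lemma \ref{l.mathfrak.compact}), which takes $\mathcal{F}^{k-1,\mathbf{s} (s-1,\lambda,\lambda',\delta+3)}$ continuously back into $\mathcal{F}^{k-1,\mathbf{s} (s,\lambda,\lambda',\delta+1)}$, one sees that $\varPsi_\mu W_0$ is a compact operator on $R^{k-1,\mathbf{s} (s,\lambda,\lambda',\delta+1)} (\overline{\mathcal{C}_T}, \varLambda^2)$. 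It remains to check that it preserves $X$, i.e.\ that $\varPsi_\mu W_0 g$ is closed and lies in the range of the Laplacian whenever $g \in X$. Closedness is immediate: for $dg = 0$ one has $d W_0 g = 0$ (trivially for $n = 2$ since $d^2 = 0$, and by (\ref{eq.deRham}) and (\ref{eq.commute}) for $n \geq 3$, exactly as in the proof of Lemma \ref{l.sol.psi}), whence $d \varPsi_\mu W_0 g = \varPsi_\mu d W_0 g = 0$. For the range membership one uses that $W_0 g = d (U_0 g)$ is exact (Corollary \ref{c.deRham.Hoelder.tt} and (\ref{eq.VW})), so that $\varPsi_\mu W_0 g = d\, \varPsi_\mu U_0 g$, together with Lemma \ref{l.homo.map.t} applied to $P = d$. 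Consequently $\varPsi_\mu W_0 : X \to X$ is compact, and by the Fredholm alternative the operator $I + \varPsi_\mu W_0$ of (\ref{eq.heat.pseudo.d2}) is continuously invertible provided it is injective.

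To establish injectivity, suppose $g \in X$ satisfies $g + \varPsi_\mu W_0 g = 0$, that is, equation (\ref{eq.heat.pseudo}) with $g_0 = 0$. Applying part 2) of the $\mathcal{F}\,$-scale version of Lemma \ref{l.reduce.psi} with the trivial data $F = (f,u_0)^T = (0,0)^T$, we obtain a pair $u = d^\ast (\varPhi \otimes I) g$ and $p = -\, d^\ast (\varPhi \otimes I) (H_\mu + V_0) u$ lying in the uniqueness class (\ref{eq.unicla}) --- here one invokes $n/2 < \delta < n$ together with $k \geq 2$, $s \geq 1$ to guarantee, via Theorem \ref{t.emb.hoelder.t}, that the weighted H\"older norms under control dominate those required in (\ref{eq.unicla}) --- and solving $\mathcal{A}_{V_0} (u,p)^T = 0$ in the layer. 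Since $\delta > n/2$, Theorem \ref{t.NS.deriv.unique} forces $(u,p)^T \equiv 0$, and hence $g = du = 0$. Thus $I + \varPsi_\mu W_0$ is injective, and the corollary follows.

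The step I expect to be the main obstacle is the bookkeeping of weight and smoothness exponents in the first step: one must make the passage to the $\mathcal{F}\,$-scale explicit, confirm that the composition $\varPsi_\mu W_0$ returns precisely to $X$ rather than to a larger space --- in particular that its image lies in the range of the Laplacian for the weight $\delta+1$, which requires tracking the admissible harmonic-polynomial orthogonality conditions --- and verify that the loss of one derivative inherent in the action of the Newton potential $\varPhi \otimes I$ (see the remark after Lemma \ref{l.reduce.psi}) is absorbed by the smoothing provided by $\varPsi_\mu$. Once this mapping diagram is set up correctly, the compactness follows from Lemma \ref{l.map.W0} and the Arzel\`a--Ascoli type arguments of Lemma \ref{l.Arzela.t} already at our disposal, and the remainder is a routine application of the Fredholm alternative and of Theorem \ref{t.NS.deriv.unique}.
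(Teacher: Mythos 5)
Your proposal is correct and follows essentially the same route as the paper: the paper likewise establishes that $\varPsi_\mu W_0$ is a compact selfmapping of $R^{k-1,\mathbf{s}(s,\lambda,\lambda',\delta+1)}(\overline{\mathcal{C}_T},\varLambda^2)\cap\mathcal{S}_d$ via Lemmata \ref{l.homo.map.t}, \ref{l.map.W0} and \ref{l.sol.psi}, proves injectivity by passing from $g$ in the kernel to the pair $u=d^\ast(\varPhi\otimes I)g$, $p=-d^\ast(\varPhi\otimes I)(H_\mu+V_0)u$ and invoking Theorem \ref{t.NS.deriv.unique}, and concludes by the Fredholm alternative. The only cosmetic difference is that where you cite Lemma \ref{l.reduce.psi} with trivial data, the paper writes out the same construction as a separate uniqueness lemma.
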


\begin{proof}
First we observe by Lemmata \ref{l.homo.map.t}, \ref{l.map.W0} and \ref{l.sol.psi} that the operator
$I + \varPsi_\mu W_0$ is a continuous selfmapping of
$
   R^{k\!-\!1,\mathbf{s} (s,\lambda,\lambda',\delta\!+\!1)} (\overline{\mathcal{C}_T} , \varLambda^2) \cap
   \mathcal{S}_d.
$
Our next goal is to show that this mapping is one-to-one.

\begin{lemma}
\label{l.unique.pseudo}
Suppose that
   $s$ and $k \geq 2$ are positive integers,
   $n/2 < \delta < n$ is different from $n-2$, $n-1$,
and
   the coefficients of $g^{(0)}$ are of class $C^{k,\mathbf{s} (s-1,\lambda,\delta+1)}$,
   the coefficients of $v^{(1)}$ are of class $C^{k,\mathbf{s} (s-1,\lambda,\delta)}$
in the layer $\overline{\mathcal{C}_T} $.
Then any form
$
   g
 \in
   R^{k-1,\mathbf{s} (s,\lambda,\delta+1)} (\overline{\mathcal{C}_T} , \varLambda^2)
$
satisfying $(I + \varPsi_\mu W_0) g = 0$ is identically zero.
\end{lemma}

\begin{proof}
Indeed, Lemma \ref{l.sol.psi} yields readily
   $dg = 0$
in $\overline{\mathcal{C}_T} $.
Then using Corollary \ref{c.deRham.Hoelder.t} and equality (\ref{eq.VW}) we deduce that
the function
   $u = d^\ast (\varPhi \otimes I) g$
satisfies $du = g$ and
$$
   \Big\{
   \begin{array}{rclll}
     d (H_\mu u + V_0 u)
   & =
   & 0
   & \mbox{in}
   & \overline{\mathcal{C}_T} ,
\\
     d^\ast u
   & =
   & 0
   & \mbox{in}
   & \overline{\mathcal{C}_T}
   \end{array}
$$
whence
$$
   \begin{array}{rccclcc}
   d (\gamma_0 u)
 & =
 & \gamma_0 (g)
 & =
 & \gamma_0 (-\, \varPsi_\mu W_0 g)
 & =
 & 0,
\\
   d^\ast (\gamma_0 u)
 & =
 & \gamma_0 (d^\ast u)
 & =
 & 0
 &
 &
   \end{array}
$$
on $\mathbb{R}^n$.
According to (\ref{eq.deRham}), the last two equalities imply that $\gamma_0 u$ is a harmonic
one-form on $\mathbb{R}^n$.
As $\delta > 0$ it follows that $\gamma_0 u$ vanishes at the point at infinity and hence
$\gamma_0 u \equiv 0$ on $\mathbb{R}^n$.

Since $d (H_\mu u + V_0 u) = 0$, the function
$$
   p = - d^\ast (\varPhi \otimes I) \left( H_\mu u + V_0 u \right)
$$
belongs to the space
   $C^{k,\mathbf{s} (s-1,\lambda,\delta-1)} (\overline{\mathcal{C}_T})$
and satisfies
$
   H_\mu u + V_0 u + dp = 0
$
in $\overline{\mathcal{C}_T} $.
Therefore, the pair $U = (u,p)^T$ lies in the direct product
$$
   C^{k-2,\mathbf{s} (s,\lambda,\delta)} (\overline{\mathcal{C}_T} ,\varLambda^1) \cap \mathcal{S}_{d^\ast}
 \times
   C^{k,\mathbf{s} (s-1,\lambda,\delta-1)} (\overline{\mathcal{C}_T})
$$
and satisfies
$
   A_{V_0} U = 0.
$
Finally, by the uniqueness result of Theorem \ref{t.NS.deriv.unique} we get $u \equiv 0$, and so
$g \equiv 0$, too.
\end{proof}

We are now in a position to complete the proof of Corollary \ref{c.invertible.psi}.
According to Lemma \ref{l.map.W0}, the operator (\ref{eq.heat.pseudo.d2}) is Fredholm and
its index equals zero.
Then the statement of the corollary follows from Lemma \ref{l.unique.pseudo} and Fredholm theorems.
\end{proof}

\begin{corollary}
\label{c.NS.deriv.unique}
Assume that
   $s$ and $k \geq 2$ are positive integers,
   $0 < \lambda < \lambda' < 1$,
   $n/2 < \! \delta \! < n$ is different from $n-2$, $n-1$,
and
   the coefficients of $g^{(0)}$ are of class $\mathcal{F}^{k,\mathbf{s} (s-1,\lambda,\lambda',\delta+1)}$,
   the coefficients of $v^{(1)}$ are of class $\mathcal{F}^{k,\mathbf{s} (s-1,\lambda,\lambda',\delta)}$, and
   the coefficients of $v^{(2)}$ are of class $\mathcal{F}^{k+1,\mathbf{s} (s-1,\lambda,\lambda',-1)}$
in the layer $\overline{\mathcal{C}_T} $.
Then for any pair
$$
   F
 \! = \! (f,u_0)^T
 \in
   \mathcal{F}^{k,\mathbf{s} (s,\lambda,\lambda',\delta)} (\overline{\mathcal{C}_T} , \varLambda^1)
 \times
   C^{2s+k+1,\lambda,\delta} (\mathbb{R}^n, \varLambda^1) \cap \mathcal{S}_{d^\ast}
$$
there is a unique solution
$$
   U
 \! = \! (u,p)^T
 \in
   \mathcal{F}^{k\!-\!1,\mathbf{s} (s,\lambda,\lambda',\delta)} (\overline{\mathcal{C}_T} ,\varLambda^1) \cap
   \mathcal{S}_{d^\ast}
 \times
   \mathcal{F}^{k\!-\!1,\mathbf{s} (s\!-\!1,\lambda,\lambda',\delta\!-\!1)} (\overline{\mathcal{C}_T})
$$
to the equation $A_{V_0} U = F$ and the corresponding linear operator
   $U = A_{V_0}^{-1} (F)$
is bounded.
\end{corollary}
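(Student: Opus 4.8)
The plan is to deduce the corollary by chaining together three things already established: the reduction of $A_{V_0} U = F$ to the Fredholm equation $g + \varPsi_\mu W_0 g = g_0$ carried out in Lemma~\ref{l.reduce.psi}, the continuous invertibility of $I + \varPsi_\mu W_0$ on $R^{k-1,\mathbf{s}(s,\lambda,\lambda',\delta+1)}(\overline{\mathcal{C}_T},\varLambda^2) \cap \mathcal{S}_d$ supplied by Corollary~\ref{c.invertible.psi}, and the continuity of the parabolic and Newtonian potentials in the weighted H\"older scales. Since the coefficients of $g^{(0)}$, $v^{(1)}$, $v^{(2)}$ are assumed to lie in the appropriate $\mathcal{F}$-classes, I would read Lemmata~\ref{l.sol.psi}, \ref{l.map.W0}, \ref{l.reduce.psi} and Corollary~\ref{c.invertible.psi} in the scale $\mathcal{F}^{k,\mathbf{s}(\cdot,\cdot)}$ throughout, which is legitimate by the blanket remark preceding Lemma~\ref{l.mathfrak.compact}.

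For existence, given $F = (f,u_0)^T$ I would first set $g_0 := \varPsi_{\mu,0}\, du_0 + \varPsi_\mu\, df$. The forms $du_0$ and $df$ are $d$-closed, and the exterior derivative raises the weight exponent by one; combining Lemma~\ref{l.heat.key1} and Theorem~\ref{t.heat.key2} with the de Rham range Lemmata~\ref{l.homo.map} and \ref{l.homo.map.t} and the commutation relations $d\varPsi_\mu = \varPsi_\mu d$, $d\varPsi_{\mu,0} = \varPsi_{\mu,0} d$ --- exactly as at the beginning of the proof of Lemma~\ref{l.reduce.psi}, part~2) --- one checks that $g_0 \in R^{k-1,\mathbf{s}(s,\lambda,\lambda',\delta+1)}(\overline{\mathcal{C}_T},\varLambda^2) \cap \mathcal{S}_d$ and that $F \mapsto g_0$ is bounded; here the extra spatial smoothness packed into $\mathcal{F}^{k,\mathbf{s}(s,\lambda,\lambda',\delta)} = C^{k+1,\mathbf{s}(s,\lambda,\delta)} \cap C^{k,\mathbf{s}(s,\lambda',\delta)}$ is precisely what offsets the regularity lost through $\varPsi_\mu$ and $\varPsi_{\mu,0}$. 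Corollary~\ref{c.invertible.psi} then produces a unique $g$ in that range solving $g + \varPsi_\mu W_0 g = g_0$ and depending boundedly on $g_0$, and Lemma~\ref{l.reduce.psi}, part~2), turns this $g$ into the pair
$$
   u = d^\ast (\varPhi \otimes I) g,
\qquad
   p = d^\ast (\varPhi \otimes I)\big( f - (H_\mu + V_0) u \big),
$$
which lies in $\mathcal{F}^{k-1,\mathbf{s}(s,\lambda,\lambda',\delta)}(\overline{\mathcal{C}_T},\varLambda^1) \cap \mathcal{S}_{d^\ast} \times \mathcal{F}^{k-1,\mathbf{s}(s-1,\lambda,\lambda',\delta-1)}(\overline{\mathcal{C}_T})$, satisfies $A_{V_0} U = F$ and $du = g$. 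Since $g \mapsto U$ is bounded by Corollary~\ref{c.deRham.Hoelder.t}, Lemma~\ref{l.weight.Hoelder.Laplace.t}, Lemma~\ref{l.map.hoelder.V0} and Lemma~\ref{l.diff.oper}, composing it with $F \mapsto g_0$ and $g_0 \mapsto g$ exhibits the solution operator $A_{V_0}^{-1} : F \mapsto U$ as a bounded linear map.

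For uniqueness I would argue as follows. If $A_{V_0} U = 0$ with $U = (u,p)^T$ in the stated space, then by Theorem~\ref{t.emb.hoelder.t} we have $u \in C^{\mathbf{s}(1,0,\delta)}(\overline{\mathcal{C}_T},\varLambda^1) \cap \mathcal{S}_{d^\ast}$ and $p \in C^{1,\mathbf{s}(0,0,\delta-1)}(\overline{\mathcal{C}_T})$, and the same embedding forces the coefficients of $v^{(1)}$ to be of class $C^{\mathbf{s}(0,0,0)}$ (recall $\delta > 0$); hence Theorem~\ref{t.NS.deriv.unique} applies and gives $U \equiv 0$. Equivalently, one may invoke Lemma~\ref{l.reduce.psi}, part~1): $g = du$ then solves $g + \varPsi_\mu W_0 g = 0$, so $g \equiv 0$ by Corollary~\ref{c.invertible.psi}, whence $du = d^\ast u = 0$ makes $u$ a harmonic one-form vanishing at infinity (as $\delta > 0$), so $u \equiv 0$ by Corollary~\ref{c.deRham.Hoelder}, and then $dp = 0$ together with the weighted bound on $p$ and $\delta > 1$ forces $p \equiv 0$.

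The part I expect to require most care is the index bookkeeping in the reduction, namely tracking the spatial smoothness $k$, the parabolic order $s$ and the weight $\delta$ through the compositions and confirming that the ``gap'' noted in the remark after Lemma~\ref{l.reduce.psi} --- one spatial derivative lost in passing through $\varPhi \otimes I$, one parabolic order lost in recovering $p$ --- is exactly matched by the indices in the statement, that $g_0$ really belongs to the space on which Corollary~\ref{c.invertible.psi} is formulated, and that the moment conditions cutting out the ranges $R^{\cdot}$ are automatically fulfilled because every form to which $d$ is applied is $d$-exact (Lemmata~\ref{l.homo.map}, \ref{l.homo.map.t}). Once those verifications are carried out, the corollary is just a concatenation of the boundedness, invertibility and uniqueness facts proved above.
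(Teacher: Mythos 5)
Your proposal is correct and follows essentially the same route as the paper's proof, which likewise combines Lemma \ref{l.reduce.psi} (the reduction to $g + \varPsi_\mu W_0 g = g_0$), Corollary \ref{c.invertible.psi} (invertibility of $I + \varPsi_\mu W_0$), Theorem \ref{t.NS.deriv.unique} (uniqueness), and Lemma \ref{l.heat.key1} (to place $g_0 = \varPsi_{\mu,0}\, du_0 + \varPsi_\mu\, df$ in $R^{k-1,\mathbf{s}(s,\lambda,\lambda',\delta+1)}(\overline{\mathcal{C}_T},\varLambda^2)$). The additional index-tracking and the two alternative uniqueness arguments you supply are consistent with the paper's (much terser) argument.
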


\begin{proof}
It follows from
   Corollary \ref{c.invertible.psi},
   Theorem  \ref{t.NS.deriv.unique} and
   Lemma \ref{l.reduce.psi},
for
\begin{eqnarray*}
   dF
 & \in &
   R^{k-1,\mathbf{s} (s,\lambda,\lambda',\delta+1)} (\overline{\mathcal{C}_T}, \varLambda^2)
 \times
   R^{2s+k,\lambda,\delta+1} (\mathbb{R}^n, \varLambda^2),
\\
   \varPsi_{\mu,0} d u_0 + \varPsi_\mu df
 & \in &
   R^{k-1,\mathbf{s} (s,\lambda,\lambda',\delta+1)} (\overline{\mathcal{C}_T}, \varLambda^2),
\end{eqnarray*}
the latter inclusion being due to Lemma \ref{l.heat.key1}.
\end{proof}

\section{The Navier-Stokes equations as an open map}
\label{s.NS.OpenMap}

We plan to prove that the Navier-Stokes equations can be treated as a nonlinear injective
Fredholm operator with open range in proper Banach spaces.
Recall that a nonlinear operator $\mathcal{A} : X \to Y$ in Banach spaces $X$, $Y$ is called
Fredholm if it has a Frech\'et derivative at each point $x_0 \in X$ and this derivative is a
Fredholm linear map from $X$ to $Y$ (see \cite{Sm65}).

For this purpose we set
$$
   \mathcal{A}
 = \Big( \begin{array}{cc}
           H_\mu + \mathbf{D}^1
         & d
\\
           \gamma_0
         & 0
         \end{array}
   \Big)
 = \mathcal{A}_0
 + \Big( \begin{array}{cc}
           \mathbf{D}^1
         & 0
\\
           0
         & 0
         \end{array}
   \Big).
$$
Clearly, the operator is well defined in the scale of weighted H\"older spaces introduced above.
For nonlinear Fredholm operators we may use a Sard theorem for Banach spaces (see \cite{Sm65})
and the Baire category theorem to obtain additional information on the range of $\mathcal{A}$.

Thus, given any data $F = (f,u_0)^T$, we look for a solution $U = (u,p)^T$ of nonlinear
Navier-Stokes equations
\begin{equation}
\label{eq.NS.D}
   \mathcal{A}\, \Big( \begin{array}{c} u \\ p \end{array} \Big)
 = \Big( \begin{array}{c} f \\ u_0 \end{array} \Big)
\end{equation}
in the scale of weighted H\"older spaces on $\overline{\mathcal{C}_T} $.

\begin{theorem}
\label{t.NS.unique}
Suppose
   $s \geq 1$ and $k$ are nonnegative integers,
   $n/2 < \delta$,
and
   the coefficients of $g^{(0)}$ are of class $C^{\mathbf{s} (0,0,0)}$,
   the coefficients of $v^{(1)}$ are of class $C^{\mathbf{s} (0,0,0)}$, and
   the coefficients of $v^{(2)}$ are of class $C^{1,\mathbf{s} (0,0,-1)}$
in the layer $\overline{\mathcal{C}_T} $.
Then, for each pair
$$
   F \! = \! (f,u_0)^T
 \in
   C^{k,\mathbf{s} (s-1,\lambda,\delta)} (\overline{\mathcal{C}_T} ,\varLambda^1)
 \times
   C^{2s+k,\lambda,\delta} (\mathbb{R}^n,\varLambda^1) \cap \mathcal{S}_{d^\ast}
$$
the nonlinear Navier-Stokes type equation
\begin{equation}
\label{eq.NS.D1}
   \mathcal{A}_{0} \Big( \begin{array}{c} u \\ p \end{array} \Big)
 + \Big( \begin{array}{c} V_0 u  \\ 0 \end{array} \Big)
 + a\, \Big( \begin{array}{c} \mathbf{D}^1 u \\ 0 \end{array} \Big)
= \Big( \begin{array}{c} f \\ u_0 \end{array} \Big),
\end{equation}
where $a$ is an arbitrary nonnegative real number, has at most one solution in the space
$$
   C^{k,\mathbf{s} (s,\lambda,\delta)} (\overline{\mathcal{C}_T} ,\varLambda^1) \cap \mathcal{S}_{d^\ast}
 \times
   C^{k+1,\mathbf{s} (s-1,\lambda,\delta-1)} (\overline{\mathcal{C}_T}).
$$
\end{theorem}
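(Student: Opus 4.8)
The plan is to prove uniqueness by reducing the difference of two solutions to a homogeneous problem and then invoking the energy estimate already established for the linearised operator in Theorem~\ref{t.NS.deriv.unique}. Suppose $U_1 = (u_1, p_1)^T$ and $U_2 = (u_2, p_2)^T$ are two solutions to \eqref{eq.NS.D1} lying in the indicated space, and set $u = u_1 - u_2$, $p = p_1 - p_2$. Subtracting the two equations and using the bilinearity structure revealed by Lemma~\ref{l.D.deRham} (in particular the identity $\mathbf{D}^1 u_1 - \mathbf{D}^1 u_2 = \tfrac12 d^0 \ast(\ast(u_1+u_2)\wedge u) + \ast((\ast d^1 u)\wedge u_1) + \ast((\ast d^1 u_2)\wedge u)$ obtained by polarising $\mathbf{D}^1$), the difference $U=(u,p)^T$ satisfies a linear equation of the form $\mathcal{A}_{\widetilde V_0} U = 0$, where $\widetilde V_0$ is the first-order operator $V_0$ with the fixed forms $v^{(1)}, v^{(2)}, g^{(0)}$ replaced by suitable combinations of the data and of $u_1, u_2$. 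Concretely the new ``pressure-absorbing'' two-form $\widetilde g^{(0)}$ picks up a multiple of $a\, d^1 u_2$, the new one-form $\widetilde v^{(1)}$ picks up $a\, u_1$ (acting through $\ast((\ast d^1 u)\wedge u_1)$), and the new one-form $\widetilde v^{(2)}$ picks up $\tfrac a2 (u_1 + u_2)$.

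Next I would check that these new coefficient forms meet the hypotheses required by Theorem~\ref{t.NS.deriv.unique}, namely that $\widetilde v^{(1)}$ and $\widetilde g^{(0)}$ have coefficients of class $C^{\mathbf{s}(0,0,0)}$ and $\widetilde v^{(2)}$ of class $C^{1,\mathbf{s}(0,0,-1)}$ in $\overline{\mathcal{C}_T}$. Since $u_1, u_2 \in C^{k,\mathbf{s}(s,\lambda,\delta)}(\overline{\mathcal{C}_T},\varLambda^1) \cap \mathcal{S}_{d^\ast}$ with $\delta > n/2 > 0$, and $C^{k,\mathbf{s}(s,\lambda,\delta)}$ embeds continuously into $C^{\mathbf{s}(0,0,0)}$ (Theorem~\ref{t.emb.hoelder.t}, using $\delta \geq 0$ and $s + \lambda \geq 0$), the contribution $a\, u_j$ to $\widetilde v^{(1)}$ is admissible; combined with the original $v^{(1)}$ which is already of class $C^{\mathbf{s}(0,0,0)}$ by hypothesis, $\widetilde v^{(1)}$ is of class $C^{\mathbf{s}(0,0,0)}$. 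Similarly $d^1 u_2$ lies in a space embedding into $C^{\mathbf{s}(0,0,0)}$ by Lemma~\ref{l.diff.oper} (differentiation in $x$ raises the weight, so in particular keeps $\delta \geq 0$), so $\widetilde g^{(0)}$ is of class $C^{\mathbf{s}(0,0,0)}$; and $\tfrac a2 (u_1+u_2)$ together with the given $v^{(2)}$ yields $\widetilde v^{(2)}$ of class $C^{1,\mathbf{s}(0,0,-1)}$, since $C^{k,\mathbf{s}(s,\lambda,\delta)}$ with $k \geq 0$ and $\delta > 0 > -1$ embeds into $C^{1,\mathbf{s}(0,0,-1)}$ (one has one spare $x$-derivative and the weight exponent only improves). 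Here it is important that $\delta > n/2 \geq 1$, so $\delta - 1 > 0$ and the pressure difference $p \in C^{k+1,\mathbf{s}(s-1,\lambda,\delta-1)}(\overline{\mathcal{C}_T})$ still has a strictly positive weight exponent, matching the class \eqref{eq.unicla} required in Theorem~\ref{t.NS.deriv.unique}.

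With $U = (u,p)^T$ now recognised as a pair in the uniqueness class \eqref{eq.unicla} satisfying $\mathcal{A}_{\widetilde V_0} U = 0$, Theorem~\ref{t.NS.deriv.unique} applies verbatim and gives $u \equiv 0$ and $p \equiv 0$, hence $U_1 = U_2$. The case $a = 0$ is literally Theorem~\ref{t.NS.deriv.unique} with coefficients $v^{(1)}, v^{(2)}, g^{(0)}$ unchanged, so no polarisation is needed there; the interest is in $a > 0$, for which the bilinear term $\mathbf{D}^1$ is genuinely present.

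The main obstacle is bookkeeping rather than analysis: one must verify that the polarised nonlinear term $a(\mathbf{D}^1 u_1 - \mathbf{D}^1 u_2)$ can indeed be written \emph{exactly} in the Lamb form $\widetilde V_0 u = \ast(\ast\widetilde g^{(0)}\wedge u) + \ast((\ast d^1 u)\wedge \widetilde v^{(1)}) + d^0\ast(\ast\widetilde v^{(2)}\wedge u)$ for one-forms $u$, with coefficient forms \emph{not} depending on $u$ itself but only on $u_1, u_2$ and the fixed data. This is exactly what the symmetric identity \eqref{eq.D.deRham} of Lemma~\ref{l.D.deRham} provides: choosing the two one-forms there to be $u$ and $u_1$, and separately $u$ and $u_2$, and averaging, expresses $(u_j \cdot \nabla) u + (u \cdot \nabla) u_j$ through $d^0$, $d^1 u$, $d^1 u_j$ and wedge products, which is precisely of the required shape after collecting terms. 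A small additional point to confirm is that the substantial-derivative difference contributes nothing to the $\gamma_0$-component (it does not, since $\mathbf{D}^1$ acts only on the first slot), so the reduced system really is $\mathcal{A}_{\widetilde V_0} U = 0$ with zero right-hand side in both components, including $\gamma_0 u = 0$ because $\gamma_0 u_1 = \gamma_0 u_2 = u_0$.
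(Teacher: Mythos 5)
Your proposal is correct, and it takes a genuinely different (and cleaner) route than the paper's own proof. You polarise the quadratic term exactly into the Lamb form: the identity $a(\mathbf{D}^1 u_1 - \mathbf{D}^1 u_2) = \widetilde V_0 (u_1 - u_2)$ with coefficient forms depending only on $u_1, u_2$ is correct (it is equivalent to applying Lemma \ref{l.D.deRham} to the symmetric decomposition $\mathbf{D}^1 u_1 - \mathbf{D}^1 u_2 = (w\cdot\nabla)u + (u\cdot\nabla)w$ with $w = \tfrac12(u_1+u_2)$), and your verification that $\widetilde g^{(0)}, \widetilde v^{(1)}$ are of class $C^{\mathbf{s}(0,0,0)}$, that $\widetilde v^{(2)}$ is of class $C^{1,\mathbf{s}(0,0,-1)}$, and that the difference pair lies in the uniqueness class (\ref{eq.unicla}) is sound, since $\delta > n/2 \geq 1$ and $s \geq 1$ give boundedness of $u_1$, $u_2$, $d^1 u_2$ and positivity of the weight $\delta - 1$ for the pressure. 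The paper does not make this reduction; it repeats the energy argument of Theorem \ref{t.NS.deriv.unique}, writing the difference of the substantial derivatives in coordinates as $\sum_{i,j}\int (u_i(\partial_i u'_j)u_j + u''_i(\partial_i u_j)u_j)\,dx$, killing the second sum by integration by parts (using $d^\ast u'' = 0$ and the decay of the boundary terms at $|x| = R$), absorbing the first into the dissipation $\mu\sum_i\|\partial_i u\|^2_{L^2}$ by the Cauchy--Schwarz and Young inequalities together with the boundedness of $u'$, reusing only the intermediate estimate (\ref{eq.uniq.5}) for the $V_0$ term, and closing with the same Gronwall argument. The mechanisms coincide term by term: your $d^0\ast(\ast\widetilde v^{(2)}\wedge u)$ contribution vanishing against $u$ by integration by parts is exactly the paper's vanishing second sum, and your $\ast((\ast d^1 u)\wedge\widetilde v^{(1)})$ term is the one absorbed into the dissipation. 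What your packaging buys is economy rather than new content, since all the convergence and boundary-term checks are delegated to the already proven linear theorem.
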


\begin{proof}
One may follow
   the original paper \cite{Lera34a} or
   the proofs of Theorem 3.2 for $n = 2$ and Theorem 3.4 for $n = 3$ in \cite{Tema79},
showing the uniqueness result by integration by parts.
Cf. also Theorem \ref{t.NS.deriv.unique}.

Indeed, let
   $(u',p'')$ and
   $(u'',p'')$
be any two solutions to (\ref{eq.NS.D}).
Then for the difference
   $(u,p) = (u' - u'', p' - p'')$
we get
\begin{equation}
\label{eq.NS.D0}
   \Big( \begin{array}{c} H_\mu u + dp \\ 0 \end{array} \Big)
 = a\, \Big( \begin{array}{c}  \mathbf{D}^1 u'' - \mathbf{D}^1 u' \\ 0 \end{array} \Big)
 - \Big( \begin{array}{c} V_0 u \\ 0 \end{array} \Big).
\end{equation}

It is easy to see that
$$
   \partial_t\, \| u (\cdot, t) \|^2_{L^2 (B_R,\varLambda^1)}
 = 2\, (\partial_t u, u)_{L^2 (B_R,\varLambda^1)}
$$
for all $t \in [0,T]$.
As
$$
\begin{array}{rcl}
   u
 & \in
 & C^{k,\mathbf{s} (s,\lambda,\delta)} (\overline{\mathcal{C}_T} ,\varLambda^1) \cap \mathcal{S}_{d^\ast},
\\
   p
 & \in
 & C^{k+1,\mathbf{s} (s-1,\lambda,\delta-1)} (\overline{\mathcal{C}_T})
\end{array}
$$
and $\delta > n/2$, the coefficients of the one-forms
   $u$,
   $\partial_i u$,
   $\partial_t u$,
   $H_\mu u$ and
   $dp$
are, by Lemma \ref{l.emb.L2t}, square integrable over all of $\mathbb{R}^n$ for each fixed $t \in [0,T]$.
Hence it follows that
\begin{eqnarray*}
\lefteqn{
   (H_\mu u + dp, u)_{L^2 (\mathbb{R}^n,\varLambda^1)}
}
\\
 & = &
   \lim_{R \to +\infty}
   \Big(
   \frac{1}{2}\, \partial_t \| u (\cdot, t) \|^2_{L^2 (B_R,\varLambda^1)}
 + \mu \| du (\cdot, t) \|^2_{L^2 (B_R,\varLambda^2)}
 + \mu \| d^\ast u (\cdot, t) \|^2_{L^2 (B_R)}
\\
 &   &
   \ \ \ \ \ \ \ \ \ \ \ \ + \
   (p, d^\ast u)_{L^2 (B_R)}
 - \int_{\partial B_R} u^\ast \Big( \frac{\partial u}{\partial \nu} - p \nu \Big) ds
   \Big)
\\
 & = &
   \frac{1}{2}\, \partial_t \| u (\cdot, t )\|^2_{L^2 (\mathbb{R}^n, \varLambda^1)}
 + \mu \sum_{i=1}^n \| \partial_i u (\cdot, t) \|^2_{L^2 (\mathbb{R}^n,\varLambda^1)}
\end{eqnarray*}
because
   $d^\ast u = 0$
and
   $R^{n-1-(2 \delta-1)} = R^{n-2 \delta}  \to 0$ if $R \to +\infty$.
We have used here the identity
\begin{equation}
\label{eq.Dirichlet}
   \| du (\cdot, t) \|^2_{L^2 (\mathbb{R}^n,\varLambda^2)}
 + \| d^\ast u (\cdot, t) \|^2_{L^2 (\mathbb{R}^n)}
 = \sum_{i=1}^n \| \partial_i u (\cdot, t) \|^2_{L^2 (\mathbb{R}^n,\varLambda^1)}
\end{equation}
which can be checked, for example, by the Plancherel theorem.

Furthermore, the integrals
   $(\mathbf{D}^1 u', u)_{L^2 (\mathbb{R}^n,\varLambda^1)}$ and
   $(\mathbf{D}^1 u'', u)_{L^2 (\mathbb{R}^n,\varLambda^1)}$
converge, for both
   $\mathbf{D}^1 u'$ and
   $\mathbf{D}^1 u''$
are one-forms with coefficients of class
   $C^{\mathbf{s} (0,0,\delta+1)}$
in the layer $\overline{\mathcal{C}_T} $ and
   $u \in C^{\mathbf{s} (0,0,\delta)} (\overline{\mathcal{C}_T} ,\varLambda^1)$
with $\delta > n/2$ (see Lemmata \ref{l.emb.L2t} and \ref{l.product}).
Therefore, (\ref{eq.NS.D0}) implies
\begin{eqnarray*}
\lefteqn{
   \frac{1}{2}\, \partial_t\, \| u (\cdot,t) \|^2_{L^2 (\mathbb{R}^n,\varLambda^1)}
 + \mu \sum_{i=1}^n \| \partial_i u (\cdot,t) \|^2_{L^2 (\mathbb{R}^n,\varLambda^1)}
}
\\
 & = &
 -\, a\,
   \left( (\mathbf{D}^1 u', u)_{L^2 (\mathbb{R}^n, \varLambda^1)}
        - (\mathbf{D}^1 u'', u)_{L^2 (\mathbb{R}^n, \varLambda^1)}
   \right)
 - (V _0 u, u)_{L^2 (\mathbb{R}^n,\varLambda^1)}.
\end{eqnarray*}
A trivial verification shows that
\begin{eqnarray*}
\lefteqn{
   (\mathbf{D}^1 u', u)_{L^2 (\mathbb{R}^n,\varLambda^1)}
 - (\mathbf{D}^1 u'', u)_{L^2 (\mathbb{R}^n,\varLambda^1)}
}
\\
 & = &
   \sum_{i,j=1}^n
   \int_{\mathbb{R}^n}
   \left( u'_i (\partial_i u'_j) u_j - u''_i (\partial_i u''_j) u_j \right) dx
\\
 & = &
   \sum_{i,j=1}^n
   \int_{\mathbb{R}^n}
   \left( (u'_i - u''_i) (\partial_i u'_j) u_j + u''_i (\partial_i (u'_j - u''_j)) u_j \right)
   dx
\\
 & = &
   \sum_{i,j=1}^n
   \int_{\mathbb{R}^n}
   \left( u_i (\partial_i u'_j) u_j + u''_i (\partial_i u_j) u_j \right)
   dx,
\end{eqnarray*}
where
$
   \displaystyle
   u = \sum_{j=1}^n u_j dx^j
$
and similarly for the forms $u'$ and $u''$.
As $d^\ast u'' = 0$, it follows that
\begin{eqnarray*}
\lefteqn{
   \sum_{i,j=1}^n
   \int_{\mathbb{R}^n}
   u''_i (\partial_i u_j) u_j\, dx
 = \sum_{j=1}^n
   \lim_{R \to +\infty}
}
\\
 & \times &
   \Big(
   \int_{B_R}
   \Big( d^\ast u'') (u_j u_j) - \sum_{i=1}^n u''_i (\partial_i u_j) u_j  \Big) dx
 + \sum_{i=1}^n \int_{\partial B_R} u''_i u_j u_j ds
   \Big)
\\
 & = &
 - \sum_{i,j=1}^n \int_{\mathbb{R}^n}
   u''_i  (\partial_i u_j) u_j\, dx,
\end{eqnarray*}
for $R^{n-1-3 \delta-1} \to 0$ if $R \to +\infty$.
Since we arrived at the same sum with opposite sign, we conclude that this sum vanishes.
On integrating by parts once again we obtain
$$
   \sum_{i,j=1}^n \int_{\mathbb{R}^n} u_i (\partial_i u'_j) u_j\, dx
 = \sum_{j=1}^n \int_{\mathbb{R}^n} (d^\ast u) (u'_j u_j)
 - \sum_{i,j=1}^n \int_{\mathbb{R}^n} u_i (\partial_i u_j) u'_j\, dx,
$$
and so
\begin{eqnarray*}
\lefteqn{
   \frac{1}{2}\, \partial_t\, \| u (\cdot,t) \|^2_{L^2 (\mathbb{R}^n,\varLambda^1)}
 + \mu \sum_{i=1}^n \| \partial_i u (\cdot,t) \|^2_{L^2 (\mathbb{R}^n,\varLambda^1)}
}
\\
 & = &
   a
   \sum_{i,j=1}^n
   \int_{\mathbb{R}^n} u_i (\cdot,t) (\partial_i u_j) (\cdot,t) u'_j (\cdot,t)\, dx
 - (V _0 u (\cdot,t), u (\cdot,t))_{L^2 (\mathbb{R}^n,\varLambda^1)}
\end{eqnarray*}
for all $t \in [0,T]$.

As both $u'$ and $u$ have coefficients in
   $C^{k,\mathbf{s} (s,\lambda,\delta)} (\overline{\mathcal{C}_T})$,
the pairwise products $u_i u'_j$ are of class
   $C^{\mathbf{s} (0,0,2 \delta)} (\overline{\mathcal{C}_T})$,
where $2 \delta > 0$ by assumption.
In particular, the functions
   $u_i (\cdot,t) u'_j (\cdot,t)$
are square integrable over $\mathbb{R}^n$ for each fixed $t \in [0,T]$
   (see Lemma \ref{l.emb.L2t}).
By the Cauchy-Schwarz inequality,
\begin{eqnarray*}
   \sum_{i,j=1}^n
   \int_{\mathbb{R}^n} u_i (\partial_i u_j) u'_j\, dx
 & \leq &
   \sum_{i,j=1}^n
   \| \partial_i u_j \|_{L^2 (\mathbb{R}^n)}
   \| u_i u_j \|_{L^2 (\mathbb{R}^n)}
\\
 & \leq &
   \Big( \sum_{i,j=1}^n \| \partial_i u_j \|^2_{L^2 (\mathbb{R}^n)} \Big)^{1/2}
   \Big( \sum_{i,j=1}^n \| u_i u'_j \|^2_{L^2 (\mathbb{R}^n)} \Big)^{1/2}
\end{eqnarray*}
whence
\begin{eqnarray*}
\lefteqn{
   a
   \sum_{i,j=1}^n
   \int_{\mathbb{R}^n} u_i (\cdot,t) (\partial_i u_j) (\cdot,t) u'_j (\cdot,t)\, dx
}
\\
 & \leq &
   b \mu
   \sum_{i,j=1}^n
   \| \partial_i u_j (\cdot, t) \|^2_{L^2 (\mathbb{R}^n)}
 + \frac{a^2}{4 b \mu}
   \sum_{i,j=1}^n \| u_i (\cdot,t) u'_j (\cdot,t) \|^2_{L^2 (\mathbb{R}^n)}
\end{eqnarray*}
with an arbitrary constant $b > 0$ independent of $u'$ and $u''$, for
   $2 b_1 b_2 \leq b_1^2 + b_2^2$.
Then, using estimate (\ref{eq.uniq.5}) for the term
   $(V_0 (u,u)_{L^2 (\mathbb{R}^n,\varLambda^1)}$
obtained in the proof of Theorem \ref{t.NS.deriv.unique}, and identity (\ref{eq.Dirichlet})
we conclude that
$$
   \frac{1}{2}\, \partial_t\, \| u (\cdot,t) \|^2_{L^2 (\mathbb{R}^n,\varLambda^1)}
 \leq
   \frac{a^2}{2 \mu}
   \sum_{i,j=1}^n
   \| u_i (\cdot,t) u'_j) (\cdot,t) \|^2_{L^2 (\mathbb{R}^n)}
 +  c\, \| u (\cdot,t) \|^2_{L^2 (\mathbb{R}^n,\varLambda^1)},
$$
for all $t \in [0,T]$, where the constant $c$ depends neither on $t$ nor on $u'$ and $u''$.
Since
   $u' \in C^{\mathbf{s} (0,0,\delta)} (\overline{\mathcal{C}_T} ,\varLambda^1)$,
there is another constant $C$ depending on $u'$ but not on $x$ and $t$, such that
$$
   |v (x,t)|^2 \leq C\, (1+|x|^2)^{-\delta}
$$
for all $(x,t) \in \overline{\mathcal{C}_T} $.
Hence it follows that
$$
   \frac{1}{2}\, \partial_t\, \| u (\cdot,t) \|^2_{L^2 (\mathbb{R}^n,\varLambda^1)}
\leq
   \Big( \frac{a^2}{2 \mu} C^2 + c \Big)\,
   \| u (\cdot,t) \|^2_{L^2 (\mathbb{R}^n,\varLambda^1)}
$$
for all $t \in [ 0,T]$.

\begin{remark} 
\label{r.Energy}
As is seen in the proof of Theorem \ref{t.NS.unique}, one can easily use the integration by parts 
for solutions to the Navier-Stokes equations in the weighted spaces for $\delta > n/2$. 
This means that the standard energy estimates are still valid for them  
   (see for instance \cite[(3.47)]{Tema79}).
\end{remark}

The rest of the proof runs in the same way as that of Theorem \ref{t.NS.deriv.unique}.
This leads immediately to
   $u \equiv 0$ and
   $p \equiv 0$,
i.e., the solutions $(u',p')$ and $(u'',p'')$ coincide, as desired.
\end{proof}

The nonlinear Navier-Stokes equations reduce in much the same way as the corresponding
linearised equations.
To this end we introduce nonlinear pseudodifferential operators
\begin{eqnarray*}
   \mathbf{D}^2 g
 & = &
   d \ast (\ast g \wedge d^\ast (\varPhi \otimes I) g),
\\
   \mathcal{Q} g
 & =
 & \ast (\ast g \wedge d^\ast (\varPhi \otimes I) g)
\end{eqnarray*}
which map two-forms on $\overline{\mathcal{C}_T} $ into two- and one-forms, respectively.
By the very construction, we have
   $d (\mathcal{Q} g) = \mathbf{D}^2 g$.

It follows from (\ref{eq.deRham}) and Corollary \ref{c.deRham.Hoelder.tt} that
\begin{equation}
\label{eq.VW.n}
   d\, \mathbf{D}^1 u
 = d \ast (\ast du \wedge u)
 = d \ast (\ast du \wedge  d^\ast (\varPhi \otimes I) du)
 = \mathbf{D}^2 (du)
\end{equation}
for all
   $u \in C^{k,\mathbf{s} (s,\lambda,\delta)} (\overline{\mathcal{C}_T} , \varLambda^1)$.
Equality (\ref{eq.VW.n}) means that the pair $\{ \mathbf{D}^1, \mathbf{D}^2 \}$ is a nonlinear
homomorphism of the de Rham complex at steps $1$ and $2$.
Using this homomorphism allows one to reduce the Navier-Stokes equations to a  nonlinear
Cauchy problem for the heat equation in the scale
   $C^{k\!-\!1,\mathbf{s} (s,\lambda,\delta\!+\!1)} (\overline{\mathcal{C}_T} ,\varLambda^2)$, 
	cf. \cite[Ch. 2]{BerMaj02}. 
We proceed with an explicit description.

\begin{lemma}
\label{l.map.G}
Let
   $s \geq 1$ and $k \geq 2$ be integers,
   $0 < \lambda < \lambda' < 1$,
and let
   $1 \leq \delta < n$ be different from $n-2$ and $n-1$.
Then the nonlinear operators
\begin{equation}
\label{eq.map.G.tilde}
\begin{array}{rrcl}
   \mathcal{Q} \, :
 \!\! & \!\! R^{k,\mathbf{s} (s-1,\lambda,\delta+1)} (\overline{\mathcal{C}_T} , \varLambda^2)
 \!\! & \!\! \to
 \!\! & \!\! C^{k,\mathbf{s} (s-1,\lambda,\delta+2)} (\overline{\mathcal{C}_T} , \varLambda^1),
\\
   \mathbf{D}^2 \, :
 \!\! & \!\! R^{k,\mathbf{s} (s-1,\lambda,\delta+1)} (\overline{\mathcal{C}_T} , \varLambda^2)
 \!\! & \!\! \to
 \!\! & \!\! C^{k-1,\mathbf{s} (s-1,\lambda,\delta+3)} (\overline{\mathcal{C}_T} , \varLambda^2)
\end{array}
\end{equation}
are continuous.
If, in addition, $1 < \delta < n$, then the nonlinear operators
\begin{equation}
\label{eq.map.G.tilde.mathfrak}
\begin{array}{rrcl}
   \mathcal{Q} \, :
 \!\! & \!\! R^{k-1,\mathbf{s} (s,\lambda,\lambda',\delta+1)} (\overline{\mathcal{C}_T} , \varLambda^2)
 \!\! & \!\! \to
 \!\! & \!\! \mathcal{F}^{k,\mathbf{s} (s-1,\lambda,\lambda',\delta+2)} (\overline{\mathcal{C}_T} , \varLambda^1),
\\
   \mathbf{D}^2 \, :
 \!\! & \!\! R^{k-1,\mathbf{s} (s,\lambda,\lambda',\delta+1)} (\overline{\mathcal{C}_T} , \varLambda^2)
 \!\! & \!\! \to
 \!\! & \!\! \mathcal{F}^{k-1,\mathbf{s} (s-1,\lambda,\lambda',\delta+3)} (\overline{\mathcal{C}_T} , \varLambda^2)
\end{array}
\end{equation}
are compact and continuous.
\end{lemma}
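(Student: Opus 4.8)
The plan is to treat the two operators $\mathcal{Q}$ and $\mathbf{D}^2$ essentially in parallel, since $\mathbf{D}^2 = d\, \mathcal{Q}$, so that boundedness/continuity or compactness of $\mathcal{Q}$ into a space $Y$ automatically yields the corresponding property for $\mathbf{D}^2$ into $C^{k,\mathbf{s}(\ldots)}_{\text{(target)}}$ via Lemma \ref{l.diff.oper} (the exterior derivative drops one order of $x\,$-regularity and raises the weight by one, hence the shift from $\delta+2$ to $\delta+3$ and from $C^{k,\cdot}$ to $C^{k-1,\cdot}$). The core is therefore to estimate $\mathcal{Q}g = \ast(\ast g \wedge d^\ast(\varPhi\otimes I)g)$. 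First I would record that, for $g \in R^{k,\mathbf{s}(s-1,\lambda,\delta+1)}(\overline{\mathcal{C}_T},\varLambda^2)$ with $1 \leq \delta < n$, $\delta \neq n-2,n-1$, Corollary \ref{c.weight.Hoelder.Laplace.t} (in the form used in Lemma \ref{l.weight.Hoelder.Laplace.t}) gives $(\varPhi\otimes I)g \in C^{k+1,\mathbf{s}(s-1,\lambda,\delta-1)}(\overline{\mathcal{C}_T},\varLambda^2) \cap \mathcal{D}_\varDelta$, and then Lemma \ref{l.homo.map.t} (applied with the first-order operator $d^\ast$, $k'=1$) yields $d^\ast(\varPhi\otimes I)g \in R^{k,\mathbf{s}(s-1,\lambda,\delta)}(\overline{\mathcal{C}_T},\varLambda^1)$, with continuous dependence on $g$. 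Since $\ast$ is a constant-coefficient algebraic operation, the bilinear estimate of Lemma \ref{l.product} applied to the pair $(g,\, d^\ast(\varPhi\otimes I)g)$, which lie in weighted classes with weight exponents $\delta+1$ and $\delta$ respectively, gives
$$
   \| \mathcal{Q}g \|_{C^{k,\mathbf{s}(s-1,\lambda,2\delta+1)}(\overline{\mathcal{C}_T},\varLambda^1)}
 \leq c\, \| g \|_{C^{k,\mathbf{s}(s-1,\lambda,\delta+1)}(\overline{\mathcal{C}_T},\varLambda^2)}^2 .
$$
Because $\delta \geq 1$ we have $2\delta+1 \geq \delta+2$, so Theorem \ref{t.emb.hoelder.t} embeds $C^{k,\mathbf{s}(s-1,\lambda,2\delta+1)}$ continuously into $C^{k,\mathbf{s}(s-1,\lambda,\delta+2)}$, which gives the target space in \eqref{eq.map.G.tilde}. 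Continuity (as a nonlinear map) follows from the bilinear structure: $\mathcal{Q}g_1 - \mathcal{Q}g_2$ is a sum of two terms, each a $\ast$-product of a difference $g_1-g_2$ or $d^\ast(\varPhi\otimes I)(g_1-g_2)$ against a bounded factor, so the same Lemma \ref{l.product} and Lemma \ref{l.homo.map.t} estimates give a bound of the form $c(\|g_1\|+\|g_2\|)\|g_1-g_2\|$.

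For the compactness assertion \eqref{eq.map.G.tilde.mathfrak} I would assume now the stronger hypothesis $1 < \delta < n$ and work in the $\mathcal{F}$-scale. The key extra input is Lemma \ref{l.mathfrak.compact}: a bounded set $S$ in $R^{k-1,\mathbf{s}(s,\lambda,\lambda',\delta+1)}(\overline{\mathcal{C}_T},\varLambda^2)$ is precompact in $\mathcal{F}^{k,\mathbf{s}(s-1,\lambda,\lambda',\delta'+1)}$ for any $\delta' < \delta$; pick such a $\delta'$ with $\delta' \geq 1$ (possible since $\delta > 1$). Given a sequence $\{g_\nu\} \subset S$, pass to a subsequence converging to some $g$ in the $\delta'$-space. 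Then I would estimate $\mathcal{Q}g_\nu - \mathcal{Q}g$ exactly as in the continuity argument above, but now using that the factor $d^\ast(\varPhi\otimes I)(g_\nu - g)$ is controlled in weight $\delta'$ while the bounded factors are controlled in weight $\delta+1$, so that Lemma \ref{l.product} produces a product-weight $\geq \delta+2$ in both the $\lambda$ and $\lambda'$ norms — one runs the estimate once with $\lambda$ and once with $\lambda'$, just as in the proof of Lemma \ref{l.map.W0}. This shows $\mathcal{Q}g_\nu \to \mathcal{Q}g$ in $\mathcal{F}^{k,\mathbf{s}(s-1,\lambda,\lambda',\delta+2)}(\overline{\mathcal{C}_T},\varLambda^1)$, hence $\mathcal{Q}$ in \eqref{eq.map.G.tilde.mathfrak} is compact; applying $d$ and invoking Lemma \ref{l.diff.oper} once more gives compactness of $\mathbf{D}^2$ into $\mathcal{F}^{k-1,\mathbf{s}(s-1,\lambda,\lambda',\delta+3)}(\overline{\mathcal{C}_T},\varLambda^2)$.

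The main obstacle I anticipate is bookkeeping rather than anything conceptual: one must check that the output of $d^\ast(\varPhi\otimes I)$ genuinely lands in the \emph{range} space $R^{\cdot}(\overline{\mathcal{C}_T})$ (not just the plain weighted H\"older space) so that the moment conditions needed to apply Lemma \ref{l.homo.map.t} at the next stage are satisfied — this is why the hypothesis $\delta < n$ and the exclusion of $n-2, n-1$ appear, and why the Laplace-operator results are quoted in the $\mathcal{F}$-version of Section \ref{s.NS.lin}. A secondary point requiring care is the half-integer loss of regularity in $x$ coming from $\varPhi\otimes I$ (Lemma \ref{l.weight.Hoelder.Laplace.t} gains only one derivative, not two), which forces the asymmetry between the $k$ in the source and the $k$, $k-1$ in the two targets; I would simply carry the indices through and verify at each step that the embedding $C^{k,\mathbf{s}(s-1,\lambda,2\delta+1)} \hookrightarrow C^{k,\mathbf{s}(s-1,\lambda,\delta+2)}$ of Theorem \ref{t.emb.hoelder.t} absorbs the surplus weight $2\delta+1-(\delta+2) = \delta-1 \geq 0$, with strict inequality exactly when $\delta > 1$, which is where the compactness (as opposed to mere boundedness) is gained.
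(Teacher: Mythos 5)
Your proposal is correct and follows essentially the same route as the paper: a bilinear decomposition of $\mathcal{Q}$, the product estimate of Lemma \ref{l.product} combined with the mapping properties of $d^\ast (\varPhi \otimes I)$ and the weight bookkeeping that uses $\delta \geq 1$ for continuity, Lemma \ref{l.mathfrak.compact} with an intermediate exponent $1 \leq \delta' < \delta$ (hence $\delta > 1$) run in both the $\lambda$ and $\lambda'$ norms for compactness, and the factorisation $\mathbf{D}^2 = d\, \mathcal{Q}$ together with Lemma \ref{l.diff.oper} for the second operator. The only cosmetic difference is that you multiply at the full weight $2\delta+1$ and then embed into weight $\delta+2$, whereas the paper splits the weights as $\delta'+1$ and $\delta-\delta'+1$ so that the product lands at $\delta+2$ directly; these are equivalent by Theorem \ref{t.emb.hoelder.t}.
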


\begin{proof}
Indeed, given any elements $g^{(0)}$ and $g$ in
   $C^{k,\mathbf{s} (s-1,\lambda,\delta+1)} (\overline{\mathcal{C}_T} , \varLambda^2)$,
we get
\begin{eqnarray}
\label{eq.G.point}
\lefteqn{
   \mathcal{Q} g
 = \ast (\ast g \wedge d^\ast (\varPhi \otimes I) g)
}
\nonumber
\\
 & = &
   \ast (\ast (g-g^{(0)}) \wedge  d^\ast (\varPhi \otimes I) g^{(0)})
 + \ast (\ast g^{(0)} \wedge d^\ast (\varPhi \otimes I) (g-g^{(0)}))
\nonumber
\\
 & + &
   \ast (\ast (g-g^{(0)}) \wedge  d^\ast (\varPhi \otimes I) (g-g^{(0)}))
 + \ast (\ast g^{(0)} \wedge d^\ast (\varPhi \otimes I) g^{(0)}).
\nonumber
\\
\end{eqnarray}

Fix $g^{(0)}$ and any real $\delta'$ satisfying $1 \leq \delta' \leq \delta$.
From (\ref{eq.G.point}) and Lemma \ref{l.product} it follows that
\begin{eqnarray}
\label{eq.G.est.0}
\lefteqn{
   \| \mathcal{Q} g - \mathcal{Q} g^{(0)}
   \|_{C^{k,\mathbf{s} (s-1,\lambda,\delta+2)} (\cdot)}
}
\nonumber
\\
 & \leq &
   c\,
   \Big(
   \| g - g^{(0)} \|_{C^{k,\mathbf{s} (s-1,\lambda,\delta'+1)} (\cdot)}
   \|  d^\ast (\varPhi \otimes I) g^{(0)} \|_{C^{k,\mathbf{s} (s-1,\lambda,\delta-\delta'+1)} (\cdot)}
\nonumber
\\
 &  &
 +\
   \| g^{(0)} \|_{C^{k,\mathbf{s} (s-1,\lambda,\delta-\delta'+2)} (\cdot)}
   \| d^\ast (\varPhi \otimes I) (g - g^{(0)}) \|_{C^{k,\mathbf{s} (s-1,\lambda,\delta')} (\cdot)}
\nonumber
\\
 &  &
 +\
   \| g - g^{(0)} \|_{C^{k,\mathbf{s} (s-1,\lambda,\delta'+1)} (\cdot)}
   \| d^\ast (\varPhi \otimes I) (g - g^{(0)})
   \|_{C^{k,\mathbf{s} (s-1,\lambda,\delta-\delta'+1)} (\cdot)}
   \Big)
\nonumber
\\
 & \leq &
   c\,
   \Big(
   \| g - g^{(0)} \|_{C^{k,\mathbf{s} (s-1,\lambda,\delta'+1)} (\cdot)}
   \| g^{(0)} \|_{C^{k,\mathbf{s} (s-1,\lambda,\delta-\delta'+2)} (\cdot)}
\nonumber
\\
 &  &
 +\
   \| g^{(0)} \|_{C^{k,\mathbf{s} (s-1,\lambda,\delta-\delta'+2)} (\cdot)}
   \| g - g^{(0)} \|_{C^{k,\mathbf{s} (s-1,\lambda,\delta'+1)} (\cdot)}
\nonumber
\\
 &  &
 +\
   \| g - g^{(0)} \|_{C^{k,\mathbf{s} (s-1,\lambda,\delta'+1)} (\cdot)}
   \| g - g^{(0)} \|_{C^{k,\mathbf{s} (s-1,\lambda,\delta-\delta'+2)} (\cdot)}
   \Big)
\nonumber
\\
 & \leq &
   c\,
   \Big(
   \| g - g^{(0)} \|_{C^{k,\mathbf{s} (s-1,\lambda,\delta'+1)} (\cdot)}
   \| g^{(0)} \|_{C^{k,\mathbf{s} (s-1,\lambda,\delta+1)} (\cdot)}
\nonumber
\\
 &  &
 +\
   \| g^{(0)} \|_{C^{k,\mathbf{s} (s-1,\lambda,\delta+1)} (\cdot)}
   \| g - g^{(0)} \|_{C^{k,\mathbf{s} (s-1,\lambda,\delta'+1)} (\cdot)}
\nonumber
\\
 &  &
 +\
   \| g - g^{(0)} \|_{C^{k,\mathbf{s} (s-1,\lambda,\delta'+1)} (\cdot)}
   \| g - g^{(0)} \|_{C^{k,\mathbf{s} (s-1,\lambda,\delta+1)} (\cdot)}
   \Big)
\nonumber
\\
\end{eqnarray}
with $c$ a constant independent of $g^{(0)}$ and $g$,
   the last inequality being due to the fact that $\delta - \delta' + 2 \leq \delta + 1$ if
   and only if $1 \leq \delta'$.
By abuse of notation we omit the domain and target bundles in designations.
Choosing $\delta' = \delta$ we deduce that if
   $g_\nu \to g^{(0)}$ in the space $C^{k,\mathbf{s} (s-1,\lambda,\delta+1)} (\cdot)$
then
   $\mathcal{Q} g_\nu \to \mathcal{Q} g^{(0)}$ in the space
   $C^{k,\mathbf{s} (s-1,\lambda,\delta+2)} (\cdot)$.
Moreover Lemma \ref{l.diff.oper} implies that
   the sequence $d \mathcal{Q} g_\nu$ converges to $d \mathcal{Q} g^{(0)}$ in the space
   $C^{k-1,\mathbf{s} (s-1,\lambda,\delta+3)} (\cdot)$.
Thus the nonlinear mappings (\ref{eq.map.G.tilde}) are continuous, as desired.

If moreover $\delta > 1$ then there is a real $\delta'$ such that
   $1 \leq \delta' < \delta$.
According to Lemma \ref{l.mathfrak.compact}, if $S$ is a bounded set in
$$
   \mathcal{F}^{k-1,\mathbf{s} (s,\lambda,\lambda',\delta+1)} (\cdot)
 = C^{k,\mathbf{s} (s,\lambda,\delta+1)} (\cdot) \cap
   C^{k-1,\mathbf{s} (s,\lambda',\delta+1)} (\cdot),
$$
then there is a sequence $\{ g_\nu \} \subset S$ converging in the space
$$
   \mathcal{F}^{k,\mathbf{s} (s-1,\lambda,\lambda',\delta'+1)} (\cdot)
 = C^{k+1,\mathbf{s} (s-1,\lambda,\delta'+1)} (\cdot) \cap
   C^{k,\mathbf{s} (s-1,\lambda',\delta'+1)} (\cdot)
$$
to a limit $g^{(0)}$.
Estimate (\ref{eq.G.est.0}) yields
$$
\begin{array}{rcl}
   \| \mathcal{Q} g_\nu - \mathcal{Q} g^{(0)} \|_{C^{k+1,\mathbf{s} (s-1,\lambda,\delta+2)} (\cdot)}
 & \to
 & 0,
\\
   \| \mathcal{Q} g_\nu - \mathcal{Q} g^{(0)} \|_{C^{k,\mathbf{s} (s-1,\lambda',\delta+2)} (\cdot)}
 & \to
 & 0,
\end{array}
$$
as $\nu \to \infty$.
On summing up we see that the sequence $\mathcal{Q} g_\nu$ converges to $\mathcal{Q} g^{(0)}$
in the norm of
   $\mathcal{F}^{k,\mathbf{s} (s-1,\lambda,\lambda',\delta+2)} (\cdot)$.
Hence, the mapping $\mathcal{Q}$ of (\ref{eq.map.G.tilde.mathfrak}) is compact.
Finally, Lemma \ref{l.diff.oper} implies that the mapping $\mathbf{D}^2$ of (\ref{eq.map.G.tilde.mathfrak})
is compact, too, because it factors continuously through $\mathcal{Q}$.
\end{proof}

\begin{lemma}
\label{l.sol.psi.n}
Let
   $s \geq 1$ and $k \geq 2$ be integers,
   $0 < \lambda <1$,
and let
   $0 < \delta < n$ be different from $n-2$ and $n-1$.
If
   $g_0 \in  R^{k-1,\mathbf{s} (s,\lambda,\delta+1)} (\overline{\mathcal{C}_T} , \varLambda^2) \cap
             \mathcal{S}_d$,
then any solution
   $g \in  R^{k-1,\mathbf{s} (s,\lambda,\delta+1)} (\overline{\mathcal{C}_T} , \varLambda^2)$
to the equation
\begin{equation}
\label{eq.heat.pseudo.n}
   g + \varPsi_\mu \mathbf{D}^2 g = g_0
\end{equation}
automatically satisfies $dg\, (\cdot,t) = 0$ in $\mathbb{R}^n$ for each $t \in [0,T]$.
\end{lemma}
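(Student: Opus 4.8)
The plan is to mimic the proof of Lemma \ref{l.sol.psi}, exploiting that $\mathbf{D}^2 g$ is, by construction, an exact two-form in the space variable. First I would observe that $\mathbf{D}^2 g = d\, \mathcal{Q} g$ with $\mathcal{Q} g = \ast (\ast g \wedge d^\ast (\varPhi \otimes I) g)$, so $\mathbf{D}^2 g$ is a coboundary for the de Rham differential $d$ acting on forms over $\mathbb{R}^n$ for each fixed $t$. Consequently $d\, \mathbf{D}^2 g\, (\cdot,t) = 0$ in $\mathbb{R}^n$, either trivially because $d^2 = 0$ when $n = 2$ (in which case $\varLambda^3 = 0$ and there is nothing to prove), or because $d (d \mathcal{Q} g) = 0$ when $n \geq 3$. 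The hypotheses $s \geq 1$, $k \geq 2$ and $1 \leq \delta < n$ different from $n-2$, $n-1$ ensure that Lemma \ref{l.map.G} applies, so $\mathcal{Q} g$ and $\mathbf{D}^2 g$ lie in the appropriate weighted H\"older spaces and all the manipulations below are legitimate.

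Next I would apply the differential $d$ to equation (\ref{eq.heat.pseudo.n}). Since by (\ref{eq.commute}) and the fact that convolution in $x$ with $\psi_\mu$ commutes with $d$, we have $d \varPsi_\mu = \varPsi_\mu d$, and since $d\, \mathbf{D}^2 g = 0$ as just noted, applying $d$ to $g + \varPsi_\mu \mathbf{D}^2 g = g_0$ yields
$$
   d g + \varPsi_\mu\, (d\, \mathbf{D}^2 g) = d g_0,
$$
that is, $d g = d g_0$ in $\overline{\mathcal{C}_T}$. By assumption $g_0 \in R^{k-1,\mathbf{s} (s,\lambda,\delta+1)} (\overline{\mathcal{C}_T}, \varLambda^2) \cap \mathcal{S}_d$, so $d g_0\, (\cdot,t) = 0$ in $\mathbb{R}^n$ for each $t \in [0,T]$; hence $d g\, (\cdot,t) = 0$ in $\mathbb{R}^n$ for each $t \in [0,T]$, which is exactly the assertion.

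There is essentially no serious obstacle here: the only point requiring a line of care is the justification that $d$ may be passed through the volume parabolic potential $\varPsi_\mu$ and through the operators defining $\mathbf{D}^2$, which is where the membership statements of Lemma \ref{l.map.G} (and the commutation relations (\ref{eq.deRham}), (\ref{eq.commute})) are invoked so that all terms are genuine forms in the relevant spaces and the formal computation $d (d \mathcal{Q} g) = 0$ is meaningful rather than merely symbolic. I would also remark explicitly on the degenerate case $n = 2$, where $\varLambda^3 = 0$ forces $d g = 0$ automatically and the argument is vacuous. Modulo these routine remarks, the proof is a one-paragraph diagram chase on the de Rham complex, parallel to Lemma \ref{l.sol.psi}.
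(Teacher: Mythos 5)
Your proof is correct and follows essentially the same route as the paper's own: apply $d$ to (\ref{eq.heat.pseudo.n}), use $d\,\varPsi_\mu = \varPsi_\mu\, d$ together with $d\,\mathbf{D}^2 g = d (d\, \mathcal{Q} g) = 0$ (trivially for $n=2$, since $\varLambda^3 = 0$ over $\mathbb{R}^2$), and conclude $dg = dg_0 = 0$ from $g_0 \in \mathcal{S}_d$. The only cosmetic slip is that you quote the weight hypothesis as $1 \leq \delta < n$ when invoking Lemma \ref{l.map.G}, whereas the lemma assumes $0 < \delta < n$; this does not affect the argument.
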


\begin{proof}
Indeed, if $n = 2$, then $dg = 0$ because $d^2 = 0$.
If $n \geq 3$, it follows from (\ref{eq.deRham}) and (\ref{eq.commute}) that
\begin{eqnarray*}
   d \left( g + \varPsi_\mu \mathbf{D}^2 g \right)
 & = &
   dg + \varPsi_\mu d \mathbf{D}^2 g
\\
 & = &
   dg
\\
 & = &
   d g_0,
\end{eqnarray*}
and so $dg  = 0$.
\end{proof}

Our next result interprets  Lemma \ref{l.reduce.psi} within the context of (nonlinear)
Navier-Stokes equations.

\begin{lemma}
\label{l.reduce.psi.n}
Suppose that
   $s \geq 1$ and $k \geq 2$ are integers,
   $0 < \lambda <1$,
and
   $0 < \delta < n$ is different from $n-2$ and $n-1$.
Let moreover $F = (f,u_0)^T$ be an arbitrary pair of
$$
   C^{k,\mathbf{s} (s-1,\lambda,\delta)} (\overline{\mathcal{C}_T} , \varLambda^1)
 \times
   C^{2s+k,\lambda,\delta} (\mathbb{R}^n, \varLambda^1) \cap \mathcal{S}_{d^\ast}.
$$

1)
If
$
   U
 \! = \! (u,p)^T
 \in
   C^{k-1,\mathbf{s} (s,\lambda,\delta)} (\overline{\mathcal{C}_T} ,\varLambda^1) \cap \mathcal{S}_{d^\ast}
 \times
   C^{k-1,\mathbf{s} (s-1,\lambda,\delta-1)} (\overline{\mathcal{C}_T})
$
satisfies
   $du \in R^{k-1,\mathbf{s} (s,\lambda,\delta+1)} (\overline{\mathcal{C}_T} ,\varLambda^2)$
and
\begin{equation}
\label{eq.NS.lin.psi.n}
   \mathcal{A} U = F
\end{equation}
then $g = du$ is a solution to equation (\ref{eq.heat.pseudo.n}) with
   $g_0 = \varPsi_{\mu,0} du_0 + \varPsi_\mu df$.

2)
Conversely, if
$
   g
 \in
   R^{k-1,\mathbf{s} (s,\lambda,\delta+1)} (\overline{\mathcal{C}_T} ,\varLambda^2)
$
is a solution to equation (\ref{eq.heat.pseudo.n}) with
$
   g_0
 = \varPsi_{\mu,0} du_0 + \varPsi_\mu df
 \in R^{k-1,\mathbf{s} (s,\lambda,\delta+1)} (\overline{\mathcal{C}_T} ,\varLambda^2)
$
then the pair
$$
\begin{array}{rcl}
   u
 & =
 & d^\ast (\varPhi \otimes I) g,
\\
   p
 & =
 & d^\ast (\varPhi \otimes I) \left( f - H_\mu u - \mathbf{D}^1 u \right)
\end{array}
$$
belongs to
$
   C^{k-1,\mathbf{s} (s,\lambda,\delta)} (\overline{\mathcal{C}_T} ,\varLambda^1) \cap \mathcal{S}_{d^\ast}
 \times
   C^{k-1,\mathbf{s} (s-1,\lambda,\delta-1)} (\overline{\mathcal{C}_T}),
$
satisfies (\ref{eq.NS.lin.psi.n}) and
   $du \in R^{k-1,\mathbf{s} (s,\lambda,\delta+1)} (\overline{\mathcal{C}_T} ,\varLambda^2)$.
\end{lemma}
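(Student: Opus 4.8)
The plan is to mirror the proof of Lemma~\ref{l.reduce.psi} step by step, replacing the linearised operator $\mathcal{A}_{V_0}$ by the full Navier-Stokes operator $\mathcal{A}$ and the homomorphism $\{V_0,W_0\}$ by the nonlinear homomorphism $\{\mathbf{D}^1,\mathbf{D}^2\}$ of the de Rham complex at steps $1$ and $2$ provided by formula (\ref{eq.VW.n}). For part~1), I would start from a solution $U=(u,p)^T$ of (\ref{eq.NS.lin.psi.n}), which by definition means $H_\mu u + \mathbf{D}^1 u + dp = f$ in $\overline{\mathcal{C}_T}$ and $\gamma_0 u = u_0$ on $\mathbb{R}^n$. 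Applying $d$ to the impulse equation and using the commutation relations (\ref{eq.commute}) together with $d^2=0$ and (\ref{eq.VW.n}), the pressure term disappears and one obtains
\begin{equation*}
   H_\mu\, du + \mathbf{D}^2 (du) = df
   \quad\text{in}\ \overline{\mathcal{C}_T},
   \qquad
   \gamma_0\, du = du_0
   \quad\text{on}\ \mathbb{R}^n,
\end{equation*}
the initial condition being a consequence of $d\,\gamma_0 = \gamma_0\, d$. Then I would invoke Lemma~\ref{l.homo.map.t} to place $du$ in $R^{k-1,\mathbf{s}(s,\lambda,\delta+1)}(\overline{\mathcal{C}_T},\varLambda^2)$, Lemma~\ref{l.map.G} to control $\mathbf{D}^2(du)$, and then Lemmata~\ref{l.bound.heat.initial.hoelder}, \ref{l.heat.key1} and Theorem~\ref{t.heat.key2} to rewrite this Cauchy problem in integral form as $g + \varPsi_\mu \mathbf{D}^2 g = \varPsi_{\mu,0} du_0 + \varPsi_\mu df$, i.e. exactly (\ref{eq.heat.pseudo.n}) with $g=du$ and $g_0=\varPsi_{\mu,0}du_0+\varPsi_\mu df$; closedness of $g$ is automatic.

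For the converse part~2), I would reverse the construction. Given a solution $g$ of (\ref{eq.heat.pseudo.n}) in $R^{k-1,\mathbf{s}(s,\lambda,\delta+1)}(\overline{\mathcal{C}_T},\varLambda^2)$, first note that $g_0=\varPsi_{\mu,0}du_0+\varPsi_\mu df$ lies in $R^{k-1,\mathbf{s}(s,\lambda,\delta+1)}(\overline{\mathcal{C}_T},\varLambda^2)\cap\mathcal{S}_d$ by Lemmata~\ref{l.homo.map} and \ref{l.homo.map.t} (using $d\varPsi_{\mu,0}=\varPsi_{\mu,0}d$, $d\varPsi_\mu=\varPsi_\mu d$ and $0<\delta<n$), and therefore $dg(\cdot,t)=0$ for all $t$ by Lemma~\ref{l.sol.psi.n}. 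Then Corollary~\ref{c.deRham.Hoelder.t} produces $u=d^\ast(\varPhi\otimes I)g$ in $R^{k-1,\mathbf{s}(s,\lambda,\delta)}(\overline{\mathcal{C}_T},\varLambda^1)\cap\mathcal{S}_{d^\ast}$ with $du=g$. Applying Lemma~\ref{l.bound.heat.initial.hoelder} to $u$ and using (\ref{eq.VW.n}) one checks that $d(H_\mu u + \mathbf{D}^1 u - f)=0$ and $d(\gamma_0 u - u_0)=0$; since $H_\mu u+\mathbf{D}^1 u-f$ lies in $C^{k-1,\mathbf{s}(s-1,\lambda,\delta)}(\overline{\mathcal{C}_T},\varLambda^1)\cap\mathcal{S}_d$ (use Lemmata~\ref{l.diff.oper}, \ref{l.map.G} and the smoothness hypotheses on the coefficients, together with $\delta<n$), Corollary~\ref{c.deRham.Hoelder.t} gives $p=d^\ast(\varPhi\otimes I)(f-H_\mu u-\mathbf{D}^1 u)$ in $C^{k-1,\mathbf{s}(s-1,\lambda,\delta-1)}(\overline{\mathcal{C}_T})$ with $H_\mu u+\mathbf{D}^1 u+dp=f$. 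Finally, from $d(\gamma_0 u-u_0)=0$ and $d^\ast(\gamma_0 u-u_0)=0$ Corollary~\ref{c.deRham.Hoelder} forces $\gamma_0 u=u_0$, so $U=(u,p)^T$ solves (\ref{eq.NS.lin.psi.n}).

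The routine but nontrivial part is the bookkeeping of weight exponents and H\"older indices at each invocation of the potential estimates: one must verify that $\mathbf{D}^2(du)$ actually lands in a space to which the volume potential $\varPsi_\mu$ applies with the gain of two derivatives (Theorem~\ref{t.heat.key2}), and that the loss of one derivative in $x$ coming from $\varPhi\otimes I$ (Lemma~\ref{l.weight.Hoelder.Laplace.t}) is exactly what produces the gap between $k$ and $k-1$ alluded to in the remark following the lemma statement. The main obstacle I anticipate is keeping the nonlinear terms in the admissible range: one needs $\delta$ in the interval $(0,n)$, minus the prohibited values $n-2,n-1$, so that $d^\ast(\varPhi\otimes I)g$ and the correction arising from $\mathbf{D}^1 u$ remain in the range of $d$ with the prescribed weights; this is where the hypotheses on $\delta$ and on the regularity of the (implicitly fixed) coefficient forms are used, and one has to be careful that the nonlinear operator $\mathbf{D}^1$ does not raise the weight beyond what Corollary~\ref{c.deRham.Hoelder.t} can absorb. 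Once these exponent inequalities are checked, the two parts follow formally from the results already established.
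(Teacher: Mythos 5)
Your proposal is correct and follows essentially the same route as the paper's own proof: apply $d$ to the impulse equation, use (\ref{eq.commute}) and (\ref{eq.VW.n}) to reduce to the Cauchy problem for $du$ and then to the integral equation via Lemmata \ref{l.homo.map.t}, \ref{l.bound.heat.initial.hoelder}, \ref{l.heat.key1} and Theorem \ref{t.heat.key2}; and conversely recover $u$ and $p$ with $d^\ast(\varPhi\otimes I)$ via Lemma \ref{l.sol.psi.n} and Corollaries \ref{c.deRham.Hoelder.t}, \ref{c.deRham.Hoelder}. The only cosmetic slip is your reference to "coefficient forms": unlike Lemma \ref{l.reduce.psi}, the nonlinear term $\mathbf{D}^1 u$ here is quadratic in $u$ itself and is controlled directly by Lemma \ref{l.product}, not by hypotheses on fixed forms $g^{(0)}$, $v^{(1)}$, $v^{(2)}$.
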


\begin{proof}
1)
Let $U = (u,p)^T$ be a solution to (\ref{eq.NS.lin.psi.n}).
From (\ref{eq.commute}) it follows that
   $d H_\mu u = H_\mu du$,
and so using (\ref{eq.VW.n}) we obtain
$$
   \Big\{
   \begin{array}{rclll}
     H_\mu du (\cdot,t) +  \mathbf{D}^2 du (\cdot,t)
   & =
   & df (\cdot,t)
   & \mbox{on}
   & \mathbb{R}^n,
\\
     du (\cdot,0)
   & =
   & d u_0
   & \mbox{on}
   & \mathbb{R}^{n}
\end{array}
$$
for all $t \in [0,T]$,
   the last equality being a consequence of $d\, \gamma_0 u = \gamma_0\, du$.
It remains to apply
   Lemmata \ref{l.homo.map.t}, \ref{l.bound.heat.initial.hoelder}, \ref{l.heat.key1}
   and Theorem \ref{t.heat.key2}
to see that
   $g = du$
is of class $R^{k-1,\mathbf{s} (s,\lambda,\delta+1)} (\overline{\mathcal{C}_T})$ and satisfies
equation (\ref{eq.heat.pseudo.n}).
(Obviously, $g$ is closed in the layer.)

2)
Set
   $g_0 = \varPsi_{\mu,0} d u_0 + \varPsi_\mu df$.
Since
   $d \varPsi_{\mu,0} = \varPsi_{\mu,0} d$ and
   $d \varPsi_\mu = \varPsi_\mu d$,
it follows by Lemmata \ref{l.homo.map.t} and \ref{l.homo.map} that
$
   g_0
 \in
   R^{k-1,\mathbf{s} (s,\lambda,\delta+1)} (\overline{\mathcal{C}_T} ,\varLambda^2) \cap \mathcal{S}_d,
$
if $0 < \delta < n$.
Hence, any solution $g$ to (\ref{eq.heat.pseudo.n}) is in
   $R^{k-1,\mathbf{s} (s,\lambda,\delta+1)} (\overline{\mathcal{C}_T} ,\varLambda^2) \cap \mathcal{S}_d$
because of Lemma \ref{l.sol.psi.n}.

Now, Corollary \ref{c.deRham.Hoelder.t} implies that
   $u = d^\ast (\varPhi \otimes I) g$
is an one-form with coefficients in
   $R^{k-1,\mathbf{s} (s,\lambda,\delta)} (\overline{\mathcal{C}_T})$
satisfying
$
   du
 = g
 \in
   R^{k-1,\mathbf{s} (s,\lambda,\delta+1)} (\overline{\mathcal{C}_T} ,\varLambda^2)
$
in the layer.
Using Lemma \ref{l.bound.heat.initial.hoelder} and formula (\ref{eq.VW.n}) we see that
$$
   \Big\{
   \begin{array}{rclll}
      d \left( H_\mu u + \mathbf{D}^1 u - f \right)
   & =
   & 0
   & \mbox{in}
   & \overline{\mathcal{C}_T} ,
\\
     d \left( \gamma_0 u - u_0 \right)
   & =
   & 0
   & \mbox{on}
   & \mathbb{R}^n.
   \end{array}
$$
As $0 < \delta < n$ and
$$
   H_\mu u + \mathbf{D}^1 u - f
 \in
   C^{k-1,\mathbf{s} (s-1,\lambda,\delta)} (\overline{\mathcal{C}_T} ,\varLambda^1) \cap \mathcal{S}_d,
$$
an application of Corollary \ref{c.deRham.Hoelder.t} shows that
$
   p = d^\ast (\varPhi \otimes I) \left( f - H_\mu u - \mathbf{D}^1 u \right)
$
belongs to the space
$
   C^{k-1,\mathbf{s} (s-1,\lambda,\delta-1)} (\overline{\mathcal{C}_T})
$
and it satisfies
$$
   H_\mu u + \mathbf{D}^1 u + dp = f
$$
in $\overline{\mathcal{C}_T} $.
Finally, since
$$
\begin{array}{rcl}
   d \left( \gamma_0 u - u_0 \right)
 & =
 & 0,
\\
   d^\ast \left( \gamma_0 u - u_0 \right)
 & =
 & 0,
\end{array}
$$
we get $\gamma_0 u = u_0$ in all of $\mathbb{R}^n$ because of Corollary \ref{c.deRham.Hoelder}.
Hence, the pair $U = (u,p)^T$ satisfies (\ref{eq.NS.lin.psi.n}).
\end{proof}

We are already in a position to state an open mapping theorem for the reduced equation
(\ref{eq.heat.pseudo.n}).

\begin{theorem}
\label{t.OpenMap.psi}
Assume that
   $s$ and $k \geq 2$ are positive integers,
   $0 < \lambda < \lambda' < 1$,
and
   $n/2 < \delta < n$ is different from $n-2$ and $n-1$.
Then $\varPsi_\mu \mathbf{D}^2$ is a compact continuous selfmapping of the space
$
   R^{k-1,\mathbf{s} (s,\lambda,\lambda',\delta+1)} (\overline{\mathcal{C}_T} , \varLambda^2) \cap
   \mathcal{S}_d.
$
Moreover, the mapping
\begin{equation}
\label{eq.heat.pseudo.nn}
   I \! + \! \varPsi_\mu \mathbf{D}^2 :
   R^{k\!-\!1,\mathbf{s} (s,\lambda,\lambda',\delta\!+\!1)} (\overline{\mathcal{C}_T} , \varLambda^2) \cap
   \mathcal{S}_d
 \! \to \!
   R^{k\!-\!1,\mathbf{s} (s,\lambda,\lambda',\delta\!+\!1)} (\overline{\mathcal{C}_T} , \varLambda^2) \cap
   \mathcal{S}_d
\end{equation}
is Fredholm, injective, and open.
\end{theorem}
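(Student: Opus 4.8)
The plan is to deduce the statement by assembling mapping properties already at hand, the one new ingredient being the identification of the Fréchet derivative of the quadratic operator $\mathbf{D}^2$. First I would record that $\varPsi_\mu \mathbf{D}^2$ is a compact continuous selfmapping of $R^{k-1,\mathbf{s} (s,\lambda,\lambda',\delta+1)} (\overline{\mathcal{C}_T}, \varLambda^2) \cap \mathcal{S}_d$. Since $n \geq 2$ and $n/2 < \delta$ give $\delta > 1$, Lemma~\ref{l.map.G} shows that $\mathbf{D}^2$ carries $R^{k-1,\mathbf{s} (s,\lambda,\lambda',\delta+1)} (\overline{\mathcal{C}_T}, \varLambda^2)$ compactly and continuously into $\mathcal{F}^{k-1,\mathbf{s} (s-1,\lambda,\lambda',\delta+3)} (\overline{\mathcal{C}_T}, \varLambda^2)$. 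Composing with the volume parabolic potential $\varPsi_\mu$, which by Theorem~\ref{t.heat.key2} — valid for the scale $\mathcal{F}^{k,\mathbf{s} (s,\lambda,\lambda',\delta)}$ by the remark following Corollary~\ref{c.NSL.hoelder0} — maps $\mathcal{F}^{k-1,\mathbf{s} (s-1,\lambda,\lambda',\delta+3)}$ boundedly into $\mathcal{F}^{k-1,\mathbf{s} (s,\lambda,\lambda',\delta+1)}$, and invoking Lemmata~\ref{l.homo.map.t} and \ref{l.sol.psi.n} (together with $d \varPsi_\mu = \varPsi_\mu d$) to see that the image is $d$-closed and lies in the range of the Laplacian, exactly as in the opening of the proof of Corollary~\ref{c.invertible.psi}, I would obtain the asserted compactness and continuity. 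In particular $I + \varPsi_\mu \mathbf{D}^2$ is a compact perturbation of the identity on the Banach space in question.

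Next I would compute the Fréchet derivative. The mapping $g \mapsto \mathbf{D}^2 g = d \ast (\ast g \wedge d^\ast (\varPhi \otimes I) g)$ is a bounded homogeneous quadratic polynomial, hence of class $C^\infty$, and its derivative at a form $g^{(0)}$ is $(\mathbf{D}^2)' (g^{(0)})\, h = d \ast (\ast h \wedge d^\ast (\varPhi \otimes I) g^{(0)}) + d \ast (\ast g^{(0)} \wedge d^\ast (\varPhi \otimes I) h)$, which is precisely the operator $W_0$ of Section~\ref{s.NS.lin} attached to the coefficients $g^{(0)}$ and $v^{(1)} := d^\ast (\varPhi \otimes I) g^{(0)}$. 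Thus the Fréchet derivative of $I + \varPsi_\mu \mathbf{D}^2$ at $g^{(0)}$ equals $I + \varPsi_\mu W_0$. It then remains to check that, as $g^{(0)}$ runs over $R^{k-1,\mathbf{s} (s,\lambda,\lambda',\delta+1)} (\overline{\mathcal{C}_T}, \varLambda^2)$, the pair $(g^{(0)}, v^{(1)})$ satisfies the regularity assumptions of Corollary~\ref{c.invertible.psi}: for $g^{(0)}$ this follows from the continuous embedding $\mathcal{F}^{k-1,\mathbf{s} (s,\lambda,\lambda',\delta+1)} \hookrightarrow \mathcal{F}^{k,\mathbf{s} (s-1,\lambda,\lambda',\delta+1)}$, and for $v^{(1)}$ one chains Lemma~\ref{l.weight.Hoelder.Laplace.t} (the Newton potential $\varPhi \otimes I$ gains two units of weight and one $x$-derivative), the mapping properties of $d^\ast$ from Lemma~\ref{l.diff.oper}, and the same embeddings — exactly as in the proof of Lemma~\ref{l.reduce.psi.n}; this is where the hypotheses $n/2 < \delta < n$ and $\delta \neq n-2, n-1$ are used (they guarantee $\delta > 1$ and $\delta+1-n \notin \mathbb{Z}_{\geq 0}$). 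Granting this, Corollary~\ref{c.invertible.psi} shows that $I + \varPsi_\mu W_0$ is a topological isomorphism of $R^{k-1,\mathbf{s} (s,\lambda,\lambda',\delta+1)} (\overline{\mathcal{C}_T}, \varLambda^2) \cap \mathcal{S}_d$, in particular a linear Fredholm operator of index zero. Hence $I + \varPsi_\mu \mathbf{D}^2$ is a nonlinear Fredholm operator (of index zero) in the sense of \cite{Sm65}; and since it is of class $C^1$ with everywhere invertible derivative, the inverse mapping theorem in Banach spaces makes it a local homeomorphism, hence an open map.

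To prove injectivity, suppose $(I + \varPsi_\mu \mathbf{D}^2) g_1 = (I + \varPsi_\mu \mathbf{D}^2) g_2 = g_0$ with $g_1, g_2 \in R^{k-1,\mathbf{s} (s,\lambda,\lambda',\delta+1)} (\overline{\mathcal{C}_T}, \varLambda^2) \cap \mathcal{S}_d$, and set $u_i := d^\ast (\varPhi \otimes I) g_i$. By Lemma~\ref{l.sol.psi.n} and Corollary~\ref{c.deRham.Hoelder.t} one has $d u_i = g_i$ and $d^\ast u_i = 0$. Applying $H_\mu$ to the two identities, using $H_\mu \varPsi_\mu = I$ and $\gamma_0 \varPsi_\mu = 0$ together with (\ref{eq.deRham}), (\ref{eq.commute}) and the homomorphism property (\ref{eq.VW.n}), I would obtain $d (H_\mu u_i + \mathbf{D}^1 u_i) = H_\mu g_0$ for $i = 1, 2$, and that $\gamma_0 u_1$, $\gamma_0 u_2$ are harmonic one-forms on $\mathbb{R}^n$ with the same exterior derivative and co-differential zero, hence — vanishing at the infinitely distant point because $\delta > 0$ — equal. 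Subtracting, the difference $w := u_1 - u_2$, paired with the pressure recovered by $d^\ast (\varPhi \otimes I)$ from the closed form $H_\mu w + \mathbf{D}^1 u_1 - \mathbf{D}^1 u_2$, satisfies an equation of exactly the type (\ref{eq.NS.D0}) used in the proof of Theorem~\ref{t.NS.unique}, with $d^\ast w = 0$ and $\gamma_0 w = 0$; here the relevant products are integrable over $\mathbb{R}^n$ for each $t$ because $\delta > n/2$ (Lemma~\ref{l.emb.L2t}, Lemma~\ref{l.product}, Remark~\ref{r.Energy}). The energy estimate carried out there forces $w \equiv 0$, so $g_1 = d u_1 = d u_2 = g_2$. (Equivalently, writing $g_0 = \varPsi_{\mu,0}\, d u_0 + \varPsi_\mu\, d f$ for suitable data $(f, u_0)$ one may invoke Lemma~\ref{l.reduce.psi.n} and Theorem~\ref{t.NS.unique} directly.)

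The step I expect to be the main obstacle is the bookkeeping inside the second paragraph: verifying that the linearisation $I + \varPsi_\mu W_0$ obtained by differentiating at an arbitrary $g^{(0)}$ genuinely meets the hypotheses of Corollary~\ref{c.invertible.psi}, with all weight, smoothness and Hölder exponents tracked correctly through the Newton and parabolic potentials and the chain of continuous embeddings. Everything else is a matter of assembling results already established.
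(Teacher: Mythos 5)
Your proposal is correct and follows essentially the same route as the paper: compactness and continuity of $\varPsi_\mu \mathbf{D}^2$ from Lemma \ref{l.map.G} and Theorem \ref{t.heat.key2}, injectivity by reducing to the Navier--Stokes uniqueness Theorem \ref{t.NS.unique} via Lemma \ref{l.reduce.psi.n} (which is exactly the paper's Lemma \ref{l.unique.psi.n}, merely unwound into the underlying energy argument in your main text), identification of the Fr\'echet derivative with $I + \varPsi_\mu W_0$ and its invertibility via Corollary \ref{c.invertible.psi}, and openness from the inverse mapping theorem in Banach spaces. The weight and smoothness bookkeeping you flag as the delicate point is handled in the paper exactly as you describe, by the embeddings $\mathcal{F}^{k-1,\mathbf{s}(s,\lambda,\lambda',\delta+1)} \hookrightarrow \mathcal{F}^{k,\mathbf{s}(s-1,\lambda,\lambda',\delta+1)}$ and the mapping properties of $d^\ast (\varPhi \otimes I)$.
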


\begin{proof}
First we note that Lemmata \ref{l.homo.map.t}, \ref{l.map.G} and \ref{l.sol.psi.n} imply that the
operator $\varPsi_\mu \mathbf{D}^2$ maps
$
   R^{k-1,\mathbf{s} (s,\lambda,\lambda',\delta+1)} (\overline{\mathcal{C}_T} , \varLambda^2) \cap
   \mathcal{S}_d
$
continuously into the space itself.
Since we have $\mathbf{D}^2 = d \mathcal{Q}$, the continuity and compactness of the mapping
   $\varPsi_\mu \mathbf{D}^2$
follow from
   Lemma \ref{l.map.G} and
   Theorem \ref{t.heat.key2}.
We now turn to the one-to-one property of the mapping (\ref{eq.heat.pseudo.nn}).

\begin{lemma}
\label{l.unique.psi.n}
Let
   $s \geq 1$ and $k \geq 2$ be integers,
   $0 < \lambda < 1$,
and
   $n/2 < \delta < n$ be different from $n-2$ and $n-1$.
If
$
   g_0
 \in
   R^{k-1,\mathbf{s} (s,\lambda,\delta)} (\overline{\mathcal{C}_T} , \varLambda^2) \cap
   \mathcal{S}_d
$
then equation (\ref{eq.heat.pseudo.n}) has no more than one solution in
$
   R^{k-1,\mathbf{s} (s,\lambda,\delta+1)} (\overline{\mathcal{C}_T} , \varLambda^2).
$
\end{lemma}

\begin{proof}
Suppose that
$
   g', g''
 \in
   R^{k-1,\mathbf{s} (s,\lambda,\delta+1)} (\overline{\mathcal{C}_T} , \varLambda^2)
$
are two solutions to (\ref{eq.heat.pseudo.n}).
By Lemma \ref{l.sol.psi.n}, they satisfy
   $d g' (\cdot,t) = 0$ and
   $d g'' (\cdot,t) = 0$
in $\mathbb{R}^n$ for each $t \in [0,T]$.

Since
$
   g_0
 \in
   R^{k-1,\mathbf{s} (s,\lambda,\delta)} (\overline{\mathcal{C}_T} , \varLambda^2) \cap
   \mathcal{S}_d
$
we see by Corollaries \ref{c.deRham.Hoelder} and \ref{c.deRham.Hoelder.t} that there
are unique forms
$$
\begin{array}{rcccl}
   u_0
 & =
 & d^\ast \varPhi \left( \gamma_0 g_0 \right)
 & \in
 & C^{2s+k,\lambda,\delta} (\mathbb{R}^n, \varLambda^1),
\\
   f
 & =
 & d^\ast (\varPhi \otimes I) H_\mu g_0
 & \in
 & C^{k-1,\mathbf{s} (s-1,\lambda,\delta)} (\overline{\mathcal{C}_T} , \varLambda^1),
\end{array}
$$
such that
$$
\begin{array}{rcl}
   d u_0
 & =
 & \gamma_0 g_0,
\\
   df
 & =
 & H_\mu g_0
\end{array}
$$
and
$
   g_0 = \varPsi_{\mu,0} d u_0 + \varPsi_\mu df
$
in $\overline{\mathcal{C}_T}$.
Lemma \ref{l.reduce.psi} implies that the pairs
   $U' = (u',p')^T$ and
   $U'' = (u'',p'')^T$
with
$$
\begin{array}{rclcrcl}
   u'
 & =
 & d^\ast (\varPhi \otimes I) g',
 &
 & u''
 & =
 & d^\ast (\varPhi \otimes I) g'',
\\
   p'
 & =
 & d^\ast (\varPhi \otimes I) \left( f - H_\mu u' - \mathbf{D}^1 u' \right),
 &
 & p''
 & =
 & d^\ast (\varPhi \otimes I) \left( f - L_\mu u'' - \mathbf{D}^1 u'' \right)
\end{array}
$$
belong to
$
   C^{k-2,\mathbf{s} (s,\lambda,\delta)} (\overline{\mathcal{C}_T} , \varLambda^1)
 \times
   C^{k,\mathbf{s} (s-1,\lambda,\delta-1)} (\overline{\mathcal{C}_T})
$
and satisfy
   $\mathcal{A} U' = F$ and
   $\mathcal{A} U'' = F$
in the layer, where $F = (f,u_0)^T$.
By the uniqueness of Theorem \ref{t.NS.unique}, we get $U' = U''$.

In particular, $u' = u''$, and so
$
   d^\ast (\varPhi \otimes I) g' = d^\ast (\varPhi \otimes I) g''.
$
Since both $g'$ and $g''$ belong to
$
   R^{k-1,\mathbf{s} (s,\lambda,\delta+1)} (\overline{\mathcal{C}_T} , \varLambda^2) \cap
   \mathcal{S}_d,
$
it follows that
$$
\begin{array}{rcrcl}
   \varDelta \left( d^\ast (\varPhi \otimes I) g' - d^\ast (\varPhi \otimes I) g'' \right)
 & =
 & d^\ast \left( g' - g'' \right)
 & =
 & 0,
\\
 &
 & d \left( g' - g'' \right)
 & =
 & 0
\end{array}
$$
in $\overline{\mathcal{C}_T} $.
Now Corollary \ref{c.deRham.Hoelder.t} yields $g' = g''$, as desired.
\end{proof}

Lemma \ref{l.unique.psi.n} implies immediately that the mapping in (\ref{eq.heat.pseudo.nn}) is
actually one-to-one.

An easy calculation shows that the Frech\'et derivative of the map $I + \varPsi_\mu \mathbf{D}^2$
at an arbitrary point
$$
   g ^{(0)}
 \in
   \mathcal{F}^{k-1,\mathbf{s} (s,\lambda,\lambda',\delta+1)} (\overline{\mathcal{C}_T} ,\varLambda^2) \cap
   \mathcal{S}_d
 \subset
   \mathcal{F}^{k,\mathbf{s} (s-1,\lambda,\lambda',\delta+1)} (\overline{\mathcal{C}_T} ,\varLambda^2) \cap
   \mathcal{S}_d
$$
is given by
$$
   (I + \varPsi_\mu \mathbf{D}^2)'_{g^{(0)}} g
 = (I + \varPsi_\mu W_0) g.
$$
Here, the mapping $W_0$ is constructed by means of
   $g^{(0)}$ and
   $v^{(1)} = d^\ast (\varPhi \otimes I) g^{(0)}$,
the latter one-form belonging to
$$
   \mathcal{F}^{k-1,\mathbf{s} (s,\lambda,\lambda',\delta)} (\overline{\mathcal{C}_T} ,\varLambda^1)
 \subset
   \mathcal{F}^{k,\mathbf{s} (s-1,\lambda,\lambda',\delta)} (\overline{\mathcal{C}_T} ,\varLambda^1).
$$
Now Corollary \ref{c.invertible.psi} shows that the Frech\'et derivative
$
   {(I + \varPsi_\mu \mathbf{D}^2)}'_{g^{(0)}}
$
is a continuously invertible selfmapping of the space
$
   R^{k-1,\mathbf{s} (s,\lambda,\lambda',\delta+1)} (\overline{\mathcal{C}_T} ,\varLambda^2) \cap
   \mathcal{S}_d,
$
for each $g^{(0)}$ as above.

Finally, as  the Frech\'et derivative is a continuously invertible linear operator at
each point $g ^{(0)}$ of
   $R^{k-1,\mathbf{s} (s,\lambda,\lambda',\delta+1)} (\overline{\mathcal{C}_T} ,\varLambda^2))$,
the openness of (\ref{eq.heat.pseudo.nn}) follows from the implicit mapping theorem in
Banach spaces,
   see for instance Theorem 5.2.3 of \cite[p.~101]{Ham82}.
\end{proof}

Theorem \ref{t.OpenMap.psi} suggests two directions for the development of the topic.
First, one can use the standard fixed point techniques including mapping degree theory
to handle operator equation (\ref{eq.heat.pseudo.n}).
Second, one can take into account the properties of the so-called clopen set.

\begin{corollary}
\label{c.clopen}
Assume $n \geq 2$.
Let
   $s \geq 1$ and $k \geq 2$ be integers,
   $0 < \lambda < \lambda' < 1$,
and let
   $n/2 < \delta < n$ be different from $n-2$ and $n-1$.
If the range of mapping (\ref{eq.heat.pseudo.nn}) is closed then it coincides with the
whole destination space.
\end{corollary}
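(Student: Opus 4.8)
The plan is to turn the statement into the familiar ``clopen set'' argument: since mapping (\ref{eq.heat.pseudo.nn}) is open, its range is an open subset of the destination space; if in addition that range is closed, then it is a nonempty clopen subset of a connected space and hence must be the whole space. Throughout, write
$
   E := R^{k-1,\mathbf{s} (s,\lambda,\lambda',\delta+1)} (\overline{\mathcal{C}_T} , \varLambda^2) \cap \mathcal{S}_d
$
for the common domain and target of the map $T := I + \varPsi_\mu \mathbf{D}^2$, and set $\mathcal{R} := T (E)$ for its range.

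First I would invoke Theorem \ref{t.OpenMap.psi}, according to which $T$ is an open mapping of $E$ into itself. Applying openness to the set $E$, which is trivially open in $E$, shows that $\mathcal{R} = T (E)$ is an open subset of $E$. By the hypothesis of the corollary, $\mathcal{R}$ is also closed in $E$. Next I would observe that $E$ is a Banach space: it is the intersection of the closed range of the Laplace operator in the scale $\mathcal{F}^{k-1,\mathbf{s} (s,\lambda,\lambda',\delta+1)}$ (closed by the analogue of Corollary \ref{c.weight.Hoelder.Laplace.t} for this scale) with the closed subspace $\mathcal{S}_d$ of $d$-closed forms, hence a closed subspace of a Banach space. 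Being a normed vector space, $E$ is convex and therefore (path-)connected. Consequently $\mathcal{R}$ is a clopen subset of the connected space $E$, which forces either $\mathcal{R} = \emptyset$ or $\mathcal{R} = E$.

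To exclude the first alternative, I would simply exhibit a point of $\mathcal{R}$: since $\mathcal{Q} (0) = \ast (\ast 0 \wedge d^\ast (\varPhi \otimes I) 0) = 0$, we have $\mathbf{D}^2 (0) = d\, \mathcal{Q} (0) = 0$, whence $T (0) = 0 \in \mathcal{R}$. Therefore $\mathcal{R} = E$, i.e. the range of (\ref{eq.heat.pseudo.nn}) coincides with the whole destination space, as claimed. There is no real analytic obstacle in this argument; it rests entirely on the openness already established in Theorem \ref{t.OpenMap.psi} and on the connectedness of the Banach space $E$. The only point requiring a little care is bookkeeping: one must read (\ref{eq.heat.pseudo.nn}) literally, with $E$ itself (and not some larger space) as codomain, so that ``open range'' and ``closed range'' are being asserted in the same topological space and can legitimately be combined.
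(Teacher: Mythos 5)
Your proposal is correct and follows essentially the same route as the paper: the range is open by Theorem \ref{t.OpenMap.psi}, hence clopen under the hypothesis, and a clopen subset of the connected (convex) space $R^{k-1,\mathbf{s}(s,\lambda,\lambda',\delta+1)}(\overline{\mathcal{C}_T},\varLambda^2)\cap\mathcal{S}_d$ is either empty or everything. Your explicit verification that the range is nonempty (via $T(0)=0$) is a small but legitimate step the paper leaves implicit.
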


\begin{proof}
Since the destination space is convex, it is connected.
As is known, the only clopen (closed and open) sets in a connected topological vector space
are the empty set and the space itself.
Hence, the range of the mapping $I + \varPsi_\mu \mathbf{D}^2$ is closed if and only if it
coincides with the whole destination space.
\end{proof}

When combined with Lemma \ref{l.reduce.psi.n}, Theorem \ref{t.OpenMap.psi} implies that
the Navier-Stokes equations induce an open noncoercive mapping in the function spaces under
consideration.

\begin{corollary}
\label{c.open.NS.short}
Suppose
   $n \geq 2$.
Let
   $s \geq 1$ and $k \geq 2$ be integers,
   $0 < \lambda < \lambda' < 1$,
and let
   $n/2 < \delta < n$ be different from $n-2$ and $n-1$.
Then, for any pair $U^{(0)} = (u^{(0)},p^{(0)})^T$ of
$
   \mathcal{F}^{k-1,\mathbf{s} (s,\lambda,\lambda',\delta)} (\overline{\mathcal{C}_T}, \varLambda^1) \cap
   \mathcal{S}_{d^\ast}
 \times
   \mathcal{F}^{k-1,\mathbf{s} (s-1,\lambda,\lambda',\delta-1)} (\overline{\mathcal{C}_T}),
$
such that
$$
\begin{array}{rcl}
   d u^{(0)}
 & \in
 & R^{k-1,\mathbf{s} (s,\lambda,\lambda',\delta+1)} (\overline{\mathcal{C}_T}, \varLambda^2),
\\
   \mathcal{A} U^{(0)}
 & \in
 & \mathcal{F}^{k-1,\mathbf{s} (s,\lambda,\lambda',\delta)} (\overline{\mathcal{C}_T}, \varLambda^1)
 \times
   C^{2s+k+1,\lambda,\delta} (\mathbb{R}^n,\varLambda^1) \cap \mathcal{S}_{d^\ast},
\end{array}
$$
there is $\varepsilon > 0$ with the property that for all data $F = (f,u_0)^T$ of the product
$
   \mathcal{F}^{k,\mathbf{s} (s,\lambda,\lambda',\delta)} (\overline{\mathcal{C}_T} ,\varLambda^1)
 \times
   C^{2s+k+1,\lambda,\delta} (\mathbb{R}^n,\varLambda^1) \cap \mathcal{S}_{d^\ast}
$
satisfying the estimate
\begin{equation}
\label{eq.NS.open.est}
   \| d (f - (H_\mu + \mathbf{D}^1) u^{(0)})
   \|_{\mathcal{F}^{k-1,\mathbf{s} (s,\lambda,\lambda',\delta+1)} (\cdot)}
 + \| d (u_0 - \gamma_0 u^{(0)})
   \|_{C^{2s+k,\lambda,\delta+1} (\cdot)}
 < \varepsilon
\end{equation}
the nonlinear equation
   $\mathcal{A} U = F$
has a unique solution $U = (u,p)^T$ in
$$
   \mathcal{F}^{k-1,\mathbf{s} (s,\lambda,\lambda',\delta)} (\overline{\mathcal{C}_T} , \varLambda^1) \cap
   \mathcal{S}_{d^\ast}
 \times
   \mathcal{F}^{k-1,\mathbf{s} (s-1,\lambda,\lambda',\delta-1)} (\overline{\mathcal{C}_T})
$$
with
$
   du \in R^{k-1,\mathbf{s} (s,\lambda,\lambda',\delta+1)} (\overline{\mathcal{C}_T} , \varLambda^2).
$
\end{corollary}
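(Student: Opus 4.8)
The plan is to transfer the open mapping theorem for the reduced equation (Theorem~\ref{t.OpenMap.psi}) back to the Navier-Stokes operator $\mathcal{A}$ by means of the correspondence $U \leftrightarrow g = du$ established in Lemma~\ref{l.reduce.psi.n}. First I would set $g^{(0)} = du^{(0)}$, which by hypothesis lies in $R^{k-1,\mathbf{s}(s,\lambda,\lambda',\delta+1)}(\overline{\mathcal{C}_T},\varLambda^2)$; since $u^{(0)}\in\mathcal{S}_{d^\ast}$ and $\mathcal{A}U^{(0)}$ has the stated regularity, Lemma~\ref{l.reduce.psi.n}~(1) shows that $g^{(0)}$ solves the reduced equation $(I+\varPsi_\mu\mathbf{D}^2)g^{(0)} = g_0^{(0)}$ with $g_0^{(0)} = \varPsi_{\mu,0}d\gamma_0 u^{(0)} + \varPsi_\mu d(f^{(0)})$ where $f^{(0)} = (H_\mu+\mathbf{D}^1)u^{(0)} + dp^{(0)}$; more simply, the right-hand side is $\varPsi_{\mu,0}d\gamma_0 u^{(0)} + \varPsi_\mu\, d((H_\mu+\mathbf{D}^1)u^{(0)})$, since the term $d\,dp^{(0)}$ vanishes. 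Thus $g^{(0)}$ lies in the range of mapping (\ref{eq.heat.pseudo.nn}), and by Theorem~\ref{t.OpenMap.psi} this range is open: there is $\varepsilon_0 > 0$ such that every $g_0$ in the ball of radius $\varepsilon_0$ around $g_0^{(0)}$ in $R^{k-1,\mathbf{s}(s,\lambda,\lambda',\delta+1)}(\overline{\mathcal{C}_T},\varLambda^2)\cap\mathcal{S}_d$ is hit by a unique $g\in R^{k-1,\mathbf{s}(s,\lambda,\lambda',\delta+1)}(\overline{\mathcal{C}_T},\varLambda^2)\cap\mathcal{S}_d$.

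Next I would translate the smallness hypothesis (\ref{eq.NS.open.est}) into smallness of $g_0 - g_0^{(0)}$. Given data $F=(f,u_0)^T$ as in the statement, put $g_0 = \varPsi_{\mu,0}du_0 + \varPsi_\mu df$. By Lemma~\ref{l.homo.map.t} and Lemma~\ref{l.homo.map}, $du_0\in R^{2s+k,\lambda,\delta+1}(\mathbb{R}^n,\varLambda^2)$ and $df \in R^{k-1,\mathbf{s}(s,\lambda,\lambda',\delta+1)}(\overline{\mathcal{C}_T},\varLambda^2)\cap\mathcal{S}_d$; by Lemma~\ref{l.heat.key1} and Theorem~\ref{t.heat.key2} the potentials $\varPsi_{\mu,0}$ and $\varPsi_\mu$ are bounded into $R^{k-1,\mathbf{s}(s,\lambda,\lambda',\delta+1)}(\overline{\mathcal{C}_T},\varLambda^2)\cap\mathcal{S}_d$ (these operators being $d$-equivariant, they preserve closedness). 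Hence there is a constant $C>0$, independent of $F$, with
$$
   \| g_0 - g_0^{(0)} \|_{R^{k-1,\mathbf{s}(s,\lambda,\lambda',\delta+1)}(\overline{\mathcal{C}_T},\varLambda^2)}
 \leq
   C
   \Big(
   \| d(f - (H_\mu+\mathbf{D}^1)u^{(0)}) \|_{\mathcal{F}^{k-1,\mathbf{s}(s,\lambda,\lambda',\delta+1)}(\cdot)}
 + \| d(u_0 - \gamma_0 u^{(0)}) \|_{C^{2s+k,\lambda,\delta+1}(\cdot)}
   \Big),
$$
because $g_0^{(0)} = \varPsi_{\mu,0}d\gamma_0 u^{(0)} + \varPsi_\mu\, d((H_\mu+\mathbf{D}^1)u^{(0)})$ has exactly the same shape. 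Choosing $\varepsilon = \varepsilon_0/C$ makes (\ref{eq.NS.open.est}) imply $\|g_0 - g_0^{(0)}\| < \varepsilon_0$, so the reduced equation is solvable: there is a unique $g\in R^{k-1,\mathbf{s}(s,\lambda,\lambda',\delta+1)}(\overline{\mathcal{C}_T},\varLambda^2)\cap\mathcal{S}_d$ with $(I+\varPsi_\mu\mathbf{D}^2)g = g_0$.

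Then I would invoke Lemma~\ref{l.reduce.psi.n}~(2): from this $g$ one recovers $u = d^\ast(\varPhi\otimes I)g$ and $p = d^\ast(\varPhi\otimes I)(f - H_\mu u - \mathbf{D}^1 u)$, which lie in $C^{k-1,\mathbf{s}(s,\lambda,\delta)}(\overline{\mathcal{C}_T},\varLambda^1)\cap\mathcal{S}_{d^\ast}$ and $C^{k-1,\mathbf{s}(s-1,\lambda,\delta-1)}(\overline{\mathcal{C}_T})$ respectively, satisfy $\mathcal{A}U = F$ and $du = g\in R^{k-1,\mathbf{s}(s,\lambda,\delta+1)}(\overline{\mathcal{C}_T},\varLambda^2)$. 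To upgrade the Hölder exponent from $\lambda$ to the $\mathcal{F}$-scale one repeats the construction with $\lambda'$ in place of $\lambda$ — all lemmata quoted hold verbatim for the $\mathcal{F}^{k,\mathbf{s}(s,\lambda,\lambda',\delta)}$ scale, as remarked in the text preceding Lemma~\ref{l.mathfrak.compact} — which places $u$ and $p$ in the $\mathcal{F}$-spaces claimed in the corollary. Uniqueness of $U$ is Theorem~\ref{t.NS.unique} (with $a=1$, $\delta>n/2$), and uniqueness of $g$ for given $g_0$ is Lemma~\ref{l.unique.psi.n}; together with the bijective correspondence of Lemma~\ref{l.reduce.psi.n} this gives the unique solvability asserted. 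The main obstacle I anticipate is purely bookkeeping: checking that $g_0^{(0)}$ genuinely equals $\varPsi_{\mu,0}d\gamma_0 u^{(0)} + \varPsi_\mu\, d((H_\mu+\mathbf{D}^1)u^{(0)})$ despite $\mathcal{A}U^{(0)}$ not being the distinguished data $F$ — one must use $d\,dp^{(0)} = 0$ to discard the pressure contribution — and verifying that the norms appearing in (\ref{eq.NS.open.est}) are precisely those controlling $\|g_0 - g_0^{(0)}\|$ under the boundedness estimates of $\varPsi_\mu$, $\varPsi_{\mu,0}$; the slight mismatch of smoothness indices ($2s+k$ versus $2s+k+1$, and $k$ versus $k-1$) caused by the loss of regularity in the Newton potential must be tracked carefully but introduces no genuine difficulty once Lemma~\ref{l.reduce.psi.n} is granted.
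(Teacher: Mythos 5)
Your proposal is correct and follows essentially the same route as the paper: reduce to the equation $(I+\varPsi_\mu\mathbf{D}^2)g=g_0$ via $g=du$, note that $g^{(0)}=du^{(0)}$ corresponds to $g_0^{(0)}=\varPsi_{\mu,0}d\gamma_0 u^{(0)}+\varPsi_\mu d((H_\mu+\mathbf{D}^1)u^{(0)})$, transfer the smallness hypothesis to $\|g_0-g_0^{(0)}\|$ through the boundedness of $\varPsi_\mu$ and $\varPsi_{\mu,0}$ on the $\mathcal{F}$-scale, apply the openness of Theorem \ref{t.OpenMap.psi}, and recover $(u,p)$ by Lemma \ref{l.reduce.psi.n}(2), with uniqueness from Theorem \ref{t.NS.unique}. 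The bookkeeping points you flag (discarding $d\,dp^{(0)}=0$, and the embedding $C^{2s+k+1,\lambda,\delta+1}\hookrightarrow C^{2s+k,\lambda',\delta+1}$ needed to make $\varPsi_{\mu,0}$ land in the $\mathcal{F}$-space) are exactly the ones the paper handles.
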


The proof of the corollary actually shows that
$
   \| u - u^{(0)} \|_{\mathcal{F}^{k-1,\mathbf{s} (s,\lambda,\lambda',\delta)} (\cdot)}
 \leq
   c\, \varepsilon,
$
where $c$ is a constant depending only on certain norms of the potentials
   $d^\ast (\varPhi \otimes I)$ and
   $\varPsi_{\mu}$,
   $\varPsi_{\mu,0}$,
and the inverse operator
   $(I + \varPsi_\mu W_0)^{-1}$,
but not on the data $f$ and $u_0$.
Here, the operator $W_0$ is constructed by means of the forms
   $g^{(0)} = d u ^{(0)}$ and
   $v^{(1)} = u^{(0)}$.

\begin{proof}
By Lemma \ref{l.heat.key1}, the volume parabolic potential $\varPsi_\mu$ induces bounded linear
operators
$$
\begin{array}{rcl}
   C^{k+1,\mathbf{s} (s,\lambda,\delta+1)} (\overline{\mathcal{C}_T})
 & \to
 & C^{k+1,\mathbf{s} (s,\lambda,\delta+1)} (\overline{\mathcal{C}_T}) \cap \mathcal{D}_{H_\mu},
\\
   C^{k,\mathbf{s} (s,\lambda',\delta+1)} (\overline{\mathcal{C}_T})
 & \to
 & C^{k,\mathbf{s} (s,\lambda',\delta+1)} (\overline{\mathcal{C}_T}) \cap \mathcal{D}_{H_\mu},
\end{array}
$$
and hence a bounded linear operator
\begin{equation}
\label{eq.oper.psi}
   \mathcal{F}^{k,\mathbf{s} (s,\lambda,\lambda',\delta+1)} (\overline{\mathcal{C}_T})
 \to
   \mathcal{F}^{k,\mathbf{s} (s,\lambda,\lambda',\delta+1)} (\overline{\mathcal{C}_T}) \cap
   \mathcal{D}_{H_\mu}.
\end{equation}

Similarly, Lemma \ref{l.heat.key1} implies that the Poisson parabolic potential $\varPsi_{\mu,0}$
induces a bounded linear operator
$$
   C^{2s+k+1,\lambda,\delta+1} (\mathbb{R}^n)
 \to
   C^{k+1,\mathbf{s} (s,\lambda,\delta+1)} (\overline{\mathcal{C}_T}) \cap \mathcal{D}_{H_\mu}.
$$
On the other hand, by Theorem \ref{t.emb.hoelder}, the space
   $C^{2s+k+1,\lambda,\delta+1} (\mathbb{R}^n)$
is continuously embedded into the space
   $C^{2s+k,\lambda',\delta+1} (\mathbb{R}^n)$.
Hence it follows that the potential $\varPsi_{\mu,0}$ induces a bounded linear operator
$$
   C^{2s+k+1,\lambda,\delta+1} (\mathbb{R}^n)
 \to
   C^{k,\mathbf{s} (s,\lambda',\delta+1)} (\overline{\mathcal{C}_T}) \cap \mathcal{D}_{H_\mu},
$$
and so a bounded linear operator
\begin{equation}
\label{eq.oper.psi0}
   C^{2s+k+1,\lambda,\delta+1} (\mathbb{R}^n)
 \to
   \mathcal{F}^{k,\mathbf{s} (s,\lambda,\lambda',\delta+1)} (\overline{\mathcal{C}_T}) \cap
   \mathcal{D}_{H_\mu}.
\end{equation}

We now apply Lemma \ref{l.reduce.psi.n} to see that the form
   $g^{(0)} = d u^{(0)}$
is a solution to the equation
$$
   (I + \varPsi_\mu \mathbf{D}^2) g^{(0)} = g_0^{(0)}
$$
with the right-hand side
   $g_0^{(0)} := \varPsi_{\mu,0} d \gamma_0 u^{(0)} + \varPsi_\mu d (H_\mu + \mathbf{D}^1) u^{(0)}$
belonging to the space
   $R^{k-1,\mathbf{s} (s,\lambda,\lambda',\delta+1)} (\overline{\mathcal{C}_T},\varLambda^2)$.
Set
$$
   g_0 = \varPsi_{\mu,0} d u_0 + \varPsi_\mu df,
$$
which belongs to
   $\mathcal{F}^{k-1,\mathbf{s} (s,\lambda,\lambda',\delta+1)} (\overline{\mathcal{C}_T},\varLambda^2)$
by Lemma  \ref{l.heat.key1}.
An immediate calculations shows that
\begin{eqnarray*}
\lefteqn{
   \| g_0 - g_0^{(0)}
   \|_{\mathcal{F}^{k-1,\mathbf{s} (s,\lambda,\lambda',\delta+1)} (\overline{\mathcal{C}_T},\varLambda^2)}
}
\\
 & \leq &
   \| \varPsi_{\mu,0} \| \,
   \| d (u_0 - \gamma_0 u^{(0)})
   \|_{C^{2s+k,\lambda,\delta+1} (\mathbb{R}^n,\varLambda^2)}
\\
 & + &
   \| \varPsi_\mu \| \,
   \| df - d (H_\mu + \mathbf{D}^1) u^{(0)}
   \|_{\mathcal{F}^{k-1,\mathbf{s} (s,\lambda,\lambda',\delta+1)} (\overline{\mathcal{C}_T},\varLambda^2)}
\end{eqnarray*}
where
   $\| \varPsi_\mu \|$ and
   $\| \varPsi_{\mu,0} \|$
are the norms of bounded linear operators
   (\ref{eq.oper.psi}) and
   (\ref{eq.oper.psi0}),
respectively.

If $\varepsilon > 0$ in the estimate (\ref{eq.NS.open.est}) is small enough, then
Theorem \ref{t.OpenMap.psi} shows that there is a solution
$
   g
 \in
   R^{k-1,\mathbf{s} (s,\lambda,\lambda',\delta+1)} (\overline{\mathcal{C}_T},\varLambda^2) \cap
   \mathcal{S}_d
$
to the equation
$
   g + \varPsi_\mu \mathbf{D}^2 g = g_0.
$
Finally, the pair
$$
\begin{array}{rcl}
   u
 & =
 & d^\ast (\varPhi \otimes I) g,
\\
   p
 & =
 & d^\ast (\varPhi \otimes I) \left( f - H_\mu u - \mathbf{D}^1 u \right)
\end{array}
$$
belongs to
$
   \mathcal{F}^{k-1,\mathbf{s} (s,\lambda,\lambda',\delta)} (\overline{\mathcal{C}_T},\varLambda^1) \cap
   \mathcal{S}_{d^\ast}
 \times
   \mathcal{F}^{k-1,\mathbf{s} (s-1,\lambda,\lambda',\delta-1)} (\overline{\mathcal{C}_T})
$
and satisfies the nonlinear equation $\mathcal{A} U = F$,
   which is due to Lemma \ref{l.reduce.psi.n}.
Moreover, $du = g$ belongs to
$
   R^{k-1,\mathbf{s} (s,\lambda,\lambda',\delta+1)} (\overline{\mathcal{C}_T},\varLambda^2),
$
as desired.
\end{proof}

In the strict sense of the word Corollary \ref{c.open.NS.short} is not an open mapping theorem, for neither
the domain nor the target space has been fixed for the Navier-Stokes equations.
In order to strengthen the assertion we turn to the framework of metric spaces.
For this purpose, denote by $\mathcal{D}_{\mathcal{A}}$ the set of all pairs $U = (u,p)^T$ in the product
$$
   \mathcal{F}^{k-1,\mathbf{s} (s,\lambda,\lambda',\delta)} (\overline{\mathcal{C}_T}, \varLambda^1) \cap
   \mathcal{S}_{d^\ast}
 \times
   \mathcal{F}^{k-1,\mathbf{s} (s-1,\lambda,\lambda',\delta-1)} (\overline{\mathcal{C}_T}),
$$
such that
$$
\begin{array}{rcl}
   du
 & \in
 & R^{k-1,\mathbf{s} (s,\lambda,\lambda',\delta+1)} (\overline{\mathcal{C}_T}, \varLambda^2),
\\
   \mathcal{A} U
 & \in
 & \mathcal{F}^{k,\mathbf{s} (s,\lambda,\lambda',\delta)} (\overline{\mathcal{C}_T}, \varLambda^1)
 \times
   C^{2s+k+1,\lambda,\delta} (\mathbb{R}^n,\varLambda^1) \cap \mathcal{S}_{d^\ast}.
\end{array}
$$
Since $\mathcal{A}$ is nonlinear, the set $\mathcal{D}_{\mathcal{A}}$ fails to bear a vector space structure.
We topologise it under the metric
$$
   d (U,V) := \| U - V \| + \| du - dv \| + \| \mathcal{A} U - \mathcal{A} V \|,
$$
where
$$
\begin{array}{rcl}
   \| U - V \|
 & = &
   \| U - V \|_{\mathcal{F}^{k-1,\mathbf{s} (s,\lambda,\lambda',\delta)} (\overline{\mathcal{C}_T}, \varLambda^1)
         \times \mathcal{F}^{k-1,\mathbf{s} (s-1,\lambda,\lambda',\delta-1)} (\overline{\mathcal{C}_T})},
\\
   \| du - dv \|
 & = &
   \| du - dv
   \|_{\mathcal{F}^{k-1,\mathbf{s} (s,\lambda,\lambda',\delta+1)} (\overline{\mathcal{C}_T}, \varLambda^2)},
\\
   \| \mathcal{A} U - \mathcal{A} V \|
 & = &
   \| \mathcal{A} U - \mathcal{A} V
   \|_{\mathcal{F}^{k,\mathbf{s} (s,\lambda,\lambda',\delta)} (\overline{\mathcal{C}_T}, \varLambda^1)
\times C^{2s+k+1,\lambda,\delta} (\mathbb{R}^n,\varLambda^1)}
\end{array}
$$
for $U = (u,p)^T$ and $V = (v,q)^T$ in $\mathcal{D}_{\mathcal{A}}$.

Since the weighted H\"{o}lder spaces are complete, we conclude immediately that the metric space
$\mathcal{D}_{\mathcal{A}}$ is complete, too.
Indeed, choose a Cauchy sequence $U_\nu = (u_\nu,p_\nu)^T$ in $\mathcal{D}_{\mathcal{A}}$.
By the above, the sequences
   $\{ U_\nu \}$,
   $\{ d u_\nu \}$ and
   $\{ \mathcal{A} U_\nu \}$
converge in the Banach spaces
$$
\begin{array}{cc}
   \mathcal{F}^{k-1,\mathbf{s} (s,\lambda,\lambda',\delta)} (\overline{\mathcal{C}_T}, \varLambda^1)
   \cap \mathcal{S}_{d^\ast} \!
 \times
   \mathcal{F}^{k\!-\!1,\mathbf{s} (s-1,\lambda,\lambda',\delta-1)} (\overline{\mathcal{C}_T}),
 \! & \!
   \mathcal{F}^{k-1,\mathbf{s} (s,\lambda,\lambda',\delta+1)} (\overline{\mathcal{C}_T}, \varLambda^2),
\\
   \mathcal{F}^{k,\mathbf{s} (s,\lambda,\lambda',\delta)} (\overline{\mathcal{C}_T}, \varLambda^1)
 \times
   C^{2s+k+1,\lambda,\delta} (\mathbb{R}^n,\varLambda^1) \cap \mathcal{S}_{d^\ast},
 \! & \!
\end{array}
$$
respectively.
Let $U = (u,p)^T$ stand for the limit of $\{ U_\nu \}$.
A familiar argument using embedding theorems and the continuity of the operators $d^1$ and $\mathcal{A}$
in their domains shows that the limits of the sequences
   $\{ d u_\nu \}$ and
   $\{ \mathcal{A} U_\nu \}$
just amount to $du$ and $\mathcal{A} U$, respectively.
Thus,
$$
\begin{array}{rcl}
   du
 & \in
 & \mathcal{F}^{k-1,\mathbf{s} (s,\lambda,\lambda',\delta+1)} (\overline{\mathcal{C}_T}, \varLambda^2),
\\
   \mathcal{A} U
 & \in
 & \mathcal{F}^{k,\mathbf{s} (s,\lambda,\lambda',\delta)} (\overline{\mathcal{C}_T}, \varLambda^1)
 \times
   C^{2s+k+1,\lambda,\delta} (\mathbb{R}^n,\varLambda^1) \cap \mathcal{S}_{d^\ast}.
\end{array}
$$
Moreover, if $h \in H_{\leq m}$ is a harmonic polynomial of suitable degree in $\mathbb{R}^n$, then
$$
   \int_{\mathbb{R}^n} du (x) h (x)\, dx
 = \lim_{\nu \to \infty} \int_{\mathbb{R}^n} d u_\nu (x) h (x)\, dx
 = 0
$$
because
   $d u_\nu \in R^{k-1,\mathbf{s} (s,\lambda,\lambda',\delta+1)} (\overline{\mathcal{C}_T}, \varLambda^2)$
for all $\nu = 1, 2, \ldots$.
Hence it follows that
   $du \in R^{k-1,\mathbf{s} (s,\lambda,\lambda',\delta+1)} (\overline{\mathcal{C}_T}, \varLambda^2)$
and
   $U \in \mathcal{D}_{\mathcal{A}}$,
which was to be proved.

\begin{corollary}
\label{c.open.NS.short.formal.BIG}
Suppose
   $n \geq 2$,
   $s$ and $k \geq 2$ are positive integers,
   $0 \! < \! \lambda \! < \! \lambda' \! < \! 1$,
and
   $n/2 < \delta < n$ is different from $n-2$ and $n-1$.
Then the mapping
\begin{equation}
\label{eq.metric.A.BIG}
   \mathcal{A}:
   \mathcal{D}_{\mathcal{A}}
 \to
   \mathcal{F}^{k, \mathbf{s} (s,\lambda,\lambda',\delta)} (\overline{\mathcal{C}_T}, \varLambda^1)
 \times
   C^{2s+k+1,\lambda,\delta} (\mathbb{R}^n,\varLambda^1) \cap \mathcal{S}_{d^\ast}
\end{equation}
is continuous, injective and its range is open.
\end{corollary}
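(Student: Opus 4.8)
The plan is to reduce the claimed properties of $\mathcal{A}$ on the metric space $\mathcal{D}_{\mathcal{A}}$ to the corresponding properties of the reduced operator $I + \varPsi_\mu \mathbf{D}^2$ established in Theorem \ref{t.OpenMap.psi}, using the correspondence of Lemma \ref{l.reduce.psi.n} as the dictionary between the two pictures. First I would record that the metric $d(U,V)$ on $\mathcal{D}_{\mathcal{A}}$ has been chosen precisely so that $\mathcal{A}$ becomes trivially continuous: $\| \mathcal{A} U - \mathcal{A} V\|$ is one of the three summands of $d(U,V)$, so $d(\mathcal{A} U, \mathcal{A} V) \le d(U,V)$ in the destination norm, giving (Lipschitz) continuity of \eqref{eq.metric.A.BIG} at once. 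The completeness of $\mathcal{D}_{\mathcal{A}}$ has already been verified in the paragraph preceding the corollary, so the metric space setting is legitimate.

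For injectivity I would invoke Theorem \ref{t.NS.unique}: if $U' = (u',p')^T$ and $U'' = (u'',p'')^T$ lie in $\mathcal{D}_{\mathcal{A}}$ with $\mathcal{A} U' = \mathcal{A} U''$, then since $n/2 < \delta$ and the coefficient regularity hypotheses of Theorem \ref{t.NS.unique} hold trivially here (there are no lower order coefficients to worry about, only the genuinely nonlinear $\mathbf{D}^1$ term, which is the case $a=1$, $V_0 = 0$ of that theorem), the uniqueness conclusion forces $U' = U''$. The fact that $\mathcal{D}_{\mathcal{A}}$ sits inside the space where Theorem \ref{t.NS.unique} applies, and that its elements have the extra regularity $du \in R^{k-1,\mathbf{s}(s,\lambda,\lambda',\delta+1)}(\overline{\mathcal{C}_T},\varLambda^2)$ and $\mathcal{A} U$ in the smoother target, is exactly what makes Lemma \ref{l.reduce.psi.n} applicable on the nose.

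The substance of the proof is the openness of the range. Here I would argue as in Corollary \ref{c.open.NS.short}: fix $U^{(0)} = (u^{(0)},p^{(0)})^T \in \mathcal{D}_{\mathcal{A}}$ and set $g^{(0)} = du^{(0)}$, which by Lemma \ref{l.reduce.psi.n} solves $(I + \varPsi_\mu \mathbf{D}^2) g^{(0)} = g_0^{(0)}$ with $g_0^{(0)} = \varPsi_{\mu,0} d\gamma_0 u^{(0)} + \varPsi_\mu d(H_\mu + \mathbf{D}^1) u^{(0)}$ in $R^{k-1,\mathbf{s}(s,\lambda,\lambda',\delta+1)}(\overline{\mathcal{C}_T},\varLambda^2) \cap \mathcal{S}_d$. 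Given data $F = (f,u_0)^T$ close to $\mathcal{A} U^{(0)}$ in the destination metric, form $g_0 = \varPsi_{\mu,0} du_0 + \varPsi_\mu df$; by Lemma \ref{l.heat.key1} the map $F \mapsto g_0$ is bounded into $\mathcal{F}^{k-1,\mathbf{s}(s,\lambda,\lambda',\delta+1)}(\overline{\mathcal{C}_T},\varLambda^2)$ and $d g_0 = 0$ by \eqref{eq.commute}, so $g_0$ lies in the range space of \eqref{eq.heat.pseudo.nn} and is close to $g_0^{(0)}$. Since $I + \varPsi_\mu \mathbf{D}^2$ is open by Theorem \ref{t.OpenMap.psi}, there is $g \in R^{k-1,\mathbf{s}(s,\lambda,\lambda',\delta+1)}(\overline{\mathcal{C}_T},\varLambda^2) \cap \mathcal{S}_d$ with $(I+\varPsi_\mu \mathbf{D}^2)g = g_0$, and the reconstruction formulas $u = d^\ast(\varPhi \otimes I) g$, $p = d^\ast(\varPhi \otimes I)(f - H_\mu u - \mathbf{D}^1 u)$ from part 2) of Lemma \ref{l.reduce.psi.n} produce $U = (u,p)^T \in \mathcal{D}_{\mathcal{A}}$ solving $\mathcal{A} U = F$. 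The one point needing care, and the main obstacle, is that openness of \eqref{eq.metric.A.BIG} must be checked with respect to the metric $d$ — i.e. I must control not only $\| \mathcal{A}U - \mathcal{A}U^{(0)}\|$ (which is $\|F - \mathcal{A}U^{(0)}\|$, automatic) but also $\|U - U^{(0)}\|$ and $\|du - du^{(0)}\|$ in terms of $\varepsilon$. This follows because $du - du^{(0)} = g - g^{(0)}$ is controlled by $\|g_0 - g_0^{(0)}\| \le \mathrm{const}\cdot\varepsilon$ via the continuous invertibility of the Fréchet derivative near $g^{(0)}$ from Corollary \ref{c.invertible.psi}, and then $u - u^{(0)} = d^\ast(\varPhi \otimes I)(g - g^{(0)})$ and the analogous identity for $p$ inherit the bound through the boundedness of the potentials $d^\ast(\varPhi\otimes I)$ and $\varPsi_\mu$; hence a ball of radius $\varepsilon$ around $\mathcal{A}U^{(0)}$ in the destination space lies in the image of a $d$-ball around $U^{(0)}$, which is precisely the openness of the range.
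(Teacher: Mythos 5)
Your proposal is correct and follows essentially the same route as the paper: continuity is built into the metric on $\mathcal{D}_{\mathcal{A}}$, injectivity is Theorem \ref{t.NS.unique}, and openness of the range is obtained by passing to the reduced equation $(I+\varPsi_\mu \mathbf{D}^2)\,g = g_0$ exactly as in Corollary \ref{c.open.NS.short} — the paper merely cites that corollary after converting the bound on $\| \mathcal{A}U^{(0)} - F \|$ into estimate (\ref{eq.NS.open.est}) by means of the norm of $d^1$, whereas you unwind its proof. Your extra control of $\| U - U^{(0)} \|$ and $\| du - du^{(0)} \|$ in the metric $d$ is not needed for the stated conclusion that the range is an open set, but it is correct and in fact yields the stronger assertion that $\mathcal{A}$ is an open mapping onto its range.
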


\begin{proof}
Indeed, the continuity of mapping (\ref{eq.metric.A.BIG}) follows from the very construction of the
space $\mathcal{D}_{\mathcal{A}}$.
By Theorem \ref{t.NS.unique}, it is injective.
Fix an element $U^{(0)}$ of $\mathcal{D}_{\mathcal{A}}$.
Denote by $\| d^1 \|$ the norm of the exterior derivative which maps
$$
   \mathcal{F}^{k, \mathbf{s} (s,\lambda,\lambda',\delta)} (\overline{\mathcal{C}_T}, \varLambda^1)
 \times
   C^{2s+k+1,\lambda,\delta} (\mathbb{R}^n,\varLambda^1) \cap \mathcal{S}_{d^\ast}
$$
continuously into
$$
   \mathcal{F}^{k-1, \mathbf{s} (s,\lambda,\lambda',\delta+1)} (\overline{\mathcal{C}_T}, \varLambda^2)
 \times
   C^{2s+k,\lambda,\delta+1} (\mathbb{R}^n,\varLambda^2) \cap \mathcal{S}_d.
$$
Then, for any pair $F = (f,u_0)^T$ of
$$
   \mathcal{F}^{k, \mathbf{s} (s,\lambda,\lambda',\delta)} (\overline{\mathcal{C}_T}, \varLambda^1)
 \times
   C^{2s+k+1,\lambda,\delta} (\mathbb{R}^n,\varLambda^1) \cap \mathcal{S}_{d^\ast}
$$
satisfying
$$
   \| \mathcal{A} U^{(0)} - F
   \|_{\mathcal{F}^{k, \mathbf{s} (s,\lambda,\lambda',\delta)} (\overline{\mathcal{C}_T}, \varLambda^1)
       \times C^{2s+k+1,\lambda,\delta} (\mathbb{R}^n,\varLambda^1)}
 <
   \frac{\varepsilon}{\| d^1 \|},
$$
we get estimate (\ref{eq.NS.open.est}) with the constant $\varepsilon$ as in Corollary \ref{c.open.NS.short}.
Thus, the statement follows immediately from Corollary \ref{c.open.NS.short}.
\end{proof}

\begin{remark}
\label{r.scale.n}
For
   $n \geq 3$ and
   $n/2 < \delta < n-1$
we need not work with the sophisticated scale
$
   R^{k-1,\mathbf{s} (s,\lambda,\lambda',\delta+1)} (\overline{\mathcal{C}_T} ,\varLambda^2) \cap
   \mathcal{S}_d
$
of function spaces.
We may simply use the scale
$
   \mathcal{F}^{k-1,\mathbf{s} (s,\lambda,\lambda',\delta+1)} (\overline{\mathcal{C}_T} ,\varLambda^2) \cap
   \mathcal{S}_d.
$
instead, which makes the exposition more transparent.
\end{remark}

We finish the section by mentioning a familiar example by P. Fatou (1922).
He constructed a holomorphic mapping $f (z)$ of $\mathbb{C}^2$ whose Jacobi matrix $f' (z)$ has a constant
determinant different from zero.
The mapping $f$ is a homeomorphism onto the image, however, the image of $f$ leaves out a closed subset of
$\mathbb{C}^2$ with nonempty interior.
This shows that nonlinear Fredholm mappings may behave rather intricately.

\bigskip

\textit{Acknowledgments\,}
The first author gratefully acknowledges the financial support of the grant of the
Russian Federation Government for scientific research under the supervision of
leading scientist at the Siberian Federal University, contract No 14.Y26.31.0006.
and the grant of President of Russian Federation for leading scientific schools
  NSh-9149.2016.1

\end{document}